\documentclass{article}[12pt]
\usepackage{fullpage}
\usepackage{amsmath}
\usepackage{amssymb}
\usepackage{newtxtext, newtxmath}

\usepackage{amsthm}
\usepackage[english]{babel}
\usepackage{caption}
\usepackage{xcolor}
\usepackage{algorithm}
\usepackage{algpseudocode}
\usepackage{tikz}
\usepackage{listings}
\usepackage{hyperref,cleveref}
\usetikzlibrary{fit,positioning,backgrounds,decorations.pathreplacing,calc}
\usetikzlibrary{graphs, shapes, calc, arrows.meta}
\usepackage{subcaption}  

\newtheorem{theorem}{Theorem}[section]
\newtheorem{lemma}[theorem]{Lemma}
\newtheorem{prop}[theorem]{Proposition}

\newcounter{claim-nb}[theorem]
\newcounter{claim-nbs}[section]
\newtheorem{CL}[theorem]{Claim}

\newenvironment{cproof}
{\begin{proof}
 [Proof of Claim.]
 \vspace{-1.2\parsep}}
{ \end{proof}}

\tikzset{>={Latex[length=3pt,width=2.6pt]}}

\newcommand{\R}{\mathbb{R}}

\newcommand{\1}{\mathbf{1}}

\newcommand{\Z}{\mathbb{Z}}

\newcommand{\cR}{\mathbb R}
\newcommand{\cZ}{\mathbb Z}

\newcommand{\zF}{\mathcal F}

\newcommand{\zJ}{\mathcal J}

\newcommand{\zB}{\mathcal B}

\DeclareMathOperator{\cone}{cone}
\DeclareMathOperator{\conv}{conv}
\DeclareMathOperator{\lin}{lin}
\DeclareMathOperator{\lat}{lat}
\DeclareMathOperator{\scr}{SCR}
\DeclareMathOperator{\sco}{SCO}
\DeclareMathOperator{\dij}{DIJ}
\DeclareMathOperator{\aff}{aff}
\DeclareMathOperator{\supp}{supp}

\DeclareMathOperator{\poly}{Poly}

\newtheorem{CO}[theorem]{Corollary}
\newtheorem{LE}[theorem]{Lemma}
\newtheorem{PR}[theorem]{Proposition}
\newtheorem{CN}[theorem]{Conjecture}

\newtheorem{DE}[theorem]{Definition}

\title{Lattice Structure and Efficient Basis Construction for Strongly Connected Orientations}

\author{Siyue Liu \thanks{Carnegie Mellon University. Email: \href{mailto:siyueliu@andrew.cmu.edu}{\texttt{siyueliu@andrew.cmu.edu}}.}\and 
  Olha Silina\thanks{Carnegie Mellon University. 
  Email: \href{siolga12@gmail.com}{\texttt{siolga12@gmail.com}}.}}
\date{}

\begin{document}
\maketitle

\begin{abstract}
Let $\vec{G}=(V,E^+\cup E^-)$ be a bidirected graph whose underlying undirected graph $G=(V,E)$ is $2$-edge-connected. A strongly connected orientation (SCO)  is defined as a subset of arcs that contains exactly one of $e^+,e^-$ for every $e\in E$ and induces a strongly connected subgraph of $\vec{G}$. Given a family $\mathcal{F}$ of proper subsets of $V$, we call an SCO tight if there is exactly one arc entering $U$ for every $U\in \mathcal{F}$. We give a polynomial-time algorithm to construct a set $\mathcal{B}$ consisting of tight SCO's which forms an integral basis for the linear hull of tight SCO's. That is, $\mathcal{B}$ is a linearly independent subset of tight SCO's, and every integer vector in the linear hull of tight SCO's can be written as an integral combination of $\mathcal{B}$. This extends the main result of Abdi, Conu\'ejols, Liu and Silina (IPCO 2025), who gave a non-constructive proof of the existence of such a basis in an equivalent setting. While the previous proof uses polyhedral theory, our proof is purely combinatorial and yields a polynomial-time algorithm. As an application of our algorithm, we show that parity-constrained tight strongly connected orientations can be solved in deterministic polynomial time. Along the way, we discover appealing connections to the theory of perfect matching lattices.
\end{abstract}

\section{Introduction}
Given a $2$-edge-connected undirected graph $G=(V,E)$, its \emph{bidirected} graph $\vec{G}=(V,E^+\cup E^-)$ is obtained from copying every edge $e\in E$ twice and orienting them in the opposite directions, denoted as $e^+\in E^+$ and $e^-\in E^-$. Here, $E^+$ and $E^-$ are two arbitrary orientations of $E$ that are opposite to each other. A \emph{strongly connected orientation (SCO)} is an arc subset $\vec{E}\subseteq E^+\cup E^-$ which contains exactly one of $e^+,e^-$ for every $e\in E$, and satisfies $|\delta_{\vec{E}}^-(U)|\geq 1$ for every $\emptyset\neq U\subsetneqq V$, denoted as $\sco(G)$.

In this paper,  which is the full version
of the extended abstract \cite{LiuSilina2026LatticeSCO}, we consider strengthenings that satisfy additional constraints defined by an arbitrary family $\zF\subseteq 2^V\setminus \{\emptyset, V\}$ of proper subsets of $V$.
Given $\zF$, we say an SCO $\vec{E}$ is \emph{tight} for $(G,\zF)$ if for every $U\in\zF$ there is exactly one arc coming into $U$, i.e., $|\delta_{\vec{E}}^-(U)|=1$. We denote by $\sco(G,\zF)$ the set of tight SCO's of $(G,\zF)$. We study the \emph{lattice of tight strongly connected orientations}, i.e., the lattice generated by the indicator vectors of tight SCO's, denoted by $\lat(\sco(G,\zF))$. Denote by $\lin(\sco(G,\zF))$ the linear hull of tight SCO's. It is easy to see that $\lat(\sco(G,\zF))\subseteq \lin(\sco(G,\zF))\cap\Z^A$. Given a rational linear subspace $L\subseteq \cR^{E^+\cup E^-}$, an \emph{integral basis for $L$} is a subset $\zB\subseteq L\cap \cZ^{E^+\cup E^-}$ of linearly independent vectors such that every vector in $L\cap \cZ^{E^+\cup E^-}$ can be written as an integral linear combination of $\zB$. Since the dimension of $\lin(\sco(G))$ is at most $|E|$, there are at most $|E|$ many sets $U\in \zF$ for which the constraints $|\delta_{\vec{E}}^-(U)|=1$ are linearly independent. Thereby we may assume w.l.o.g. that $|\mathcal{F}|\le |E|$, which implies the input size $(G,\zF)$ is bounded by the size of the graph. Our main result is the following:
\begin{theorem}\label{main-sco-theorem}
    Let $G=(V,E)$ be a $2$-edge-connected undirected graph. For an arbitrary $\zF\subseteq 2^V\setminus \{\emptyset, V\}$ with $|\zF|\le |E|$, we can construct an integral basis for $\lin(\sco(G,\zF))$ consisting of tight SCO's in polynomial time.
\end{theorem}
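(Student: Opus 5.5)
The plan is an induction on $|V|+|E|$ that mirrors the tight cut decomposition for matching lattices: reduce $(G,\zF)$ to smaller instances by contracting tight sets, and glue integral bases of the pieces.

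First, preprocessing. Since strong connectivity forces $|\delta^-_{\vec E}(U)|\ge 1$ and $|\delta^+_{\vec E}(U)|\ge1$, any $U\in\zF$ with $|\delta_G(U)|\le 1$ either kills all tight SCO's or is harmless. We can also test in polynomial time whether $\sco(G,\zF)\neq\emptyset$. Tight SCO's are exactly the $0/1$ points of an intersection of a submodular-flow polyhedron with the faces $x(\delta^-(U))=1$, and such faces are again submodular flow systems, hence TDI. So one feasibility call per arc also tells us which arcs are forced in or forced out; these arcs are deleted or fixed. After this, every remaining arc lies in some tight SCO.

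\textbf{Base case.} Suppose $\zF$ contains no set $U$ that is nontrivial, meaning both $|U|\ge2$ and $|V\setminus U|\ge2$ (after replacing sets by complements where convenient). Then every $U\in\zF$ is a single vertex $v$ or its complement, with in-degree exactly $1$. Here I would show directly that $\lat(\sco(G,\zF))=\lin(\sco(G,\zF))\cap\Z^{A}$, and build the basis greedily. Start from one tight SCO $\vec E_0$. Reversing a directed cycle of $\vec E_0$ that respects the tight vertices gives another tight SCO. The differences $\1_{\vec E}-\1_{\vec E_0}$ are signed cycle vectors, so the lattice they generate is the lattice of a cycle-type space, which is totally unimodular in flavour. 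Concretely, I would find an ear-decomposition-like ordering. Each new ear adds a new tight SCO whose support introduces a new coordinate direction with coefficient $\pm1$, which makes the basis matrix unimodular. To get $\dim\lin(\sco(G,\zF))$ vectors, compute the dimension by linear algebra on the equality system restricted to non-fixed arcs.

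\textbf{Inductive step.} Pick a nontrivial $U\in\zF$, with $\delta^-_{\vec E}(U)$ a single arc in every tight SCO. Form $G_1=G/U$ and $G_2=G/\bar U$, each carrying the inherited family (sets crossing $U$ are uncrossed first, using the usual uncrossing of tight sets, so that we may assume $\zF$ is laminar). A tight SCO of $G$ restricts to tight SCO's of $G_1$ and $G_2$ that agree on $\delta(U)$. Conversely, two such SCO's agreeing on the unique entering arc and on the orientation of $\delta(U)$ glue to a tight SCO of $G$. This gluing needs a check: the exactly-one entering arc condition makes strong connectivity local on each side. Recursively obtain bases $\zB_1,\zB_2$. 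For each arc $a\in\delta(U)$ that can be the entering arc, and each remaining basis element, pair it with a partner from the other side using the same entering arc and the same orientation of $\delta(U)$. Then take a suitable subset of the glued vectors.

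\textbf{Main obstacle.} The hardest part is proving that the glued family is an integral basis. Linear independence and spanning follow from a dimension count $\dim L=\dim L_1+\dim L_2-|\delta(U)|$. Integrality requires that every integer vector of $L$ decomposes into integer vectors of $L_1$ and $L_2$ that agree on $\delta(U)$. Since the agreement lives in a space whose coordinates are the arcs of $\delta(U)$, I would argue that the projections onto $\delta(U)$ generate that lattice unimodularly, analogously to the matching-lattice proof for tight cuts.

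Polynomiality follows because each contraction step strictly decreases a nonnegative potential, for example $|\zF|$ plus the number of vertices. The number of recursive instances is therefore polynomial, and each involves polynomially many submodular-flow computations.
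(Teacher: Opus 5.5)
\textbf{There is a genuine gap: the base case.} Your gluing step is sound in outline. It is essentially the paper's Lemma~\ref{lemma:short_basis-going-up} together with Lemma~\ref{de-composition}, phrased in $G$ rather than in the subdivided bipartite digraft. (You should state why uncrossing is legitimate: if $U,W\in\zF$ cross and a tight SCO exists, then $G$ has no edge between $U\setminus W$ and $W\setminus U$, so the in-degree function is modular on that pair.) The base case, however, carries essentially all of the theorem's difficulty, and you settle it by analogy.

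Your claim that differences of tight SCO's are signed cycle vectors is false in general. Reversing a directed cycle preserves $|\delta^-(W)|$ for every $W$, so cycle reversals never leave the set of SCO's with one fixed in-degree sequence. That matches the paper's \emph{elementary} case, where all but one vertex are constrained to in-degree $1$, which fixes the whole in-degree sequence. There an ear decomposition does work (Theorem~\ref{thm:ear_decomposition}). As soon as some vertex is unconstrained by $\zF$, its in-degree varies and differences contain directed paths. Reversing a path can destroy strong connectivity. So nothing you wrote guarantees the SCO that each ear step needs. Nor does it show that these vectors generate all of $\lin(\sco(G,\zF))\cap\Z^{E^+\cup E^-}$. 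That lattice identity is the nontrivial content here; its perfect-matching analogue fails, e.g.\ for the Petersen graph.

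Your dimension count ``by linear algebra on the equality system'' also ignores implicit tight sets not in $\zF$. Already for $\zF=\emptyset$, a $2$-edge-cut $\delta_G(W)$ forces $|\delta^-(W)|=1$ in every SCO. In general such sets come from barriers and $2$-separations. They change the dimension (the term $b(D,\zF)$ in Theorem~\ref{main-digraft}) and must be detected and decomposed, which the paper does via Theorem~\ref{thm:barrier_tight_equiv}. Your induction only contracts explicit members of $\zF$.

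The most serious omission is the non-robust situation. There, orientations meeting all degree constraints need not be strongly connected, so nontrivial cut constraints define facets. The paper needs a new mechanism for this:
\begin{itemize}
\item It tests robustness via \eqref{eq:robust} (Theorem~\ref{thm:testing-robust}).
\item In the robust case it makes vertices tight one at a time, each time adding one tight dijoin of degree $2$ at the new vertex (Corollaries~\ref{cor:add-tight-vert} and~\ref{CO:degree2-robust}).
\item In the non-robust case it climbs the dominance order from a separating dicut to a contractible dicut and then to a \emph{good} dicut $C$ (Theorems~\ref{thm:separating-contractible} and~\ref{thm:separating-good}). Imposing $|J\cap C|=1$ for a good dicut lowers the dimension by exactly one.
\item It restores that direction with a single tight dijoin satisfying $|J\cap C|=2$, obtained from the Jump-free Lemma~\ref{lemma:continuity}; integrality is preserved by Lemma~\ref{lemma:integral_basis}.
\end{itemize}
You need some argument of this kind controlling how the dimension and the lattice change when a non-tight cut is made tight. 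Without it, your base case restates the theorem rather than proving it.
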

This result implies two things: first, $\lat(\sco(G,\zF))=\lin(\sco(G,\zF))\cap\Z^{E^+\cup E^-}$; second, there is a basis for $\lat(\sco(G,\zF))$ that consists of tight SCO's. 
This result is essentially the algorithmic counterpart of Abdi, Conu\'ejols, Liu and Silina \cite{abdi2025strongly}, who gave a non-constructive proof of the same result in terms of \emph{strongly connected reorientations (SCR)}, or \emph{strengthenings} in digraphs. In a digraph, a subset of arcs is a \emph{strengthening} if the digraph obtained from flipping the arcs in $J$ is strongly connected. Given a family $\zF\subseteq 2^V\setminus \{\emptyset, V\}$, we say a strengthening $J$ is \emph{tight} for $(D,\zF)$ if after flipping the arcs of $J$, there is exactly one arc coming into every $U\in \zF$. Denote by $\scr(D,\zF)$ the set of tight strengthenings of $(D,\zF)$. The main result of \cite{abdi2025strongly} is the following:
\begin{theorem}[\cite{abdi2025strongly}]\label{main-scr-theorem}
    Let $D=(V,A)$ be a digraph whose underlying undirected graph is $2$-edge-connected. Let $\zF\subseteq 2^V\setminus \{\emptyset, V\}$ such that $|\zF|\le |A|$ and $\gcd\{1-|\delta^-(U)|:U\in \zF\}=1$. Then there exists an integral basis for $\lin(\scr(D,\zF))$ consisting of tight strengthenings.
\end{theorem}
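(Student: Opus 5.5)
The plan is to derive Theorem~\ref{main-scr-theorem} from Theorem~\ref{main-sco-theorem} by translating between strengthenings and orientations. Given $D=(V,A)$, let $G=(V,E)$ be its underlying undirected graph. Choose the orientations so that $E^+=A$ and $E^-$ consists of the reversed arcs. A subset $J\subseteq A$ corresponds to the orientation
\[
\vec E_J=(A\setminus J)^+\cup J^-,
\]
whose indicator vector is $(\1-\chi_J,\chi_J)$ in coordinates $(E^+,E^-)$. Flipping $J$ in $D$ produces exactly $\vec E_J$. Hence $J\in\scr(D,\zF)$ if and only if $\vec E_J\in\sco(G,\zF)$, so $J\mapsto\vec E_J$ is a bijection between tight strengthenings and tight SCO's. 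The only subtlety is that this map is affine rather than linear, so the role of the gcd hypothesis must be to control the resulting constant term.

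\textbf{Key linear identity.} For a tight strengthening $J$ and $U\in\zF$, the tightness condition reads
\[
|\delta^-(U)|-\chi_J(\delta^-(U))+\chi_J(\delta^+(U))=1 .
\]
Set $d_U:=1-|\delta^-(U)|$. Then every $\chi_J$ with $J\in\scr(D,\zF)$ satisfies $\chi_J(\delta^+(U))-\chi_J(\delta^-(U))=d_U$. By linearity, if $y=\sum_J\mu_J\chi_J$, then
\[
y(\delta^+(U))-y(\delta^-(U))=\Big(\sum_J\mu_J\Big)\,d_U \quad\text{for all } U\in\zF .
\]
The hypothesis $\gcd\{d_U\}=1$ forces $\zF\neq\emptyset$ and some $d_U\neq 0$. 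Consequently the total coefficient $\lambda(y):=\sum_J\mu_J$ is uniquely determined by $y$. Moreover, if $y$ is integral, then $\lambda d_U\in\Z$ for all $U$, and B\'ezout ($\sum_U a_U d_U=1$ with $a_U\in\Z$) gives $\lambda\in\Z$.

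\textbf{Transfer of the basis.} Let $\zB$ be the integral basis of tight SCO's for $\lin(\sco(G,\zF))$ given by Theorem~\ref{main-sco-theorem}. Let $\zB'$ be the set of strengthenings $J$ with $\vec E_J\in\zB$, and let $\pi$ denote projection onto the $E^-$ coordinates, so that $\pi(\chi_{\vec E_J})=\chi_J$.

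\emph{Spanning.} Take an integral $y\in\lin(\scr(D,\zF))$ and write $y=\sum_J\mu_J\chi_J$. Then $x:=\sum_J\mu_J\chi_{\vec E_J}=(\lambda\1-y,\,y)$ lies in $\lin(\sco(G,\zF))$, and it is integral because $\lambda\in\Z$. So $x$ is an integral combination of $\zB$, and applying $\pi$ writes $y$ as the same integral combination of $\zB'$.

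\emph{Independence.} Suppose $\sum c_J\chi_J=0$ over $J\in\zB'$. Then $\sum c_J\chi_{\vec E_J}=((\sum c_J)\1,0)$. The key identity applied to the zero vector gives $(\sum c_J)d_U=0$ with some $d_U\neq0$, so $\sum c_J=0$. Hence $\sum c_J\chi_{\vec E_J}=0$, and the independence of $\zB$ gives $c=0$.

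\emph{Containment.} Every element of $\lin(\scr(D,\zF))$ is the $\pi$-image of an element of $\lin(\sco(G,\zF))$, and $\zB'\subseteq\scr(D,\zF)$. So $\zB'$ lies in $\lin(\scr(D,\zF))\cap\Z^A$, and it is an integral basis of tight strengthenings.

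\textbf{Main obstacle.} The only delicate point is the affine shift: the constant part $\lambda\1$ must be integral for the lifted vector to be integral in the SCO setting. This is exactly what the gcd condition guarantees through the identity above. Everything else is bookkeeping. The same bijection also makes the construction polynomial-time whenever Theorem~\ref{main-sco-theorem} is.
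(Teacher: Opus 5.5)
Your proposal is correct and follows the paper's proof. The paper uses the same flip correspondence $J_i = A\setminus O_i$ and the same B\'ezout argument to show that the common coefficient (your $\lambda$, the paper's $k$) is an integer. It then lifts an integral $y$ to an integral vector of $\lin(\sco(G,\zF))$ and projects back. You additionally verify that the transferred set is linearly independent, a step the paper leaves implicit.
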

One can easily go between tight strengthenings in a digraph $D=(V,A)$ and tight SCO's in the bidirected graph $\vec{G}=(V,A\cup A^{-1})$, where $A^{-1}$ are the reverse arcs of $A$, via an invertible linear transformation, so our main \Cref{main-sco-theorem} implies \Cref{main-scr-theorem} (we prove this in \Cref{sec:reduction-to-digrafts}). The proof in \cite{abdi2025strongly} is an existence proof obtained from studying the implicit equalities and facets of the polytope of tight strengthenings. The main contribution of this paper is developing a new combinatorial proof which leads to a polynomial-time algorithm to construct such a basis.

Our main motivation for studying tight strengthenings is the following conjecture by Woodall \cite{Woodall78}. 
In a digraph, a \emph{dicut} is the set of arcs leaving a nonempty proper vertex subset with no incoming arcs, i.e., an arc set of the form $\delta^+(U)$ for some $\emptyset\neq U\subsetneqq V$ such that $\delta^-(U)=\emptyset$. A \emph{dijoin} is an arc subset that intersects every dicut at least once. In order to make the digraph strongly connected, at least one arc from each dicut has to be flipped, and thus, every strengthening is a dijoin. Woodall conjectured that the maximum number of disjoint dijoins is equal to the minimum size of a dicut. Schrijver \cite{schrijverobervation} reformulated Woodall's conjecture in the following:
\begin{CN}[\cite{schrijverobervation}]\label{Woodall-CN-2}
    Let $\tau\geq 2$ be an integer, and let $D=(V,A)$ be a digraph where the minimum size of a dicut is $\tau$. Then $A$ can be partitioned into $\tau$ strengthenings.
\end{CN}
Observe that if such a statement were true, the only strengthenings allowed in a partition would be the ones that intersect every dicut of size $\tau$ \emph{exactly} once. Therefore, it suffices to restrict ourselves to those strengthenings, which are precisely $\scr(D,\zF)$ for $\zF$ being the family of sets $U\subseteq V$ that induce dicuts $\delta^+(U)$ of size $\tau$. \Cref{main-scr-theorem} implies a relaxation of \Cref{Woodall-CN-2} that allows adding and subtracting tight strengthenings \cite{abdi2025strongly}. Formally, it implies that we can find an assignment $\lambda_J\in \cZ$ to every strengthening $J$ that intersects every minimum dicut exactly once such that $\sum_{J}\lambda_J\1_J = \1_A$ and $\sum_J  \lambda_J = \tau$.
As a new application, we address the \emph{parity strongly connected orientation (SCO)} problem. Given a $2$-edge-connected undirected graph $G=(V,E)$, whose bidirected graph is $\vec{G}=(V,E^+\cup E^-)$. The arcs of $\vec{G}$ are colored red or blue. Let $R\subseteq E^+\cup E^-$ be a subset of red arcs and let $\zF\subseteq 2^V\setminus \{\emptyset,V\}$. We define the \emph{parity strongly connected orientation (SCO)} problem as follows:

\paragraph{\textbf{Parity strongly connected orientation (Parity SCO)}} Find a tight SCO with an odd (even) number of red arcs if it exists.

We provide a deterministic polynomial-time algorithm for this problem as an application of our basis construction algorithm in \Cref{main-sco-theorem}.
\begin{theorem}\label{thm:parity-sco}
    Parity SCO can be solved in deterministic polynomial time.
\end{theorem}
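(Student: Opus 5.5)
We would derive \Cref{thm:parity-sco} from \Cref{main-sco-theorem} by a parity argument over the integral basis. First decide in polynomial time whether $\sco(G,\zF)\neq\emptyset$. Then run the algorithm of \Cref{main-sco-theorem} to obtain an integral basis $\zB=\{\chi^{\vec{E}_1},\dots,\chi^{\vec{E}_k}\}$ of $\lin(\sco(G,\zF))$ consisting of tight SCO's. Compute the parities $|\vec{E}_i\cap R| \bmod 2$ for all $i$.

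If some $\vec{E}_i$ has the desired parity, output it. Otherwise we claim no tight SCO has the desired parity. Let $\vec{E}$ be any tight SCO. Its vector $\chi^{\vec{E}}$ lies in $\lin(\sco(G,\zF))\cap\Z^{E^+\cup E^-}$. Since $\zB$ is an integral basis, $\chi^{\vec{E}}=\sum_i\lambda_i\chi^{\vec{E}_i}$ with $\lambda_i\in\Z$. Applying the linear functional $\1_R^\top$ gives
$$|\vec{E}\cap R|=\sum_i\lambda_i|\vec{E}_i\cap R|.$$
This alone does not fix the parity, since $\sum_i\lambda_i$ may vary. We also need the functional $\1_{E^+}+\1_{E^-}$, which takes the value $|E|$ on every SCO. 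This gives
$$\sum_i\lambda_i|E|=|E|,$$
so $\sum_i\lambda_i=1$. Now suppose every basis element has parity $p$. Then
$$|\vec{E}\cap R|\equiv p\sum_i\lambda_i = p \pmod 2.$$
So every tight SCO has parity $p$, and if $p$ is not the target parity, the answer is ``none''.

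The remaining detail is feasibility: if $\sco(G,\zF)=\emptyset$ the linear hull is $\{0\}$ and the basis is empty. We handle this by deciding nonemptiness and finding one tight SCO, either via the construction itself or via submodular flow / matroid-intersection methods for orientations with in-degree constraints on cuts. Presumably the algorithm of \Cref{main-sco-theorem} already outputs at least one tight SCO when one exists.

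The main obstacle is entirely inside \Cref{main-sco-theorem}. Given it, the argument is the affine-combination parity trick above, which rests on the fact that every SCO has exactly $|E|$ arcs.
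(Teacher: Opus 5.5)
Your proof is correct, but it handles the even target differently from the paper.

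The paper's final step uses only linearity of $x\mapsto x(R)$. If some tight SCO $O^*$ has $|O^*\cap R|$ odd, then $\sum_{O\in\zB}\lambda_O|O\cap R|$ is odd, so some basis element is odd. That settles the odd target. It says nothing for the even target: if all basis elements are odd, an integer combination with even coefficient sum is even, so linearity alone cannot rule out an even tight SCO. The paper therefore reduces the even case to the odd one by a gadget. It adds a new vertex $w$ joined to some $u$ by two parallel edges $e,f$, colors $e^+,f^+$ red, and lifts $\zF$ to $\zF'$. Then every tight SCO of $(G',\zF')$ is a tight SCO of $(G,\zF)$ plus exactly one red arc.

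You instead use that every SCO satisfies $x_{e^+}+x_{e^-}=1$ (equivalently $\1^\top x=|E|$). This forces $\sum_i\lambda_i=1$ in any integral representation of a tight SCO, so if all basis elements share a parity $p$, every tight SCO has parity $p$.

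Your route handles both targets with a single call to \Cref{main-sco-theorem} on the original instance. It also spares you from checking that $(G',\zF')$ is a legitimate instance whose tight SCOs correspond to those of $(G,\zF)$. The paper's gadget is essentially a combinatorial encoding of the same affine normalization; its advantage is that the parity step itself needs nothing beyond linearity.

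Your feasibility concern is harmless. If $\sco(G,\zF)=\emptyset$, your claim that no tight SCO has the target parity is vacuous. You only need \Cref{main-sco-theorem} to be runnable, possibly after a preliminary feasibility test, which the paper also tacitly assumes. Finally, using a single edge's equation $x_{e^+}+x_{e^-}=1$ instead of dividing by $|E|$ avoids the trivial case $E=\emptyset$.
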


\subsection{Proof overview}
We outline the proof of \Cref{main-scr-theorem} here. 

\paragraph{\textbf{Reduction to digrafts}}
First, we reduce from SCO's in undirected graphs to dijoins in bipartite digraphs. A digraph is \emph{bipartite} if every node is a source or a sink. The following notion of \emph{digrafts} is defined similarly in \cite{abdi2025strongly}:
\begin{DE}[digraft]
    A \emph{digraft} is a pair $(D,\zF)$ where $D=(V=S\dot\cup T,A)$ is a bipartite digraph with sources $S$ and sinks $T$ such that $|S|\leq |T|$, whose underlying undirected graph is $2$-edge-connected, and $\zF\subseteq 2^V\setminus \{\emptyset,V\}$ such that (a) if $U\in \zF$ then it induces a dicut $\delta^+(U)$, (b) $V\setminus v\in \zF$ for every sink $v\in T$. 
\end{DE}

We will be interested in dijoins satisfying degree constraints defined by $v\in V^t$. We say a dijoin is \emph{tight} if it intersects $\delta^+(U)$ exactly once for every $U\in \zF$. Denote by $\zJ(D,\zF)$ the set of tight dijoins of $(D,\zF)$. A digraft is \emph{tight dijoin-covered} if every arc $e\in A$ is contained in some tight dijoin. A dicut is \emph{tight} if every tight dijoin intersects it exactly once. A node $v\in V$ is \emph{tight} if every tight dijoin has degree $1$ on $v$. The order in which we define tight dijoins and tight dicuts (nodes) is important. Tight dijoins are explicitly defined as degree-constrained dijoins, while tight dicuts (nodes) are implicit, dependent on tight dijoins.

For an arbitrary $2$-edge-connected undirected graph $G$ and a family $\zF\subseteq 2^V\setminus\{\emptyset, V\}$, there is an identity map between $\sco(G,\zF)$ and $\zJ(D',\zF')$ for some digraft $(D',\zF')$ \cite{Cornuejols24,abdi2025strongly}. Thereby we may restrict ourselves to the setting of digrafts. We give the proof of the reduction to digrafts in \Cref{sec:reduction-to-digrafts}.


\paragraph{\textbf{Non-basic digrafts}}
We further reduce general digrafts to the following class of digrafts, where the family of dicuts $\zF$ consists only of shores of trivial dicuts.

Formally, let $D=(V=S\dot\cup T,A)$ be a bipartite digraph and let $S^t\subseteq S$ be a subset of sources. We consider digrafts $(D,\zF)$ where $\zF:=\big\{\{v\}:v\in S^t\big\}\bigcup\big\{V\setminus v:v\in T\big\}$. For simplicity, we denote these types of digrafts as $(D,S^t)$.
\begin{DE}[digraft $(D,S^t)$]\label{def:digraft-special}
    A \emph{digraft} $(D,S^t)$ is a pair of bipartite digraph $D=(V=S\dot\cup T,A)$ with sources $S$ and sinks $T$ such that $|S|\leq |T|$, whose underlying undirected graph is $2$-edge-connected, and a subset of sources $S^t\subseteq S$ with $|S^t|\le |S|-1$. 
\end{DE}

Denote by $V^t:=S^t\cup T$. In this setting, a tight dijoin is simply a dijoin that has exactly one arc incident to every $v\in V^t$.
Denote the set of tight nodes as $\widebar{V^t}$ and the set of tight sources as $\widebar{S^t}$. Note that every $v\in V^t$ is a tight node, and thus $V^t\subseteq \widebar{V^t}$ and $S^t\subseteq \widebar{S^t}$, but there could be more vertices in $\widebar{V^t}\setminus V^t$ that are forced to have degree $1$ for every tight dijoin. Also notice that every sink is tight by definition, so $\widebar{V^t}\setminus V^t$ consists solely of sources. By definition of tight nodes, $\zJ(D,S^t)=\zJ(D,\widebar{S^t})$. We remark that the assumption $|S|\le |T|$ is necessary for the existence of a tight dijoin. The assumption that $|S^t|\le |S|-1$ can also be made without loss of generality. This is because if $|S|<|T|$, then $(D,S^t)$ having a tight dijoin implies $|S^t|\le |S|-1$. If $|S|=|T|$, then every tight dijoin has degree $1$ for every $v\in S$, regardless of the choice of $S^t$. In other words, $\widebar{S^t}=S$ for an arbitrary $S^t\subseteq S$, which implies $\zJ(D,S^t)$ is invariant under the choice of $S^t$.

General digrafts $(D,\zF)$ can be reduced to digrafts $(D,S^t)$ through \emph{tight dicut decomposition}, which is described as follows. A dicut $\delta^+(U)$ is \emph{nontrivial} if $1<|U|<|V|-1$; otherwise, it is \emph{trivial}. If there is some $U\in \zF$ with $1<|U|<|V|-1$, it induces a nontrivial tight dicut $C=\delta^+(U)$. By contracting the shores $U$, $V\setminus U$ while preserving the family $\zF$ of tight dicut out-shores induced on the smaller digraphs, one obtains two digrafts, called \emph{$C$-contractions}. When $C$ is a nontrivial tight dicut, there is a natural way to obtain an integral basis for $\lin(\zJ(D,\zF))$ by combining the bases of the two $C$-contractions (\Cref{lemma:short_basis-going-up} from \cite{abdi2024strconn}), which can be done in polynomial time. After repeatedly applying this process, we may assume $\zF$ contains only trivial dicut out-shores. This way, we reduce $(D,\zF)$ to a list of digrafts of the form $(D,S^t)$.

A digraft $(D,S^t)$ may still have (implicit) nontrivial tight dicuts. The next step is to further apply tight dicut decomposition on $(D,S^t)$. To do this algorithmically, we identify two types of tight dicuts: \emph{barrier dicuts} and \emph{$2$-separation dicuts}. The terminology is inspired by matching theory \cite{lovasz87,LP09}. The following theorem allows us to find a nontrivial tight dicut for a digraft $(D,S^t)$ in polynomial time, if one exists.
\begin{theorem}\label{thm:barrier_tight_equiv}
    A tight dijoin-covered digraft $(D,S^t)$ has no nontrivial tight dicuts if and only if it does not contain any nontrivial barrier dicuts or $2$-separation dicuts. 
\end{theorem}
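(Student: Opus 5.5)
The easy direction is that barrier dicuts and $2$-separation dicuts are always tight; the main work is the converse. For the easy direction I would take each type in turn and check it directly against an arbitrary tight dijoin $J$. In $(D,S^t)$ every sink has degree exactly $1$ in $J$, so $|J|=|T|$. Moreover, $J$ decomposes into stars centred at sources that cover $T$, and every node of $V^t$ is covered exactly once.

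For a barrier dicut, I expect the defining condition to be a counting identity on the shore $U$. It should say, roughly, that $U$ contains only tight sources and that $|S\cap U|=|T\cap U|+1$ (or the symmetric statement for $V\setminus U$). Summing $J$-degrees over the sources in $U$ then gives exactly $|T\cap U|+1$ arcs. Since $U$ is a dicut shore, the sinks in $U$ can only be covered from inside $U$, so exactly one arc of $J$ leaves $U$, and the dicut is tight. For a $2$-separation dicut, I would use the two separating vertices together with the degree-$1$ constraint at tight nodes. The argument is that any dijoin must cross through one of the two separating vertices, and the degree constraint forbids crossing twice.

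For the converse, let $C=\delta^+(U)$ be a nontrivial tight dicut, and assume there are no nontrivial barrier dicuts; I will produce a nontrivial $2$-separation dicut. Fix a tight dijoin $J$, which exists since the digraft is tight dijoin-covered. Counting arcs of $J$ with tail in $U$ gives
\[\sum_{s\in S\cap U}\deg_J(s)=|T\cap U|+1.\]
If every source in $U$ is tight, this forces $|S\cap U|=|T\cap U|+1$, and $C$ is a barrier dicut. The same holds on the other side if all sources in $V\setminus U$ are tight. So in the remaining case both shores contain non-tight sources, $s_1\in U$ and $s_2\in V\setminus U$. Each has some tight dijoin in which its degree is at least $2$.

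The main step is an exchange argument in this remaining case. I would take tight dijoins $J_1$ and $J_2$ witnessing non-tightness of $s_1$ and $s_2$ respectively, and look at the symmetric difference $J_1\triangle J_2$. Since sinks have degree $1$, it decomposes into alternating paths and cycles. The plan is to recombine along these components to get a tight dijoin that either crosses $C$ twice, contradicting tightness of $C$, or else loses the dijoin property. If the dijoin property fails, the dicut that is newly missed together with $C$ should localise the obstruction to two vertices, which is the $2$-separation. Here $2$-edge-connectivity of the underlying graph rules out a single cut vertex, and tight dijoin-coveredness guarantees the relevant arcs lie in tight dijoins. I expect this last step to be the main obstacle: the recombined arc set may fail to be a dijoin for a reason unrelated to $C$. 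I would handle it by choosing $U$ minimal among nontrivial tight dicut shores and uncrossing the violated dicut with $U$, using the fact that intersections and unions of crossing tight dicut shores are tight.
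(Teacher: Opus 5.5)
Your converse has a genuine gap, located exactly where the content of the theorem lies. Since $C$ is tight, any recombination of $J_1\triangle J_2$ that is still a tight dijoin meets $C$ exactly once. So the ``crosses $C$ twice'' branch never occurs, and the whole argument is the failure analysis, which you state only as an expectation. Nothing explains why the obstruction should involve exactly two vertices, or why one of them is a sink and the other lies in $S^t$. (A $2$-separation needs $u\in S^t$, not merely a tight source. $2$-edge-connectivity does not rule out cut vertices; only vertices of $T\cup S^t$ are kept from being cut vertices, by tight dijoin-coveredness.) Nor does anything explain why this case cannot end at some other nontrivial tight dicut.

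Your proposed repair also fails as stated. \Cref{lemma:tight_dicut_uncross} uncrosses two dicuts that are \emph{both} tight. The dicut $W$ missed by your recombined arc set, however, is only known to be met exactly once by one particular tight dijoin. It is not a tight dicut, so $U\cap W$ and $U\cup W$ inherit no tightness, and minimality of $U$ among tight shores yields no contradiction.

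The missing ingredient is a supply of \emph{tight} dicuts interacting with $C$. The paper obtains it from LP duality, not from primal exchanges.
\begin{itemize}
\item Because $\max\{\mathbf{1}_{C_0}^\top x: x\in\dij(D,S^t)\}=1$, the dual yields a node-induced weight $w_e=y_u+y_v$, with $y\ge 0$ supported on $V^t$. It also yields an optimal integral dicut packing $t^*$ under $w$ with $C_0\in\supp(t^*)$.
\item By complementary slackness, every dicut in $\supp(t^*)$ is tight and every arc with $w_e>0$ is saturated (\Cref{lemma:strong-dual}, \Cref{lemma:opt_node_induce}).
\item Uncrossing the support is now legitimate. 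The only way it can eliminate every nontrivial member is $U\cap W=\{u\}$, $U\cup W=V\setminus\{v\}$, which is precisely a $2$-cut of tight nodes.
\item \Cref{prop:new_tight}, together with the absence of nontrivial barrier dicuts (when $|S|<|T|$), puts $u$ in $S^t$. This gives the $2$-separation.
\item With a cross-free support, minimize the total packing value, then take a smallest support shore $R$ containing an endpoint $u$, with $y_u>0$, of one of its arcs. One shows that the positive-$y$ vertices $R^+\subseteq S^t$ of $R$ lying outside the maximal support in-shores $R_1,\dots,R_k$ form a barrier of size $k+1\ge 2$. Its barrier dicuts are $\delta^-(R_1),\dots,\delta^-(R_k),\delta^-(V\setminus R)$, and all of them being trivial forces $|S|=|T|$.
\end{itemize}

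Separately, your easy direction and your one-sided cases rest on a guessed definition. In the paper, a barrier is $X\subseteq S^t$ or $X\subseteq T$ with $\sigma(V\setminus X)=|X|$, and its barrier dicuts are those induced by the components of $V\setminus X$. Tightness comes from summing over all components, not from a counting identity on one shore. Your one-sided cases do survive under the correct definition. If all sources outside $U$ are tight, then $T\setminus U$ is a barrier having $C$ as a barrier dicut. If all sources in $U$ lie in $S^t$, then $S\cap U$ is such a barrier. A tight source of $U$ outside $S^t$, however, requires \Cref{prop:new_tight}.
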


After applying tight dicut decomposition, we obtain a list of \emph{basic digrafts}, defined as follows.
\begin{DE}[basic digraft]
    A digraft $(D,S^t)$ is \emph{basic} if it is tight dijoin-covered, and does not contain any nontrivial tight dicuts.
\end{DE} 
We further differentiate between two types of basic digrafts.
\begin{DE}[brick, brace]
    A basic digraft $(D,S^t)$ is a \emph{brick} if $|S|<|T|$; it is a \emph{brace} if $|S|=|T|$.
\end{DE}

The terminology of bricks and braces is inspired by the theory of matching lattices by Lov\'asz \cite{lovasz87}, and Edmonds, Pulleyblank and Lov\'asz \cite{Edmonds82}. Interestingly, in a tight dijoin-covered digraft $(D,S^t)$ with $|S|=|T|$, a subset $J\subseteq A$ is a tight dijoin if and only if it is a perfect matching (see \Cref{sec:elemantary}). So, our definition of braces is identical to the one for perfect matchings in bipartite graphs \cite{lovasz87}. This is not the case for bricks. Nevertheless, we reuse the terminology due to its crucial role in determining the dimension of the lattice (\Cref{main-digraft}). Similar to the perfect matching setting \cite{Edmonds82}, we show that the results of any two tight dicut decomposition procedures on a digraft $(D,\zF)$ are the same list of bricks and braces up to the multiplicities of the arcs (\Cref{prop:unique_decomposition}). In particular, the number of bricks after applying tight dicut decomposition to $(D,\zF)$ is invariant, which we denote as $b(D,\zF)$.
The following is our main theorem for digrafts:
\begin{theorem}\label{main-digraft}
    Let $(D=(V,A),\zF)$ be a tight dijoin-covered digraft. Then, we can construct an integral basis $\zB$ for $\lin(\zJ(D,\zF))$ that consists of tight dijoins in polynomial time. Moreover, $|\zB|=|A|-|\widebar{V^t}|-b(D,\zF)+2$.
\end{theorem}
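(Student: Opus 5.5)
We construct the basis by induction on the tight dicut decomposition, exactly as in the proof overview.

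\textbf{Step 1: reduce to basic digrafts.} Given a tight dijoin-covered digraft $(D,\zF)$, we first look for a nontrivial tight dicut $C=\delta^+(U)$.
\begin{itemize}
\item If some $U\in\zF$ has $1<|U|<|V|-1$, we use it directly.
\item Otherwise $\zF$ consists only of trivial shores, so $(D,\zF)$ is a digraft $(D,S^t)$. By \Cref{thm:barrier_tight_equiv}, a nontrivial tight dicut exists if and only if a nontrivial barrier dicut or $2$-separation dicut exists. We search for these in polynomial time. Barriers come from a deficiency/Hall-type computation on the bipartite structure; $2$-separations come from enumerating vertex pairs and checking connectivity.
\end{itemize}
Whenever such a $C$ is found, we form the two $C$-contractions. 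Each is again a tight dijoin-covered digraft with strictly fewer vertices, because tight dijoins of $D$ restrict to tight dijoins of the contractions. We recurse on both, and then combine their integral bases into a basis of $\lin(\zJ(D,\zF))$ using \Cref{lemma:short_basis-going-up}. The recursion tree has polynomially many leaves, since each split is along a nontrivial cut and the total size grows by only $O(1)$ per split. The leaves are basic digrafts.

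\textbf{Step 2: dimension bookkeeping.} We verify the formula $|\zB|=|A|-|\widebar{V^t}|-b+2$ is additive under the combination. Suppose the contractions have parameters $(A_1,\widebar{V^t_1},b_1)$ and $(A_2,\widebar{V^t_2},b_2)$. Then:
\begin{itemize}
\item The arcs of $C$ appear in both contractions.
\item Each contraction gains one contracted tight node.
\item The brick counts add, by \Cref{prop:unique_decomposition}.
\end{itemize}
\Cref{lemma:short_basis-going-up} should glue the bases by identifying along the $|C|$ common coordinates, giving $|\zB|=|\zB_1|+|\zB_2|-|C|$. We will check that this matches the formula.

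\textbf{Step 3: braces.} For a brace, tight dijoins are exactly perfect matchings of the bipartite graph. We need an integral basis of size $|A|-|V|+2$, where $|\widebar{V^t}|=|V|$ and $b=0$. Lovász's theory gives a lattice basis of the perfect matching lattice of a brace, and it can be made constructive. We grow the graph through an ear decomposition (bipartite ear decomposition / Little-type). Each ear addition adds one new perfect matching that uses the new ear and is linearly independent of the previous ones, keeping the integral-basis property because the new coordinate is hit with coefficient $1$.

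\textbf{Step 4: bricks (main obstacle).} For a brick ($|S|<|T|$), the target size is $|A|-|\widebar{V^t}|+1$. I would use a combinatorial ear-type argument:
\begin{itemize}
\item Find a small starting subdigraft whose tight dijoins we can write down.
\item Repeatedly add arcs or paths while keeping the digraft tight dijoin-covered.
\item At each step, exhibit a tight dijoin that uses the new arc(s) with unit coefficient. Unimodularity then preserves integrality of the basis.
\end{itemize}
The difficulty is twofold:
\begin{itemize}
\item Ensuring the added object preserves the basic and tight dijoin-covered properties, an analogue of the ear theorem for bricks in matching theory.
\item Showing the lattice equals $\lin(\zJ)\cap\Z^A$, i.e., no parity-type obstruction like the Petersen brick. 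The single extra dimension for bricks comes from slack at the non-tight sources, and we expect no such obstruction here.
\end{itemize}
I would first try to prove an ear-decomposition lemma using \Cref{thm:barrier_tight_equiv}: removing a suitable arc from a brick leaves a digraft whose decomposition yields at most one brick plus braces. This mirrors Carvalho–Lucchesi–Murty's removable-edge results, and induction then closes the argument.
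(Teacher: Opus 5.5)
Your Steps 1--3 match the paper's \Cref{lemma:non-basic} and \Cref{CO:basis_brace}, up to two small points. First, the tight-node bookkeeping needs the \emph{composition} half of \Cref{de-composition}, so that a non-contracted vertex is tight in $D$ if and only if it is tight in its contraction. Second, barriers are found by minimizing the submodular function $X\mapsto |X|-\sigma(V\setminus X)$ over subsets of $T$ or $S^t$, not by a Hall-type neighbour count.

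The genuine gap is Step 4. The brick case is the heart of the theorem, and you do not prove it. Your ear plan depends on a removable-arc lemma: every brick outside some explicit base class has an arc $e$, computable in polynomial time, such that $D-e$ is again a tight dijoin-covered digraft with \emph{exactly} one brick and the same tight nodes. ``At most one brick'', as you wrote, is not enough; with zero bricks all sources become tight. Only under these conditions does the dimension drop by exactly one. In that case integrality would indeed be automatic, since the coefficient of a tight dijoin through $e$ is forced to equal $x_e$.

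You supply neither this lemma, nor the base class, nor the starting subdigraft. Its matching-theoretic analogue, the existence of $b$-invariant edges in bricks, is a deep theorem of Carvalho, Lucchesi and Murty with genuine exceptions (the Petersen graph among them). So ``we expect no such obstruction'' is not an argument. Without such a construction you also have no lower bound on $\dim\lin(\zJ(D,S^t))$ for a brick. The upper bound $|A|-|V^t|+1$ follows from independence of the degree equations, but the matching lower bound can only come out of an explicit construction.

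The idea you are missing is to add tightness constraints instead of deleting arcs. Let $C$ be a non-tight dicut and $\zB'$ an integral basis of $\lin\{J\in\zJ(D,S^t):|J\cap C|=1\}$ made of tight dijoins. Adjoin a single tight dijoin $J_0$ with $|J_0\cap C|=2$, which exists by the Jump-free \Cref{lemma:continuity}. This gives an integral basis of the span of these dijoins together with $J_0$: in $x=\sum_j\lambda_j\1_{J_j}$ one has $\lambda_0=x(C)-x(\delta(v))$ for any sink $v$ (\Cref{lemma:integral_basis}). This is exactly why no parity obstruction can arise.

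The real work is choosing $C$ so that the dimension drops by exactly one.
\begin{itemize}
\item In a robust brick (tight edge covers are tight dijoins, recognizable via \Cref{thm:robust_digraft} and submodular minimization), any $C=\delta^+(v)$ with $v\in S\setminus S^t$ works (\Cref{cor:add-tight-vert}, \Cref{CO:degree2-robust}). Iterating ends in elementary digrafts, which are handled by ear decomposition (\Cref{thm:ear_decomposition}).
\item In a non-robust brick, start from a separating dicut and climb the domination order $\succeq$ to a contractible and then a good dicut (\Cref{thm:separating-contractible}, \Cref{thm:separating-good}, both relying on \Cref{thm:barrier_tight_equiv}). Setting it tight, or contracting it, yields near-bricks with no new tight sources. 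One then recurses on $(|V|,|S\setminus S^t|)$ as in \Cref{thm:basis_brick}.
\end{itemize}
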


From now, we may assume the digraft is either a brick or a brace. The case of braces is identical to the setting of bipartite matchings so we omit it. We sketch the main steps in our proof on how to deal with bricks.
The first step is to reduce the digraft to \emph{robust} ones, and then to \emph{elementary} ones. We present how to construct integral basis for $\lin(\zJ(D,\zF))$ in the bottom-up order: elementary, robust, and non-robust digrafts.

\paragraph{Elementary digrafts} For the base case of the proof we define a new class of digrafts.
\begin{DE}[elementary digraft]
A digraft $(D,S^t)$ is \emph{elementary} if it is tight dijoin-covered and $|S^t|=|S|-1$.
\end{DE}
In an elementary digraft, every tight dijoin has a fixed degree on every $v\in V$. In particular, when $|S|=|T|$, a tight dijoin is a perfect matching. 
Thus, it includes the case where $(D,S^t)$ is a brace. 
We generalize the \emph{ear decomposition} for bipartite matchings to elementary digrafts (see e.g. \cite{plummer1986matching} Ch 4.1). We build a sequence of elementary digrafts by removing one ear at a time. The addition of each ear will increase the dimension of the lattice by exactly one, so we will add an appropriate tight dijoin to the basis. This gives us an algorithm to construct an integral basis for $\lin(\zJ(D,S^t))$.

\paragraph{Non-elementary robust digrafts}
An \emph{edge cover} is a subset of edges that has degree at least $1$ on every $v\in V$. We say an edge cover is \emph{tight} if it has degree $1$ on every $v\in V^t$. Since in a digraft every node $v\in V$ induces a dicut $\delta(v)$, every tight dijoin is a tight edge cover. We say a tight dijoin-covered digraft is \emph{robust} if the converse also holds:
\begin{DE}[robust digraft]\label{def:robust}
A digraft $(D,S^t)$ is \emph{robust} if it is tight dijoin-covered and every tight edge cover is a tight dijoin. 
\end{DE}

\begin{DE}[separating dicut]
    In a tight dijoin-covered digraft $(D,S^t)$, a dicut $C$ is \emph{separating} if it is nontrivial and there is a tight edge cover $J$ such that $J\cap C=\emptyset$.
\end{DE}
Clearly, a tight dijoin-covered digraft is robust if and only if it does not have a separating dicut. We give a polynomial time to test whether a digraft is robust:
\begin{theorem}\label{thm:testing-robust}
    There exists a polynomial-time algorithm that, given a digraft $(D,S^t)$, checks whether $(D,S^t)$ is robust. Moreover, if the answer is no, it returns a separating dicut.
\end{theorem}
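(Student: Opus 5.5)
We need an algorithm that tests robustness: find a tight edge cover $J$ and a nontrivial dicut $C$ with $J\cap C=\emptyset$. The approach is to guess a pair of vertices to separate, then solve a degree-constrained edge cover problem on the subgraph with the candidate dicut removed.

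\textbf{Step 1 (dijoin-covered).} The definition of robust includes being tight dijoin-covered, so we first test this. For each arc $e$, we decide whether some tight dijoin contains $e$. A tight dijoin is a dijoin with degree $1$ on every $v\in V^t$. The dijoin constraints are crossing-submodular in the sense of Lucchesi--Younger/Edmonds--Giles, so adding the degree equalities and the condition $x_e=1$ still gives a polyhedron described by a TDI system of crossing-family constraints. Feasibility can therefore be tested in polynomial time, via submodular flow. If some arc fails, we answer ``no''. In this case no separating dicut need be returned, since the digraft is not tight dijoin-covered.

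\textbf{Step 2 (reformulating separating dicuts).} Assume the digraft is tight dijoin-covered. A separating dicut is $C=\delta^+(U)$ with $\delta^-(U)=\emptyset$ and $1<|U|<|V|-1$, together with a tight edge cover $J\subseteq A\setminus C$. Since $J$ avoids $C$, it splits as $J=J_U\cup J_{V\setminus U}$, with $J_U$ a tight edge cover of $D[U]$ and $J_{V\setminus U}$ a tight edge cover of $D[V\setminus U]$.

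The existence of a tight edge cover avoiding a given arc set $F$ is a degree-constrained subgraph problem on the bipartite graph $A\setminus F$: degree exactly $1$ on $V^t$ and at least $1$ elsewhere. This is a bipartite $b$-matching / flow feasibility problem with lower and upper bounds, so it is solvable in polynomial time. Equivalently, it asks for a $U$ that is closed under in-arcs and such that both induced subgraphs admit tight edge covers.

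\textbf{Step 3 (search via closures).} Guessing $U$ directly is exponential, so we use an extremal choice instead. A witnessing $U$ must contain at least two vertices, and its complement must also contain at least two vertices. So we guess two vertices $a,b$ required to lie in $U$ and two vertices $c,d$ required to lie in $V\setminus U$; this is $O(|V|^4)$ choices.

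For fixed guesses, we need a set $U$ closed under predecessors containing $a,b$ and avoiding $c,d$. In addition, the graph with the arcs of $\delta^+(U)$ removed must have a tight edge cover.

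Model this as a flow problem on an auxiliary network. Each arc is either used in $J$ or not. Each vertex carries a side variable $y_v\in\{0,1\}$, subject to the following constraints:
\[
y_u\le y_v \text{ for each arc } uv,
\]
\[
x_{uv}\le 1-(y_u-y_v) \text{ for each arc } uv,
\]
together with the degree constraints on $x$. Because $D$ is bipartite with all arcs from $S$ to $T$, this system is a network matrix plus bipartite incidence constraints.

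\textbf{Main obstacle: integrality.} I expect proving total unimodularity (or an equivalent combinatorial reduction) to be the hard step. If total unimodularity fails, I would try an alternative that is more combinatorial. Compute a maximal tight edge cover structure via Gallai--Edmonds-type decomposition of the degree-constrained problem. Then show that for any tight edge cover $J$, the components of the graph $(V,J)$ that are connected in the undirected sense form the finest candidate pieces. A separating dicut exists iff some union of these components is in-closed and nontrivial.

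Concretely, fix $J$ and contract each component of $J$. A dicut avoiding $J$ is then exactly a nontrivial in-closed set in the contracted digraph. Such a set can be found by strongly connected component computation: the contracted digraph has a nontrivial dicut iff it is not strongly connected and some source component set has the right size.

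The remaining difficulty is that $J$ must be chosen well. I would handle this by iterating over arcs $e$: force $e\in J$ together with a chosen cut structure, recompute, and argue via exchange (symmetric differences of tight edge covers form alternating paths) that polynomially many candidate covers suffice.
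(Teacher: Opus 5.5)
\textbf{There is a genuine gap.} Your proposal never gives a polynomial-time procedure for the one step that carries the theorem: deciding whether some nontrivial dicut $C$ and some tight edge cover $J$ satisfy $J\cap C=\emptyset$.

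\emph{The integer program in Step 3 has no integrality argument, and the total-unimodularity route fails.} As written, the linking constraint also points the wrong way. With $y_u\le y_v$, the arcs of the dicut $\delta^-(\{v:y_v=1\})$ have $y_u-y_v=-1$, so $x_{uv}\le 1-(y_u-y_v)=2$ is vacuous on them; you want $x_{uv}+y_v-y_u\le 1$. With either sign, the matrix is not totally unimodular. Take two arcs $e_1=(s_1,t)$ and $e_2=(s_2,t)$ into a common sink. The linking rows of $e_1,e_2$ and the degree row of $t$, restricted to the columns $x_{e_1},x_{e_2},y_t$, are $(1,0,1)$, $(0,1,1)$, $(1,1,0)$, with determinant $-2$.

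\emph{The fallback does not close the gap either.} Every sink has degree exactly $1$ in a tight edge cover. So the components of $(V,J)$ are always the $|S|$ stars centered at the sources, and for a fixed $J$ the test is trivial. All the difficulty lies in choosing the assignment of sinks to sources. Your claim that polynomially many candidate covers suffice is unsupported, and it is exactly the hard part.

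\textbf{The missing idea} is to replace the search over pairs $(C,J)$ by a Hall-type certificate on $S$ alone.

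First, reduce to sets $X\subseteq S$. A minimal dicut contained in $C$ is still avoided by $J$. It is nontrivial because $J$ covers every vertex. So by \Cref{lemma:minimal_dicuts} you may take $C=\delta^-(X\cup N(X))$ with $\emptyset\neq X\subsetneqq S$. Moreover $X\not\subseteq S^t$: otherwise the cover on the $X$-side forces $|N(X)|=|X|$, which \Cref{lemma:b-matching} rules out in a tight dijoin-covered digraft. Counting sinks on the other side, $(S\setminus X)\cup(T\setminus N(X))$, gives $|N(X)|-|X|\le |T|-|S|$.

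Conversely, \Cref{lemma:violate_robust} shows the following. If $X$ minimizes the submodular function $f(X)=|N(X)|-|X|$ over $\emptyset\neq X\subsetneqq S$, $X\not\subseteq S^t$, and $f(X)\le |T|-|S|$, then both induced subdigraphs have tight edge covers, by the Hall conditions of \Cref{lemma:tight_edge_cover}. A Hall violator $Y_1$ on the first side would give $X\setminus N(Y_1)$, and a violator $X_2$ on the second side would give $X\cup X_2$. Each has strictly smaller $f$, contradicting minimality. So the ``extremal choice'' you were looking for is precisely a minimizer of $|N(X)|-|X|$, and its minimality guarantees feasibility on both sides of the cut at once.

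The paper's algorithm, justified by \Cref{thm:robust_digraft}, works as follows.
\begin{enumerate}
\item Minimize $f$ subject to $u\in X$, $v\notin X$, for all $u\in S\setminus S^t$ and $v\in S$.
\item If the minimum is at most $|T|-|S|$, output $\delta^-(X\cup N(X))$ as the separating dicut.
\item Otherwise, robustness reduces to checking $|N(X)|-|X|\ge 1$ for all $\emptyset\neq X\subsetneqq S$ with $X\subseteq S^t$. This is again a submodular minimization.
\end{enumerate}
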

In robust digrafts, a tight dijoin only needs to cover trivial dicuts as opposed to the entire family of dicuts. This makes robust digrafts exhibit many nice properties. We show that in a robust digraft $(D,S^t)$ with $|S^t|<|S|-1$, the digraft $(D,S^t\cup \{v\})$ obtained from setting an arbitrary $v\in S\setminus S^t$ to be tight remains robust (\Cref{cor:add-tight-vert}). Furthermore, there exists a tight dijoin $J$ of $(D,S^t)$ that has exactly $2$ arcs incident with $v$ (\Cref{CO:degree2-robust}). We show that adding $J$ to an integral basis for $\lin(\zJ(D,S^t\cup \{v\}))$ gives us an integral basis for $\lin(\zJ(D,S^t))$. Repeatedly setting the vertices in $S\setminus S^t$ to be tight, we arrive at the case of elementary digrafts where $|S^t|=|S|-1$.

\paragraph{Non-robust bricks} 
The most challenging part of our proof is to reduce a brick $(D,S^t)$ if it is not robust.
To this end, we will either move a non-tight vertex $v\in S\setminus \widebar{S^t}$ into $S^t$, or contract a nontrivial dicut. Both cases are equivalent to setting some non-tight dicut to be tight. In the first case, we will eventually reach elementary digrafts; in the second case, we reduce the sizes of the digrafts.

We say a digraft $(D,S^t)$ is a \emph{near-brick} if it is tight dijoin-covered and contains exactly one brick, i.e., $b(D,S^t)=1$. Ideally, we would like to find a dicut $C$ such that after setting it to be tight, the dimension of the lattice only drops by one. For this, we define the following notion of \emph{good dicuts}, which turns out to have the aforementioned property:
\begin{DE}[good dicut]
    In a brick $(D,S^t)$, a trivial dicut induced by node $u\in S\setminus S^t$ is \emph{good} if $(D,(S^t)':=S^t\cup \{u\})$ is a near-brick and $\widebar{(S^t)'}=(S^t)'$. A nontrivial dicut $C$ is \emph{good} if both $C$-contractions $(D_i,S_i^t)$ are near-bricks and satisfy $\widebar{S_i^t}=S_i^t$, $i=1,2$.
\end{DE}

We now elaborate on the algorithm of finding a good dicut. Recall that if a digraft is not robust, we can use \Cref{thm:testing-robust} to find a separating dicut. This dicut will be a starting point of a search for a good dicut.
We show that good dicuts can be described as maximal dicuts with respect to a certain partial order on the set of all non-tight dicuts. The algorithm of finding a good dicut is a dual procedure where we keep looking for improving dicuts $C$ in the partial order (as summarized in Theorems \ref{thm:separating-contractible} and \ref{thm:separating-good}). We show that the dicuts we find are linearly independent and thus the length of the procedure is bounded by the dimension of the linear hull. The structural \Cref{thm:barrier_tight_equiv} plays a crucial role in finding improving dicuts.


Finally, after finding a good dicut $C$, we set $C$ to be tight if it is trivial or contract $C$ if it is nontrivial. We construct a basis for the tight dijoins that intersect $C$ exactly once. It turns out we only need to add one additional tight dijoin $J$ to the basis, with the property that $|J\cap C|=2$ (\Cref{lemma:integral_basis}). The existence of such a tight dijoin follows from the \emph{Jump-free lemma} (\Cref{lemma:continuity} from \cite{abdi2024strconn}) and can be found in polynomial time. This concludes the basis construction for non-robust bricks.

\Cref{fig:flowchart} summarizes the main steps of the proof.
\begin{figure}
    \centering
\includegraphics[width=0.8\linewidth]{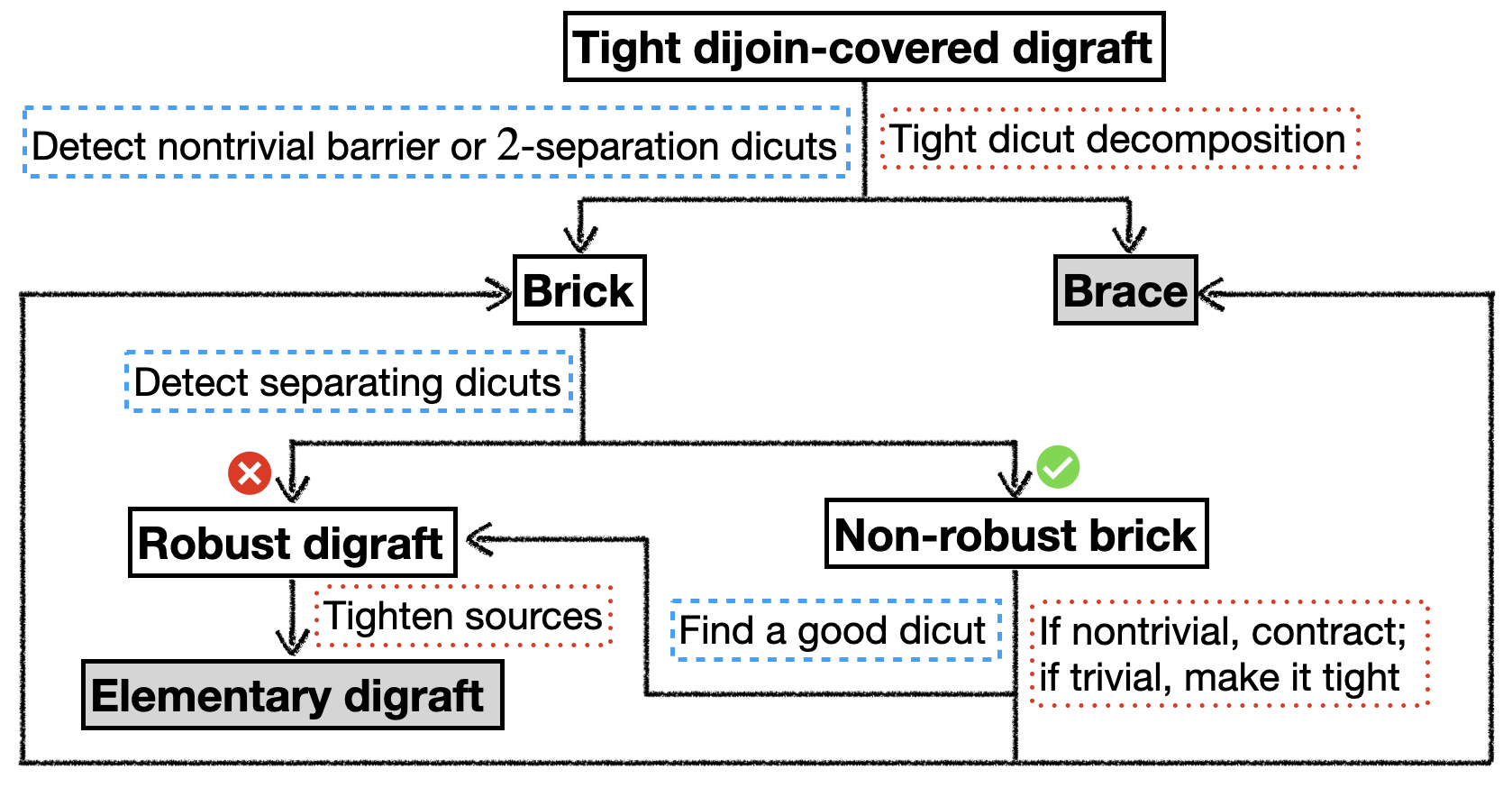}
    \caption{The solid boxes are types of digrafts. The dashed boxes are steps involved in finding specific types of dicuts. The dotted boxes are steps executed to reduce the digrafts. The end of the procedure are elementary digrafts and braces.}
    \label{fig:flowchart}
\end{figure}

\subsection{Related work}
\paragraph{\textbf{Connections to Woodall's conjecture}} The polytope of strongly connected orientations of a graph $G$ is given as follows by \cite{Edmonds77}:
\[\conv(\sco(G))=\big\{x\in \R_{\ge 0}^{E^+\cup E^-}:x_{e^+}+x_{e^-}=1\ \forall e\in E;\quad x(\delta_{\vec{G}}^+(U))\ge 1\  \forall \emptyset\neq U\subsetneqq V\big\}.\] 
Schrijver \cite{Schrijver80} refuted a conjecture of Edmonds and Giles \cite{Edmonds77}, which by \cite{Cornuejols24} is equivalently saying that in a $2$-edge-connected undirected graph $G$, the strongly connected orientations of $G$ form a \emph{Hilbert basis}, i.e., every integer vector in the cone generated by $\sco(G)$ can be written as a \emph{nonnegative} integer linear combination of SCO's. 
If such a statement were true, it has to be true for every face of $\cone(\sco(G))$:
restricting to a face defined by $\{x\in \R^{E^+\cup E^-}:x(\delta_{\vec{G}}^+(U))=1\ \forall U\in \zF\}$ for some family $\zF\subseteq 2^V\setminus \{\emptyset,V\}$, one obtains that every integer $x\in \cone(\sco(G,\zF))\cap \Z^{R^+\cup R^-}$ can be written as a nonnegative integer linear combination of tight SCO's of $(G,\zF)$. 
\Cref{main-sco-theorem} shows that if we relax the nonnegativity of the coefficients in the integer combination, such a statement would be true. 
Although Edmonds-Giles conjecture has been refuted, such a viewpoint remains interesting, since Woodall's conjecture \cite{Woodall78} is equivalently saying that every ``nowhere-zero" integer $x\in \cone(\sco(G,\zF))\cap \Z_{\ge 1}^{R^+\cup R^-}$ can be written as a nonnegative integer combination of tight SCO's of $(G,\zF)$ \cite{Cornuejols24}, which is still widely open. 
Another consequence of \Cref{main-sco-theorem} is that for every prime $p\geq 2$ and every digraph with minimum dicut size $\tau$, there is a $p$-adic packing of dijoins of value $\tau$ \cite{abdi2025strongly}, extending a result of Guenin and Hwang \cite{guenin2025dyadic} which proves the existence of a dyadic packing of value $\tau$. Our main \Cref{main-sco-theorem} allows us to find such a packing in polynomial time. This can be done by first restricting ourselves to polynomially many variables as in the sparse packing procedure in \cite{abdi2024strconn} and then using an algorithm for $p$-adic programming in \cite{abdi2024dyadic}.

\paragraph{\textbf{Comparison to existing proofs}} Abdi, Cornu\'ejols, Liu and Silina \cite{abdi2025strongly} initiated the study of the lattice of tight strengthenings. They proved a non-constructive version of \Cref{main-scr-theorem}. Their argument can be summarized as follows. First, they use the same reduction to the digraft setting $(D,\zF)$. They closely work with the polyhedron of tight dijoins, $\dij(D,\zF):=\{x\in \R_{\geq 0}^A: x(\delta^+(U))\geq 1\ \forall \text{ dicut } U;\ x(\delta^+(U))=1\ \forall U\in \zF\}$, which is given by \cite{Lucchesi78}. They reduce to basic digrafts by contracting tight dicuts. This step is non-constructive. Observe that $\dij(D,\zF)$ of the basic digraft does not have implicit equalities except those defined by trivial tight dicuts induced by tight nodes. If all facet-defining inequalities (FDI) are defined by trivial dicuts or nonnegativity constraints, the digraft is robust (this is indeed equivalent to \Cref{def:robust} because the bipartite edge cover polytope is integral, see e.g. \cite{Schrijver03} Ch 19.5). Using the integer decomposition property of tight edge covers and a result of Gerards and Seb\H{o} \cite{Gerards87}, they show that an integral basis consisting of tight dijoins exists. Otherwise, there is a nontrivial dicut $C=\delta^+(U)$ that induces an FDI. They set $C$ to be tight, which reduces the dimension of $\lin(\zJ(D,\zF))$ by one. Using the Jump-free lemma, they find one additional tight dijoin $J$ with $|J\cap C|=2$ to add to the basis. 

To make their proof constructive, the step of finding an FDI defined by a nontrivial dicut must be made algorithmic, which is highly nontrivial. For a general polyhedra, the problem of FDI recognition is $D^p$-hard \cite{PapadimitriouYannakakis1984}. 
Even for well-structured polyhedra such as perfect matching polytope, the similar question of finding an FDI defined by a nontrivial odd set is open. We discuss this in detail in the next paragraph.
We overcome these difficulties by first, providing a recognition algorithm for robust digrafts and second, finding a good dicut that is, in fact, an FDI in polynomial time.

\paragraph{\textbf{Perfect matching lattices}}Some results of this work are reminiscent of the setting of perfect matchings. In an undirected graph $G=(V,E)$ a \emph{perfect matching} is a set of edges incident with each $v\in V$ exactly once. A graph is \emph{matching-covered} if every edge $e\in E$ belongs to some perfect matching. Denote the set of perfect matchings in $G$ by $\mathcal{M}(G)$.
A well-celebrated result due to Lov\'asz \cite{lovasz87} in the theory of matching covered graphs is that for every $x\in \lin(\mathcal{M}(G))\cap \Z^E$ we have $2x \in \lat(\mathcal{M}(G))$. Moreover, if $G$ has no Petersen minor, $\lat(\mathcal{M}(G))=\lin(\mathcal{M}(G))\cap \Z^E$. Abdi and Silina \cite{abdi2025integral} recently gave a short proof of Lov\'asz's result.
In a series of follow-up works of Carvalho, Lucchesi, and Murty, it was shown that the lattice of perfect matchings $\lat(\mathcal{M}(G))$ contains a basis consisting of perfect matchings \cite{carvalho1997decomposicao,carvalho1999ear,Carvalho02,deCarvalhoLucchesiMurty2002a,deCarvalhoLucchesiMurty2002b} (see the excellent survey book \cite{LucchesiMurty2024}).
In proving these results, the authors leverage many structural results on matching-covered graphs, such as tight cut decomposition established by Edmonds, Pulleyblank and Lov\'asz \cite{Edmonds82}. They define the notions of \emph{barrier cuts} and \emph{$2$-separation cuts} which are crucial for the existence of nontrivial tight odd cuts. 

We give a characterization of when a digraft has a tight dijoin that has an interesting resemblance to Tutte's theorem for perfect matchings \cite{Tutte1950}. This allows us to identify similar notions of barrier dicuts and $2$-separation dicuts in digrafts. 
However, there are important distinctions between perfect matchings and tight dijoins in digrafts. For example, 
a well-studied problem in matching theory is testing whether a graph is \emph{Birkhoff-von Neumann (BvN)}, i.e., the perfect matching polytope is characterized solely by nonnegativity and degree constraints \cite{balas1981integer,de2004perfect,de2020birkhoff,de2006build}. It is open whether recognition of BvN graphs is in $\mathcal{NP}$. 
In contrast, we are able to recognize robust digrafts in polynomial time, which are precisely the digrafts whose polytope of tight dijoins are solely defined by nonnegativity and degree constraints. This distinction allows us to greatly simplify the proof of Carvalho, Lucchesi, and Murty, and bypass the hardness of finding the analogous notion of good cuts in perfect matchings.

\paragraph{\textbf{Congruency-constrained optimization}} The literature of congruency-constrained combinatorial optimization is quite rich, e.g., 
\cite{ArtmannWZ17,eisenbrand2024sensitivity,horsch2024problems,liu2024congruency,ElMaaloulySteinerWulf2023,McCuaig2004,NageleSZ22,nagele_submodular_2019,RobertsonSeymourThomas1999}. A central open question in this area is the \emph{exact perfect matching} problem: given a graph $G=(V,E)$, a subset of edges $R\subseteq E$ that are colored red, and a number $k$, find a perfect matching with $k$ red edges if it exists. A randomized polynomial algorithm exists by \cite{MulmuleyVaziraniVazirani1987}, but whether a deterministic algorithm
exists is open. As a relaxation, El Maalouly, Steiner, and Wulf \cite{ElMaaloulySteinerWulf2023} gave an algorithm to find a perfect matching with the correct parity in deterministic polynomial time using the theory of perfect matching lattices.
Tight strengthenings can be captured by submodular flows \cite{Edmonds77}, whereas tight strongly connected orientations can be captured by matroid intersection \cite{frank1984matroids}. H\"orsch et al. \cite{horsch2024problems} show that finding a common basis with an odd (even) number of red elements for two general matroids requires exponentially many independence queries. Our result on parity strongly connected orientations provides a new class of matroid intersection whose parity-constrained problem can be solved in polynomial time, generalizing the result on perfect matchings for bipartite graphs.

\paragraph{\textbf{Organization}} In \Cref{Sec:preliminary}, we provide preliminary results that are needed later in the proof. In \Cref{sec:tight-dicut-decomp}, we give characterization of tight dijoin-covered digrafts, tight dicuts, and establish the uniqueness of tight dicut decomposition. In particular we prove \Cref{thm:barrier_tight_equiv}. In \Cref{sec:robust-case}, we give the proof of \Cref{thm:testing-robust} and describe how to construct basis for robust digrafts. In \Cref{sec:nonrobust}, we deal with non-robust bricks, which is the most challenging case. In \Cref{sec:elemantary}, we show how to construct basis for elementary digrafts. In \Cref{sec:parity}, we prove \Cref{thm:parity-sco} on parity-constrained SCO's.

\section{Preliminaries}
\label{Sec:preliminary}
A subset $L\subseteq \cR^A$ is a \emph{lattice} if it is the set of integer linear combinations of finitely many vectors. Given a finite subset $R\subseteq \cR^A$, the \emph{lattice generated by $R$}, denoted $\lat(R)$, is the set of all integer linear combinations of the vectors in $R$. A \emph{lattice basis for $L$} is a set $\zB$ of linearly independent vectors that generates the lattice, i.e., $L = \lat(B)$. Every lattice has a basis.

Let $D=(V,A)$ be a digraph. For a subset of arcs $J\subseteq A$ and $v\in V$, denote by $d_J(v)$ the number of arcs in $J$ that are incident to $v$. For a proper subset $\emptyset\neq X\subsetneqq V$, denote by $\delta(X)$ the cut induced by $X$, $\delta_J(X):=\delta(X)\cap J$, and $d_J(X):=|\delta_J(X)|$. Denote by $\delta^-(X), \delta^+(X)$ the arcs entering and leaving from $X$, respectively. For $X\subseteq V$, denote by $N(X)$ the neighbors of $X$ in $D$. For $J\subseteq A$, denote by $N_J(X)$ the neighbors of $X$ in the subdigraph $(V,J)$. Let $X,Y\subseteq V$. Denote by $E(X)$ the arcs that have both endpoints in $X$. Denote by $E(X,Y)$ the arcs that have one endpoint in $X$ and another endpoint in $Y$. 
Denote by $\sigma_D(X)$ the number of (weakly) connected components in the subgraph of $D$ induced by the vertex set $X$. We omit the subscript $D$ if the context is clear.

Let $(D,S^t)$ be a digraft and let $b:S\rightarrow \Z_{\geq 0}$. We say a subset of arcs $J$ is a \textit{perfect $b$-matching} if $d_J(v)=b(v),\ \forall v\in S$, and $d_J(v)=1,\ \forall v\in T$. A subset of arcs $J$ is a \textit{$b$-dijoin} if $J$ is a perfect $b$-matching that intersects all dicuts. 
Abdi, Cornu\'ejols and Zlatin \cite{Abdi23-dijoins} reduce Woodall's conjecture to sink-regular bipartite digraphs. A different reduction can be found in \cite{schrijverobervation,Cornuejols24}. We state some facts that demonstrate the usefulness of the digraft setting. All of the results in this section are proved in \Cref{sec:proof-prelim}.

\begin{LE}\label{lemma:minimal_dicuts}
    In a digraft $(D,S^t)$, a minimal dicut that is not of the form $\delta^-(v)$ for some $v\in T$ is of the form $\delta^-(X\cup N(X))$ for some $X\subsetneqq S, X\neq \emptyset$.
\end{LE}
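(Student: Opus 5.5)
A dicut $C=\delta^+(U)$ with $\delta^-(U)=\emptyset$ has out-shore $U$ and in-shore $W:=V\setminus U$. (We write $\delta^-(W)$ for the set of arcs entering $W$, which is exactly $C$.) In a bipartite digraph every arc goes from a source in $S$ to a sink in $T$. So the only constraint is that no arc enters $U$, i.e.\ no arc goes from $W\cap S$ to $U\cap T$. Equivalently, $N(U\cap S)\subseteq$ anything we like, but $N(W\cap S)\subseteq W\cap T$. Put $X:=U\cap S$ and $Y:=W\cap S=S\setminus X$.

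The plan is to use minimality to show that $W=Y\cup N(Y)$ after the roles are set up correctly. I will state it as $C=\delta^-(Y\cup N(Y))$ with $Y$ playing the role of "$X$" in the lemma. Indeed the in-shore of the dicut is the set entered by $C$, so we want $C=\delta^-(Z)$ with $Z=Y\cup N(Y)$.

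\textbf{Case $Y=\emptyset$.} Then $W\subseteq T$ consists only of sinks. Each $t\in W$ gives a dicut $\delta^-(t)=\delta^+(V\setminus t)$, which is contained in $C=\delta^-(W)$ because $W$ contains no sources. It is nonempty since $G$ is $2$-edge-connected, so there are no isolated vertices. By minimality, $C=\delta^-(t)$, which is the excluded form.

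\textbf{Case $Y\neq\emptyset$.} Set $Z:=Y\cup N(Y)\subseteq W$. Every arc entering $Z$ goes from a source outside $Y$ to a sink in $N(Y)$; there are no arcs leaving $Z$, since every arc out of a source in $Y$ lands in $N(Y)$. Moreover, every arc entering $Z$ also enters $W$: its tail lies in $X\subseteq U$ and its head lies in $W$. Hence $\delta^-(Z)\subseteq\delta^-(W)=C$.

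\textbf{Checking that $\delta^-(Z)$ is a dicut.} We need $Z$ to be a proper nonempty set with $\delta^-(Z)$ nonempty.
\begin{itemize}
\item $Z\neq\emptyset$ because $Y\neq\emptyset$.
\item $Z\neq V$ because $Z\subseteq W\subsetneq V$.
\item $\delta^-(Z)\neq\emptyset$ because the underlying graph is connected and $\delta^+(Z)=\emptyset$.
\end{itemize}
So $\delta^-(Z)$ is a dicut contained in $C$, and minimality forces $\delta^-(Z)=C$.

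\textbf{Checking $\emptyset\neq Y\subsetneq S$.} We have $Y\neq S$ because otherwise $X=\emptyset$, so $U\subseteq T$ consists of sinks only; then $U$ has no outgoing arcs and $C=\delta^+(U)$ would be empty, a contradiction. With $\emptyset\neq Y\subsetneq S$, the dicut is $C=\delta^-(Y\cup N(Y))$, as claimed.

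The only delicate point is orientation and notation: the lemma's $\delta^-(X\cup N(X))$ refers to arcs entering the in-shore. Beyond that, the argument is just minimality combined with the bipartite source/sink structure.
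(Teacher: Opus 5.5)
Your proof is correct and takes essentially the same route as the paper's: you take the sources $Y$ of the in-shore and handle the all-sinks case by reducing to a single $\delta^-(t)$; otherwise you note that $\delta^-(Y\cup N(Y))$ is a dicut contained in $C$ and invoke minimality. The differences are minor. You get equality of the arc sets directly from minimality, whereas the paper goes on to prove the shore equality $W=Y\cup N(Y)$. You also check explicitly that $\emptyset\neq Y\subsetneq S$ and that $\delta^-(Y\cup N(Y))$ is nonempty, which the paper leaves implicit.
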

The following lemma shows that whether a subset of arcs $J\subseteq A$ is a dijoin only depends on its degree sequence.
\begin{LE}\label{lemma:b-matching}
    Given a digraft $(D,S^t)$ and $b:S\rightarrow \Z_{\geq 0}$, there exists a $b$-dijoin if and only if
    \begin{equation}\label{eq:b-matching}
    \begin{aligned}
        b(X)&\leq |N(X)|-1, ~\forall \emptyset\neq X\subsetneqq S,\\
        b(S)&=|T|.
    \end{aligned}
    \end{equation}
    Moreover, for $b$ satisfying \eqref{eq:b-matching}, every perfect $b$-matching is a $b$-dijoin.
\end{LE}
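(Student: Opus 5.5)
We argue via Lemma~\ref{lemma:minimal_dicuts}: in a bipartite digraph every arc goes from $S$ to $T$, so a perfect $b$-matching $J$ (with $d_J(v)=1$ for every sink) meets each sink dicut $\delta^-(v)$, $v\in T$, exactly once. By Lemma~\ref{lemma:minimal_dicuts}, the remaining minimal dicuts are $\delta^-(X\cup N(X))$ with $\emptyset\neq X\subsetneqq S$. Since every dicut contains a minimal one, $J$ is a dijoin if and only if it meets each $\delta^-(X\cup N(X))$, i.e.\ some arc of $J$ enters $N(X)$ from $S\setminus X$.

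\emph{Key counting step.} Fix $\emptyset\neq X\subsetneqq S$ and a perfect $b$-matching $J$. Each sink in $N(X)$ has exactly one $J$-arc. All arcs at $X$ go into $N(X)$, so the $J$-arcs at $X$ number $b(X)$ and land on distinct sinks of $N(X)$. Hence the number of sinks of $N(X)$ whose $J$-arc comes from $S\setminus X$ is exactly $|N(X)|-b(X)$. Therefore $J$ intersects $\delta^-(X\cup N(X))$ if and only if $b(X)\le |N(X)|-1$. This identity is the heart of the lemma.

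\emph{Both directions.} If a $b$-dijoin $J$ exists, counting arcs at sinks gives $b(S)=|J|=|T|$, and the counting step gives $b(X)\le |N(X)|-1$ for every proper nonempty $X$. So \eqref{eq:b-matching} holds. Conversely, assume \eqref{eq:b-matching}. The counting step shows that every perfect $b$-matching is a $b$-dijoin, which is the ``moreover'' claim. It remains to show that some perfect $b$-matching exists.

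\emph{Existence.} Replace each source $v$ by $b(v)$ copies and look for a matching of $T$ into the copies that covers everything; this is a bipartite perfect matching with equal sides, since $b(S)=|T|$. We check Hall's condition from the source side. For $X=\emptyset$ it is trivial. For $X=S$ we need $b(S)\le |N(S)|$, which holds because $2$-edge-connectivity gives every sink a neighbour, so $N(S)=T$. For proper nonempty $X$ we have $b(X)\le |N(X)|-1<|N(X)|$. Hall's theorem then yields the matching.

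The only real subtlety is the reliance on Lemma~\ref{lemma:minimal_dicuts} to restrict attention to dicuts of the form $\delta^-(X\cup N(X))$ and sink dicuts. Once that is granted, the counting identity settles everything.
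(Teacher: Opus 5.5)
Your proof is correct and is essentially the paper's argument: the identity $|J\cap\delta^-(X\cup N(X))|=|N(X)|-b(X)$ for a perfect $b$-matching $J$, Lemma~\ref{lemma:minimal_dicuts} to reduce to these dicuts and the sink dicuts, and Hall's theorem for existence. You leave one point implicit in the ``only if'' direction, which the paper states: each $\delta^-(X\cup N(X))$ with $\emptyset\neq X\subsetneqq S$ really is a nonempty dicut, since no arc leaves $X\cup N(X)$ and connectivity gives an entering arc. In the other direction, your explicit check of Hall's condition at $X=S$ via $N(S)=T$ fills in a detail the paper glosses over.
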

The following lemma appears in \cite{abdi2024strconn}. We give a short proof for completeness.
\begin{LE}[\cite{abdi2024strconn}, Theorem 3.2]\label{lemma:tight-dijoin-covered}
    A digraft $(D,S^t)$ is tight dijoin-covered if and only if it has at least one tight dijoin.
\end{LE}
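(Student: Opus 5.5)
The easy direction is immediate. If $(D,S^t)$ is tight dijoin-covered, then every arc lies in some tight dijoin, and $A\neq\emptyset$ because the underlying graph is $2$-edge-connected, so at least one tight dijoin exists. For the converse I fix a tight dijoin $J$ and an arc $e=(s,t)\notin J$, with $s\in S$ and $t\in T$, and I construct a tight dijoin containing $e$.

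By \Cref{lemma:b-matching}, being a tight dijoin depends only on the degree sequence. So it suffices to find $b:S\to\Z_{\ge 1}$ and a perfect $b$-matching $J'\ni e$ such that $b(v)=1$ for $v\in S^t$, $b(S)=|T|$, and $b(X)\le |N(X)|-1$ for every $\emptyset\ne X\subsetneqq S$. The condition $b\ge 1$ is automatic from the last inequality applied to $S\setminus\{v\}$ together with $b(S)=|T|$ and $N(S\setminus v)\subseteq T$. The degree sequence $b_0$ of $J$ satisfies all these conditions.

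Since $d_J(t)=1$, there is a unique arc $f=(s',t)\in J$, and $s'\ne s$. I would search for an alternating structure that exchanges $f$ for $e$ while restoring degrees. Concretely, I take an $s$–$s'$ path $P$ in $D$ whose arcs alternate between $J$ and $A\setminus J$, starting at $s$ with a $J$-arc and ending at $s'$ with a non-$J$ arc. Then $J\triangle(P\cup\{e,f\})$ is a perfect $b_0$-matching containing $e$, hence a tight dijoin by \Cref{lemma:b-matching}.

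Alternatively, when $s\notin S^t$ and $b_0(s')\ge 2$ (or $s'\notin S^t$ with suitable slack), I would use the simple swap $J-f+e$. Its degree sequence is $b_0+\1_s-\1_{s'}$, and I only need to verify the inequalities $b(X)\le|N(X)|-1$ for sets $X$ that contain $s$ but not $s'$.

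So the plan splits into two routes: a pure degree shift when it is feasible, and otherwise an alternating path. More generally, I would consider alternating walks from $s$ that may terminate at any source $u$ where the degree can be changed, that is, where $u\notin S^t$ and the shifted $b$ still satisfies \eqref{eq:b-matching}.

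The main obstacle is showing that when no such alternating path or feasible shift exists, we reach a contradiction. I would take $X\subseteq S$ to be the set of sources reachable from $s$ by $J$-alternating paths that avoid $f$, so that $N(X)$ consists of sinks that are matched by $J$ into $X$. I expect the failure of every exchange to force $X\cup N(X)$ to be a shore where $b_0(X)=|N(X)|$, or where the only arcs leaving the set are $e$ and $f$. The first case contradicts \eqref{eq:b-matching}. The second gives a cut of size at most $1$ in the underlying undirected graph once we account for the bipartite orientation (using \Cref{lemma:minimal_dicuts} to identify dicut shores as $X\cup N(X)$), which contradicts $2$-edge-connectivity. Making this Hall-type counting precise is where I expect the real work to lie. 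In particular, it has to handle the sources in $S^t$, whose degree cannot move, and must show that the slack $-1$ in \eqref{eq:b-matching} is exactly what prevents the blocking set from being tight.
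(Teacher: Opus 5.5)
Your reduction to degree sequences via \Cref{lemma:b-matching} and the exchange $J\triangle(P\cup\{e,f\})$ are sound. But the step that carries the entire converse, namely that such an exchange exists, is left as an expectation, and the blocking structure you anticipate is aimed at the wrong set.

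Let $X\subseteq S$ and $Y\subseteq T$ be the vertices reachable from $s$ by alternating paths that leave sources along $J$-arcs and leave sinks along non-$J$ arcs. If $s'\in X$, a shortest reaching path is your $P$. In general, every $J$-arc at $X$ ends in $Y$, every sink of $Y$ has all its neighbours in $X$, and $b_0(X)=|Y|$.

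Suppose $s'\notin X$. Then $t\notin Y$, since its only $J$-arc is $f$. Yet $t\in N(X)$, so $|N(X)|\ge b_0(X)+1$. Thus $X\cup N(X)$ has slack and violates nothing in \eqref{eq:b-matching}. The contradiction also does not come from the size of any cut, so $2$-edge-connectivity is not what is needed. It comes from the complement $W:=V\setminus(X\cup Y)$:
\begin{itemize}
\item no arc goes from $S\setminus X$ to $Y$, so $\delta^+(W)=\emptyset$;
\item $e\in\delta^-(W)$;
\item $J\cap\delta^-(W)=\emptyset$, because $J$-arcs out of $X$ end in $Y$.
\end{itemize}
So $J$ would miss a dicut, which is impossible. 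Hence the alternating $s$--$s'$ path always exists with $b_0$ held fixed. The degree shifts, the special treatment of $S^t$, and the appeal to $2$-edge-connectivity are unnecessary. Separately, your claim $s'\neq s$ fails when $e$ is parallel to $f$; in that case $J-f+e$ works trivially.

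The paper bypasses the search entirely with one application of Hall's theorem. Keep $b_0$, delete $t$, and set $b':=b_0-\1_s$. For every $\emptyset\neq X\subsetneqq S$ one has $b'(X)\le b_0(X)\le|N(X)|-1\le|N_{D-t}(X)|$, and $b'(S)=|T|-1$. So $D-t$ has a perfect $b'$-matching $J'$. Then $J'\cup\{e\}$ is a perfect $b_0$-matching containing $e$, hence a tight dijoin by the ``moreover'' part of \Cref{lemma:b-matching}. This is the precise form of your remark that the slack $-1$ is what matters: it pays exactly for the deleted sink $t$.
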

Observe that the feasible degree sequences of tight dijoins, which are precisely $b\in \Z_{\geq 0}^S$ satisfying \eqref{eq:b-matching}, form a polymatroid. The exchange property of polymatroids allows \cite{abdi2025strongly} to establish the following lemma. We give a short constructive proof.
\begin{LE}[\cite{abdi2025strongly}, Jump-free Lemma]\label{lemma:continuity}
    Let $C$ be a dicut in a digraft $(D,S^t)$. If there are two tight dijoins $J'$ and $J''$ such that $|J'\cap C|=\lambda'$ and $|J''\cap C|=\lambda''$ for some $\lambda'<\lambda''$, then for every integer $\lambda'\leq\lambda\leq\lambda''$, we can find a tight dijoin $J$ with $|J\cap C|=\lambda$ in polynomial time.
\end{LE}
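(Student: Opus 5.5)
We work entirely with degree sequences, using \Cref{lemma:b-matching}. Let $b'$ and $b''$ be the degree sequences on $S$ of $J'$ and $J''$. Both satisfy \eqref{eq:b-matching}, and both have $b(v)=1$ for $v\in S^t$. Since every sink has degree $1$, we have $|J\cap C|=|J\cap\delta^+(U)|$ for $C=\delta^+(U)$, and this is determined by the degrees: if $U$ is the out-shore, then $|J\cap C| = \sum_{v\in U\cap S} b(v) - |U\cap T|$, because every arc of $J$ leaving a source in $U$ either enters a sink in $U$ or crosses $C$, and no arc enters $U$. So $|J\cap C| = b(U\cap S) - |U\cap T|$ is an affine function of $b$.

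The set $P$ of integer $b$ satisfying \eqref{eq:b-matching} together with $b(v)=1$ for $v\in S^t$ is the set of bases of an integral polymatroid-type structure (a submodular-type constraint system on $S$, restricted by fixing coordinates). The plan is to walk from $b'$ to $b''$ by elementary exchanges $b\mapsto b+\chi_u-\chi_w$ that stay inside $P$. Each step changes $b(U\cap S)$, and hence $|J\cap C|$, by at most $1$. Therefore every intermediate value $\lambda$ is attained, which is the discrete intermediate value theorem.

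Concretely, while $b\neq b''$, pick $u$ with $b(u)<b''(u)$. By the exchange property there is $w$ with $b(w)>b''(w)$ such that $b+\chi_u-\chi_w\in P$. This strictly decreases $\|b-b''\|_1$, so at most $|T|$ steps occur. Feasibility of each candidate exchange is checked by testing \eqref{eq:b-matching}. Checking all sets $X$ is exponential, but it reduces to a max-flow/Hall-type check: $b$ is feasible iff for every source $s$ the bipartite $b$-matching with $b(s)$ raised by $1$ is infeasible, while the original is feasible. Equivalently, $b(X)\le|N(X)|-1$ for all proper nonempty $X$ can be tested by computing, for each $s$, whether a perfect $b+\chi_s$-matching exists after adding a dummy sink. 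Alternatively, we test directly via \Cref{lemma:b-matching} by attempting to build a $b$-matching and checking the dicut condition on the result using connectivity. Once the right $b$ is found, a perfect $b$-matching is computed by bipartite flow, and by \Cref{lemma:b-matching} it is a tight dijoin.

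The main obstacle is justifying the exchange step. We must show that $P$ really is the set of integer bases of a polymatroid, i.e.\ that $f(X)=|N(X)|-1$ for proper nonempty $X$ (with $f(S)=|T|$) is submodular on intersecting pairs. Since $|N(\cdot)|$ is submodular, $f$ is intersecting-submodular, and intersecting-submodular set functions still yield base polyhedra with the simultaneous exchange property. We must also handle the fixed coordinates $S^t$. Since $b'$ and $b''$ agree on $S^t$, the exchange never needs to touch those coordinates: we restrict the chosen $u,w$ to $S\setminus S^t$, using the symmetric exchange property, which guarantees that $w$ can be taken from the support of $b-b''$.
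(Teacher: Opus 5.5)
Your argument is essentially the paper's: you encode tight dijoins by their degree sequences via \Cref{lemma:b-matching}, note that $|J\cap C|=b(U\cap S)-|U\cap T|$ is affine in $b$ (cleaner than the paper's detour through \Cref{lemma:minimal_dicuts}, which literally covers only minimal dicuts), and walk from $b'$ to $b''$ by exchanges that move this value by at most one per step. Two slips to fix. First, $f$ is not intersecting-submodular: for intersecting $X,Y$ with $X\cup Y=S$ you would need $|N(X)|+|N(Y)|\ge |N(X\cap Y)|+|T|+1$, which can fail. It is only crossing-submodular, so the exchange step should be justified by Fujishige's theorem that a nonempty base polyhedron defined by an integer crossing-submodular function is an integral base polyhedron (the paper's ``face of a polymatroid'' is equally loose at this point). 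Second, your first membership test is vacuous, since $(b+\chi_s)(S)=|T|+1$ always makes it infeasible. Use your second test instead: compute any perfect $b$-matching and check whether it is a dijoin, which decides membership by both directions of \Cref{lemma:b-matching}. Alternatively, minimize $|N(X)|-b(X)$ over proper nonempty $X$ by submodular minimization.
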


We say that two proper subsets $\emptyset\neq U,W \subsetneqq V$ \emph{cross} if $U\setminus W, W\setminus U, U\cap W\neq\emptyset$ and $U\cup W\neq V$. A family $\zF\subseteq 2^V\setminus \{\emptyset,V\}$ is a \emph{crossing family} if for every $U,W\in \zF$ that cross, $U\cap W, U\cup W\in \zF$. The following lemma is a consequence of the modularity of dicut size function $|\delta^+(U)|,\ \emptyset\neq U\subsetneqq V$.
\begin{LE}\label{lemma:tight_dicut_uncross}
    Let $(D,\zF)$ be a digraft, and let $\emptyset\neq U,W\subsetneqq V$ be such that $\delta^+(U),\delta^+(W)$ are dicuts and $U,W$ cross. Then, $\zJ\big(D,\zF\cup \{U\}\cup \{W\}\big)=\zJ\big(D,\zF\cup \{U\cap W\}\cup \{U\cup W\}\big)$
\end{LE}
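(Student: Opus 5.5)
We prove the two inclusions. Since both families of constraints include those of $\zF$, it suffices to compare the extra conditions. Fix an arbitrary tight dijoin $J$ of $(D,\zF)$; in particular $J$ is a dijoin, so it meets every dicut at least once.

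The first step is to establish that $\delta^+(U\cap W)$ and $\delta^+(U\cup W)$ are dicuts. Because $U,W$ cross, both $U\cap W$ and $U\cup W$ are nonempty proper subsets of $V$. We use the standard modular identity for cut functions. Writing $d^-(X)=|\delta^-(X)|$, we have
\[
d^-(U)+d^-(W)=d^-(U\cap W)+d^-(U\cup W)+|E(U\setminus W,W\setminus U)|,
\]
where the last term counts arcs in both directions between $U\setminus W$ and $W\setminus U$. Since $d^-(U)=d^-(W)=0$, all terms on the right vanish. Hence $\delta^+(U\cap W)$ and $\delta^+(U\cup W)$ are dicuts, and there are no arcs between $U\setminus W$ and $W\setminus U$. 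With that extra term gone, the same counting applied to out-arcs gives, for any arc set $J$,
\[
|J\cap\delta^+(U)|+|J\cap\delta^+(W)|=|J\cap\delta^+(U\cap W)|+|J\cap\delta^+(U\cup W)|.
\]
This holds because each arc contributes equally to both sides: check the arc types by the location of the tail and head among the four regions $U\cap W$, $U\setminus W$, $W\setminus U$, $V\setminus(U\cup W)$, excluding arcs between $U\setminus W$ and $W\setminus U$.

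With the identity in hand, both inclusions follow. Since $J$ is a dijoin and all four sets induce dicuts, each of the four terms is at least $1$. If $J$ meets $\delta^+(U)$ and $\delta^+(W)$ exactly once, the left side equals $2$, so both right-hand terms equal $1$. Conversely, if $J$ meets $\delta^+(U\cap W)$ and $\delta^+(U\cup W)$ exactly once, the right side equals $2$, forcing both left-hand terms to equal $1$. This gives both inclusions.

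The only delicate point is verifying the modular identity, in particular that arcs between $U\setminus W$ and $W\setminus U$ are the sole correction term and that they are excluded by the dicut assumption. This is a routine case check over the arc types.
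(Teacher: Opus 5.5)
Your proof is correct and follows the same route as the paper. You show that $\delta^+(U\cap W)$ and $\delta^+(U\cup W)$ are dicuts, apply the modular identity $|J\cap\delta^+(U)|+|J\cap\delta^+(W)|=|J\cap\delta^+(U\cap W)|+|J\cap\delta^+(U\cup W)|$, and conclude using the lower bound of $1$ on each term; you additionally derive the identity explicitly, including the vanishing correction term for arcs between $U\setminus W$ and $W\setminus U$, where the paper only asserts it.
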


\section{Tight dijoins, tight dicuts, and decomposition}\label{sec:tight-dicut-decomp}

\subsection{Tight dijoins via degree-constrained strongly connected orientations}\label{sec:degree_SCO}
First, we give a characterization of when a tight dijoin exists. Recall that $\sigma(V-X)$ denotes the number of (weakly) connected components after deleting some subset $X\subseteq V$ from $D$.
\begin{theorem}\label{thm:tight_dijoin}
    A digraft $(D,S^t)$ has at least one tight dijoin if and only if
    \begin{equation}\label{eq:tight_dijoin}
        \begin{aligned}
            \sigma(V-X)\leq |X|, \quad \forall \emptyset \neq X\subseteq T\\
            \sigma(V-X)\leq |X|, \quad \forall \emptyset\neq X\subseteq S^t.
        \end{aligned} 
    \end{equation}
\end{theorem}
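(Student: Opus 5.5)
We prove necessity directly and sufficiency by reducing to the existence of a suitable degree sequence $b$ via \Cref{lemma:b-matching}. The polymatroid feasibility condition that arises is then translated back into the two conditions of \eqref{eq:tight_dijoin}.

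\textbf{Necessity.} Let $J$ be a tight dijoin and first take $\emptyset\neq X\subseteq T$. Let $K$ be a component of $V-X$; since $D$ is $2$-edge-connected, $\delta(K)\neq\emptyset$. Every arc of $\delta(K)$ joins $K$ to $X$. Because $X\subseteq T$, every such arc goes from a source in $K$ to a sink in $X$. Hence $\delta^+(K)=\delta(K)$ is a dicut, so $J$ contains an arc of it, and that arc ends in a sink of $X$. Every sink has $d_J=1$, so distinct components use distinct sinks of $X$, giving $\sigma(V-X)\le |X|$. Symmetrically, for $\emptyset\neq X\subseteq S^t$, every arc of $\delta(K)$ leaves a source of $X$ and enters $K$. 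Thus $\delta^+(V\setminus K)$ is a dicut, $J$ uses an arc of it starting in $X$, and $d_J(v)=1$ for $v\in S^t$ again gives injectivity.

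\textbf{Sufficiency.} By \Cref{lemma:b-matching}, it suffices to find an integer $b:S\to\Z_{\ge0}$ with
\[
b\ge 1 \text{ on } S,\qquad b=1 \text{ on } S^t,\qquad b(S)=|T|,\qquad b(Y)\le f(Y):=|N(Y)|-1 \text{ for } \emptyset\neq Y\subsetneqq S.
\]
The lower bound is forced because each $\delta^+(v)$, $v\in S$, is a dicut. The function $f$ is intersecting submodular, since $|N(\cdot)|$ is submodular. So this is feasibility of an integral base of a polymatroid with box constraints $l\le b\le u$, where $l\equiv 1$, $u=1$ on $S^t$, and $u=\infty$ elsewhere. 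By standard polymatroid theory (Dilworth truncation plus the box-constrained base theorem, with integrality automatic), a solution exists iff two conditions hold:
\begin{enumerate}
\item[(a)] $\sum_i |Y_i| \le \sum_i f(Y_i)$ for every family of disjoint $Y_i$. It suffices to check, for every $Y\subseteq S$ and partition $\{Y_i\}$ of $Y$, that $\sum_i(|N(Y_i)|-1)\ge |Y|$.
\item[(b)] For every $Z\supseteq S\setminus S^t$ and every partition $\{Z_i\}$ of $Z$ into proper parts, $|T|\le |S\setminus Z| + \sum_i(|N(Z_i)|-1)$, with the convention that $b(S)=|T|$ is treated separately.
\end{enumerate}
Equivalently, the maximum of $b(S)$ over the lower-bounded region must be at least $|T|$. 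The upper bound $b(S)\le |T|$ is automatic, by counting sinks through a perfect matching structure.

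\textbf{Translation.} The remaining step is to show that a violation of (a) or (b) yields a violating set for \eqref{eq:tight_dijoin}. For (a), take a violating partition with parts chosen minimal, so that each $Y_i\cup N(Y_i)$ is connected. Set $X:=T\setminus \bigcup_i N(Y_i)$ together with the appropriate sinks, so that the components of $V-X$ are essentially the $Y_i\cup N(Y_i)$. A count of sinks then gives $\sigma(V-X)>|X|$ with $X\subseteq T$. For (b), take $X:=S\setminus Z\subseteq S^t$. The components of $V-X$ come from the parts $Z_i$ together with their neighborhoods. Comparing $|T|$ with $\sum_i |N(Z_i)|$ yields $\sigma(V-X)>|X|$. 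One uses here that in a bipartite digraph with $2$-edge-connected underlying graph, every sink outside $\bigcup N(Z_i)$ is adjacent only to $X$, and is hence itself a component or is absorbed.

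\textbf{Main obstacle.} I expect the hardest part to be this bookkeeping. It requires choosing the partitions so that parts correspond exactly to connected components, by merging parts with overlapping neighborhoods, which only helps the violation by submodularity. It also requires handling isolated sinks and the boundary cases $Y=S$ or $Z=S$. If this becomes messy, an alternative is to cite the degree-constrained strongly connected orientation theorem suggested by the section title, that is, Frank's orientation theorem with in-degree bounds. In that case \eqref{eq:tight_dijoin} is read off as its partition condition after the reduction of \Cref{sec:reduction-to-digrafts}.
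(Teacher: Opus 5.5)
Your necessity argument is correct. The sufficiency argument has a genuine gap at the claim that, by standard polymatroid theory, an integral $b$ exists iff (a) and (b) hold. The prescribed total $b(S)=|T|=|N(S)|$ is one more than the value $|N(S)|-1$ that $f$ would assign to $S$. As a result the system is only \emph{crossing} submodular: for intersecting $Y,Z$ with $Y\cup Z=S$, submodularity of $|N(\cdot)|$ falls one short of $f(Y)+f(Z)\ge |T|+f(Y\cap Z)$. Feasibility then also requires co-partition inequalities. Summing $b(S\setminus Z_i)\le |N(S\setminus Z_i)|-1$ over a partition $Z_1,\dots,Z_k$ of $S$ gives $(k-1)|T|\le \sum_i(|N(S\setminus Z_i)|-1)$, and this does not follow from (a) and (b).

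Here is a counterexample. Take three disjoint $4$-cycles $s_i t_i s_i' t_i'$ ($i=1,2,3$, with $s_i,s_i'$ sources), two further sinks $x_1,x_2$, arcs $s_ix_1$ and $s_i'x_2$, and $S^t=\emptyset$. This is a digraft with $|S|=6$, $|T|=8$, and $|N(Y)|\ge |Y|+2$ for every nonempty $Y\subseteq S$. Hence (a) holds. Every partition of $S$ into $k\ge 2$ proper parts gives $\sum_i(|N(Z_i)|-1)\ge |S|+k\ge 8=|T|$, so (b) holds. Yet $X=\{x_1,x_2\}$ has $\sigma(V-X)=3>|X|$, so there is no tight dijoin. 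On the $b$-side, the three constraints $b(S\setminus\{s_i,s_i'\})\le 5$ add up to $2b(S)\le 15$, which contradicts $b(S)=8$.

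In general, for $X\subseteq T$, choose $Z_i$ to be the source sets of the components of $V-X$; the co-partition inequality then becomes $\sigma(V-X)\le|X|$. So your (b) encodes only the half of \eqref{eq:tight_dijoin} with $X\subseteq S^t$. No bookkeeping in the translation step can recover the $X\subseteq T$ half. Separately, for (a) the correct witness is $X=N(Y)$ for a single violating $Y$, not $T\setminus\bigcup_iN(Y_i)$.

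The paper avoids polymatroids altogether. A tight dijoin has $|T|$ arcs, which is the minimum size of any dijoin, so it is a minimal dijoin and hence a strengthening. Therefore $J$ is a tight dijoin iff flipping $J$ gives a strongly connected orientation with in-degree at least $d(v)-1$ at every sink and at most $1$ at every $v\in S^t$. \Cref{thm:degree_SCO} then yields \eqref{eq:tight_dijoin} at once, since for $X\subseteq T$ one has $E(X)=\emptyset$ and $\sum_{v\in X}(d(v)-1)=|E(X,V\setminus X)|-|X|$. Your fallback points toward this route. What it needs, though, is this flip together with the fact that minimal dijoins are strengthenings, not the appendix construction, which goes from orientations to digrafts.
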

This theorem essentially follows from the characterization for the existence of degree-constrained strongly connected orientations by Frank and Gy\'arf\'as \cite{gyarfas1978orient}.
\begin{theorem}[\cite{gyarfas1978orient}]\label{thm:degree_SCO}
    Let $f:V\rightarrow \Z_+\cup\{-\infty\}$ and $g:V\rightarrow \Z_+\cup\{\infty\}$ be two functions satisfying $f\leq g$. A $2$-edge-connected undirected graph $G=(V,E)$ has a strongly connected orientation $\vec{E}$ such that $f(v)\leq |\delta^-_{\vec{E}}(v)|\leq g(v)$ if and only if the following two conditions hold:
    \begin{subequations}\label{eq:sco}
    \begin{align}
        f(X)&\leq |E(X,V\setminus X)|+|E(X)|-\sigma(V-X),\quad \forall X\subseteq V, X\neq\emptyset;\label{eq:sco:lower_bound}
        \\
        g(X)&\geq |E(X)|+\sigma(V-X),\quad \forall X\subseteq V, X\neq\emptyset.\label{eq:sco:upper_bound}
    \end{align}
    \end{subequations}
\end{theorem}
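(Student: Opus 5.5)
The plan is to view in-degree vectors of strongly connected orientations as integer points of a base polyhedron, and to reduce existence with bounds $f,g$ to the standard criterion for a base polyhedron to meet a box. Necessity is a direct count. Sufficiency comes from Frank's orientation theory together with a computation of the relevant fully supermodular closure.

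\emph{Necessity.} Let $\vec{E}$ be a strongly connected orientation with $f\le d^-_{\vec{E}}\le g$, and fix $\emptyset\ne X\subseteq V$. Every edge of $E(X)$ contributes exactly one to $d^-_{\vec{E}}(X):=\sum_{v\in X}|\delta^-_{\vec{E}}(v)|$. Each connected component $K$ of $G-X$ satisfies $\emptyset\ne K\subsetneqq V$, so strong connectivity forces an arc leaving $K$. Since $K$ is a component of $G-X$, this arc goes into $X$. These arcs are distinct for distinct components and lie in $E(X,V\setminus X)$. Hence $g(X)\ge d^-_{\vec{E}}(X)\ge |E(X)|+\sigma(V-X)$, which is \eqref{eq:sco:upper_bound}. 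Symmetrically, each component $K$ needs an arc entering it, and that arc comes from $X$. So at least $\sigma(V-X)$ edges of $E(X,V\setminus X)$ are oriented out of $X$. This gives $f(X)\le d^-_{\vec{E}}(X)\le |E(X)|+|E(X,V\setminus X)|-\sigma(V-X)$, which is \eqref{eq:sco:lower_bound}.

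\emph{Sufficiency.} I will use Frank's theorem on orientations covering a crossing supermodular function. Take $p(U)=1$ for $\emptyset\ne U\subsetneqq V$. The in-degree vectors $m\in\Z^V$ of strongly connected orientations are exactly the integer points of
\[
B=\{m: m(V)=|E|,\ m(U)\ge |E(U)|+1\ \forall\, \emptyset\ne U\subsetneqq V\}.
\]
This description holds because $G$ is $2$-edge-connected, so any integral $m\in B$ is realizable by the orientation theorem. The function $h(U)=|E(U)|+1$ is crossing supermodular, so $B$ is an integral base polyhedron. Its defining fully supermodular function is the truncation
\[
\hat h(X)=\max\sum_i h'(X_i),
\]
where the maximum runs over partitions of $X$, and $h'$ arises from $h$ by first taking co-partitions (intersections of complements) to pass from crossing to intersecting supermodular.

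Next I invoke the Frank--Tardos result that an integral base polyhedron meets the box $\{f\le m\le g\}$ in an integer point if and only if, for every $X$, $\hat h(X)\le g(X)$ and $f(X)\le |E|-\hat h(V\setminus X)$. The remaining task is to show that these two conditions coincide with \eqref{eq:sco:upper_bound} and \eqref{eq:sco:lower_bound}.

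\emph{Main obstacle.} The key identity to prove is $\hat h(X)=|E(X)|+\sigma(V-X)$ for every nonempty $X$. Once it holds, the lower condition follows by complementation, using $|E|-|E(V\setminus X)|=|E(X)|+|E(X,V\setminus X)|$ and the same identity applied to $V\setminus X$.

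For the inequality $\ge$, take the sets $V\setminus K$ over the components $K$ of $G-X$. Their intersection is $X$, and the co-partition they form realizes the value $|E(X)|+\sigma(V-X)$.

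For the inequality $\le$, I would argue that any partition or co-partition family of sets achieves at most this value. The point is that each "$+1$" must be witnessed by a distinct component of $G-X$: otherwise two parts could be merged without loss, by supermodular uncrossing of $|E(\cdot)|$. This counting step is where the care lies.

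Alternatively, one could bypass the closure computation and prove sufficiency by a max-flow/augmenting-path argument. Start from any strongly connected orientation and reverse directed paths to move in-degrees toward $[f,g]$. When no improving path exists, the blocking set $X$ yields a violated inequality in \eqref{eq:sco}. I would keep this as a fallback.
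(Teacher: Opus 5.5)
\textbf{Verdict: there is a genuine gap.} Your necessity argument is correct. The polyhedral set-up is also sound: in-degree vectors of SCOs are the integer points of the base polyhedron $B$, and the box-intersection criterion applies. This is a different route from the paper, which does not prove the statement and only cites Frank and Gy\'arf\'as.

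\textbf{The key identity is false.} The claim $\hat h(X)=|E(X)|+\sigma(V-X)$ fails for the fully supermodular function $\hat h(X)=\min_{m\in B}m(X)$. Take $G=K_4$ minus the edge $ac$ and $X=\{a,c\}$. Since $a$ and $c$ have degree $2$, each has in-degree exactly $1$ in every SCO, so $\hat h(X)=2$. But $|E(X)|+\sigma(V-X)=0+1=1$.

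In closure terms, the partition $\{a\},\{c\}$ already gives $h'(\{a\})+h'(\{c\})=2$. Both ``$+1$''s are witnessed by the single component $\{b,d\}$ of $G-X$. Merging the two parts loses one, because no edge joins $a$ and $c$; an intersecting supermodular $h'$ says nothing about disjoint sets. So your merging argument for ``$\le$'' cannot work.

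\textbf{The complementation step is also wrong.} Applying the identity to $V\setminus X$ produces the number of components of $G[X]$, not $\sigma(V-X)$. The resulting condition $f(X)\le |E(X)|+|E(X,V\setminus X)|-\sigma_G(X)$ is not sufficient. In $K_{2,3}$ with sides $\{x,c\}$ and $\{\ell_1,\ell_2,\ell_3\}$, set $f(x)=f(c)=2$, $f(\ell_i)=-\infty$ and $g\equiv\infty$. That condition then holds for every $X$, including $X=\{x,c\}$ where it reads $4\le 6-2$. Yet every SCO gives $\{x,c\}$ total in-degree exactly $3$, since each $\ell_i$ has in- and out-degree $1$. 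Condition \eqref{eq:sco:lower_bound} correctly detects this at $X=\{x,c\}$, giving $0+6-3=3<4$.

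\textbf{How to repair it.} The strategy is salvageable as follows.
\begin{itemize}
\item Only $h'$ matters. Because $g$ is modular, $\hat h\le g$ on all sets if and only if $h'\le g$ on all sets: sum over the parts of a maximizing partition. So only the co-partition closure $h'$ is needed.
\item Your formula holds for $h'$. A co-partition whose complements $W_1,\dots,W_t$ partition $V\setminus X$ has value $|E(X)|+t-c$, where $c$ is the number of edges joining distinct $W_i$. Contracting each $W_i$ gives a graph with $t$ nodes and $c$ edges whose components are unions of components of $G-X$. Hence $t-c\le\sigma(V-X)$, with equality for your choice $W_i=K_i$. This gives \eqref{eq:sco:upper_bound}.
\item Use arc reversal for the lower bound, not complementation. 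Reversing an SCO gives an SCO, so $B$ is invariant under $m\mapsto d_G-m$, where $d_G$ is the degree vector. Hence $\max_{m\in B}m(X)=d_G(X)-\hat h(X)$.
\item Finish the lower bound. The box condition $f(X)\le \max_{m\in B}m(X)$ becomes $d_G(X)-f(X)\ge \hat h(X)$. By modularity of $d_G-f$, this is equivalent to $d_G(X)-f(X)\ge |E(X)|+\sigma(V-X)$. Since $d_G(X)=2|E(X)|+|E(X,V\setminus X)|$, this is exactly \eqref{eq:sco:lower_bound}.
\end{itemize}
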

\begin{proof}[Proof of Theorem \ref{thm:tight_dijoin}]
Notice that every tight dijoin $J$ in digraft $(D,S^t)$ is (inclusion-wise) minimal, in the sense that there is no other dijoin $J'\subsetneqq J$. Indeed, each dijoin $J'$ of $D$ should have at least one arc incident to each vertex in $T$, and thus $|J'|\geq |T|$. If $J$ is a tight dijoin, then $|J|=|T|$ and so it is minimal. It is known that every minimal dijoin is a strengthening (see e.g. \cite{lovasz2007combinatorial} Ch 6). Let $J^{-1}$ be the arcs obtained by flipping the directions of arcs in $J$. Thus, a subset of arcs $J\subseteq A$ is a tight dijoin if and only if $\vec{E}:=(A\setminus J) \cup J^{-1}$ is a strongly connected orientation and $f(v)\leq |\delta_{\vec{E}}^-(v)|\leq g(v)$ for every $v\in V$, where
\begin{equation}\label{eq:degree_constraints}
\begin{aligned}
    f(v)=\begin{cases}
        d(v)-1, & v\in T\\
        -\infty, & \text{ otherwise}
    \end{cases}
\end{aligned} \quad \text{ and } \quad
\begin{aligned} 
    g(v)=\begin{cases}
        1, & v\in S^t\\
        \infty, & \text{ otherwise.}
    \end{cases}
\end{aligned}
\end{equation}
The theorem follows directly from \Cref{thm:degree_SCO} applied to the functions above.
\end{proof}
\Cref{thm:tight_dijoin} essentially gives us an algorithm to check whether a digraft $(D,S^t)$ contains a tight dijoin, or equivalently, is tight dijoin-covered. We delay the proof to \Cref{sec:proof-tight-dicut-decomp}.
\begin{CO}\label{CO:tight-dijoin-polytime}
    There is a polynomial-time algorithm to check whether a digraft $(D,S^t)$ has at least one tight dijoin, or equivalently, is tight dijoin-covered. Moreover, if the answer is no, it returns some $X\subseteq V$ that violates \eqref{eq:tight_dijoin}.
\end{CO}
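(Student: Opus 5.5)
We need a polynomial-time algorithm that decides whether $(D,S^t)$ has a tight dijoin and, if it does not, returns some $X$ violating \eqref{eq:tight_dijoin}. The plan is to reduce to degree-constrained strongly connected orientations and solve that as a matroid intersection, or equivalently a submodular flow, problem.

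By the proof of \Cref{thm:tight_dijoin}, $J$ is a tight dijoin if and only if $\vec E=(A\setminus J)\cup J^{-1}$ is a strongly connected orientation of the underlying graph $G$ with in-degree bounds $f\le|\delta^-_{\vec E}(v)|\le g$, where $f,g$ are given by \eqref{eq:degree_constraints}. Finding such an orientation is a feasibility problem for a submodular flow. We model the orientation choice through flipping variables $x\in\{0,1\}^A$ with the following constraints:
\begin{itemize}
\item the crossing-family constraints $x(\delta^+(U))-x(\delta^-(U))\ge 1-|\delta^-(U)|$;
\item the degree constraints at single nodes, which here simply say $d_J(v)=1$ for $v\in T\cup S^t$.
\end{itemize}
Edmonds--Giles submodular flow feasibility can be decided in strongly polynomial time (for instance via Frank's algorithm or the Fujishige--Iwata algorithms). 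A cleaner alternative in this bipartite setting uses \Cref{lemma:b-matching}: a tight dijoin exists if and only if some $b$ with $b(v)=1$ on $S^t$ satisfies \eqref{eq:b-matching}. The function $X\mapsto |N(X)|-1$ is submodular on intersecting pairs, so deciding this is a polymatroid membership question, solvable by submodular function minimization. Either route gives the yes/no answer in polynomial time. The tight dijoin itself is then a perfect $b$-matching, found by bipartite flow. Finally, \Cref{lemma:tight-dijoin-covered} gives the equivalence with being tight dijoin-covered.

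The main task is the certificate. When the answer is no, I would extract a violating set constructively. First, check the easy necessary conditions: $|S^t|\le|S|-1$, $|S|\le|T|$, and $2$-edge-connectivity. Then work with the submodular-flow (or polymatroid) infeasibility certificate. Infeasibility of a submodular flow is witnessed by a dual set: a set $X$ violating \eqref{eq:sco:lower_bound} or \eqref{eq:sco:upper_bound}. This is exactly how the Frank--Gyárfás proof proceeds, as a max-flow/min-cut argument, and it is computable by minimizing the submodular functions
\[
X\mapsto |E(X,V\setminus X)|+|E(X)|-\sigma(V-X)-f(X)
\quad\text{and}\quad
X\mapsto g(X)-|E(X)|-\sigma(V-X).
\]
One caveat is that $\sigma$ is not submodular as a set function. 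So I would instead run the standard augmenting-path orientation algorithm. Start from any strongly connected orientation and repeatedly reverse directed paths to fix degree violations. When no augmenting path exists, the set of vertices reachable from a deficient vertex yields the violating set directly.

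The remaining step is translation. Plugging the specific $f,g$ from \eqref{eq:degree_constraints} into \eqref{eq:sco}, I would show that any violating $X$ can be shrunk to a violator of the form $X\cap T$ or $X\cap S^t$. Vertices with $f=-\infty$ or $g=\infty$ can be dropped, and bipartiteness makes $|E(X)|$ vanish once $X$ lies on one side. This yields a set violating \eqref{eq:tight_dijoin}. I expect this translation, and ensuring that the augmenting-path certificate is produced in polynomial time, to be the main obstacle. The first thing I would try is to follow the proof of \Cref{thm:tight_dijoin} line by line, keeping the witnessing $X$ at each step.
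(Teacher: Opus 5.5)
Your yes/no procedure is fine, but the ``moreover'' part, which asks you to actually produce a set violating \eqref{eq:tight_dijoin}, is never established. That is the only content of the statement beyond \Cref{thm:tight_dijoin}.

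After noting that $\sigma$ is not submodular, you fall back on an augmenting-path orientation algorithm, and it does not work as described. Reversing a directed $s$--$t$ path decreases by one the in-degree of every set containing $t$ but not $s$. Starting from a strongly connected orientation, it can therefore create a set with no entering arc. Maintaining strong connectivity requires genuine submodular-flow machinery. You neither specify that machinery nor show how its infeasibility certificate becomes a single set $X$ with the $\sigma(V-X)$ count. The ``vertices reachable from a deficient vertex'' argument is the one giving Hakimi-type conditions for orientations without connectivity requirements, and it produces no $\sigma$ term.

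Conversely, the translation step you flag as the main obstacle is trivial. With $f,g$ as in \eqref{eq:degree_constraints}, any set containing a vertex with $f=-\infty$ (resp.\ $g=\infty$) satisfies \eqref{eq:sco:lower_bound} (resp.\ \eqref{eq:sco:upper_bound}) automatically. So every violator already lies inside $T$ (resp.\ $S^t$), where $E(X)=\emptyset$ and the condition reads $\sigma(V-X)\le |X|$.

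The missing idea is that $\sigma$ is well behaved on a single colour class (\Cref{lemma:submod-comp}). Take $X\subseteq Y\subseteq T$ and $v\in T\setminus Y$. All neighbours of $v$ lie in $S$ and are never deleted, and deleting more vertices cannot merge components. Hence the number of components of $D-X-v$ containing a neighbour of $v$ is nondecreasing in $X$; this number determines how much $\sigma$ grows when $v$ is also deleted. So $X\mapsto |X|-\sigma(V\setminus X)$ has nonincreasing marginals, i.e.\ is submodular, on subsets of $T$. The same argument works on subsets of $S^t$.

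You can therefore minimize this function directly by submodular function minimization, over nonempty $X\subseteq T$ and over nonempty $X\subseteq S^t$. To enforce nonemptiness, minimize over sets containing a fixed element, for each choice of that element. If both minima are nonnegative, \Cref{thm:tight_dijoin} guarantees a tight dijoin; otherwise the minimizer is the required violator. This gives the decision and the certificate simultaneously, with no orientation or flow algorithm.
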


\subsection{Two types of tight dicuts and basic digraft recognition}
Consider an arbitrary tight dijoin-covered digraft $(D,S^t)$. We begin by introducing two special classes of dicuts (Figure \ref{fig:tikz_side_simple} illustrates both classes).

\paragraph{Barrier dicuts.} First, if there is some $\emptyset\neq X\subseteq S^t \text{ or } \emptyset\neq X\subseteq T$, such that $\sigma(V\setminus X)=|X|$, then each connected component of $V\setminus X$ induces a tight dicut. Indeed, suppose $|X|=k$ and let $U_1,...,U_k$ be the connected components of $V\setminus X$. Assume $X\subseteq T$ without loss of generality. 
Then, for each $U_i$, $\delta^-(U_i)=\emptyset$, which means $\delta^+(U_i)$ is a dicut. 
Thus, every dijoin $J$ should satisfy $|J\cap \delta^+(U_i)|\geq 1$. In particular, $J$ should have at least one arc going from $U_i$ to $X$ for each $U_i$ since $U_1,...,U_k$ are disconnected from each other. Therefore, $d_J(X)\geq k$. Moreover, every tight dijoin $J$ has $d_J(v)=1\ \forall v\in X$ since $X\subseteq T$. Thus, $d_J(X)=\sum_{v\in X}d_J(v)=|X|=k$. Therefore, there is exactly one arc going from $X$ to each $U_i$, which implies $\delta^+(U_i)$ is a tight dicut. We call $X$ a \textit{barrier} 
and each dicut $\delta^+(U_i)$ a \emph{barrier dicut}. 
If all the barrier dicuts induced by $U_i$'s are trivial, then either $|X|=1$ or $|U_i|=1\ \forall i\in [k]$. In the second case, each $\{u_i\}:=U_i$ is a source because $D$ is connected. Therefore, $X=T$ and $|S|=|T|=k$. Thus, if $|S|<|T|$, then $X$ being a nontrivial barrier implies that there is at least one nontrivial barrier dicut induced by $U_i$ for some $i\in [k]$.

\paragraph{$2$-separation dicuts.} 
Second, if there exist $u\in S^t, v\in T$ such that $\{u,v\}$ forms a $2$-cut, i.e., the digraph induced by vertices $V\setminus \{u,v\}$ is disconnected, then another type of tight dicut arises.
Indeed, let $U$ and $U'$ be unions of components of $V\setminus \{u,v\}$ such that $U\dot\cup U'=V\setminus \{u,v\}$. For every tight dijoin $J$, one has $1+1=d_J(u)+d_J(v)=d_J(U\cup \{u\})+d_J(U'\cup \{u\})\geq 1+1$, where the inequality follows from the fact that both $\delta^+(U\cup \{u\})$ and $\delta^+(U'\cup \{u\})$ are dicuts. Therefore, $d_J(U\cup \{u\})=d_J(U'\cup \{u\})=1$.
This implies that both $\delta^+(U\cup \{u\})$ and $\delta^+(U'\cup \{u\})$ are tight dicuts. We call such a $2$-cut $\{u,v\}$, where $u\in S^t, v\in T$, a \emph{$2$-separation}. We refer to the dicuts $\delta^+(U\cup \{u\})$ and $\delta^+(U'\cup \{u\})$ as \emph{$2$-separation dicuts}.

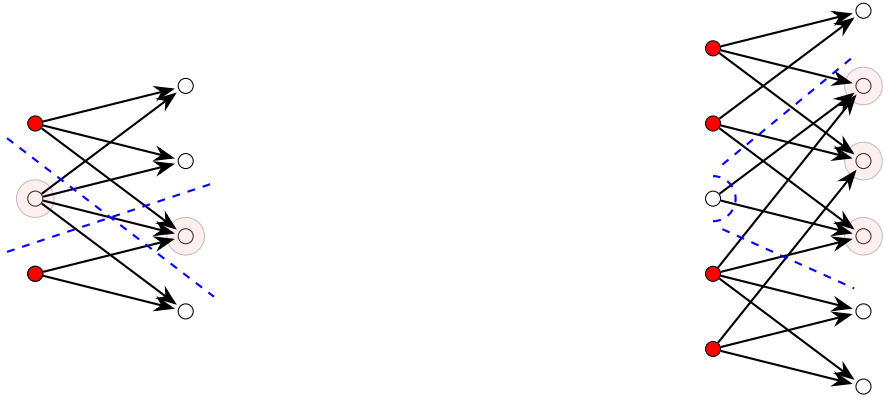
\begin{figure}[H]\label{pic:barrier-and-sep}
    \centering
        \begin{minipage}{0.4\textwidth}
        \centering
  	\begin{tikzpicture}[
  scale = 1,
  vx/.style = {draw, fill=white, circle, minimum size=2mm, inner sep=0pt},
  vxr/.style = {vx, fill=red}, 
  vxp/.style = {
    vx,
    fill=pink, opacity=0.25, transform shape, scale=2.5, 
  },
  dedge/.style = {thick, -{Stealth[length=8pt, sep]}},
  cut/.style = {blue, thick, dashed}
]

\node[vxr] (a1) at (-1,4.5) {};  
\node[vxr] (a2) at (-1,3.5) {};
\node[vx] (a3) at (-1,2.5) {};
\node[vxr] (a4) at (-1,1.5) {};
\node[vxr] (a5) at (-1,0.5) {};

\node[vx] (b1) at (1,5) {}; 
\node[vx] (b2) at (1,4) {};
\node[vx] (b3) at (1,3) {};
\node[vx] (b4) at (1,2) {};
\node[vxp] (x) at (1,4) {};
\node[vxp] (y) at (1,3) {};
\node[vxp] (z) at (1,2) {};
\node[vx] (b5) at (1,1) {};
\node[vx] (b6) at (1,0) {};

        \draw[dedge] (a1) to (b1);
        \draw[dedge] (a1) to (b2);
        \draw[dedge] (a1) to (b3);
        \draw[dedge] (a2) to (b1);
        \draw[dedge] (a2) to (b3);
        \draw[dedge] (a2) to (b4);
        \draw[dedge] (a3) to (b2);
        \draw[dedge] (a3) to (b4);
        \draw[dedge] (a4) to (b2);
        \draw[dedge] (a4) to (b4);
        \draw[dedge] (a4) to (b5);
        \draw[dedge] (a4) to (b6);
        \draw[dedge] (a5) to (b3);
        \draw[dedge] (a5) to (b5);
        \draw[dedge] (a5) to (b6);

\node (P) at (-1, 2.85) {};
\node (Q) at (1, 4.5) {};
\node (A) at (-1, 2.15) {};
\node (B) at (1, 1.25) {};
    \draw[cut] (P) to (Q);	
    \draw[cut] (A) to (B);	

    \draw[cut] (-1,2.2) arc (-90:90:0.3);
	\end{tikzpicture}
    \end{minipage}
    \hfill
    \begin{minipage}{0.4\textwidth}
        \centering
	\begin{tikzpicture}[
  scale = 1,
  vx/.style = {draw, fill=white, circle, minimum size=2mm, inner sep=0pt},
  vxr/.style = {vx, fill=red}, 
  vxp/.style = {
    vx,
    fill=pink, opacity=0.25, transform shape, scale=2.5, 
  },
  dedge/.style = {thick, -{Stealth[length=8pt, sep]}},
  cut/.style = {blue, thick, dashed}
]

\node[vxr] (a1) at (-1,2.5) {};  
\node[vx] (a2) at (-1,1.5) {};
\node[vxp] (x) at (-1,1.5) {};

\node[vxr] (a3) at (-1,0.5) {};
\node[vxr] (a3) at (-1,0.5) {};

\node[vx] (b1) at (1,3) {}; 
\node[vx] (b2) at (1,2) {};
\node[vx] (b3) at (1,1) {};
\node[vxp] (y) at (1,1) {};
\node[vx] (b4) at (1,0) {};

        \draw[dedge] (a1) to (b1);
        \draw[dedge] (a1) to (b2);
        \draw[dedge] (a1) to (b3);
        \draw[dedge] (a2) to (b1);
        \draw[dedge] (a2) to (b2);
        \draw[dedge] (a2) to (b3);
        \draw[dedge] (a2) to (b4);
        \draw[dedge] (a3) to (b3);
        \draw[dedge] (a3) to (b4);

\node (P) at (-1.5, 0.75) {};
\node (Q) at (1.5, 1.75) {};
\node (A) at (-1.5, 2.4) {};
\node (B) at (1.5, 0.1) {};
    \draw[cut] (P) to (Q);	
    \draw[cut] (A) to (B);
	\end{tikzpicture}
    \end{minipage}
    \caption{(Left): an example of barrier dicuts. The vertices in $S\setminus S^t$ are filled solid. The vertices of the barrier are circled and the arising barrier dicuts are in dashed. (Right): an example of $2$-separation dicuts. The vertices of a $2$-separation are circled and the two arising tight dicuts are in dashed.}
    \label{fig:tikz_side_simple}
\end{figure}

The importance of barrier dicuts and $2$-separation dicuts is highlighted in \Cref{thm:barrier_tight_equiv}. Its proof is quite technical, so we delay it to \Cref{sec:proof-tight-dicut}. We note that this theorem does not imply that every tight dicut is a barrier or $2$-separation dicut. It also gives a polynomial-time algorithm to recognize a basic digraft, which is proved in \Cref{sec:proof-tight-dicut-decomp}.
\begin{CO}\label{CO:tight-dicut-polytime}
    There is a polynomial-time algorithm to check whether a tight dijoin-covered digraft $(D,S^t)$ is basic. Moreover, if the answer is no, it returns some barrier dicut or $2$-separation dicut.
\end{CO}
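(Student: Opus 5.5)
By \Cref{thm:barrier_tight_equiv}, a tight dijoin-covered digraft $(D,S^t)$ is basic if and only if it contains no nontrivial barrier dicut and no $2$-separation dicut. So it suffices to give polynomial-time searches for these two objects; each search either returns one or certifies that none exists.

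\emph{$2$-separation dicuts.} Enumerate all pairs $u\in S^t$, $v\in T$; there are at most $|V|^2$ of them. For each pair, delete $\{u,v\}$ and compute the weakly connected components of $D-\{u,v\}$, for instance by BFS. If there are at least two components, let $U$ be one of them and $U'$ the union of the others. Then $\delta^+(U\cup\{u\})$ and $\delta^+(U'\cup\{u\})$ are $2$-separation dicuts, as argued above, and we return one of them. If no pair disconnects $D$, there is no $2$-separation dicut.

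\emph{Barrier dicuts.} We must find $\emptyset\neq X\subseteq T$ or $\emptyset\neq X\subseteq S^t$ with $\sigma(V\setminus X)=|X|$ such that some component of $V\setminus X$ induces a nontrivial dicut. I would reduce this to repeated feasibility tests via \Cref{CO:tight-dijoin-polytime}, which returns a violator of \eqref{eq:tight_dijoin} whenever a digraft has no tight dijoin. Since $(D,S^t)$ has a tight dijoin, \eqref{eq:tight_dijoin} holds, so barriers are exactly the sets on which \eqref{eq:tight_dijoin} is tight. To expose them, perturb the instance so that tightness becomes violation. For each vertex $w\in T$ (respectively $w\in S^t$) and each choice of component direction, modify the digraft so that \eqref{eq:tight_dijoin} is violated precisely when some barrier containing $w$ exists. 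Candidate modifications are adding an extra pendant source/sink pair, or subdividing, so that $\sigma(V-X)$ increases by one for every $X\ni w$. Then run \Cref{CO:tight-dijoin-polytime} on the modified instance. If it reports infeasibility, the returned violator yields, after undoing the modification, an $X$ with $\sigma(V\setminus X)=|X|$.

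Alternatively, one can compute $\max_X\bigl(\sigma(V-X)-|X|\bigr)$ directly. Because $X$ lies entirely on one side of the bipartition, the degree-constrained SCO framework of \Cref{thm:degree_SCO} suggests this is a submodular-type minimization, solvable in polynomial time.

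Once a barrier $X$ is found, we check its components $U_i$. If some $U_i$ has $1<|U_i|<|V|-1$, we return $\delta^+(U_i)$. If all are trivial, then by the discussion above either $|X|=1$ or $X=T$ with $|S|=|T|$, and we must continue searching for other barriers. To handle this, I would restrict the search to barriers with $|X|\ge 2$ and some component of size at least $2$. Concretely, enumerate a vertex $w\in X$ and a pair of vertices forced into the same component of $V\setminus X$, and impose these as constraints in the perturbation.

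The main obstacle is this barrier step: extracting barriers that are nontrivial from the feasibility oracle, and finding a gadget that turns tight inequalities of \eqref{eq:tight_dijoin} into violated ones without destroying the bipartite digraft structure. If the gadget approach fails, I would fall back on the submodular minimization formulation, with enumeration over a fixed vertex in $X$ and a fixed vertex pair in a common component of $V\setminus X$.
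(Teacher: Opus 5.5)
Your reduction via \Cref{thm:barrier_tight_equiv} and your $2$-separation search match the paper. The barrier step, however, is never actually established, and it is the real content of the corollary.

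The gadget route is left unspecified. Any modified instance would also have to remain a $2$-edge-connected digraft with $|S|\le|T|$ for \Cref{CO:tight-dijoin-polytime} to apply. Moreover, that oracle is itself implemented by minimizing the function below, so the gadget gains nothing. Your fallback rests on the claim that maximizing $\sigma(V-X)-|X|$ is ``submodular-type'', and \Cref{thm:degree_SCO} does not give you this.

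The missing lemma is that $f(X):=|X|-\sigma(V\setminus X)$ is submodular on subsets of $T$, and separately on subsets of $S^t$. Take $X\subseteq Y\subseteq T$ and $v\in T\setminus Y$. Adding $v$ to $X$ changes $\sigma$ by $\rho_X-1$, where $\rho_X$ is the number of components of $V\setminus(X\cup\{v\})$ adjacent to $v$. Every neighbour of $v$ is a source, so it lies outside $Y$. The components of $V\setminus(Y\cup\{v\})$ refine those of $V\setminus(X\cup\{v\})$, hence $\rho_Y\ge\rho_X$, i.e.\ $f$ has non-increasing marginals. Since $(D,S^t)$ is tight dijoin-covered, $f\ge 0$ on nonempty sets, and barriers are exactly its zeros.

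Your mechanism for forcing nontriviality also fails as stated. Consider the sets $X$ for which a fixed pair of vertices lies in a common component of $V\setminus X$. This family is closed under intersection but not under union, so it is not a lattice family, and submodular minimization cannot be restricted to it. In addition, fixing a single vertex $w\in X$ does not rule out $|X|=1$.

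The right constraint is simpler and follows from the observation you already cite. If every barrier dicut of $X$ is trivial, then either $|X|=1$ or every component is a singleton. For sink barriers the singleton case forces $X=T$ with $|S|=|T|$. For $X\subseteq S^t$ it is impossible, because it would force $X=S$ while $|S^t|<|S|$.

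Hence it suffices to minimize $f$ over $\{X\subseteq T: u,v\in X,\ w\notin X\}$ for all triples $u,v,w\in T$, and over $\{X\subseteq S^t: u,v\in X\}$ for all pairs $u,v\in S^t$. These are intervals of the Boolean lattice, so submodular minimization applies. Any zero found is a barrier one of whose components induces a nontrivial barrier dicut.
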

Using the fact that $D$ is $2$-edge-connected, the next proposition shows that if a digraft $(D,S^t)$ has no nontrivial barrier, then $\widebar{S^t}=S^t$. In particular, if $(D,S^t)$ is a brick, having no nontrivial barrier dicut implies that it has no nontrivial barrier, and thereby $\widebar{S^t}=S^t$. This fact will be useful in later sections.

\begin{PR}\label{prop:new_tight}
Let $(D,S^t)$ be a tight dijoin-covered digraft. If there exists $s\in \widebar{S^t}\setminus S^t$, then there exists a barrier $X\subseteq T$ such that $N(s)\subseteq X$. 
\end{PR}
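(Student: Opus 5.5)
The plan is to turn the tightness of $s$ into the infeasibility of a degree-constrained strongly connected orientation problem, and then read off the barrier from the violated Frank--Gy\'arf\'as condition (\Cref{thm:degree_SCO}). As in the proof of \Cref{thm:tight_dijoin}, tight dijoins $J$ of $(D,S^t)$ correspond to strongly connected orientations $\vec{E}=(A\setminus J)\cup J^{-1}$ satisfying the bounds \eqref{eq:degree_constraints}. For a source $s$ we have $|\delta^-_{\vec{E}}(s)|=d_J(s)$. Since $s\in\widebar{S^t}$, no tight dijoin has $d_J(s)\ge 2$. We also have $d_J(s)\ge 1$, because $\delta(s)$ is a dicut that every dijoin meets. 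So I modify $f$ by setting $f(s):=2$ and keep everything else as in \eqref{eq:degree_constraints}; then no feasible orientation exists. By \Cref{thm:degree_SCO}, some nonempty $X$ violates \eqref{eq:sco:lower_bound} or \eqref{eq:sco:upper_bound}. The upper-bound condition does not involve $f$, and it holds because a tight dijoin exists. So $X$ violates \eqref{eq:sco:lower_bound}. Since the original problem is feasible, we must have $s\in X$. Moreover $f=-\infty$ on $S\setminus\{s\}$, so $X=Y\cup\{s\}$ with $Y\subseteq T$.

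Next I compute the violated inequality. Put $k:=d(s)-|E(s,Y)|$, the number of arcs from $s$ to $T\setminus Y$. The arcs meeting $X$ are all arcs at $Y$ and at $s$, and those between $s$ and $Y$ are counted twice. Hence
$|E(X,V\setminus X)|+|E(X)|=\sum_{v\in Y}d(v)+d(s)-|E(s,Y)|$.
The violation $2+\sum_{v\in Y}(d(v)-1)>\sum_{v\in Y}d(v)+k-\sigma(V-X)$ then becomes, by integrality, $\sigma(V-X)\ge |Y|+k-1$. I will compare this with \eqref{eq:tight_dijoin}, which holds because a tight dijoin exists and gives $\sigma(V-Z)\le |Z|$ for every nonempty $Z\subseteq T$. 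Note that $Y\ne\emptyset$: otherwise $X=\{s\}$ and $k=d(s)$, so $\sigma(V-s)\ge d(s)-1\ge 1$, which should be checked to contradict $2$-edge-connectivity plus the bipartite structure. If this degenerate case survives, I will treat it within the general case below.

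There are two cases. If $k=0$, then $N(s)\subseteq Y$ and $s$ is an isolated vertex of $D-Y$. Thus $\sigma(V-Y)=\sigma(V-X)+1\ge |Y|$, and with \eqref{eq:tight_dijoin} this is an equality, so $Y$ is a barrier containing $N(s)$. If $k\ge 1$, let $C$ be the component of $D-Y$ containing $s$. Deleting $s$ splits $C$ into at most $k$ pieces, so
$\sigma(V-X)\le \sigma(V-Y)-1+k\le |Y|-1+k$.
Equality must therefore hold throughout: the $k$ neighbours $t_1,\dots,t_k$ of $s$ outside $Y$ lie in $k$ distinct pieces $P_1,\dots,P_k$, and $\sigma(V-Y)=|Y|$. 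Now take $X':=Y\cup N(s)\subseteq T$. Each $t_i$ is a sink of degree at least $2$ by $2$-edge-connectivity. Its other neighbours are sources different from $s$, so they lie in $P_i$, and hence $P_i\setminus\{t_i\}\ne\emptyset$. Counting components of $D-X'$ gives at least $|Y|-1$ components of $D-Y$ other than $C$, plus the isolated vertex $s$, plus at least one component inside each $P_i\setminus\{t_i\}$. Hence $\sigma(V-X')\ge |Y|+k=|X'|$, and \eqref{eq:tight_dijoin} forces equality. So $X'$ is a barrier with $N(s)\subseteq X'$.

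I expect the main obstacle to be the $k\ge 1$ case, and in particular the counting when deleting $t_i$ interacts with $Y$. One must check that the components of $D-Y$ other than $C$ remain intact after removing $t_i\in C$, which is clear. One must also make sure the pieces $P_i\setminus\{t_i\}$ are genuinely nonempty and disjoint from the other counted components. The bookkeeping of which vertices carry $f=-\infty$ when applying \Cref{thm:degree_SCO} also needs care.
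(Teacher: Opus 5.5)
Your setup matches the paper's: Frank--Gy\'arf\'as with $f(s)=2$, a violating set $X=Y\cup\{s\}$ with $\sigma(V-X)\ge |Y|+k-1$, then enlarging to $Y\cup N(s)$. Your cases $k=0$, and $k\ge 1$ with $Y\neq\emptyset$, are correct. The gap is the degenerate case $Y=\emptyset$, which you hoped would be contradictory.

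\textbf{Why $Y=\emptyset$ is not contradictory.} If $d(s)=2$, then $\sigma(V-s)\ge 1=d(s)-1$ holds trivially, so $X=\{s\}$ always violates \eqref{eq:sco:lower_bound}. Such an $s$ really is a tight node, since \Cref{lemma:b-matching} gives $d_J(s)\le |N(s)|-1\le 1$. Because \Cref{thm:degree_SCO} only hands you \emph{some} violating set, you must handle this one.

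\textbf{Why your general case does not absorb it.} The step $\sigma(V-Y)\le |Y|$ is \eqref{eq:tight_dijoin}, which requires $Y\neq\emptyset$. For $Y=\emptyset$ you have $\sigma(V)=1>0=|Y|$, so the chain only gives $\sigma(V-s)\le k$ and equality is no longer forced. You then lose the conclusion that the neighbours of $s$ are distinct and lie in distinct pieces. That conclusion is exactly what you used to show $P_i\setminus\{t_i\}\neq\emptyset$.

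\textbf{How the paper avoids this.} The paper never uses an upper bound on $\sigma(V-Y)$. It argues directly that every component of $D-(Y\cup\{s\})$ keeps a vertex outside $N(s)$, while $s$ becomes an additional isolated component. This gives $\sigma\bigl(V\setminus(Y\cup N(s))\bigr)\ge \sigma(V-X)+1\ge |Y|+k\ge |Y\cup N(s)|$ uniformly in $Y$, including $Y=\emptyset$.

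\textbf{Patching your route.} If you keep your equality-forcing argument, treat $Y=\emptyset$ separately. By $2$-edge-connectivity, each component of $D-s$ is joined to $s$ by at least two arcs. Hence $d(s)\ge 2\sigma(V-s)\ge 2d(s)-2$, which forces $d(s)=2$. Then there are two sub-cases.
\begin{enumerate}
\item If $N(s)=\{t\}$, then $\sigma(V-t)\ge 1$ trivially, since $s$ survives.
\item If $N(s)=\{t_1,t_2\}$, each $t_i$ has degree at least $2$ but only one arc to $s$, so it has another neighbour. Thus $D-N(s)$ contains the isolated vertex $s$ plus a nonempty remainder, and $\sigma(V-N(s))\ge 2$.
\end{enumerate}
In both cases \eqref{eq:tight_dijoin} forces equality, so $N(s)$ itself is the required barrier.
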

\begin{proof}
     As we argued in the proof of \Cref{thm:tight_dijoin}, $J$ is a tight dijoin if and only if $\vec{E}:=(A\setminus J) \cup J^{-1}$ is a strongly connected orientation obeying the degree constraints described in \eqref{eq:degree_constraints}. Since $s\in \widebar{S^t}$ is a tight node, there is no strongly connected orientation $\vec{E}$ satisfying $f(v)\leq |\delta_{\vec{E}}^-(v)|\leq g(v)$ for every $v\in V$, where
\begin{equation*}
\begin{aligned}
    f(v)=\begin{cases}
        d(v)-1, & v\in T\\
        2, & v=s\\
        -\infty, & o.w.
    \end{cases}
\end{aligned} \quad \text{ and } \quad
\begin{aligned} 
    g(v)=\begin{cases}
        1, & v\in S^t\\
        \infty, & o.w.
    \end{cases}
\end{aligned}
\end{equation*}
By \Cref{thm:degree_SCO}, there exists $X'$ that violates either \eqref{eq:sco:lower_bound} or \eqref{eq:sco:upper_bound}. This is not the case when the lower bound $f(s)=2$ is not imposed, since $(D,S^t)$ is tight dijoin-covered. This means we must have $s\in X'$ and \eqref{eq:sco:lower_bound} is violated, i.e.,
\[
f(X')\geq |E(X',V\setminus X')|+|E(X')|-\sigma(V-X')+1.
\]
Then, there exists $X\subseteq T$ such that $X'=X\cup \{s\}$ satisfying 
\[
\begin{aligned}
    \sum_{v\in X} (d(v)-1)+2&\geq \sum_{v\in X} d(v)+d(s)-|E(s,X)|-\sigma\big(V-(X\cup\{s\})\big)+1,\\
    i.e.,\quad \sigma\big(V-(X\cup\{s\})\big)&\geq |X|+d(s)-|E(s,X)|-1.
\end{aligned}
\]
Now, if we focus on a component $C$ in $V\setminus (X\cup\{s\})$ that contains some neighbor $t\in N(s)$, then $C\setminus \{t\}$ is nonempty because otherwise $s$ would be the only neighbor of $t$, which is not the case. This implies that a component in $V-(X\cup \{s\})$ remains nonempty in $V-(X\cup N(s))$. Clearly, if deleting $X\cup \{s\}$ separates two components, then deleting $X\cup N(s)$ separates them as well. Additionally, deleting $X\cup N(s)$ would isolate $s$ as an additional component. Therefore, $\sigma\big(V\setminus (X\cup N(s))\big)\geq \sigma\big(V\setminus (X\cup\{s\})\big)+1$. Thus,
\[
\sigma\big(V\setminus (X\cup N(s))\big)\geq \sigma\big(V\setminus (X\cup\{s\})\big)+1\geq |X|+d(s)-|E(s,X)|=|X|+|E(s,T\setminus X)|\geq |X\cup N(s)|.
\]
Therefore, we obtain a barrier $X\cup N(s)$ containing $N(s)$. 
\end{proof}

A direct corollary of \Cref{prop:new_tight} is the following.
\begin{CO}\label{brick-maximally-valid}
    A brick $(D,S^t)$ has $\widebar{S^t}=S^t$.
\end{CO}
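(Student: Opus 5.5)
The plan is to argue by contradiction, combining \Cref{prop:new_tight} with the barrier-dicut discussion above. Let $(D,S^t)$ be a brick and suppose some $s\in \widebar{S^t}\setminus S^t$ exists. By definition a brick is tight dijoin-covered, so \Cref{prop:new_tight} applies and yields a barrier $X\subseteq T$ with $N(s)\subseteq X$, that is, $\sigma(V\setminus X)=|X|$. The goal is then to show that $X$ produces a nontrivial barrier dicut, contradicting basicness.

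The key steps are as follows. First, since $N(s)\subseteq X$, the vertex $s$ is an isolated component $\{s\}$ of $V\setminus X$. Second, recall from the paragraph on barrier dicuts that if every barrier dicut induced by the components $U_1,\dots,U_k$ of $V\setminus X$ is trivial, then either $|X|=1$ or every $U_i$ is a singleton. The singleton case forces $X=T$ and $|S|=|T|$, which is impossible in a brick, where $|S|<|T|$. So it remains to rule out $|X|=1$ with all $U_i$ trivial.

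Third, suppose $X=\{t\}$. Then $\sigma(V\setminus t)=1$, so $V\setminus t$ is a single component. Since $\{s\}$ is a component of $V\setminus X$, this forces $V\setminus t=\{s\}$, which means $|V|=2$. That contradicts $|S|<|T|$ together with the $2$-edge-connectivity of $D$. (Alternatively, $s$ would be a cut vertex situation: $N(s)=\{t\}$ gives $d(s)=1$ unless arcs are parallel, and parallel arcs only would again make $V=\{s,t\}$.)

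Hence some barrier dicut $\delta^+(U_i)$ is nontrivial. It is tight by the barrier argument, which contradicts the assumption that a brick has no nontrivial tight dicuts. Therefore $\widebar{S^t}=S^t$.

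The main obstacle is the degenerate case $|X|=1$. I handle it by observing that $\{s\}$ is itself one of the components, so $\sigma(V\setminus X)=|X|=1$ makes the whole graph tiny; this needs the trivial-size check to be done carefully.
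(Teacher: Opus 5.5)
Your proof is correct and follows the paper's route: apply \Cref{prop:new_tight} to obtain a barrier $X\subseteq T$ with $N(s)\subseteq X$, then use the barrier-dicut discussion together with $|S|<|T|$ to extract a nontrivial tight barrier dicut, which contradicts that a brick is basic. Your explicit treatment of the case $|X|=1$ (where $\{s\}$ being a component forces $V=\{s,t\}$) fills in a detail the paper leaves implicit. That case already contradicts $|S|<|T|$ directly, so you do not need $2$-edge-connectivity there.
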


\subsection{Tight dicut decomposition}\label{subsec:tight-decomp}
We can decompose a digraft into two smaller digrafts by contracting a nontrivial dicut $C$. The \emph{$C$-contractions} of a digraft $(D,S^t)$ are the two smaller digrafts $(D_1,S_1^t)$, $(D_2,S_2^t)$ obtained from contracting $V\setminus U$, $U$ into $\bar{u}$, $u$ resp., and setting the contracted vertices to be tight.
\begin{DE}[$C$-contractions of $(D,S^t)$]\label{def:dicut-contraction}
    Let $C=\delta^+(U)$ be a nontrivial dicut of $(D,S^t)$. 
    Let $U_1:=V\setminus U$ and $U_2:=U$. Let $D_i=(V_i,A_i)$ be the digraph obtained from $D$ after contracting $U_i$ to a singleton $u_i$; so $V_i=\{u_i\}\cup U_{3-i}$. Let $V_i^t:=(V^t\cap V_i)\cup \{u_i\}$ and $S_i^t:=V_i^t\cap S_i$. We refer to $(D_i,S_i^t),i=1,2$ as the \emph{$C$-contractions} of $(D,S^t)$. 
\end{DE}

The definition of $C$-contractions can be extended to general digrafts $(D,\zF)$ (see \Cref{def:dicut-contraction-general}). The following lemma from \cite{abdi2025strongly} implies that if $C$ is a tight dicut of a digraft $(D,\zF)$, then we can combine bases of two $C$-contractions into a basis of the original digraft. 
This is a fairly common argument that appears in many graph objects, e.g. perfect matchings, that involves ``gluing" along a tight cut.
\begin{lemma}[\cite{abdi2025strongly}, Lemma 10]\label{lemma:short_basis-going-up}
    Let $(D,\zF)$ be a digraft and $C=\delta^+(U)$ be a dicut such that $\zJ(D,\zF\cup \{U\})\neq\emptyset$. Let $(D_i,\zF_i),i=1,2$ be the $C$-contractions of $(D,\zF)$. Suppose $\zB_i\subseteq \zJ(D_i,\zF_i)$ is an integral basis for $\lin(\zJ(D_i,\zF_i))$, $i=1,2$. Then one can combine them into an integral basis $\zB\subseteq \zJ(D,\zF\cup \{U\})$ for $\lin(\zJ(D,\zF\cup \{U\}))$, where $|\zB|=|\zB_1|+|\zB_2|-|C|$.
\end{lemma}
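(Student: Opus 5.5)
The plan is to prove the lemma by the standard ``gluing along a tight cut'' argument, in three stages: describe $\zJ(D,\zF\cup\{U\})$ and its linear hull in terms of the two contractions, then build $\zB$ explicitly, then check integrality and the count.

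\textbf{Bijection with compatible pairs.} Write $\zF':=\zF\cup\{U\}$.
\begin{itemize}
\item The arcs of $D_1$ are $A(U)\cup C$ and the arcs of $D_2$ are $A(V\setminus U)\cup C$. Here $C$ appears in both, and all arcs of $D$ lie in one of the two.
\item For $J\in\zJ(D,\zF')$, the restriction $J_i:=J\cap A_i$ is a tight dijoin of $(D_i,\zF_i)$. Dicuts of $D_i$ correspond to dicuts of $D$ whose shores do not cross $U$, and $|J\cap C|=1$ gives degree $1$ at the contracted vertex $u_i$.
\item Conversely, take $J_i\in\zJ(D_i,\zF_i)$ with $J_1\cap C=J_2\cap C=\{e\}$. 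Then $J_1\cup J_2$ is a tight dijoin of $D$. To see that every dicut of $D$ is hit, uncross it with $U$ using modularity as in Lemma~\ref{lemma:tight_dicut_uncross}: if $\delta^+(W)$ is a dicut missed by $J$, then $\delta^+(W\cap U)$ or $\delta^+(W\cup U)$ is a dicut missed by $J$, and such a dicut lives in one contraction.
\end{itemize}
So $\zJ(D,\zF')$ is exactly the set of compatible pairs. Moreover, every $e\in C$ lies in some tight dijoin of each $D_i$: this is tight dijoin-coveredness inherited from $\zJ(D,\zF')\neq\emptyset$. I expect this to be the main obstacle if $\zB_i$ must cover all of $C$. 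I would handle it via Lemma~\ref{lemma:tight-dijoin-covered}, after first discarding arcs of $C$ that lie in no tight dijoin by fixing them to zero.

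\textbf{Construction.} For each $e\in C$, fix $K_i^e\in\zJ(D_i,\zF_i)$ containing $e$, for $i=1,2$. The integral linear functional $x\mapsto x_e$ on $\lin(\zJ(D_i,\zF_i))$, restricted to $C$, lets us change basis.
\begin{itemize}
\item Transform $\zB_1$ by unimodular operations into an integral basis $\{P^e:e\in C\}\cup\{Q_1,\dots,Q_r\}$ with $P^e|_C=\chi_e$ and $Q_j|_C=0$. This is possible because the coordinates on $C$ of $\lat(\zB_1)$ form exactly $\lat\{\chi_e\}$: each $K_1^e$ lies in the lattice, so each $\chi_e$ is attained.
\item The elements $P^e,Q_j$ need not be dijoins. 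To keep $\zB$ consisting of tight dijoins, do the combination at the level of sets instead. Glue $K_1^e$ with every element of $\zB_2$ whose $C$-part is $\{e\}$, and glue every element of $\zB_1$ whose $C$-part is $\{f\}$ with $K_2^f$.
\item Then prune to a linearly independent generating subset of size $|\zB_1|+|\zB_2|-|C|$.
\end{itemize}

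\textbf{Integrality and dimension.} For integrality, take $x\in\lin(\zJ(D,\zF'))\cap\Z^A$ and split it into $x_1=x|_{A_1}$ and $x_2=x|_{A_2}$, with $x_1|_C=x_2|_C$. Each $x_i$ lies in $\lin(\zJ(D_i,\zF_i))\cap\Z^{A_i}=\lat(\zB_i)$. Write $x_1=\sum_{J}\lambda_J J$ over $\zB_1$. Subtract $\sum_J\lambda_J\,(J\cup K_2^{f(J)})$, where $f(J)$ is the $C$-arc of $J$. The remainder is zero on $A_1\setminus C$ and on $C$, and so lies in the lattice of $\zB_2$ restricted to vectors vanishing on $C$. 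Differences of the form $(K_1^e\cup B)-(K_1^e\cup B')$ with $B,B'\in\zB_2$ sharing the same $C$-arc $e$ realize exactly this restricted lattice. For the count, $\dim\lin(\zJ(D,\zF'))=\dim_1+\dim_2-|C|$, since the linear hull is the fibre product over $\R^C$ and both projections onto $\R^C$ are surjective. The glued family spans this space, so extracting a basis via Hermite normal form gives $|\zB|=|\zB_1|+|\zB_2|-|C|$, and everything is polynomial time.
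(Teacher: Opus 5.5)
\textbf{Gap: the final extraction step does not keep the basis inside $\zJ(D,\zF\cup\{U\})$.} Your gluing and your integrality argument are sound. Even with arbitrary connectors $K_i^e$, the family
\[
\{J\cup K_2^{f(J)}: J\in\zB_1\}\cup\{K_1^{e(B)}\cup B: B\in\zB_2\}
\]
consists of tight dijoins and generates $\lin(\zJ(D,\zF\cup\{U\}))\cap\Z^A$. The problem is what you do next.
\begin{itemize}
\item This family has $|\zB_1|+|\zB_2|$ members but rank $|\zB_1|+|\zB_2|-|C|$. A generating set of a lattice need not contain a lattice basis; for instance $\{2,3\}$ generates $\Z$, but neither element alone does.
\item Write $K_1^e=\sum_J\alpha^e_J J$ and $K_2^e=\sum_B\beta^e_B B$ in the bases $\zB_1,\zB_2$. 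The integer relations among your family are generated by
\[
\textstyle\sum_J\alpha^e_J\,(J\cup K_2^{f(J)})=K_1^e\cup K_2^e=\sum_B\beta^e_B\,(K_1^{e(B)}\cup B),\qquad e\in C.
\]
You can discard $|C|$ members and still generate only if this relation matrix has a $|C|\times|C|$ minor of determinant $\pm1$. Nothing in your argument provides one.
\item Falling back on Hermite normal form does give a basis. But its vectors are integer combinations of your sets, generally not $0/1$ vectors. So you lose $\zB\subseteq\zJ(D,\zF\cup\{U\})$, which is exactly what the lemma asserts. It is also what the paper relies on: the recursion in \Cref{thm:basis_brick} and the parity argument in \Cref{thm:parity-sco} both need basis elements that are actual tight dijoins.
\end{itemize}

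\textbf{The fix: choose the connectors inside the given bases.} For each $e\in C$, some member of $\zB_i$ has $C$-arc $e$; otherwise $x_e$ would vanish on $\lin(\zB_i)=\lin(\zJ(D_i,\zF_i))$. Take $K_i^e$ to be such a member.
\begin{itemize}
\item Then $K_1^e\cup K_2^e$ arises both from $J=K_1^e$ and from $B=K_2^e$. These are the only coincidences, so the glued family has exactly $|\zB_1|+|\zB_2|-|C|$ distinct tight dijoins.
\item Your subtraction argument shows they generate the lattice.
\item Linear independence is direct. Restrict a dependency to $A_1$ and use independence of $\zB_1$: this kills the coefficients of all $J\cup K_2^{f(J)}$ with $J$ not a connector. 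Then restrict to $A_2$ and use independence of $\zB_2$ to kill the rest.
\end{itemize}
No pruning or Hermite normal form is needed, and the count $|\zB_1|+|\zB_2|-|C|$ comes for free.

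\textbf{On the coverage of $C$.} The fix requires every arc of $C$ to lie in a tight dijoin of each contraction. Your fallback of discarding arcs that fail this would change $|C|$ and break the stated count. It is also unnecessary, because the property holds:
\begin{itemize}
\item $\zF\cup\{U\}$ is cross-free, so you can decompose along its nontrivial members.
\item Apply \Cref{lemma:tight-dijoin-covered} to the resulting pieces $(D_j,S_j^t)$, then glue back with \Cref{de-composition}. This shows $(D,\zF\cup\{U\})$ is tight dijoin-covered.
\item For $e\in C$, the restrictions of a tight dijoin through $e$ are tight dijoins of both $D_i$ through $e$.
\end{itemize}
In all of the paper's applications both contractions are tight dijoin-covered anyway.
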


Therefore, we can keep decomposing a digraft along tight dicuts until there is no nontrivial tight dicut. This will end up with a list of basic digrafts. We state a useful fact about tight dicut decomposition and delay its proof to \Cref{sec:proof-tight-dicut-decomp}.

\begin{prop}\label{prop:unique_decomposition}
    The results of any two tight dicut decomposition procedures on a digraft $(D,\zF)$ are the same list of bricks and braces up to the multiplicities of the arcs.
\end{prop}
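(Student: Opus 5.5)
We will follow the classical argument of Edmonds, Pulleyblank and Lov\'asz for uniqueness of the tight cut decomposition of matching-covered graphs, adapted to dicuts. Proceed by induction on $|V|$. Take two tight dicut decompositions of $(D,\zF)$. By the paragraph before \Cref{def:digraft-special}, we may first split along the nontrivial members of $\zF$. By \Cref{lemma:tight_dicut_uncross}, adding a dicut to $\zF$ and its uncrossed pair gives the same set of tight dijoins, so the choice of splitting order does not matter at this stage. Hence it suffices to compare two procedures whose first steps contract nontrivial tight dicuts $C=\delta^+(U)$ and $C'=\delta^+(W)$ of the same digraft. The goal is to show that both lists agree with the list obtained by a common refinement.

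\textbf{Step 1: $U$ and $W$ do not cross.} Replacing $U$ or $W$ by its complement if necessary, we may assume $U\subseteq W$ or $W\subseteq U$ or $U\cap W=\emptyset$. Then $C'$ survives as a tight dicut in one of the $C$-contractions. Tight dijoins of a $C$-contraction extend to tight dijoins of $D$ via the gluing in \Cref{lemma:short_basis-going-up}, so tightness transfers between $D$ and the contraction. Symmetrically, $C$ survives in one of the $C'$-contractions. Contracting both gives the same three digrafts in either order, up to parallel arcs. By induction, each smaller digraft has a unique decomposition, so both procedures yield the same list.

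\textbf{Step 2: $U$ and $W$ cross.} Since $C$ and $C'$ are tight, every tight dijoin $J$ satisfies $|J\cap\delta^+(U)|=|J\cap\delta^+(W)|=1$. By modularity of the dicut size (the computation behind \Cref{lemma:tight_dicut_uncross}), we get
\[
|J\cap\delta^+(U\cap W)|+|J\cap\delta^+(U\cup W)|=2,
\]
and there are no arcs between $U\setminus W$ and $W\setminus U$. Both $\delta^+(U\cap W)$ and $\delta^+(U\cup W)$ are dicuts, so each meets $J$ at least once. Therefore both are tight.

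The key claim, and the main obstacle, is that at least one of $U\cap W$ and $U\cup W$ (or, after complementing, $U\setminus W$ or $W\setminus U$) gives a nontrivial tight dicut $C''$ that crosses neither $C$ nor $C'$. The degenerate case is when all four candidates are trivial. We will handle it by a direct analysis: for example, $U\cap W=\{v\}$ and $U\cup W=V\setminus\{w\}$. In that case we show that the $C$-contractions and the $C'$-contractions already coincide up to arc multiplicities, so they yield the same bricks and braces. Once $C''$ exists, Step 1 applies twice, to the pair $(C,C'')$ and to the pair $(C'',C')$. This links the two procedures, and induction on $|V|$ finishes the proof.

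Finally, the brick/brace type of each piece, determined by whether $|S|<|T|$ or $|S|=|T|$, is intrinsic to the resulting digraft. So the lists agree including types, and $b(D,\zF)$ is well defined.
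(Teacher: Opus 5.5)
Your proposal is correct and is essentially the paper's argument: non-crossing tight shores are reconciled by contracting both, with induction on $|V|$; crossing shores with $|U\cap W|\neq 1$ or $|U\cup W|\neq |V|-1$ are linked through the nontrivial tight dicut $\delta^+(U\cap W)$ or $\delta^+(U\cup W)$; and the remaining case $U\cap W=\{u\}$, $V\setminus(U\cup W)=\{v\}$ is a $2$-cut in which the $C$- and $C'$-contractions coincide up to arc multiplicities. Two small corrections: first, drop the parenthetical candidates $U\setminus W$ and $W\setminus U$, since they need not be dicut out-shores (arcs can enter $U\setminus W$ from $U\cap W$), so the degenerate case is exactly when $U\cap W$ and $U\cup W$ are both trivial; second, the transfer of tightness between $D$ and its contractions comes from \Cref{de-composition}, not \Cref{lemma:short_basis-going-up}.
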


Recall that $b(D,S^t)$ is the number of bricks that appear at the end of any decomposition procedure, which is an invariant by \Cref{prop:unique_decomposition}. 
The following lemma reduces the proof of \Cref{main-digraft} to the one for the basic digrafts. We give its proof in \Cref{sec:reduction-to-basic}.
\begin{lemma}\label{lemma:non-basic}
    Assume for every basic digraft $(D,S^t)$, one can construct an integral basis $\zB$ for $\lin(\zJ(D,S^t))$ that consists of tight dijoins in polynomial time, where $|\zB|=|A|-|\widebar{V^t}|-b(D,S^t)+2$. Then, for every tight dijoin-covered digraft $(D=(V,A),\zF)$, one can construct an integral basis $\zB$ for $\lin(\zJ(D,\zF))$ that consists of tight dijoins in polynomial time. Moreover, $|\zB|=|A|-|\widebar{V^t}|-b(D,\zF)+2$.
\end{lemma}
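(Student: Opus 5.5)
We argue by induction on $|V|$ and run the tight dicut decomposition recursively, gluing bases back together with \Cref{lemma:short_basis-going-up}. The first step reduces a general tight dijoin-covered digraft $(D,\zF)$ to the case where $\zF$ contains only trivial out-shores. If some $U\in\zF$ has $1<|U|<|V|-1$, then $C=\delta^+(U)$ is a nontrivial tight dicut. We form the two $C$-contractions $(D_i,\zF_i)$. Each is tight dijoin-covered, because every tight dijoin of $(D,\zF)$ meets $C$ once and so projects to a tight dijoin of each contraction. Each is strictly smaller, so by induction we obtain integral bases $\zB_i$ with $|\zB_i|=|A_i|-|\widebar{V_i^t}|-b(D_i,\zF_i)+2$. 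Since $\zJ(D,\zF)=\zJ(D,\zF\cup\{U\})$, \Cref{lemma:short_basis-going-up} combines them into a basis $\zB$ of size $|\zB_1|+|\zB_2|-|C|$, and the gluing itself is polynomial.

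Once every set in $\zF$ is trivial, $(D,\zF)$ is a digraft $(D,S^t)$. We replace $S^t$ by $\widebar{S^t}$, which can be computed in polynomial time: a source $s$ is tight exactly when no tight dijoin has degree $\ge 2$ at $s$, and this is a feasibility question for \Cref{thm:degree_SCO} with the bound $f(s)=2$ added. We then call \Cref{CO:tight-dicut-polytime}. If the digraft is basic, the hypothesis supplies the basis. Otherwise the corollary returns a nontrivial barrier dicut or $2$-separation dicut. Such a dicut is tight by the arguments given before \Cref{prop:new_tight}, so adding its shore to $\zF$ leaves $\zJ$ unchanged, and we contract as in the first step. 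The recursion tree has $O(|V|)$ leaves, since each contraction splits the vertex set nontrivially. Because the contracted arc sets total at most $|A|+O(|V|)\cdot|A|$, every step runs in polynomial time.

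The main work is the size bookkeeping. We need
\[
|A|-|\widebar{V^t}|-b+2=\sum_{i=1,2}\big(|A_i|-|\widebar{V_i^t}|-b_i+2\big)-|C|.
\]
We have $|A_1|+|A_2|=|A|+|C|$, and by \Cref{prop:unique_decomposition} $b=b_1+b_2$. So it remains to show $|\widebar{V_1^t}|+|\widebar{V_2^t}|=|\widebar{V^t}|+2$. The two new contracted vertices $u_1,u_2$ are tight, which accounts for the $+2$. Every other vertex lies in exactly one contraction, and it is tight there if and only if it is tight in $D$. This holds because tight dijoins of $D$ are exactly the pairs of tight dijoins of $D_1,D_2$ that agree on the arc of $C$, and each contraction is tight dijoin-covered, so every arc of $C$ is used in both. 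Here $\widebar{V^t}$ must be read with respect to $\zF$ together with the implicit tight nodes.

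I expect the main obstacle to be this tight-node correspondence and the invariance of $b$. One has to verify carefully that gluing preserves the set of degrees achievable at each vertex, and that the trivial dicut in a contraction corresponding to the contracted vertex does not distort the count. Equally delicate is confirming that the general-$\zF$ $C$-contraction from \Cref{def:dicut-contraction-general} keeps both sides tight dijoin-covered, so that the induction hypothesis applies.
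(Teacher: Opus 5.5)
Your proposal is correct and follows the paper's proof essentially step for step: induction on $|V|$, contracting first the nontrivial members of $\zF$ and then the barrier or $2$-separation dicuts returned by \Cref{CO:tight-dicut-polytime}, gluing with \Cref{lemma:short_basis-going-up}, and obtaining the count from $b(D,\zF)=b(D_1,\zF_1)+b(D_2,\zF_2)$ together with $\widebar{V^t}=(\widebar{V_1^t}\setminus u_1)\dot\cup(\widebar{V_2^t}\setminus u_2)$, which you justify through \Cref{de-composition} as the paper does. You should add the paper's preliminary uncrossing of $\zF$ via \Cref{lemma:tight_dicut_uncross}, because \Cref{def:dicut-contraction-general} (and with it the composition half of \Cref{de-composition}) needs $\zF\cup\{U\}$ to be cross-free, and otherwise members of $\zF$ crossing $U$ are lost in the contractions; replacing $S^t$ by $\widebar{S^t}$, on the other hand, is unnecessary, since the hypothesis and \Cref{CO:tight-dicut-polytime} apply to $(D,S^t)$ directly.
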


\section{Basis construction for robust digrafts}\label{sec:robust-case}
In this section, we show how to construct an integral basis for $\lin(\zJ(D,S^t))$ when $(D,S^t)$ is a robust digraft. Recall that a digraft is robust if its tight edge covers coincide with tight dijoins.
The following lemma characterizes the conditions for the existence of a tight edge cover. Its proof can be found in \Cref{sec:proof-robust}.
\begin{LE}\label{lemma:tight_edge_cover}
    A tight edge cover exists if and only if
    \begin{subequations}\label{eq:tight_edge_cover}
        \begin{align}
        |N(X)|&\geq |X|,\quad \forall X\subseteq S
        \label{eq:tight_edge_cover1}\\
        |N(Y)|&\geq |Y|,\quad \forall Y\subseteq T,\ N(Y)\subseteq S^t.
        \label{eq:tight_edge_cover2}
    \end{align}
    \end{subequations}
\end{LE}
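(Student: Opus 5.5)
Recall that a tight edge cover is an arc set $J$ with $d_J(v)=1$ for every $v\in V^t=S^t\cup T$ and $d_J(v)\ge 1$ for every $v\in S\setminus S^t$. The plan is to characterize its existence by a bipartite matching / flow argument and then apply Hall's theorem, or equivalently max-flow min-cut.

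\textbf{Necessity.} This direction is immediate. Suppose $J$ is a tight edge cover. Each $t\in T$ has exactly one $J$-arc, and each $s\in S$ has at least one $J$-arc. For $X\subseteq S$, the $J$-arcs at $X$ land in $N(X)$, and each sink receives only one $J$-arc, so $|X|\le d_J(X)\le |N(X)|$; this is \eqref{eq:tight_edge_cover1}. For $Y\subseteq T$ with $N(Y)\subseteq S^t$, every $J$-arc at $Y$ goes into $N(Y)$, and each vertex of $S^t$ has $J$-degree exactly $1$. Hence $|Y|=d_J(Y)\le |N(Y)|$, which is \eqref{eq:tight_edge_cover2}.

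\textbf{Sufficiency.} I would split the problem into two matchings.
\begin{itemize}
\item First find a matching $M$ covering all of $S$; Hall's condition for this is exactly \eqref{eq:tight_edge_cover1}.
\item The remaining sinks $T\setminus V(M)$ must each receive one arc from a vertex of $S\setminus S^t$, since vertices of $S^t$ are already saturated by $M$ and cannot take a second arc.
\end{itemize}
A fixed $M$ may fail in the second step, so $M$ has to be chosen well. I would instead set the task as a single flow problem:
\begin{itemize}
\item source $\to s$ with lower bound $1$, upper bound $1$ if $s\in S^t$ and $\infty$ otherwise;
\item $s\to t$ with capacity $1$ for every arc;
\item $t\to$ sink with lower and upper bound $1$.
\end{itemize}
Hoffman's circulation theorem (or Gale's feasibility theorem for bipartite $b$-matchings with degree intervals) then says a feasible integral flow exists iff, for every cut, the lower bounds do not exceed the capacities. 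Equivalently, I would check two Hall-type families: every $X\subseteq S$ can be saturated, which gives \eqref{eq:tight_edge_cover1}, and every $Y\subseteq T$ can be saturated using the capacities on $S$. In the second family, a set $Y$ with a neighbor outside $S^t$ is automatically fine, because that neighbor has infinite capacity. So the binding constraints come only from $Y$ with $N(Y)\subseteq S^t$, where the capacity is $|N(Y)|$; this gives \eqref{eq:tight_edge_cover2}.

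The main obstacle is making the cut condition decompose into these two separate families, rather than a mixed condition involving $X\subseteq S$ and $Y\subseteq T$ together. For this I would use the standard fact that for bipartite degree-interval problems, feasibility is equivalent to separate feasibility of the lower- and upper-bound sides. This is a Mendelsohn--Dulmage type linking argument: take a matching saturating $S$, take a matching saturating the sinks inside the constrained part, and combine them via alternating paths. Concretely, let $M_1$ be a matching saturating $S$ and let $F$ be a set of arcs covering $T$ in which each $S^t$-vertex has degree at most $1$. I would then take the symmetric structure of $M_1\cup F$ and switch along components to get $J$ covering $T$ exactly once, $S^t$ exactly once, and all of $S$ at least once.

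Finally, note that $D$ is connected and that every $t\in T$ has a neighbor. So when \eqref{eq:tight_edge_cover2} is vacuous, sinks can still be attached to non-tight sources. The existence of $F$ follows from Hall applied with the $S\setminus S^t$ vertices duplicated $|T|$ times, and that Hall condition reduces to \eqref{eq:tight_edge_cover2}.
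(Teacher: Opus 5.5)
Your proof is correct in outline, but it takes a different route from the paper. The paper does no matching argument itself. It specializes the general bipartite degree-interval theorem (\Cref{lemma:ab-matching}) with $a\equiv 1$, $b=1$ on $V^t$ and $b=\infty$ elsewhere, $d\equiv 0$, $c\equiv\infty$. It then checks that the two families of conditions indexed by $Z\subseteq V$ collapse to \eqref{eq:tight_edge_cover1} and \eqref{eq:tight_edge_cover2}: only $Z$ with $E(Z)=\emptyset$ matter, and the best choice of the $T$-part, resp.\ $S$-part, of $Z$ is then forced. The ``standard fact'' you invoke is exactly what that theorem encodes: for bipartite graphs the mixed circulation condition splits into a lower-bound-on-$S$ family and a lower-bound-on-$T$ family. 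So the paper gains brevity by citing it. Your route gains a self-contained argument from Hall's theorem alone, and it exposes the structure. With $T':=\{t\in T: N(t)\subseteq S^t\}$, a tight edge cover is just a matching saturating $S\cup T'$ plus one arc from each remaining sink to a source in $S\setminus S^t$. Your necessity argument is fine.

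The combination step needs tightening. As written, ``switch along components of $M_1\cup F$'' is not well defined: vertices of $S\setminus S^t$ can have large $F$-degree, so these components are not alternating paths and cycles. The clean fix starts from the observation that $Y\subseteq T'$ if and only if $N(Y)\subseteq S^t$. Hence \eqref{eq:tight_edge_cover2} is precisely Hall's condition for a matching $M_2$ saturating $T'$, and no duplication trick is needed. Equivalently, the arcs of $F$ at $T'$ already form such a matching. The Mendelsohn--Dulmage theorem applied to $M_1$ and $M_2$ then gives a matching $M$ saturating $S\cup T'$. Every sink not covered by $M$ lies outside $T'$, so it has a neighbor in $S\setminus S^t$. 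Adding one such arc per uncovered sink yields the desired $J$.

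If you prefer to keep $F$, you can instead augment repeatedly from an $F$-uncovered source $s$ along $s,M_1(s),F(M_1(s)),\dots$. Reassign each visited sink to its $M_1$-partner, and stop when you reach a source of current $F$-degree at least $2$ (necessarily in $S\setminus S^t$). This also works, but you must check termination and the invariants explicitly ($T$ covered exactly once, $S^t$ at most once).
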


Below we give two characterizations of robust digrafts. 

\begin{theorem}\label{thm:robust_digraft}
Let $(D,S^t)$ be a digraft. The following are equivalent:
\begin{enumerate}
    \item[(1)] $(D,S^t)$ is a robust digraft.
    \item[(2)] For every $b: S\rightarrow \Z_{\geq 0}$ such that
    \begin{equation}\label{eq:degree-edge-cover}
    \begin{aligned}
        b(v)&\geq 1, \quad \forall v\in S\\
        b(v)&=1,\quad \forall v\in S^t\\
        b(S)&=|T|,
    \end{aligned}
    \end{equation}
    there exists a $b$-dijoin.
    \item[(3)] It holds that \begin{subequations}\label{eq:robust}
        \begin{align}
        |N(X)|-|X|&\geq |T|-|S|+1,\quad \forall \emptyset \neq X\subsetneqq S, X\not\subseteq S^t
        \label{eq:robust_1}
        \\
        |N(X)|-|X|&\geq 1, \quad \forall \emptyset \neq X\subsetneqq S, X \subseteq S^t.
        \label{eq:robust_2}
    \end{align}
    \end{subequations}
\end{enumerate}
\end{theorem}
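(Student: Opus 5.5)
We prove (1)$\Leftrightarrow$(2) and (2)$\Leftrightarrow$(3) separately. Both rest on \Cref{lemma:b-matching}: for a degree vector $b$, a $b$-dijoin exists iff \eqref{eq:b-matching} holds, and then every perfect $b$-matching is a $b$-dijoin. A subset $J\subseteq A$ is a tight edge cover iff it is a perfect $b$-matching for $b(v):=d_J(v)$ on $S$. This $b$ automatically satisfies \eqref{eq:degree-edge-cover}: every sink has degree exactly $1$ in $J$, so $b(S)=|J|=|T|$. Throughout, we read (1) as ``every tight edge cover is a tight dijoin'' together with tight dijoin-coveredness; the latter follows from (2) via \Cref{lemma:tight-dijoin-covered}, provided $b$'s satisfying \eqref{eq:degree-edge-cover} exist.

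\textbf{(2)$\Rightarrow$(1).} Let $J$ be a tight edge cover and $b=d_J|_S$. Then $b$ satisfies \eqref{eq:degree-edge-cover}, so by (2) a $b$-dijoin exists. Hence $b$ satisfies \eqref{eq:b-matching}, and by the ``moreover'' part of \Cref{lemma:b-matching} the perfect $b$-matching $J$ is a $b$-dijoin, i.e.\ a tight dijoin.

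\textbf{(1)$\Rightarrow$(2).} Given $b$ satisfying \eqref{eq:degree-edge-cover}, we first show a perfect $b$-matching exists; it is then a tight edge cover, hence a tight dijoin by (1), hence a $b$-dijoin. For existence, start from a tight dijoin $J_0$ (it exists since the digraft is tight dijoin-covered). Its degree vector $b_0$ also satisfies \eqref{eq:degree-edge-cover}. We move from $b_0$ to $b$ one unit at a time: pick a source $u$ with $b_0(u)>b(u)$ and a source $w$ with $b_0(w)<b(w)$, and try to find an alternating path in the current perfect $b_0$-matching that moves one unit of degree from $u$ to $w$. Such a path exists unless some Hall-type obstruction occurs. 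Each intermediate vector is a perfect matching with degree $\ge1$ everywhere, hence a tight edge cover, and thus by (1) a tight dijoin. In particular the set of degree vectors of tight edge covers coincides with the polymatroid base set given by \eqref{eq:b-matching}.

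The main obstacle is ruling out the Hall obstruction: we must show that if some $b$ satisfying \eqref{eq:degree-edge-cover} is not realizable, there is an unrealizable $b$ adjacent to a realizable one, and the obstruction then yields a tight edge cover that is not a dijoin. A cleaner route is to prove (1)$\Rightarrow$(3) directly by contrapositive. Suppose $X$ violates \eqref{eq:robust}. Construct a tight edge cover $J$ with $b(X)\ge|N(X)|$, i.e.\ with no arc of $J$ entering the dicut $\delta^-(X\cup N(X))$ from outside $N(X)$'s matched part. Concretely, give each $v\in S\setminus X$ degree at least $1$, assign all $|T|-|S\setminus X|$ remaining units to $X$, with only one unit at each vertex of $S^t$. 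Then realize this $b$ via Hall's theorem, using tight dijoin-coveredness to verify \eqref{eq:tight_edge_cover}-type conditions. This $J$ is a tight edge cover violating \eqref{eq:b-matching}, hence not a dijoin.

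\textbf{(3)$\Leftrightarrow$(2).} We show that \eqref{eq:robust} is exactly the statement that every $b$ satisfying \eqref{eq:degree-edge-cover} satisfies $b(X)\le|N(X)|-1$ for all $\emptyset\ne X\subsetneqq S$; \Cref{lemma:b-matching} then gives (2). For fixed $X$, maximize $b(X)$ subject to \eqref{eq:degree-edge-cover}. The complement $S\setminus X$ forces at least $|S\setminus X|$ units outside $X$. If $X\not\subseteq S^t$, the surplus can be concentrated on a non-tight vertex of $X$, so $\max b(X)=|T|-|S|+|X|$, and $\max b(X)\le|N(X)|-1$ is exactly \eqref{eq:robust_1}. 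If $X\subseteq S^t$, then $b(X)=|X|$, giving \eqref{eq:robust_2}. One edge case needs care: when $X\subseteq S^t$ and $S\setminus X$ must absorb the surplus, we need $S\setminus X\not\subseteq S^t$, which holds since $|S^t|\le|S|-1$.
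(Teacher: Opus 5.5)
\textbf{Gap in the implication out of (1).} Your (2)$\Rightarrow$(1) and (2)$\Leftrightarrow$(3) are correct; the latter is the paper's (3)$\Rightarrow$(2) computation together with its easy converse. The implication out of (1) is not established, however. You leave the unit-exchange route open yourself, and the ``cleaner'' contrapositive (1)$\Rightarrow$(3) fails as written.

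First, your prescribed vector has $b(X)=|T|-|S|+|X|$. This exceeds $|N(X)|$ whenever \eqref{eq:robust_1} is violated strictly. But every perfect $b$-matching has $b(X)\le|N(X)|$, because all its arcs at $X$ end in $N(X)$ and sinks have degree $1$.

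More fundamentally, even with $b(X)=|N(X)|$, a tight edge cover avoiding $\delta^-(X\cup N(X))$ amounts to a pair of tight edge covers of the subdigraphs induced on $X\cup N(X)$ and on $(S\setminus X)\cup(T\setminus N(X))$. For an arbitrary violating $X$ these need not exist. Tight dijoin-coveredness of $D$ does not give the conditions of \Cref{lemma:tight_edge_cover} for the two pieces.

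Here is an example. Take $S=\{s,a,c,d\}$, $S^t=\{s\}$ and $T=\{t_1,\dots,t_6\}$, with arcs from $s$ to $t_1,t_2$, from $a$ to $t_3,t_4$, from $d$ to $t_5,t_6$, and from $c$ to every $t_i$.
\begin{itemize}
\item This is a tight dijoin-covered digraft: degrees $1,1,3,1$ on $s,a,c,d$ give a tight dijoin.
\item $X=\{s,a\}$ has $|N(X)|-|X|=2=|T|-|S|$, so it violates \eqref{eq:robust_1}.
\item Your $b$ would give $a$ degree $3$, although $a$ has only two neighbours.
\item In fact no tight edge cover avoids $\delta^-(X\cup N(X))$ at all, since $t_1$ and $t_2$ would both have to be covered by the tight source $s$.
\end{itemize}

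\textbf{How to close it.} The missing idea is the extremal choice made in \Cref{lemma:violate_robust}: take $X$ minimizing $|N(X)|-|X|$ over nonempty proper $X\not\subseteq S^t$.
\begin{itemize}
\item Suppose some $Y_1\subseteq N(X)$ has fewer than $|Y_1|$ neighbours in $X$, all of them in $S^t$. Then $X$ minus those neighbours is a strictly better violator.
\item Suppose some $X_2\subseteq S\setminus X$ has fewer than $|X_2|$ neighbours in $T\setminus N(X)$. Then $X\cup X_2$ is a strictly better violator.
\end{itemize}
So for the minimizer both pieces have tight edge covers, and their union is a tight edge cover that is not a dijoin.

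Alternatively, your first route closes with no obstruction at all. Suppose $b'$ satisfies \eqref{eq:b-matching} and $b''=b'-\mathbf{1}_u+\mathbf{1}_w$ still satisfies \eqref{eq:degree-edge-cover}. Then $b''(Y)\le b'(Y)+1\le|N(Y)|$ for all $\emptyset\neq Y\subsetneqq S$, and $b''(S)=|T|$. Hall's theorem therefore gives a perfect $b''$-matching. It is a tight edge cover, hence a tight dijoin by (1), so $b''$ again satisfies \eqref{eq:b-matching}.

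Start from the degree vector of a tight dijoin and move toward any target $b$ in unit steps, each time taking a surplus vertex $u$ and a deficit vertex $w$ (both necessarily outside $S^t$). This proves (1)$\Rightarrow$(2) without \Cref{lemma:violate_robust}. The paper's route has a different payoff: its minimizer $X$ directly supplies the separating dicut output in \Cref{thm:testing-robust}.
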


The condition \eqref{eq:robust} in particular leads to a polynomial-time recognition algorithm for robust digrafts (\Cref{thm:testing-robust}). It follows from \Cref{lemma:b-matching} that Condition \eqref{eq:robust_2} is needed for every tight dijoin-covered digraft. The stronger condition \eqref{eq:robust_1} is more interesting. It turns out that a set $X\in \arg\min_{\emptyset\neq X\subsetneqq S, X\not\subseteq S^t}\big(|N(X)|-|X|\big)$ that violates \eqref{eq:robust_1} would be a separating dicut, which can be found efficiently using submodular minimization. To prove \Cref{thm:robust_digraft}, we need the following key lemma.
\begin{lemma}\label{lemma:violate_robust}
    Let $(D,S^t)$ be a digraft that has a tight edge cover. Let $$X\in\arg\min_{\emptyset\neq X\subsetneqq S, X\not\subseteq S^t}\big(|N(X)|-|X|\big).$$ Let $C:=\delta^-(X\cup N(X))$. If $|N(X)|-|X|\leq |T|-|S|$, then there exists a tight edge cover $J$ such that $J\cap C=\emptyset$.
\end{lemma}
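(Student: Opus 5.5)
The plan is to build the tight edge cover $J$ in two independent pieces, one on each side of $C$. Write $N:=N(X)$. The arcs of $C=\delta^-(X\cup N)$ are exactly the arcs from $S\setminus X$ into $N$. Since $D$ is bipartite and $X$ has no neighbours outside $N$, every arc not in $C$ lies in one of two bipartite subgraphs:
\[
H_1 := D[X\cup N] \qquad\text{and}\qquad H_2 := D[(S\setminus X)\cup (T\setminus N)].
\]
It therefore suffices to find the following two arc sets.

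1. $J_1\subseteq A(H_1)$ with degree exactly $1$ on every vertex of $N$ and on every vertex of $X\cap S^t$, and degree at least $1$ on every vertex of $X$.
2. $J_2\subseteq A(H_2)$ with the analogous properties on $T\setminus N$, $(S\setminus X)\cap S^t$ and $S\setminus X$.

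Then $J:=J_1\cup J_2$ is a tight edge cover of $(D,S^t)$ with $J\cap C=\emptyset$. For each piece I will use the Hall-type criterion of \Cref{lemma:tight_edge_cover}, i.e.\ conditions \eqref{eq:tight_edge_cover1} and \eqref{eq:tight_edge_cover2}, applied to the bipartite graphs $H_1,H_2$. That criterion is a statement about bipartite graphs with all sinks tight and a prescribed subset of tight sources, so it transfers verbatim.

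For $H_1$, condition \eqref{eq:tight_edge_cover1} asks that $|N(Z)|\ge |Z|$ for all $Z\subseteq X$. This is inherited from $D$, since $(D,S^t)$ has a tight edge cover and $N_{H_1}(Z)=N(Z)$. Condition \eqref{eq:tight_edge_cover2} asks, for $Y\subseteq N$ with $W:=N(Y)\cap X\subseteq S^t$, that $|W|\ge |Y|$. I will derive this from the minimality of $X$. The set $X\setminus W$ still contains $X\setminus S^t\ne\emptyset$, so it is a nonempty proper subset of $S$ not contained in $S^t$. Moreover $N(X\setminus W)\subseteq N\setminus Y$. Minimality then gives
\[
|N|-|X|\le |N(X\setminus W)|-|X\setminus W|\le |N|-|Y|-|X|+|W|,
\]
hence $|W|\ge|Y|$.

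For $H_2$, condition \eqref{eq:tight_edge_cover2} is again inherited. If $Y\subseteq T\setminus N$, then $N(Y)\subseteq S\setminus X$, so $N_{H_2}(Y)=N(Y)$, and $(D,S^t)$ already satisfies \eqref{eq:tight_edge_cover2}. Condition \eqref{eq:tight_edge_cover1} asks that $|N(Z)\setminus N|\ge |Z|$ for all $Z\subseteq S\setminus X$. If $Z\cup X\ne S$, then $Z\cup X$ is a nonempty proper subset not contained in $S^t$, and $N(Z\cup X)=N\,\dot\cup\,(N(Z)\setminus N)$. Minimality gives
\[
|N|+|N(Z)\setminus N|-|X|-|Z|\ge |N|-|X|,
\]
which is what we need. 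If $Z=S\setminus X$, then $N(Z)\setminus N=T\setminus N$, because every sink has a neighbour. The required inequality is $|T|-|N|\ge |S|-|X|$, and this is exactly the hypothesis $|N(X)|-|X|\le |T|-|S|$.

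I expect the main obstacle to be this last case, the only place the hypothesis enters, together with checking that \Cref{lemma:tight_edge_cover} may legitimately be applied to $H_1$ and $H_2$. These subgraphs are bipartite with tight sinks but need not be $2$-edge-connected digrafts, so I will point out that the lemma's proof uses only Hall-type arguments. Everything else is submodularity-style bookkeeping from the choice of $X$ as a minimizer.
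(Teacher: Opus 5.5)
Your proposal is correct and follows essentially the same route as the paper: you split along $C$ into $D[X\cup N(X)]$ and $D[(S\setminus X)\cup(T\setminus N(X))]$, and you verify the conditions of \Cref{lemma:tight_edge_cover} on each side by comparing with the minimizer $X$, using the competitor sets $X\setminus N(Y)$ and $X\cup Z$. The only cosmetic difference is that you argue directly and treat $Z=S\setminus X$ as a separate case via $N(S)=T$, whereas the paper argues by contradiction and uses the same fact to conclude $X\cup X_2\subsetneqq S$.
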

\begin{proof}
Let $(D_1=(V_1=S_1\dot\cup T_1, A_1),S_1^t)$ be the induced digraft on vertices $V_1:=X\cup N(X)$, where $S_1:=X$, $T_1:=N(X)$, $A_1:=E(X\cup N(X))$, $S_1^t:=S^t\cap X$ and $V_1^t:=S_1^t\cup T_1$. Let $Y=T\setminus N(X)$. 
Then $N(Y)\subseteq S\setminus X$. 
Let $(D_2=(V_2=S_2\dot\cup T_2, A_2), S_2^t)$ be the induced digraft on vertices $V_2:=Y\cup (S\setminus X)$, where $S_2:=S\setminus X$, $T_2:=Y$, $A_2:=E(Y\cup (S\setminus X))$, $S_2^t:=S^t\setminus X$ and $V_2^t:=S_2^t\cup T_2$. Figure \ref{fig:find-violated} illustrates the choice. We claim that each $(D_i,S_i^t)$ has a tight edge cover $J_i$, for $i=1,2$. This will complete the proof, as $J:=J_1\cup J_2$ is a tight edge cover for $(D,S^t)$ such that $J\cap C=\emptyset$. It remains to prove the claim.

Suppose for the sake of contradiction that $(D_1,S_1^t)$ does not have a tight edge cover. By \Cref{lemma:tight_edge_cover}, \eqref{eq:tight_edge_cover1} or \eqref{eq:tight_edge_cover2} is violated. For $Z\subseteq V_1$, denote by $N'(Z)$ the neighbors of $Z$ in $D_1$. Note that for every $X_1\subseteq S_1$, $N'(X_1)=N(X_1)$. It follows from the fact that $D$ has a tight edge cover and \Cref{lemma:tight_edge_cover} that
$|N'(X_1)|=|N(X_1)|\geq |X_1|$ for every $X_1\subseteq S_1$, which means \eqref{eq:tight_edge_cover1} is satisfied by $(D_1,S_1^t)$. Therefore, there exists $Y_1\subseteq T_1, N'(Y_1)\subseteq S_1^t$ that violates \eqref{eq:tight_edge_cover2}, i.e., $|N'(Y_1)|\leq |Y_1|-1$. Let $X':=X\setminus N'(Y_1)$. Then $N'(X')\subseteq T_1\setminus Y_1=N(X)\setminus Y_1$ which means 
\[
\begin{aligned}
    |N(X')|-|X'|=&|N'(X')|-|X'|\le |N(X)\setminus Y_1|-|X\setminus N'(Y_1)|\\
    =&|N(X)|-|X|+(|N'(Y_1)|-|Y_1|)\leq |N(X)|-|X|-1.
\end{aligned}
\] 
Moreover, since $X\not\subseteq S^t$, $N'(Y_1)\subseteq S_1^t\subseteq S^t$, we have $X'=X\setminus N'(Y_1)\not\subseteq S^t$. In particular, $X'\neq\emptyset$. Further, $X'\subseteq X\subsetneqq S$. Therefore, $X'$ contradicts our choice of $X$ which minimizes $|N(X)|-|X|$.

\begin{figure}
    \centering
    \begin{tikzpicture}[
    font=\small,
    dot/.style={circle,fill,inner sep=1.7pt},
    edge/.style = {thick, -{Stealth[length=8pt, sep]}},
    cutedge/.style={thick,dashed,-{Latex[length=2mm]}},
    setbox/.style={draw, rounded corners, thick, inner sep=5pt},
    subsetbox/.style={draw, rounded corners, dashed, thick, inner sep=3pt},
    xddbox/.style={draw, rounded corners, dashed, very thin, inner xsep=18pt, inner ysep=10pt},
    cut/.style = {blue, thick, dashed}
]


\node[dot] (x1) at (0,5.0) {};
\node[dot] (x2) at (0,4.1) {};
\node[dot] (x3) at (0,3.2) {};

\node[dot] (u1) at (0,1.5) {};
\node[dot] (u2) at (0,0.6) {};
\node[dot] (u3) at (0,-0.3) {};

\node[dot] (a1) at (4.2,5.0) {};
\node[dot] (a2) at (4.2,4.1) {};
\node[dot] (a3) at (4.2,3.2) {};

\node[dot] (b1) at (4.2,1.25) {};
\node[dot] (b2) at (4.2,-0.15) {};


\draw[edge] (x1) -- (a1);
\draw[edge] (x2) -- (a2);
\draw[edge] (x2) -- (a3);
\draw[edge] (x3) -- (a2);
\draw[edge] (x3) -- (a3);

\draw[edge] (u1) -- (b1);
\draw[edge] (u2) -- (b1);
\draw[edge] (u3) -- (b2);


\draw[edge] (u1) -- (a2);
\draw[edge] (u2) -- (a1);
\draw[edge] (u3) -- (a3);

\node (P) at (-1, 2.7) {};
\node (Q) at (5, 2) {};
\draw[cut] (P) to (Q);
\node (c) at (5.3,2) {\textcolor{blue}{$C$}};

\coordinate (c1) at ($(u1)!0.56!(a2)$);
\coordinate (c2) at ($(u2)!0.56!(a1)$);
\coordinate (c3) at ($(u3)!0.56!(a3)$);



\begin{scope}[on background layer]

    \node[xddbox, fit=(x1)(x2)(x3)(u1)(u2)] (Xddbox) {};

    \node[setbox, fit=(x1)(x2)(x3)] (Xbox) {};
    \node[setbox, fit=(a1)(a2)(a3)] (NXbox) {};
    \node[setbox, fit=(u1)(u2)(u3)] (NYbox) {};
    \node[setbox, fit=(b1)(b2)] (Ybox) {};

    \node[subsetbox, fit=(a1)] (Yone) {};
    \node[subsetbox, fit=(x1)] (NYone) {};
    \node[subsetbox, fit=(x2)(x3)] (Xprime) {};
    \node[subsetbox, fit=(u1)(u2)] (Xtwo) {};
    \node[subsetbox, fit=(b1)] (NNXtwo) {};
\end{scope}


\node[anchor=south] at ($(x1.north)+(0,0.45)$) {$S$};
\node[anchor=south] at ($(a1.north)+(0,0.45)$) {$T$};


\node[anchor=west] at ($(Xbox.west)+(-2,0)$) {$S_1=X$};

\node[anchor=west] at ($(NYbox.west)+(-2,0)$)
    {$S_2=S\setminus X$};

\node[anchor=east] at ($(Ybox.east)+(2,0)$)
    {$T_2=Y$};


\node[anchor=east] at ($(NYone.west)+(-0,0)$) {$N'(Y_1)$};
\node[anchor=west] at ($(Yone.east)+(0,0)$) {$Y_1$};

\node[anchor=east] at ($(Xprime.west)+(-0,0)$) {$X'$};
\node[anchor=east] at ($(Xtwo.west)+(-0,0)$) {$X_2$};
\node[anchor=west] at ($(NNXtwo.east)+(0,0)$) {$N''(X_2)$};


\node[anchor=east] at ($(Xddbox.west)+(1,-0.5)$)
    {$X''$};



\node[anchor=east] at ($(NXbox.east)+(2,-0)$)
    {$T_1=N(X)$};

\end{tikzpicture}
    \caption{Illustration of a set $X$ found in \Cref{lemma:violate_robust}. The dicut $C=\delta^-(X\cup N(X))$ is a separating dicut. In the case where $(D_1,S_1^t)$, the digraft induced by $S_1\cup T_1$, has no tight edge cover, the set $Y_1$ violates the condition \eqref{eq:tight_edge_cover2} and $X'=X\setminus N(Y_1')$ violates the minimality of $X$. In the case where $(D_2,S_2^t)$, the digraft induced by $S_2\cup T_2$, has no tight edge cover, the set $X_2$ violates the condition \eqref{eq:tight_edge_cover1} and $X''=X\cup X_2$ violates the minimality of $X$.} 
    \label{fig:find-violated}
\end{figure}

Suppose for the sake of contradiction that $(D_2,S_2^t)$ does not have a tight edge cover. By \Cref{lemma:tight_edge_cover}, \eqref{eq:tight_edge_cover1} or \eqref{eq:tight_edge_cover2} is violated. For $Z\subseteq V_2$, denote by $N''(Z)$ the neighbors of $Z$ in $D_2$. Note that for every $Y_2\subseteq T_2$, $N''(Y_2)=N(Y_2)$. It follows from the fact that $D$ has a tight edge cover and \Cref{lemma:tight_edge_cover} that
$|N''(Y_2)|=|N(Y_2)|\geq |Y_2|$ for every $Y_2\subseteq T_2$, $N''(Y_2)\subseteq S_2^t$, which means \eqref{eq:tight_edge_cover2} is satisfied by $(D_2,S_2^t)$. Therefore, there exists $X_2\subseteq S_2$ that violates \eqref{eq:tight_edge_cover1}, i.e., $|N''(X_2)|\leq |X_2|-1$. Let $X'':=X\cup X_2$. Then, 
\[
\begin{aligned}
    |N(X'')|-|X''|=&|N(X)\cup N''(X_2)|-|X\cup X_2|\\
    =& |N(X)|-|X|+(|N''(X_2)|-|X_2|)
    \leq |N(X)|-|X|-1.
\end{aligned}
\]
Since $|N(X)|-|X|\leq |T|-|S|$, we have $|N(X'')|-|X''|\leq |T|-|S|-1$. In particular, it implies that $X''\subsetneqq S$. Moreover, since $X\subseteq X''$ and $X\not \subseteq S^t$, it follows that $\emptyset \neq X''\not \subseteq S^t$. Therefore, $X''$ contradicts the choice of $X$.
           
\end{proof}

\begin{proof}[Proof of \Cref{thm:robust_digraft}]
    We first show that (3) $\Rightarrow$ (2). Let $b\in\Z_{\geq 0}^S$ satisfy \eqref{eq:degree-edge-cover}. We need to show that $b$ satisfies the condition \eqref{eq:b-matching} in \Cref{lemma:b-matching}. Let $\emptyset \neq X\subsetneqq S$. If $X\subseteq S^t$, then $b(X)=|X|\leq |N(X)|-1$, where the inequality follows from \eqref{eq:robust_2}. If $X\not\subseteq S^t$, then $b(X)=|T|-b(S\setminus X)\leq |T|-|S\setminus X|=|T|-|S|+|X|\leq |N(X)|-1$, where the last inequality follows from \eqref{eq:robust_1}. Thus, we conclude that there is a $b$-dijoin.

    We next show that (2) $\Rightarrow$ (1). By the definition of a digraft, $|S^t|\le |S|-1$, which implies there is a $b\in \Z_{\geq 0}^S$ satisfying \eqref{eq:degree-edge-cover}. It follows that there is a $b$-dijoin, which is a tight dijoin in particular. This proves that $(D,S^t)$ is a tight dijoin-covered. Next, we show that $(D,S^t)$ is robust. To see this, for an arbitrary tight edge cover $J$, let $b(v)=d_J(v)\ \forall v\in S$. Clearly, $b$ satisfies \eqref{eq:degree-edge-cover}. It follows that there exists a $b$-dijoin. It follows from \Cref{lemma:b-matching} that $J$ itself is a $b$-dijoin, which is a tight dijoin. Thus, $(D,S^t)$ is a robust digraft.

    Finally, we show that (1) $\Rightarrow$ (3). Since $(D,S^t)$ is tight dijoin-covered, there exists a $b$-dijoin for some $b: S\rightarrow \Z_{\geq 0}$ that satisfies \eqref{eq:degree-edge-cover}. Then,
\begin{equation}\label{eq:neighbor_digraft}
    |X|=b(X)\leq |N(X)|-1,\ \forall \emptyset\neq X\subsetneqq S,\ X\subseteq S^t,
\end{equation}
where the inequality follows from \Cref{lemma:b-matching}. This proves \eqref{eq:robust_2}. Assume for the sake of contradiction that \eqref{eq:robust_1} does not hold.
We pick an arbitrary $X\in\arg\min_{\emptyset\neq X\subsetneqq S, X\not\subseteq S^t}|N(X)|-|X|$ and let $C:=\delta^-(X\cup N(X))$. We know that  $|N(X)|-|X|\leq |T|-|S|$. Applying \Cref{lemma:violate_robust}, there exists a tight edge cover $J$ such that $J\cap C=\emptyset$. Since $C$ is a dicut, $J$ is not a dijoin, which contradicts the fact that $(D,S^t)$ is robust.
\end{proof}

This gives us a polynomial-time recognition algorithm for robust digrafts (\Cref{thm:testing-robust}). The proof is in \Cref{sec:proof-robust}. \Cref{thm:robust_digraft} has some immediate consequences.

\begin{CO}\label{cor:add-tight-vert}
    If $(D,S^t)$ is a robust digraft, then for an arbitrary $S^t \subseteq (S^t)'\subsetneqq S$, $(D,(S^t)')$ is also a robust digraft.
\end{CO}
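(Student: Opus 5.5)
The natural route is through characterization (3) of \Cref{thm:robust_digraft}. That characterization is purely combinatorial in the sets $X$ and their neighborhoods. The only dependence on the tight set is through which sets $X$ are tested by \eqref{eq:robust_1} and which by \eqref{eq:robust_2}. So I plan to check that every inequality demanded for $(D,(S^t)')$ already follows from those known for $(D,S^t)$.

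First, $(D,(S^t)')$ is a digraft in the sense of \Cref{def:digraft-special}, since $(S^t)'\subsetneqq S$ gives $|(S^t)'|\le |S|-1$. Robustness of $(D,S^t)$ gives, by (1)$\Rightarrow$(3), the inequalities \eqref{eq:robust} with respect to $S^t$.

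Now take any $\emptyset\neq X\subsetneqq S$. I will split into two cases.
\begin{itemize}
\item If $X\not\subseteq (S^t)'$, then $X\not\subseteq S^t$ as well, since $S^t\subseteq (S^t)'$. Hence \eqref{eq:robust_1} for $S^t$ gives exactly \eqref{eq:robust_1} for $(S^t)'$.
\item If $X\subseteq (S^t)'$, we need $|N(X)|-|X|\ge 1$. If $X\subseteq S^t$, this is \eqref{eq:robust_2} for $S^t$. Otherwise, \eqref{eq:robust_1} gives $|N(X)|-|X|\ge |T|-|S|+1\ge 1$, using $|S|\le |T|$, which is part of the definition of a digraft.
\end{itemize}

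Thus \eqref{eq:robust} holds for $(D,(S^t)')$, and (3)$\Rightarrow$(1) of \Cref{thm:robust_digraft} shows $(D,(S^t)')$ is robust; in particular it is tight dijoin-covered. There is no real obstacle here. The only points needing care are that the ambient digraft hypotheses ($|S|\le|T|$ and $|(S^t)'|\le|S|-1$) still hold, and the inequality $|T|-|S|+1\ge1$ used in the second case.
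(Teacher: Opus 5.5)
Your proof is correct, but it goes through a different characterization of robustness than the paper does.

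The paper uses characterization (2) of \Cref{thm:robust_digraft}. Enlarging $S^t$ to $(S^t)'$ only adds the constraints $b(v)=1$ for $v\in (S^t)'\setminus S^t$. So every $b$ satisfying \eqref{eq:degree-edge-cover} for $(D,(S^t)')$ already satisfies it for $(D,S^t)$, and robustness of $(D,S^t)$ gives a $b$-dijoin. The implication (2)$\Rightarrow$(1) then finishes. This is a one-line inclusion of admissible degree sequences, with no case split.

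Your route via characterization (3) is the set-by-set view of the same monotonicity. Enlarging $S^t$ moves the sets $X$ with $X\subseteq (S^t)'$ and $X\not\subseteq S^t$ from the strong requirement \eqref{eq:robust_1} to the weaker requirement \eqref{eq:robust_2}. You correctly observe that this is harmless exactly because $|T|-|S|+1\ge 1$.

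What each buys: the paper's version is slightly slicker. Yours makes explicit why the condition can only become easier to satisfy, and it ties directly to the submodular-minimization test behind \Cref{thm:testing-robust}. Your check that $(D,(S^t)')$ still satisfies the digraft hypothesis $|(S^t)'|\le |S|-1$ is also appropriate, and it is needed for either route.
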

\begin{proof}
    We use characterization (2) of robust digrafts in \Cref{thm:robust_digraft}. Since $S^t\subseteq (S^t)'$, any $b$ satisfying \eqref{eq:degree-edge-cover} for $(D,(S^t)')$ also satisfies \eqref{eq:degree-edge-cover} for $(D,S^t)$. It follows from the robustness of $(D,S^t)$ and \Cref{thm:robust_digraft} that there exists a $b$-dijoin. This means for every $b$ satisfying  \eqref{eq:degree-edge-cover} for $(D,(S^t)')$, there exists a $b$-dijoin, which again by \Cref{thm:robust_digraft} implies that $(D,(S^t)')$ is a digraft.
\end{proof}

\begin{CO}\label{CO:degree2-robust}
    Let $(D,S^t)$ be a robust digraft with $|S|<|T|$. If $|S^t|<|S|-1$, then for an arbitrary $v\in S\setminus S^t$, there exists a tight dijoin $J$ such that $d_J(v)=2$, which can be found in polynomial time.
\end{CO}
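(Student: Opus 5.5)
We would use characterization (2) of \Cref{thm:robust_digraft}: since $(D,S^t)$ is robust, every degree vector $b:S\to\Z_{\ge 0}$ satisfying \eqref{eq:degree-edge-cover} admits a $b$-dijoin. By \Cref{lemma:b-matching}, any perfect $b$-matching for such $b$ is automatically a $b$-dijoin. A $b$-dijoin with $b$ as in \eqref{eq:degree-edge-cover} is in particular a tight dijoin, because it has degree $1$ on every vertex of $V^t=S^t\cup T$. So the whole task reduces to exhibiting a degree vector $b$ with $b(v)=2$ that satisfies \eqref{eq:degree-edge-cover}, and then computing a perfect $b$-matching.

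Fix $v\in S\setminus S^t$. Because $|S^t|\le |S|-2$, there is another vertex $w\in S\setminus S^t$ with $w\neq v$. We need $b(u)\ge 1$ for all $u\in S$, $b(u)=1$ on $S^t$, $b(v)=2$, and $b(S)=|T|$. Set $b(u)=1$ for all $u\in S\setminus\{v,w\}$ and $b(v)=2$. This leaves
\[
b(w)=|T|-(|S|-2)-2=|T|-|S|.
\]
Since $|S|<|T|$, we have $b(w)\ge 1$, so $b$ satisfies \eqref{eq:degree-edge-cover}. Then \Cref{thm:robust_digraft} (2) gives a $b$-dijoin $J$, which is a tight dijoin with $d_J(v)=2$.

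For the algorithmic part, finding a perfect $b$-matching in the bipartite graph $D$ with prescribed degrees $b$ on $S$ and $1$ on $T$ is a bipartite $b$-matching, i.e.\ a max-flow problem, and can be solved in polynomial time. Existence of the matching is guaranteed by the previous paragraph, and by \Cref{lemma:b-matching} the matching we compute is a $b$-dijoin.

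The only point that needs care is the feasibility of $b$. This is exactly where both hypotheses enter: $|S^t|<|S|-1$ supplies the second free vertex $w$ that absorbs the excess degree, and $|S|<|T|$ ensures that this excess $|T|-|S|$ is at least $1$.
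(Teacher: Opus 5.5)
Your proof is correct and is the same argument as the paper's. You use a second non-tight source $w$ to absorb the excess degree $b(w)=|T|-|S|\ge 1$, check \eqref{eq:degree-edge-cover}, apply characterization (2) of \Cref{thm:robust_digraft}, and compute the resulting $b$-dijoin as a perfect $b$-matching; your appeal to \Cref{lemma:b-matching}, showing that any computed perfect $b$-matching is automatically a $b$-dijoin, makes the algorithmic step slightly more explicit than the paper does.
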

\begin{proof}
    Pick an arbitrary $v'\in S\setminus (S^t\cup \{v\})$. The existence of such $v'$ is guaranteed because $|S\setminus S^t|>1$. Define $b:S\rightarrow \Z_{\ge 0}$ as $b(v)=2$, $b(v')=|T|-|S|$, and $b(v'')=1$ for every $v''\neq v,v'$. Clearly, $b$ satisfies \eqref{eq:degree-edge-cover}. It follows from \Cref{thm:robust_digraft} that there exists a $b$-dijoin $J$, which is a tight dijoin such that $d_J(v)=2$. We can find such a $b$-dijoin using an algorithm that finds a perfect $b$-matching (see e.g. \cite{schrijver2003combinatorial} Ch 21.5). 
\end{proof}

The following lemma shows that one can combine an integral basis for $\lin(\{J\in \zJ(D,S^t): |J\cap C|=1\})$ and a tight dijoin $J_0$ with $|J_0\cap C|=2$ into an integral basis for $\lin(\{J\in \zJ(D,S^t): |J\cap C|=1\}\cup \{J_0\})$. We will use this fact repeatedly. Its proof is in \Cref{sec:proof-robust}.
\begin{LE}\label{lemma:integral_basis}
    Let $(D,S^t)$ be a tight dijoin-covered digraft and $C$ be a dicut. Let $\zB'$ be an integral basis of $\lin(\{J\in \zJ(D,S^t): |J\cap C|=1\})$ that consists of tight dijoins that intersect $C$ exactly once. If $J_0$ is a tight dijoin with $|J_0\cap C|=2$, then $\zB:=\zB'\cup \{J_0\}$ is an integral basis for $\lin(\{J\in \zJ(D,S^t): |J\cap C|=1\}\cup \{J_0\})$.
\end{LE}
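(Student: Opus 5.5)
The proof rests on one observation: the linear functional $x\mapsto x(C)$ takes integer values on all relevant vectors. It equals $1$ on every element of $\zB'$ and $2$ on $J_0$. Write $L':=\lin(\{J\in \zJ(D,S^t): |J\cap C|=1\})$ and $L:=\lin(\{J\in \zJ(D,S^t): |J\cap C|=1\}\cup\{J_0\})=L'+\R\1_{J_0}$. The plan has two parts: show that $\zB$ is linearly independent, and then show that every integer vector of $L$ is an integral combination of $\zB$.

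\emph{Linear independence.} The key fact is that $\1_{J_0}\notin L'$. I would not try to prove this from $x(C)$ alone, since the functional $x(C)$ is not constant on $L'$ (it is $\sum\lambda_J$). Instead I would use a second functional that is proportional to $x(C)$ on all tight dijoins, namely the total size. Every tight dijoin $J$ has $|J|=|T|$, because each arc has exactly one endpoint in $T$ and $d_J(v)=1$ for all $v\in T$. Hence on $L'$ we get
\[ x(C)=\sum_J \lambda_J = \frac{x(A)}{|T|}, \]
so every $x\in L'$ satisfies $|T|\,x(C)=x(A)$. For $J_0$ we have $|T|\cdot 2\neq |T|$, so $\1_{J_0}\notin L'$. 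Since $\zB'$ is a basis of $L'$, it follows that $\zB=\zB'\cup\{J_0\}$ is linearly independent and spans $L$.

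\emph{Integrality.} Take $x\in L\cap\Z^A$ and write $x=y+\mu\1_{J_0}$ with $y\in L'$ and $\mu\in\R$. We need $\mu\in\Z$. Applying the functional $\phi(x):=x(C)-x(A)/|T|$, which vanishes on $L'$, gives
\[ \phi(x)=\mu\bigl(2-1\bigr)=\mu. \]
However, $\phi(x)$ is only guaranteed to lie in $\frac1{|T|}\Z$, so this does not yet give integrality. I would refine it using the fact that on $\lin(\zJ(D,S^t))$ the sum of coefficients is determined linearly. Pick any $t\in T$. Every tight dijoin has $x(\delta(t))=1$, so on $L$ we have $x(\delta(t))=\sum_J\lambda_J$ over all generators, and $x(\delta(t))\in\Z$ for integral $x$. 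Now define
\[ \psi(x):=x(C)-x(\delta(t)). \]
It vanishes on each generator of $L'$ (value $1-1$), and on $J_0$ it equals $2-1=1$. Therefore $\mu=\psi(x)\in\Z$. Then $y=x-\mu\1_{J_0}\in L'\cap\Z^A$, and since $\zB'$ is an integral basis of $L'$, $y$ is an integral combination of $\zB'$. This completes the argument.

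The only step needing care is choosing this integer-valued functional $\psi$ that separates $J_0$ from $L'$ with value exactly $1$. Using $x(\delta(t))$ for a sink $t$ settles it, because sinks always have degree $1$ in tight dijoins. The same $\psi$ also gives linear independence directly.
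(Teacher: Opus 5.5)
Your proof is correct and takes essentially the paper's approach: your functional $\psi(x)=x(C)-x(\delta(t))$ for a sink $t$ is exactly the paper's step of evaluating on $C$ and on $\delta(v)$, $v\in T$, and subtracting, which gives both $\lambda_0=0$ (independence) and $\lambda_0\in\Z$ (integrality). Your preliminary detour through $|T|\,x(C)=x(A)$ is valid but, as you note yourself, unnecessary.
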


We are ready to describe the algorithm to construct a basis for the lattice of tight dijoins of robust digrafts.

\begin{theorem}\label{thm:robust_basis}
    Let $(D,S^t)$ be a robust digraft with $|S|<|T|$. Then, we can construct an integral basis $\zB$ for $\lin(\zJ(D,S^t))$ that consists of tight dijoins in polynomial time. Moreover, $|\zB|=|A|-|V^t|+1$.
\end{theorem}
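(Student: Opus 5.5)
I will argue by induction on $|S\setminus S^t|$, following the reduction sketched in the overview: set non-tight sources to be tight one at a time until the digraft becomes elementary, and at each step add a single tight dijoin of degree $2$ at the newly fixed vertex.

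\textbf{Base case.} Suppose $|S^t|=|S|-1$. Since $(D,S^t)$ is robust, it is tight dijoin-covered, so it is an elementary digraft. The ear-decomposition construction of \Cref{sec:elemantary} is taken as a black box here. It produces an integral basis of tight dijoins for $\lin(\zJ(D,S^t))$, and I will check that its size is $|A|-|V^t|+1$. The count comes from the degree constraints. Every tight dijoin has degree $1$ at each vertex of $V^t$, and has the forced degree $|T|-|S|+1$ at the unique vertex of $S\setminus S^t$. Among these $|V|=|V^t|+1$ degree equations there is exactly one linear dependency, namely that the $S$-side degrees and the $T$-side degrees both sum to $|J|$. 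This gives dimension $|A|-|V|+1=|A|-|V^t|$ only if the degree equations are the only implicit equalities. So the elementary-case theorem must supply that dimension, and I will match the two counts carefully so that they agree with $|A|-|V^t|+1$.

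\textbf{Inductive step.} Suppose $|S^t|<|S|-1$ and pick any $v\in S\setminus S^t$, with $C:=\delta(v)=\delta^+(\{v\})$. By \Cref{cor:add-tight-vert}, $(D,S^t\cup\{v\})$ is again robust, and it still has $|S|<|T|$. By induction it has an integral basis $\zB'$ of tight dijoins with $|\zB'|=|A|-|V^t|-1+1$. Moreover,
\[
\zJ(D,S^t\cup\{v\})=\{J\in\zJ(D,S^t):|J\cap C|=1\}.
\]
\Cref{CO:degree2-robust} gives, in polynomial time, a tight dijoin $J_0$ with $d_{J_0}(v)=2$. By \Cref{lemma:integral_basis}, $\zB:=\zB'\cup\{J_0\}$ is an integral basis for $\lin(\{J:|J\cap C|=1\}\cup\{J_0\})$, and $|\zB|=|A|-|V^t|+1$ as required.

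\textbf{Main obstacle.} It remains to show that this linear hull equals $\lin(\zJ(D,S^t))$. Equivalently, every tight dijoin $J$ of $(D,S^t)$ lies in the lattice generated by $\zB$. I would argue this via an integer-valued linear functional $\ell(x):=x(C)-1$, which vanishes on $\zJ(D,S^t\cup\{v\})$ and equals $1$ on $J_0$. Take any $J\in\zJ(D,S^t)$ and set $y:=\1_J-\ell(\1_J)\1_{J_0}$. Then $y$ is an integer vector with $\ell(y)=0$.

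The difficulty is to show that $y\in\lin(\zJ(D,S^t\cup\{v\}))$. For this I would use the degree description. By robustness and \Cref{thm:robust_digraft}(3), the implicit equalities of tight dijoins of $(D,S^t)$ are exactly the degree equations at $V^t$. After adding $v$ to $S^t$, the equalities are those degree equations plus the one at $v$, which is precisely $\ell=0$. The degree-sequence polymatroid is full-dimensional in the free coordinates, and \Cref{lemma:b-matching} shows that every admissible degree sequence is realized. Together these should give
\[
\lin(\zJ(D,S^t\cup\{v\}))=\lin(\zJ(D,S^t))\cap\{\ell=0\},
\]
from which $y\in\lin(\zJ(D,S^t\cup\{v\}))\cap\Z^A=\lat(\zB')$ follows. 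I expect this dimension comparison to be the step that needs the most care, in particular ruling out extra implicit equalities arising from nontrivial dicuts. Robustness is exactly what rules these out.

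Since there are at most $|S|$ induction steps and each one is polynomial, the overall construction runs in polynomial time.
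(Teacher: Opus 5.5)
Your induction is the same as the paper's: fix sources one at a time via \Cref{cor:add-tight-vert}, take $J_0$ from \Cref{CO:degree2-robust}, apply \Cref{lemma:integral_basis}, and bottom out at \Cref{thm:ear_decomposition}. But the step you flag as the main obstacle is not closed by what you wrote.

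First, $\ell(x)=x(C)-1$ is affine, not linear. With it, $\ell(y)=1-|J\cap C|$ rather than $0$. Also, the identity $\lin(\zJ(D,S^t\cup\{v\}))=\lin(\zJ(D,S^t))\cap\{\ell=0\}$ is false outright, since the left side contains $\mathbf{0}$ and the right side does not. You need a homogeneous functional such as $\ell(x)=x(C)-x(\delta(t))$ for a fixed $t\in T$, as in the proof of \Cref{lemma:integral_basis}.

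Second, and more seriously, you justify the hyperplane identity by asserting that the degree equations at $V^t$ are the only implicit equalities for $(D,S^t)$. That is a lower bound on $\dim\lin(\zJ(D,S^t))$, which is essentially the dimension statement you are trying to prove. Full-dimensionality of the degree-sequence polymatroid cannot supply it. That polymatroid lives in $\R^S$ and says nothing about the part of $\lin(\zJ(D,S^t))$ lying in the kernel of the degree map $\R^A\to\R^S$. As written, the step is unproven or circular.

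The missing observation is that only the trivial upper bound is needed, and this is exactly how the paper finishes. Every tight dijoin satisfies $x(\delta(u))=x(\delta(w))$ for all $u,w\in V^t$. The stars $\mathbf{1}_{\delta(u)}$, $u\in V^t$, are linearly independent: the only dependency among vertex stars of a connected bipartite graph uses every vertex, and $V^t\neq V$ because $S^t\neq S$. Hence $\dim\lin(\zJ(D,S^t))\le|A|-|V^t|+1$. Writing $\zJ_C:=\{J\in\zJ(D,S^t):|J\cap C|=1\}=\zJ(D,S^t\cup\{v\})$, you get
\[
|A|-|V^t|+1=|\zB|=\dim\lin\bigl(\zJ_C\cup\{J_0\}\bigr)\le\dim\lin\bigl(\zJ(D,S^t)\bigr)\le|A|-|V^t|+1 .
\]
So the two linear hulls coincide, and the conclusion of \Cref{lemma:integral_basis} transfers directly. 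No decomposition of an individual $J$ is needed, and robustness plays no role at this point. It is used only to keep the induction inside robust digrafts and to produce $J_0$.

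The same affine-versus-linear distinction resolves the off-by-one in your base case. The affine hull has dimension at most $|A|-|V|+1$ and avoids the origin. So the linear hull has dimension at most $|A|-|V|+2=|A|-|V^t|+1$, matching \Cref{thm:ear_decomposition}.
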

\begin{proof}
    To construct a basis $\zB$, we inductively set one vertex in $S\setminus S^t$ to be tight and add one tight dijoin to $\zB$. 
    
    Let $r:=|S\setminus S^t|-1$ and assume $S\setminus S^t=\{s_0,s_1,\ldots, s_r\}$. For $i\in [r]$, define $(D,S_i^t)$ as the digraft such that $S_i^t=S_{i-1}^t\cup \{s_i\}$ where $S_0^t:=S^t$. It follows from \Cref{cor:add-tight-vert} that $(D,S_i^t)$ is a robust digraft. Since $|S_{i-1}^t|<|S|-1$, it follows from \Cref{CO:degree2-robust} that we can find a tight dijoin $J_i$ of $(D,S_{i-1}^t)$ such that $d_{J_i}(s_i)=2$. Add $J_i$ to $\zB$. Finally, $(D,S_r^t)$ has $|S_r^t|=|S|-1$ and thus is an elementary digraft. We use \Cref{thm:ear_decomposition} to construct an integral basis $\zB'=\{J_{r+1},\ldots, J_k\}$ for $\lin(J(D,S_r^t))$. Add $\zB'$ to $\zB$.

    We prove that $\zB$ is indeed an integral basis for $\lin(J(D,S^t))$. Suppose for $i\in [r]$, $\zB_{i}:=\{J_{i+1},\ldots J_k\}$. We inductively prove that $|\zB_i|=|A|-|V_i^t|+1$ and $\zB_i$ is an integral basis for $(D,S_i^t)$. Assume $\zB_i$ is an integral basis for $(D,S_i^t)$. We show that $\zB_{i-1}=\zB_{i}\cup \{J_i\}$ is an integral basis for $(D,S_{i-1}^t)$. By applying \Cref{lemma:integral_basis} to the dicut $\delta^+(s_i)$ and dijoin $J_i$, we conclude that    $\zB_{i-1}$ is an integral basis for $\lin(\zJ(D,S_i^t)\cup \{J_i\})$. Since 
    \[
    \begin{aligned}
        |\zB_{i-1}|=&\dim(\lin(\zJ(D,S_i^t)\cup \{J_i\}))\leq \dim(\lin(\zJ(D,S_{i-1}^t)))\leq |A|-|V_{i-1}^t|+1\\
        =&|A|-(|V_i^t|-1)+1=|\zB_i|+1=|\zB_{i-1}|,
    \end{aligned}
    \] $\zB_{i-1}$ forms an integral basis for $\lin(\zJ(D,S_{i-1}^t))$ and $|\zB_{i-1}|=|A|-|V_{i-1}^t|+1$. This concludes the proof of the induction. 
\end{proof}

\section{Basis construction for non-robust bricks}\label{sec:nonrobust}
In this section, we deal with non-robust bricks, which is the most challenging case. The proof puts together everything we established so far. To start with, we define a partial order over all non-tight dicuts. 
Let $C,C'$ be non-tight dicuts of $(D,S^t)$. We say $C'$ \emph{dominates} $C$ if for every tight dijoin $J$, $|C\cap J|\geq |C'\cap J|$, denoted as $C'\succeq C$. We say $C'$ \emph{strictly dominates} $C$ if further for at least one tight dijoin $J$, $|C\cap J|> |C'\cap J|$, denoted as $C'\succ C$. We say $C'$ and $C$ are \emph{equivalent} if they dominate each other. We say $C$ is \emph{maximal under $\succeq$}, or simply \emph{maximal}, if no non-tight dicut strictly dominates $C$. 

Our goal is to find a good dicut to either contract if it is nontrivial or set to be tight if it is trivial. For this, we need to define two types of intermediate dicuts.
\begin{DE}[stable dicut, contractible dicut]
    In a tight dijoin-covered digraft $(D,S^t)$, a dicut is \emph{stable} if it is induced by a singleton $u\in S\setminus S^t$ and $(D,S^t\cup \{u\})$ is tight dijoin-covered. A dicut $C$ is \emph{contractible} if it is nontrivial and both $C$-contractions are tight dijoin-covered.
\end{DE}

In particular, a good dicut has to be one of the above, but satisfies an even stronger property. The following theorem gives a procedure to go from a separating dicut to a contractible dicut by finding dominating dicuts under $\succeq$.


\begin{theorem}\label{thm:separating-contractible}
    Let $(D,S^t)$ be a tight dijoin-covered digraft. If a separating dicut is maximal under $\succeq$, then it is contractible. Moreover, there exists a polynomial-time algorithm which, given an arbitrary separating dicut, outputs a contractible dicut by finding a sequence of at most $|V|$ separating dicuts, each strictly dominating the previous one.
\end{theorem}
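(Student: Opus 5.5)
The first step is to reformulate contractibility. Let $C=\delta^+(U)$ be a nontrivial dicut. If some tight dijoin $J$ has $|J\cap C|=1$, then its two images in the $C$-contractions $(D_1,S_1^t)$ and $(D_2,S_2^t)$ are tight dijoins, because the contracted vertices have degree exactly $1$. Conversely, tight dijoins of the two contractions that agree on the single arc of $C$ glue to a tight dijoin of $(D,S^t)$. So, by \Cref{lemma:tight-dijoin-covered}, $C$ is contractible if and only if both contractions have a tight dijoin, and this is certified or refuted in polynomial time by \Cref{CO:tight-dijoin-polytime}. In the refuted case the corollary returns a set $X$ violating \eqref{eq:tight_dijoin} in one of the contractions, say $(D_1,S_1^t)$. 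The whole proof is organized around turning such an $X$ into a better dicut.

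\textbf{Key step: a violating set yields a strictly dominating separating dicut.} Assume $C$ is separating and $(D_1,S_1^t)$ has no tight dijoin. Let $X\subseteq V_1$ be the set returned, with either $X\subseteq T_1$ or $X\subseteq S_1^t$, and $\sigma_{D_1}(V_1-X)\ge|X|+1$. Since $D_1$ is a contraction of the tight dijoin-covered digraft $D$, condition \eqref{eq:tight_dijoin} holds in $D$. Hence the violation must involve the contracted tight source $u_1$, so I expect $u_1\in X\subseteq S_1^t$.

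Uncontracting $u_1$ back to $V\setminus U$ merges some components. I will then pick a component $W$ of $V_1-X$, lifted to $D$, whose associated dicut $C'=\delta^+(U')$ has the following properties: $U'\subsetneq U$ (or the symmetric nesting on the other side), and $C'$ is nontrivial. I then run the barrier-type counting argument from the paragraph on barrier dicuts. For every tight dijoin $J$, each component of $V_1-X$ needs its own arc of $J$ to $X$, and all vertices of $X\setminus\{u_1\}$ have degree $1$ in $J$. Together these force $|J\cap C'|\le |J\cap C|$ for every tight dijoin $J$.

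For strictness, I will use the tight edge cover $J_0$ with $J_0\cap C=\emptyset$ that witnesses separation. Repairing $J_0$ as in \Cref{lemma:violate_robust}, by splitting into the two induced subdigrafts on either side and recombining, gives a tight edge cover that also avoids $C'$, so $C'$ is again separating. Applying the same counting to a tight dijoin where the extra component forces a surplus at $u_1$ gives a tight dijoin $J$ with $|J\cap C'|<|J\cap C|$.

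\textbf{Concluding both parts.} The first statement follows by contradiction. If a maximal separating dicut were not contractible, the key step would produce a separating dicut strictly dominating it, contradicting maximality. For the algorithm, start from any separating dicut (for example one supplied by \Cref{thm:testing-robust}) and test contractibility with \Cref{CO:tight-dijoin-polytime}. If the test fails, replace $C$ by the $C'$ produced from the violating set, and repeat. Since each new shore is strictly nested in the previous one, there are at most $|V|$ iterations, and each iteration runs in polynomial time.

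\textbf{Main obstacle.} I expect the hardest part to be choosing the component $W$ and checking the domination inequality $|J\cap C'|\le|J\cap C|$ simultaneously for \emph{all} tight dijoins $J$, not just for one. This requires a careful count of how the arcs of $J$ crossing $C$ are distributed among the components of $V_1-X$ once $u_1$ is expanded. Closely tied to this is showing that $C'$ is nontrivial and remains separating. Here I would first try to mimic the minimal-$|N(X)|-|X|$ exchange argument of \Cref{lemma:violate_robust}.
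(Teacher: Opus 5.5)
Your overall plan matches the paper's. When a contraction of a separating dicut $C=\delta^+(U)$ fails, you take the violating set $X\ni\bar u$ from \Cref{CO:tight-dijoin-polytime} and extract a component dicut that strictly dominates $C$ and is again separating. But the step you defer as the main obstacle is essentially the whole proof, and your plan for it does not work.

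Choosing the component only by nontriviality and nesting is not enough. A nontrivial component whose sources all lie in $S^t$, and which has unequally many sources and sinks, is met by every tight edge cover, so its dicut is not separating. Your proposed repair of the witness ``as in \Cref{lemma:violate_robust}'' also has no basis here, because that lemma relies on $X$ minimizing $|N(X)|-|X|$.

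The missing idea is to let the witness choose the component. Write $X=\{v_1,\dots,v_{s-1},\bar u\}$ and let $F_1,\dots,F_k$ (with $k>s$) be the dicuts of the components of $V_1\setminus X$. The set $X$ is independent, every arc at $X$ has its other end in exactly one component, and the arcs at $\bar u$ are exactly $C$. Hence every $J\subseteq A$ satisfies
\[
\sum_{i=1}^k|J\cap F_i|=|J\cap C|+\sum_{i=1}^{s-1}|J\cap\delta(v_i)|.
\]
Take the tight edge cover $J^*$ with $J^*\cap C=\emptyset$. Since each $v_i\in V^t$, the right-hand side equals $s-1<k$. So $J^*\cap F_j=\emptyset$ for some $j$, and $F_j$ is automatically nontrivial because an edge cover meets every trivial dicut. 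Thus $F_j$ is separating with the same witness $J^*$, and no repair is needed.

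The same identity gives much more than your inequality $|J\cap C'|\le|J\cap C|$. For a tight dijoin $J$, every $|J\cap F_i|\ge 1$ and every $|J\cap\delta(v_i)|=1$. Hence $|J\cap F_j|\le|J\cap C|-(k-s)<|J\cap C|$ for \emph{every} tight dijoin, so no separate search for a strict witness is required.

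This uniform strictness is what gives the $|V|$ bound. Fix one tight dijoin $J_0$; then $|J_0\cap C_1|>|J_0\cap C_2|>\cdots\ge 1$, and the first value is at most $|J_0|=|T|$.

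Your nesting argument does not give this bound. There are two cases:
\begin{itemize}
\item If the failing contraction is the one where the in-shore $V\setminus U$ is contracted, the contracted vertex is a sink and $X\subseteq T_1$. The new out-shore then lies strictly inside $U$.
\item If the out-shore $U$ is contracted, the contracted vertex is a tight source and $X\subseteq S_2^t$. The new out-shore then strictly contains $U$.
\end{itemize}
Nothing prevents these two cases from alternating, so the shores need not form a chain. This also means you must treat both cases, not only the tight-source one; the count above is identical in each.
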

\begin{proof}
    Let $C=\delta^+(U)$ be a maximal separating dicut. Let $(D_1=(V_1=S_1\cup T_1,A_1),S^t_1)$ be the $C$-contraction obtained from contracting $V\setminus U$ into $\bar{u}$. First, we show that $(D_1,S_1^t)$ is tight dijoin-covered. Suppose not. Using \Cref{CO:tight-dijoin-polytime}, we can in polynomial time find $X\subseteq S_1^t$ or $X\subseteq T_1$ such that $\sigma_{D_1}(V_1\setminus X)>|X|$. Then, $\bar{u}\in X$ since otherwise $\sigma_D(V\setminus X)>|X|$, contradicting the fact that $(D,S^t)$ is tight dijoin-covered. In particular, we know $X\subseteq T_1$. Let $s:=|X|$ and suppose $X=\{v_1,\ldots,v_{s-1},\bar{u}\}$. Let $k:=\sigma_{D_1}(V_1\setminus X)>s$ and let $F_1,F_2, \ldots, F_k$ be the dicuts induced by the connected components of $V_1\setminus X$ in $D_1$. In the original digraph $D$, they are the dicuts induced by connected components of $V\setminus \big((X\setminus \{\bar{u}\})\cup (V\setminus U)\big)$. Figure \ref{fig:find-better-dicut} illustrates the found structure.

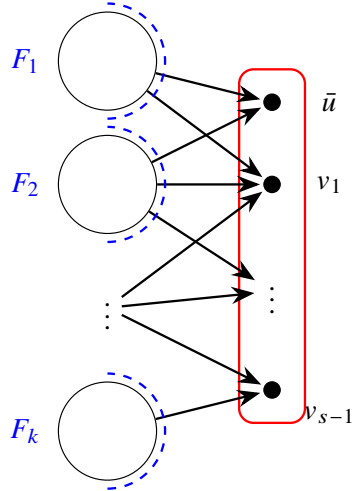
\begin{figure}[htbp]
  \centering 
  \resizebox{0.3\linewidth}{!}{
\begin{tikzpicture}[scale=1, 
 mynode/.style={circle, draw, fill=black, inner sep=2pt}, 
 comp/.style={circle, draw, fill=white, inner sep=10pt}, 
 dedge/.style = {thick, -{Stealth[length=8pt, sep]}},
 cut/.style = {blue, thick, dashed}
 ]

\node[comp] (b1) at (0,0) {};  
\node (b2) at (0,1.5) {$\vdots$}; 
\node[comp] (b3) at (0,3) {}; 
\node[comp] (b4) at (0,4.5) {};

\node (f1) at (-1,0) {$F_k$};
\node (f3) at (-1,3) {$F_{2}$}; 
\node (f4) at (-1,4.5) {$F_1$};

\node[mynode] (a1) at (2,0.5) {};  
\node (a2) at (2,1.7) {$\vdots$}; 
\node[mynode] (a3) at (2,3) {}; 
\node[mynode] (a4) at (2,4) {}; 

\node (vs) at (2.8,0.2) {$v_{s-1}$};
\node (v3) at (2.8,3) {$v_1$}; 
\node (v4) at (2.8,4) {$\bar{u}$};

\begin{scope}
\draw[thick, rounded corners=5pt]
($(a1) + (-0.4,-0.4)$) rectangle ($(a4) + (0.4,0.4)$);
\end{scope}

\draw[dedge] (b1) to (a1);
\draw[dedge] (b2) to (a1);
\draw[dedge] (b2) to (a2);
\draw[dedge] (b3) to (a2);
\draw[dedge] (b2) to (a3);
\draw[dedge] (b3) to (a3);
\draw[dedge] (b4) to (a3);
\draw[dedge] (b3) to (a4);
\draw[dedge] (b4) to (a4);

\draw[cut] 
(0,3.8) arc (90:270:-0.7);
\draw[cut] 
(0,2.3) arc (90:270:-0.7);
\draw[cut] 
(0,-0.7) arc (90:270:-0.7);

\end{tikzpicture}}
  \caption{$(D_1,S_1^t)$ with a set $X$ circled on the right-hand side; dicuts $F_1,\ldots, F_k$ are in dashed.}
  \label{fig:find-better-dicut}
\end{figure}

    Since $C$ is a separating dicut in $(D,S^t)$, there exists a tight edge cover $J^*$ such that $J^*\cap C=\emptyset$. By double counting the arcs between $X$ and $V\setminus X$,
    \[
    \sum_{i=1}^k|J^*\cap F_i|=|J^*\cap C|+\sum_{i=1}^{s-1}|J^*\cap \delta_D^-(v_i)|=s-1<k-1.
    \]
    We conclude that at for least one $j\in[k]$, $J^*\cap F_j=\emptyset$. Further, any such $F_j$ is nontrivial because $J^*$ is an edge cover. Thus, $F_j$ is a separating dicut of $D$.
    
    Finally, we show that $F_j$ strictly dominates $C$, which contradicts the fact that $C$ is maximal under $\succeq$. Indeed, for every tight dijoin $J$ of $(D,S^t)$,
    \[
    |J\cap F_j|+(k-1)\leq \sum_{i=1}^k|J\cap F_i|=|J\cap C|+\sum_{i=1}^{s-1}|J\cap \delta_D^-(v_i)|=|J\cap C|+(s-1)<|J\cap C|+(k-1).
    \]
    Thus, $|J\cap F_j|<|J\cap C|$ for every tight dijoin $J$, which means $F_j$ strictly dominates $C$. This concludes the proof that $(D_1,S_1^t)$ is tight dijoin-covered. 

    Analogously, we show that the $C$-contraction obtained from contracting $U$ is also tight dijoin-covered. This concludes the proof that a maximal separating dicut $C$ is contractible. 
    Starting from an arbitrary separating dicut $C$, if it is non-contractible, the above argument gives us an algorithm to find another separating dicut $F_j\succ C$ such that $|J\cap F_j|<|J\cap C|$ for every tight dijoin $J$. Let $C=C_1\prec\ldots \prec C_t$ be the dicuts found by the algorithm, where $C_t$ is a maximal separating dicut. Fix an arbitrary tight dijoin $J_0$, we know that $|T|=|J_0|\geq |J_0\cap C_1|>\ldots >|J_0\cap C_t|\geq 1$. Therefore, the process terminates in at most $|T|\leq |V|$ steps, at which point we find a contractible dicut.
\end{proof}

Once we have a contractible dicut, the following theorem gives a procedure to find a good dicut. The process consists of two steps. First, we find a sequence of at most $|A|$ contractible or stable dicuts, each \emph{strictly} dominating the previous one, at the end of which is a maximal dicut $C$. If $C$ is a stable dicut, then it is good and the algorithm terminates. If $C$ is a maximal contractible dicut, then we move to the second step: find a sequence of at most $|V|$ equivalent contractible dicuts until we reach a good dicut. A similar procedure of finding a ``good" cut exists in perfect matching lattices \cite{carvalho1997decomposicao,deCarvalhoLucchesiMurty2002b,LucchesiMurty2024}, but we have to overcome nontrivial technical difficulties to make it work for the digraft setting.

\begin{theorem}\label{thm:separating-good}
    Let $(D,S^t)$ be a brick. \begin{enumerate}
        \item[(a)] If a stable dicut is maximal under $\succeq$, then it is good.
        \item[(b)] If a contractible dicut is maximal under $\succeq$ and not equivalent to a stable dicut, then it is good.
        \item[(c)] There exists a polynomial-time algorithm which, given an arbitrary contractible dicut, outputs a good dicut by finding a sequence of at most $|A|+|V|$ contractible or stable dicuts, each dominating the previous one.
    \end{enumerate}   
\end{theorem}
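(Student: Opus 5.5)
The plan has three parts. For (a) and (b), argue by contradiction: a failure of goodness exhibits a non-tight dicut strictly dominating the given one. For (c), build an algorithm that climbs the order $\succeq$ and certify termination by a linear-independence count. Throughout, use that in a brick $\widebar{S^t}=S^t$ (\Cref{brick-maximally-valid}). The main tools are the recognition results \Cref{thm:barrier_tight_equiv}, \Cref{CO:tight-dicut-polytime} and \Cref{prop:new_tight}, and the double counting argument from the proof of \Cref{thm:separating-contractible}.

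For (a), let $C=\delta^+(u)$ be a maximal stable dicut and set $(S^t)'=S^t\cup\{u\}$. The digraft $(D,(S^t)')$ is tight dijoin-covered by stability, so we must show two things: it is a near-brick, and $\widebar{(S^t)'}=(S^t)'$.
\begin{itemize}
\item If some $s\in \widebar{(S^t)'}\setminus (S^t)'$ exists, \Cref{prop:new_tight} gives a barrier $X\subseteq T$ with $N(s)\subseteq X$. Since $D$ has no such barrier with respect to $S^t$, $u$ must interact with $X$. Double counting over the components of $V\setminus X$, as in \Cref{thm:separating-contractible}, should then yield a component dicut $F$ that is non-tight in $(D,S^t)$ and satisfies $|J\cap F|\le |J\cap C|$ for all tight dijoins $J$, with strict inequality for some $J$. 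This contradicts maximality.
\item If $(D,(S^t)')$ is not a near-brick, it has a nontrivial tight dicut whose decomposition yields at least two bricks. By \Cref{thm:barrier_tight_equiv} this dicut may be taken to be a barrier dicut or a $2$-separation dicut. Because $(D,S^t)$ is a brick, the barrier or $2$-separation must use $u$. The same counting identity $d_J(u)+d_J(v)=d_J(U\cup\{u\})+d_J(U'\cup\{u\})$, or its barrier analogue, then exhibits a dicut strictly dominating $\delta^+(u)$.
\end{itemize}

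For (b), apply the same argument to each $C$-contraction $(D_i,S_i^t)$. The only new tight vertex there is the contracted vertex $u_i$. A barrier or $2$-separation in $D_i$ that produces extra tight sources or a second brick must contain $u_i$, and lifting it back to $D$ gives dicuts $F$ with $\sum_F |J\cap F| = |J\cap C| + \text{const}$. Hence some such $F$ dominates $C$. Maximality forces $F$ to be equivalent to $C$. If $F$ is trivial, it is equivalent to a stable dicut, which is excluded by hypothesis. If $F$ is nontrivial, I expect the case analysis to show that $F$ then has a strictly smaller shore, and in fact that this case cannot arise for the chosen $C$. I expect this "equivalent but not strictly dominating" case to be the main obstacle. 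I would handle it by choosing $C$ with a minimal shore among equivalent maximal dicuts, which also drives part (c).

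For (c), start from a contractible dicut and run the following loop.
\begin{enumerate}
\item Test the current dicut $C$ for goodness. This uses \Cref{CO:tight-dijoin-polytime}, \Cref{CO:tight-dicut-polytime} and \Cref{prop:new_tight} on the contractions, or on $(D,S^t\cup\{u\})$ when $C$ is trivial.
\item If $C$ is not good, the obstruction found there produces a dominating dicut $F$, as in (a) and (b). By \Cref{thm:separating-contractible}, applied after possibly replacing $F$ by a maximal separating dicut above it, we may take $F$ to be contractible or stable.
\end{enumerate}
To bound the strictly dominating steps by $|A|$, attach to each step $C_j\prec C_{j+1}$ a tight dijoin $J_j$ with $|J_j\cap C_{j+1}|<|J_j\cap C_j|$, and show the characteristic vectors $\1_{C_j}$ are linearly independent. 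For example, triangularity of the matrix $(|J_i\cap C_j|)$ would give this, and the count is then at most $\dim\lin(\zJ(D,S^t))\le|A|$. For the equivalent steps, each replacement strictly shrinks the relevant shore, which bounds them by $|V|$. The result is the claimed sequence of at most $|A|+|V|$ dicuts, each dominating the previous one and ending at a good dicut.
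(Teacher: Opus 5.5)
Your treatment of $2$-separations matches the paper. The gap is the barrier case, which you flag as the main obstacle in (b) but which already breaks your argument for (a).

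Suppose $(D,S^t\cup\{u\})$, or a $C$-contraction, has a nontrivial barrier $X$ through the new tight vertex. The identity $\sum_{i=1}^k(|J\cap F_i|-1)=|J\cap C|-1$ over the component dicuts only gives $F_i\succeq C$ for every $i$. Take a tight dijoin meeting $C$ once and one meeting it twice (\Cref{lemma:continuity}). These produce a strictly dominating dicut only when at least two of the $F_i$ are non-tight.

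The configuration where exactly one $F_j$ is non-tight is fully compatible with maximality. In it, every other component is a singleton, which is forced because $D$ is a brick, and $F_j$ is equivalent to $C$. So your claim in (a) that the barrier analogue ``exhibits a dicut strictly dominating $\delta^+(u)$'' is false. Your expectation in (b) that this case cannot arise is also false.

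Choosing $C$ with a minimal shore does not fix this, for two reasons. First, (b) is a statement about \emph{every} maximal contractible dicut not equivalent to a stable one. Second, the two sides pull in opposite directions. A barrier through the contracted sink of $D_1$ gives an equivalent dicut with a smaller out-shore. A barrier through the contracted source of $D_2$ gives one with a smaller in-shore. No single extremal choice excludes both.

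What is missing is an induction on $|V|$, which is how the paper proceeds. The dicut $F_j$ dominates the maximal $C$, so it is itself a maximal contractible dicut. On the $D_1$ side it is not stable, since it is equivalent to $C$. Decompose the contraction along $F_j$:
\begin{itemize}
\item The piece containing $X$ has exactly $k$ sources and $k$ sinks. So it contains no brick, and all its sources are already tight.
\item The other piece is exactly the corresponding $F_j$-contraction of $D$, with fewer vertices. By induction it is a near-brick with no new tight sources.
\end{itemize}
\Cref{de-composition} then transfers non-tightness of sources back across $F_j$. This step is constructive, not a contradiction. You also need to rule out zero bricks. That follows from $|S_2|<|T_2|$, which double counting gives from the non-tightness of $C$.

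Three smaller points.
\begin{itemize}
\item In your first bullet of (a), a sink barrier does not depend on $S^t$. So the barrier from \Cref{prop:new_tight} is already a barrier of $(D,S^t)$ isolating $s$. This contradicts \Cref{brick-maximally-valid} immediately; no interaction with $u$ and no domination argument is involved.
\item In (c), \Cref{thm:separating-contractible} is neither applicable, since the dominating dicut need not be separating, nor needed. Any non-tight $F\succeq C$ with $C$ contractible or stable meets the tight dijoin that meets $C$ once in exactly one arc. So $F$ is itself contractible or stable.
\item Strict inequalities $|J_j\cap C_{j+1}|<|J_j\cap C_j|$ alone do not make your evaluation matrix nonsingular. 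A single dijoin meeting $C_1,C_2,C_3$ in $3,2,1$ arcs witnesses both steps. You need the paper's normalization: a common $J_0$ meeting every $C_j$ once, and witnesses $J_i$ with $|J_i\cap C_i|=2$ and $|J_i\cap C_{i+1}|=1$ (hence $1$ on all later $C_j$).
\end{itemize}
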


To prove \Cref{thm:separating-good}, we need a lemma from \cite{abdi2025strongly}. We include a proof for completeness.
\begin{LE}\label{lemma:contractible-def}
    In a tight dijoin-covered digraft $(D,S^t)$, a nontrivial dicut $C$ is contractible if and only if there is a tight dijoin $J$ such that $|J\cap C|=1$.
\end{LE}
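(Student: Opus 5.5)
Both directions go by relating tight dijoins of $(D,S^t)$ that meet $C=\delta^+(U)$ once to pairs of tight dijoins of the two $C$-contractions $(D_1,S_1^t)$ and $(D_2,S_2^t)$, obtained by contracting $V\setminus U$ into $u_1$ and $U$ into $u_2$ respectively. The key observation is that $C$ survives in both contractions as a trivial dicut. It is $\delta(u_1)$ in $D_1$, where $u_1$ is a sink because $C$ leaves $U$; there it is automatically tight, since every sink is tight. It is $\delta(u_2)$ in $D_2$, where $u_2$ is a source put into $S_2^t$. Moreover, every dicut of $D_i$ other than $\delta(u_i)$ lifts to a dicut of $D$ with the same arc set: a shore containing the contracted vertex expands to contain the whole contracted side. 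The same holds for the trivial dicuts at the other vertices of $V^t$.

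\emph{($\Leftarrow$)} Let $J$ be a tight dijoin with $J\cap C=\{e\}$. Set $J_i:=J\cap A_i$, which contains $e$. Then $J_i$ has degree $1$ at $u_i$. At every other vertex of $V_i$ it has the same degree as $J$, because all arcs at such a vertex lie in $A_i$. So $J_i$ meets the degree constraints of $(D_i,S_i^t)$. It also meets every dicut of $D_i$: $\delta(u_i)=C$ is met by $e$, and any other dicut of $D_i$ is a dicut of $D$ with the same arc set, hence met by $J$. So each $J_i$ is a tight dijoin, and by \Cref{lemma:tight-dijoin-covered} both contractions are tight dijoin-covered. Hence $C$ is contractible.

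\emph{($\Rightarrow$)} Suppose both contractions have tight dijoins $J_1,J_2$. Each meets $C$ in exactly one arc, $e_1$ and $e_2$ say. We want them to agree on the arc of $C$, and for this we use that $(D_1,S_1^t)$ is tight dijoin-covered (\Cref{lemma:tight-dijoin-covered}). Concretely, pick a tight dijoin $J_1'$ of $(D_1,S_1^t)$ containing $e_2$. Put $J:=(J_1'\setminus C)\cup (J_2\setminus C)\cup\{e_2\}$. Then $J$ has degree $1$ on each vertex of $V^t$ and meets $C$ exactly once. One checks that $|J|=|T|$.

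The main step is to show that $J$ is a dijoin of $D$. I plan to use \Cref{lemma:b-matching} here. Since $J$ is a perfect $b$-matching for its own degree vector $b$, it suffices to verify $b(X)\le |N(X)|-1$ for all $\emptyset\ne X\subsetneqq S$. Equivalently, by \Cref{lemma:minimal_dicuts}, it suffices to check that every minimal dicut $\delta^-(X\cup N(X))$ is hit.

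For a minimal dicut with shore $W$, I will uncross $W$ with $U$. If $W$ crosses $U$, modularity of the dicut size function gives
\[
|J\cap\delta^+(W)|+|J\cap C|=|J\cap\delta^+(W\cap U)|+|J\cap\delta^+(W\cup U)|.
\]
The shores $W\cap U$ and $W\cup U$ each lie on one side of $C$ (or equal to it), so they correspond to dicuts of one contraction. There $J$ coincides with $J_1'$ or $J_2$ away from $C$, so both terms on the right are at least $1$. Since $|J\cap C|=1$, this forces $|J\cap\delta^+(W)|\ge 1$. If $W$ does not cross $U$, then $\delta^+(W)$ is already a dicut of one of the contractions, with the same arc set, and the same argument applies directly.

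I expect this uncrossing check, that $W\cap U$ and $W\cup U$ really give dicuts of the contractions that $J$ meets, to be the only delicate point. In particular, I need to handle the degenerate case where $W\cap U$ is empty, or $W\cup U=V$. In that case the dicut $\delta^+(W)$ lies entirely within one side, and the claim follows directly from that contraction.
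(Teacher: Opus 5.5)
Your proof is correct and follows the paper's route: for one direction you restrict $J$ to $A_1,A_2$, and for the other you glue tight dijoins of the two contractions that agree on a single arc of $C$, chosen using that the contractions are tight dijoin-covered. The only difference is that the paper cites the decomposition/composition lemma (\Cref{de-composition}) where you verify it directly, including the uncrossing step; that step works for an arbitrary dicut shore $W$, so your detour through \Cref{lemma:b-matching} and minimal dicuts is unnecessary.
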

\begin{proof}
    Suppose $C=\delta^+(U)$ for some $\emptyset\neq U\subsetneqq V$. Let $(D_1=(V_i,A_i),S^t_1)$ be the $C$-contraction obtained from contracting $V\setminus U$ into $u_1$; let $(D_2,S^t_2)$ be the $C$-contraction obtained from contracting $U$ into $u_2$. We first prove ``only if". If $C$ is contractible, both $(D_i,S^t_i)$'s are tight dijoin-covered. Pick an arbitrary $e\in C$. There exist $J_i\in \zJ(D_i,S_i)$ such that $e\in J_i,\ i=1,2$. Further, $u_i$ being a tight node in $(D_i,\zF_i)$ implies that $J_i\cap C=\{e\},\ i=1,2$. Then, it follows from \Cref{de-composition} ``composition" that $J:=J_1\cup J_2$ is a tight dijoin of $(D,S^t)$ such that $|J\cap C|=1$. We then prove ``if". Suppose there is a tight dijoin $J$ of $(D,S^t)$ such that $|J\cap C|=1$. It follows from \Cref{de-composition} ``decomposition" that $J_i:=J\cap A_i$ is a tight dijoin of $(D_i,S_i^t),\ i=1,2$. By \Cref{lemma:tight-dijoin-covered}, $(D_i,S_i^t)$ is tight dijoin-covered for $i=1,2$, which means $C$ is a contractible dicut.
\end{proof}

We are now ready to prove \Cref{thm:separating-good}.
\begin{proof}[Proof of \Cref{thm:separating-good}]
    We prove by induction on the number of vertices $|V|$. In the case where $C=\delta^+(U)$ is a maximal contractible dicut, let $(D_1=(V_1=S_1\cup T_1,A_1),S^t_1)$ be the $C$-contraction obtained from contracting $V\setminus U$ into $\bar{u}$, and let $(D_2=(V_2=S_2\cup T_2,A_2),S^t_2)$ be the $C$-contraction obtained from contracting $U$ into $u$. In the case where $C=\delta^+(u)$ is a maximal stable dicut, let $(D_2=(V_2=S_2\cup T_2,A_2),S^t_2)$ be the digraft obtained by setting $u$ to be tight in $(D,S^t)$. The proof that $(D_2,S_2^t)$ is a near-brick is identical in both settings, so we will address them together. We first prove that $(D_2,S_2^t)$ is a near-brick and $\widebar{S_2^t}=S_2^t$.  We prove this by induction on the size of $V_2$. By definition of a contractible dicut (stable dicut), $(D_2,S_2^t)$ is tight dijoin-covered. 
\begin{CL}\label{claim:imbalance}
    $|S_2|<|T_2|$.
\end{CL}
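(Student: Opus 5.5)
The plan is to treat the two settings separately but by one counting idea: compare $|S_2|$ and $|T_2|$ with $|S|$ and $|T|$, and use that $(D,S^t)$ is a brick, so $|S|<|T|$.

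\emph{Stable case.} Here $C=\delta^+(u)$ and $D_2=D$, so $S_2=S$ and $T_2=T$. The claim is then immediate from $|S|<|T|$.

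\emph{Contractible case.} Here $C=\delta^+(U)$ and $D_2$ is obtained by contracting $U$ into the new tight source $u$. Thus $S_2=(S\setminus U)\cup\{u\}$ and $T_2=T\setminus U$. I will count degrees using a tight dijoin $J$ of $(D,S^t)$ with $|J\cap C|=1$, which exists by \Cref{lemma:contractible-def}.

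\begin{itemize}
\item The restriction $J_2=J\cap A_2$ is a tight dijoin of $(D_2,S_2^t)$, and every vertex of $S_2$ has $J_2$-degree at least $1$. Since every sink has degree exactly $1$, this gives $|T_2|=|J_2|\ge |S_2|$.
\item Suppose equality $|S_2|=|T_2|$ holds. Then every source of $D_2$ has $J_2$-degree exactly $1$, so the same holds for every source in $S\setminus U$ in $J$.
\item Apply the same count to $D_1$, the contraction of $V\setminus U$ into the new tight sink $\bar u$. We have $|T_1|=|T\cap U|+1\ge |S\cap U|=|S_1|$.
\item Adding up gives $|T|=|T_1|+|T_2|-1\ge |S_1|+|S_2|-1=|S|$. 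So equality on the $D_2$ side forces the global imbalance $|T|-|S|$ to be carried entirely by the $D_1$ side.
\end{itemize}

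That alone gives no contradiction, so equality must be ruled out by the brick structure. If $|S_2|=|T_2|$, every tight dijoin $J$ with $|J\cap C|=1$ is a perfect matching on the $V\setminus U$ side. I then want to show that $C$ is actually a tight dicut of $(D,S^t)$, contradicting that $D$ is a brick (no nontrivial tight dicuts), or else that $C$ fails maximality. For this I take $X:=T\setminus U=T_2$ together with the structure of $D_2$. In $D_2$, $|S_2|=|T_2|$ means $\widebar{S_2^t}=S_2$. Since \Cref{prop:new_tight} cannot be invoked directly on $D$, I would instead use the following argument. Any tight dijoin $J$ of $D$ satisfies
\[
|J\cap C|=|T_2|-\sum_{s\in S\setminus U} d_J(s)\le |T_2|-|S\setminus U|=|T_2|-|S_2|+1=1.
\]
Here $d_J(s)$ counts only arcs into $T_2$, since sources in $S\setminus U$ have no arcs into $U$ (as $C$ is a dicut with out-shore $U$). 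Combined with $|J\cap C|\ge 1$, this makes $C$ a tight dicut. It is nontrivial, contradicting that $(D,S^t)$ is basic.

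The main obstacle is verifying the identity $|J\cap C|=|T_2|-\sum_{s\in S\setminus U}d_J(s)$. This needs care with the arcs from $U$ into $T\setminus U$ (exactly $C$) and with the fact that arcs from $S\setminus U$ land only in $T\setminus U$. If that bookkeeping holds, the claim follows.
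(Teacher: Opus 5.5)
Your proposal is correct and is essentially the paper's argument. Both rest on the count $|T_2| = |J\cap C| + \sum_{s\in S\setminus U} d_J(s) \ge |J\cap C| + |S_2| - 1$, together with the fact that $C$ is a nontrivial, hence non-tight, dicut of the brick. The paper simply plugs in a tight dijoin with $|J\cap C|\ge 2$ to get $|T_2|\ge |S_2|+1$ directly, so the contradiction framing, the bullets about $D_1$, and the remark on tight sources of $D_2$ are unnecessary detours.
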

\begin{cproof}
    Since $(D,S^t)$ is a brick, $C$ is not a tight dicut. Pick a tight dijoin $J$ of $(D,S^t)$ such that $|J\cap C|> 1$. By double counting the arcs in $D_2$,
    \[
    1+(|S_2|-1)< |J\cap C|+\sum_{s\in S_2\setminus \{u_2\}}|J\cap \delta^+(s)|=\sum_{t\in T_2} |J\cap \delta^-(t)|=|T_2|,
    \]
    which implies $|S_2|<|T_2|$.
\end{cproof}
\begin{CL}\label{claim:no-2-sep}
    $(D_2,S_2^t)$ has no $2$-separation dicuts.
\end{CL}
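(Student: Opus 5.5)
The plan is to argue by contradiction, using maximality of $C$ under $\succeq$. Suppose $(D_2,S_2^t)$ has a $2$-separation $\{a,b\}$ with $a\in S_2^t$ and $b\in T_2$. Let $W$ and $W'$ be unions of components of $V_2\setminus\{a,b\}$ partitioning it, so that $\delta^+(W\cup\{a\})$ and $\delta^+(W'\cup\{a\})$ are $2$-separation dicuts of $D_2$. The aim is to lift one of them to a non-tight dicut of $D$ that dominates $C$, is not equivalent to $C$, and is again contractible or stable.

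First I split into cases according to where the contracted vertex $u_2$ (or $u$ in the stable case) sits.

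\emph{Case $a=u_2$.} Here $a$ is the contracted vertex, which lies in $S_2^t$ because it was made tight. A $2$-separation through $u_2$ means the part $U$ together with $b$ separates $D$. Lift $W\cup\{a\}$ by uncontracting $a$, giving the dicut $F=\delta^+(U\cup W)$ in $D$; this is a dicut because $u_2$ was a source in $D_2$. For every tight dijoin $J$ of $D$, the counting $d_J(u_2)+d_J(b)=d_J(W\cup\{a\})+d_J(W'\cup\{a\})$ carries over to $D$ as
\[
|J\cap C|+1=|J\cap F|+|J\cap F'|,
\]
where $F'=\delta^+(U\cup W')$. Since both $F$ and $F'$ are dicuts of $D$, each term on the right is at least $1$. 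Hence $|J\cap F|\le|J\cap C|$ for every $J$, so $F\succeq C$. Equality for all $J$ would force $F'$ to be tight in the brick $D$, which is impossible because $F'$ is nontrivial. (If $F'$ were trivial, then $W'=\{$a single sink$\}$; this degenerate situation needs separate treatment, and I expect it to contradict $2$-edge-connectivity or leave a vertex of degree $1$.) So $F\succ C$ strictly. It remains to show that $F$ is contractible or stable. By \Cref{lemma:contractible-def}, it suffices to take a tight dijoin $J$ with $|J\cap C|=1$, which exists because $C$ is contractible. For that $J$ the identity forces $|J\cap F|=1$. If $F$ is trivial, it is induced by a single source, and a tight dijoin with degree $1$ there makes it stable. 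Either way this contradicts the maximality of $C$.

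\emph{Case $a\neq u_2$.} Now $u_2$ lies in $W$, in $W'$, or equals $b$. If $b=u_2$, this is impossible because $u_2$ is a source. Otherwise, the $2$-separation $\{a,b\}$ lifts directly to a $2$-separation of $D$ with $a\in S^t$ and $b\in T$: uncontracting $u_2$ only enlarges the component containing it. The resulting $2$-separation dicuts are tight dicuts in $D$. At least one of them is nontrivial, since the $D_2$ sides were nontrivial and one side only grows. This contradicts $D$ being a brick. The stable case is analogous, with $D_2=D$ and $S_2^t=S^t\cup\{u\}$. A $2$-separation using $a\neq u$ is a $2$-separation of the brick $D$, which is a contradiction. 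If $a=u$, the same inequality argument as above with $C=\delta^+(u)$ gives a dicut $F\succ C$ that is contractible or stable.

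The main obstacle is the case $a=u_2$. There I must verify carefully that the lifted sets really induce dicuts in $D$ and that $F$ is non-equivalent. I also need to handle the subcase where one lifted dicut is trivial, since maximality is only with respect to non-tight dicuts. The claim that $F$ is again contractible or stable also needs care, because part (b) of \Cref{thm:separating-good} only concerns this class.
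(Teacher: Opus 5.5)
Your proof is correct, and its skeleton matches the paper's. A $2$-separation avoiding the contracted vertex lifts to a $2$-separation of the brick $(D,S^t)$. Otherwise, the identity $|J\cap F|+|J\cap F'|=|J\cap C|+1$ gives $F,F'\succeq C$.

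You differ from the paper in the strictness step. The paper invokes the Jump-free Lemma (\Cref{lemma:continuity}) to produce a tight dijoin $J_2$ with $|J_2\cap C|=2$. The identity then forces exactly one of $F_1,F_2$ to meet $J_2$ once, and that one strictly dominates $C$. You instead observe that if $F$ failed to strictly dominate $C$, then $|J\cap F'|=1$ for every tight dijoin $J$. That would make the nontrivial dicut $F'$ tight in a brick, which is impossible. Your route avoids the Jump-free Lemma and in fact shows that both $F$ and $F'$ strictly dominate $C$. The paper's route buys an explicit witness $J_2$ with $|J_2\cap C|=2$ and $|J_2\cap F_j|=1$. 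Part (c) needs exactly this witness, both to choose which $F_j$ to move to and to prove linear independence of the chain $\mathbf{1}_{C_1},\dots,\mathbf{1}_{C_t}$.

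Two of the worries you flag are vacuous.
\begin{enumerate}
\item $F$ and $F'$ can never be trivial. The shore $U\cup W'$ has at least $|U|+1\ge 2$ vertices, and its complement $W\cup\{b\}$ has at least $2$. The same count works in the stable case with $U=\{u\}$. So there is no degenerate subcase, and in particular none described by $W'$ being a single sink.
\item Maximality is defined against all non-tight dicuts, so you only need $F$ to be non-tight. This is automatic, since $F$ is nontrivial and $(D,S^t)$ is a brick. Contractibility of $F$ matters only for the algorithm in part (c). You derive it correctly from a $J$ with $|J\cap C|=1$, just as the paper does.
\end{enumerate}
One minor imprecision: in the case $a\neq u_2$, uncontracting $u_2$ may split its component rather than merely enlarge it. This only increases the number of components, so the conclusion is unaffected.
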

\begin{cproof}
Suppose for the sake of contradiction that there exists a $2$-separation $\{v,w\}\subseteq S_2^t\cup T_2$. This can be found in polynomial time by checking all possible pairs of vertices. One of the vertices has to be $u$, since otherwise $\{v,w\}$ is a $2$-separation of $(D,S^t)$, contradicting the fact that $(D,S^t)$ is a brick. Say $v=u$. Let $U_1$ and $U_2$ be unions of components of $V_2\setminus \{u,w\}$ such that $U_1\dot\cup U_2=V_2\setminus \{u,w\}$. Let $F_1$ and $F_2$ be two dicuts induced by $u\cup U_1$ and $u\cup U_2$, respectively. They are both nontrivial dicuts. For every tight dijoin $J$ of $(D,S^t)$, we have
    \[
    |J\cap F_1|+|J\cap F_2|=|J\cap C|+|J\cap \delta_D^-(w)|=|J\cap C|+1,
    \]
    rearranging which we have
    \begin{equation}
        \sum_{i=1,2}(|J\cap F_i|-1)=|J\cap C|-1.
    \end{equation}
    Since $C$ is contractible (stable), there exists a tight dijoin $J_1$ of $(D,S^t)$ such that $|J_1\cap C|=1$. Then,
    \begin{equation*}
    0\leq \sum_{i=1,2}(|J_1\cap F_i|-1)=|J_1\cap C|-1=0,
    \end{equation*}
    which implies that $|J_1\cap F_1|=|J_1\cap F_2|=1$. This implies that both $F_1$ and $F_2$ are contractible. 
    We show that at least one $F_j$, $j=1,2$ strictly dominates $C$, leading to a contradiction that $C$ is maximal. Indeed, for every tight dijoin $J$ of $(D,S^t)$ and every $j\in [2]$, 
    \begin{equation*}
    |J\cap F_j|-1\leq \sum_{i=1,2}(|J\cap F_i|-1)=|J\cap C|-1,
    \end{equation*}
    which means $|J\cap F_j|\leq |J\cap C|$. This means $F_j$ dominates $C$. It suffices to show there exists $F_j$, $j=1,2$ that strictly dominates $C$. Since $(D,S^t)$ is a brick, $C$ is not tight. Together with the fact that $C$ is contractible (stable), we can find a tight dijoin $J_2$ of $(D,S^t)$ such that $|J_2\cap C|=2$ according to \Cref{lemma:continuity}. Then, 
    \begin{equation*}
    \sum_{i=1,2}(|J_2\cap F_i|-1)=|J_2\cap C|-1=1.
    \end{equation*}
    Since both terms on the left-hand side are nonnegative, exactly one $F_j$ satisfies $|J_2\cap F_j|=1<|J_2\cap C|$. This implies that $F_j$ strictly dominates $C$, a contradiction.
\end{cproof}
    \begin{CL}\label{claim:near-brick}
        $(D_2,S_2^t)$ is a near-brick.
    \end{CL}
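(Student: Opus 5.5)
Recall that $(D_2,S_2^t)$ is a near-brick if it is tight dijoin-covered and $b(D_2,S_2^t)=1$. Tight dijoin-coveredness is already known. By Claim~\ref{claim:imbalance}, $|S_2|<|T_2|$. Since tight dicut decomposition preserves the imbalance in exactly one piece, at least one brick must appear. Indeed, gluing two contractions along a tight dicut gives $|S|-|T|$ of the whole equal to the sum over the pieces, up to the contracted vertices, so a digraft with $|S|<|T|$ cannot decompose into braces only. The work is therefore to show that no decomposition of $(D_2,S_2^t)$ produces two bricks.

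I would argue by induction on $|V_2|$, as the proof of Theorem~\ref{thm:separating-good} is set up. If $(D_2,S_2^t)$ is basic, it is itself a brick, since $|S_2|<|T_2|$, and we are done. Otherwise, by Theorem~\ref{thm:barrier_tight_equiv} it has a nontrivial barrier dicut or a $2$-separation dicut. Claim~\ref{claim:no-2-sep} excludes the latter, so there is a barrier $X\subseteq T_2$ or $X\subseteq S_2^t$ with $\sigma_{D_2}(V_2\setminus X)=|X|$ and at least one nontrivial component.

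\textbf{Case $u\notin X$.} The components of $V_2\setminus X$ lift to $D$: the component containing $u$ expands by $V\setminus V_2$ and the others stay unchanged. Thus $X$ is a barrier of $(D,S^t)$. Since $(D,S^t)$ is a brick, every barrier dicut there is trivial, so all components of $V_2\setminus X$ other than the one containing $u$ are singletons. The barrier dicut of the component containing $u$ is then a tight dicut of $D_2$ whose contraction separates off a star-like piece, consisting of $X$ and singleton sources. That piece has $|S|=|T|$, so it is a brace.

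\textbf{Case $u\in X$.} Then $X\subseteq S_2^t$ (or $T_2$, depending on the side of $u$). As in the proof of Theorem~\ref{thm:separating-contractible}, double counting gives
\[
\sum_i |J\cap F_i| = |J\cap C| + (|X|-1),
\]
where $F_i$ are the dicuts induced in $D$ by the components. Each $|J\cap F_i|\ge 1$. With $C$ contractible or stable, taking $J$ with $|J\cap C|=1$ gives every $F_i$ contractible. Taking $J_2$ with $|J_2\cap C|=2$ (via Lemma~\ref{lemma:continuity}) forces exactly one $F_j$ to have $|J_2\cap F_j|=2$ while all the others equal $1$. Any nontrivial $F_i$ with $i\ne j$ would then dominate $C$ strictly: it satisfies $|J\cap F_i|\le |J\cap C|$ for all $J$, with strict inequality at $J_2$. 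This contradicts maximality, unless $F_i$ is tight in $D$. Tightness is impossible in a brick for nontrivial dicuts, and for trivial ones means the component is a singleton, which contributes a brace-like trivial piece.

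In both cases every barrier dicut of $D_2$ splits off a piece with $|S|=|T|$, hence braces only. The remaining contraction is a smaller digraft that inherits the maximality properties: it has no $2$-separation dicuts and the same dominance argument applies. Applying induction to it shows exactly one brick survives.

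I also need $\widebar{S_2^t}=S_2^t$. By Proposition~\ref{prop:new_tight}, a vertex in $\widebar{S_2^t}\setminus S_2^t$ would yield a barrier $X\subseteq T_2$ containing its neighbourhood. Running the same case analysis, such a barrier either lifts to a nontrivial barrier of the brick $D$, contradicting Corollary~\ref{brick-maximally-valid}, or produces a dicut $F_i$ strictly dominating $C$.

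The main obstacle is the case $u\in X$: the bookkeeping that the strictly dominating $F_i$ is genuinely non-tight and of the right type (contractible or stable) to contradict maximality within the chosen class. Making the induction on the remaining contraction preserve the maximality hypothesis is the other delicate point.
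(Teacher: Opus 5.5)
Your outline follows the paper's proof: induction on $|V_2|$, the base case from \Cref{thm:barrier_tight_equiv} with Claims~\ref{claim:imbalance} and~\ref{claim:no-2-sep}, double counting over the components of a barrier, and a unique non-tight $F_j$ with all other components singleton sinks, so the piece on $X$ is balanced. However, one case is mishandled, and the step you call ``delicate'' is left open.

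The case $u\notin X$ cannot occur. As you note, $X$ lifts to a barrier of $(D,S^t)$. A nontrivial barrier has $|X|\ge 2$, and $|S|<|T|$ in the brick $D$, so it induces a nontrivial barrier dicut of $D$, a contradiction. Your argument checks only the components avoiding $u$. Treating $u\notin X$ as a real configuration is not harmless: the piece you would then induct on comes from a tight dicut of $D$, and no maximality-based hypothesis covers it. So $u\in X$, and since $u$ is a source, $X\subseteq S_2^t$.

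For the inductive step, ``inherits the maximality properties'' must be made precise. Fix the brick $(D,S^t)$. Induct, over all maximal contractible or stable dicuts $C'$ of $D$, on the size of the contraction obtained by shrinking the out-shore of $C'$ to a tight source.
\begin{itemize}
\item $F_j$ is non-tight, since $|J_2\cap F_j|=2$. It is nontrivial, and it is contractible by \Cref{lemma:contractible-def}, since $|J_1\cap F_j|=1$.
\item Since $F_j\succeq C$ and $C$ is maximal, transitivity of $\succeq$ shows that no non-tight dicut strictly dominates $F_j$. Hence $F_j$ is itself a maximal contractible dicut of $D$.
\item The $F_j$-contraction of $D_2$ that shrinks $V_2\setminus U_j$ is exactly the contraction of $D$ that shrinks the out-shore $V\setminus U_j$ of $F_j$. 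It has $|U_j|+1<|V_2|$ vertices.
\item So the hypothesis applies verbatim, including Claim~\ref{claim:no-2-sep} for $F_j$, which you therefore need not re-verify. With \Cref{prop:unique_decomposition} this gives $b(D_2,S_2^t)=0+1$.
\end{itemize}

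This also settles your concern about ``the right type''. Maximality is defined against all non-tight dicuts, and in a brick every nontrivial $F_i$ is non-tight. So any nontrivial $F_i$ with $i\ne j$ strictly dominates $C$ (witnessed by $J_2$) and is excluded outright.
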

    \begin{cproof}
    It follows from \Cref{thm:barrier_tight_equiv}, \Cref{claim:imbalance}, and \Cref{claim:no-2-sep} that if $(D_2,S_2^t)$ has no nontrivial barrier dicut, then $(D_2,S_2^t)$ is a brick and we are done. Otherwise, we can in polynomial time find a nontrivial barrier $X\subseteq S_2^t$ or $X\subseteq T_2$ using \Cref{CO:tight-dicut-polytime}. It must contain $u$ since otherwise $\sigma_D(V\setminus  X)\geq \sigma_{D_2}(V\setminus X)=|X|$, contradicting the fact that $(D,S^t)$ is a brick. In particular, we have $X\subseteq S^t\cup \{u\}$. Let $k:=|X|=\sigma_{D_2}(V_2\setminus X)$ and suppose $X=\{v_1,\ldots,v_{k-1},u\}$. Let $F_1,F_2, \ldots, F_k$ be the dicuts induced by the connected components $U_1,\ldots, U_k$ of $V_2\setminus X$ in $D_2$. For every tight dijoin $J$, we have
    \[
    \sum_{i=1}^k|J\cap F_i|=|J\cap C|+\sum_{i=1}^{k-1}|J\cap \delta_D^+(v_i)|=|J\cap C|+(k-1),
    \]
    rearranging which we have
    \begin{equation}
        \sum_{i=1}^k(|J\cap F_i|-1)=(|J\cap C|-1).
    \end{equation}
    Since $C$ is contractible (stable), there exists a tight dijoin $J_1$ such that $|J_1\cap C|=1$. Then,
    \begin{equation*}
    0\leq \sum_{i=1}^k(|J_1\cap F_i|-1)=|J_1\cap C|-1=0,
    \end{equation*}
    which implies that $|J_1\cap F_i|=1\ \forall i\in [k]$. This implies that each $F_i$ is either a trivial dicut induced by a sink or a contractible dicut. Notice that there at least one $F_j$ is non-tight, since otherwise $C$ is a tight dicut, which is not the case. We claim that exactly one $F_j$ is non-tight. Indeed, for every tight dijoin $J$ of $(D,S^t)$ and every $j\in [k]$, 
    \begin{equation*}
    |J\cap F_j|-1\leq \sum_{i=1}^k(|J\cap F_i|-1)=|J\cap C|-1,
    \end{equation*}
    which means $|J\cap F_j|\leq |J\cap C|$. This means $F_j$ dominates $C$. To find an $F_j$, $j\in [k]$ that strictly dominates $C$, we find a tight dijoin $J_2$ such that $|J_2\cap C|=2$ using \Cref{lemma:continuity}. Then, 
    \begin{equation*}
    \sum_{i=1}^k(|J_2\cap F_i|-1)=|J_2\cap C|-1=1.
    \end{equation*}
    Since all terms on the left-hand side are nonnegative, exactly one $F_j$ satisfies $|J_2\cap F_j|=2$ and every other $i\in [k]\setminus j$ has $|J_2\cap F_i|=1<|J_2\cap C|$. If $F_i$ is non-tight for some $i\in [k]\setminus j$, then $F_i$ strictly dominates $C$, a contradiction. Therefore, exactly one $F_j$ is non-tight, and hence is a contractible dicut. Other dicuts $F_i$ are induced by singleton components $U_i=\{u_i\}$ for $i\in [k]\setminus \{j\}$. (Figure \ref{fig:find-equiv-dicut} illustrates this case.) Since $F_j$ dominates $C$, $F_j$ is a maximal contractible dicut.
    Let $(D_{21}=(S_{21}\dot\cup T_{21},A_{21}),S_{21}^t)$, $(D_{22}=(S_{22}\dot\cup T_{22},A_{22}),S_{22}^t)$ be the $F_j$-contractions obtained from contracting $U_j$, $V\setminus U_j$ in $(D_{2},S_{2}^t)$ into singletons $u_j$, $\bar{u}_j$, respectively. Since $|V_{22}|<|V_2|$, by the inductive hypothesis we have that $(D_{22},S_{22}^t)$ is a near-brick. Since $(D_{21},S_{21}^t)$ satisfies $S_{21}=\{v_1,\ldots, v_{k-1},u\}$ and $T_{21}=\{u_1,\ldots, u_k\}$, we have $|S_{21}|=|T_{21}|$ and thus it does not have a brick. Therefore, the number of bricks $b(D_2,S_2^t)=b(D_{21},S_{21}^t)+b(D_{22},S_{22}^t)=1$, which means $(D_2,S_2^t)$ is a near-brick. 

    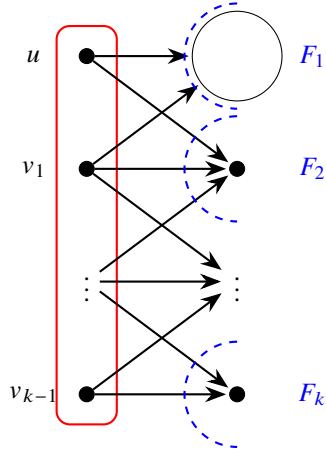
\begin{figure}[htbp]
  \centering
  \begin{tikzpicture}[scale=1, 
 mynode/.style={circle, draw, fill=black, inner sep=2pt}, comp/.style={circle, draw, fill=white, inner sep=10pt}, dedge/.style = {thick, -{Stealth[length=8pt, sep]}},
 cut/.style = {blue, thick, dashed}
 ]

\node[mynode] (a1) at (0,0) {};  
\node (a2) at (0,1.5) {$\vdots$}; 
\node[mynode] (a3) at (0,3) {}; 
\node[mynode] (a4) at (0,4.5) {}; 
\node (vs) at (-0.8,0) {$v_{k-1}$};  

\node (v3) at (-0.7,3) {$v_1$}; 
\node (v4) at (-0.7,4.5) {$u$};

\node[mynode] (b1) at (2,0) {};  
\node (b2) at (2,1.5) {$\vdots$}; 
\node[mynode] (b3) at (2,3) {}; 
\node[comp] (b4) at (2,4.5) {};

\node (f1) at (3,0) {$F_k$};  

\node (f3) at (3,3) {$F_{2}$}; 
\node (f4) at (3,4.5) {$F_1$};

\begin{scope}
\draw[thick, rounded corners=5pt]
($(a1) + (-0.4,-0.4)$) rectangle ($(a4) + (0.4,0.4)$);
\end{scope}

\draw[dedge] (a1) to (b1);
\draw[dedge] (a1) to (b2);
\draw[dedge] (a2) to (b2);
\draw[dedge] (a2) to (b3);
\draw[dedge] (a3) to (b2);
\draw[dedge] (a3) to (b3);
\draw[dedge] (a3) to (b4);
\draw[dedge] (a4) to (b3);
\draw[dedge] (a4) to (b4);
\draw[dedge] (a2) to (b1);

\draw[cut] 
(2,5.2) arc (90:270:0.7);
\draw[cut] 
(2,3.7) arc (90:270:0.7);
\draw[cut] 
(2,0.7) arc (90:270:0.7);

\end{tikzpicture}
  \caption{$(D_2,S_2^t)$ with a barrier $X$ circled on the left; dicuts $F_1,\ldots, F_k$ are in dashed. All $F_i$'s except one are trivial.} 
  \label{fig:find-equiv-dicut}
\end{figure}
\end{cproof}
    \begin{CL}\label{claim:tight-source-maximal}
$\widebar{S_2^t}=S_2^t$.
    \end{CL}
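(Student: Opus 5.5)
We argue by contradiction using \Cref{prop:new_tight}. Suppose some source $s\in \widebar{S_2^t}\setminus S_2^t$ exists. Since $(D_2,S_2^t)$ is tight dijoin-covered, \Cref{prop:new_tight} gives a barrier $X\subseteq T_2$ of $(D_2,S_2^t)$ with $N_{D_2}(s)\subseteq X$, i.e.\ $\sigma_{D_2}(V_2\setminus X)=|X|$. Note $s\neq u$, because $u\in S_2^t$ by construction (in the stable case $u$ was added to $S^t$; in the contractible case the contracted vertex is tight). Also $s\in S\setminus S^t$ in $D$, and $s$ is isolated as its own component of $V_2\setminus X$, since all its neighbours lie in $X$.

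The first step is to rule out $u\notin X$ in the contractible case, and the whole situation in the stable case, by lifting $X$ to a violation in $(D,S^t)$. In the contractible case $u$ is a source, so $X\subseteq T_2\subseteq T$. Uncontracting $U$ back into $u$ does not increase the number of components of the complement: the component containing $u$ just gets larger, since $D[U]$ hangs off $u$ and does not meet $X$. Hence $\sigma_D(V\setminus X)\ge \sigma_{D_2}(V_2\setminus X)=|X|$, so $X$ is a barrier in $(D,S^t)$ with $\{s\}$ as a singleton component. By \Cref{prop:new_tight}'s converse direction, namely the barrier dicut argument preceding it, $s$ would then be tight in $(D,S^t)$, since the singleton component $\{s\}$ induces the tight trivial dicut $\delta^+(s)$. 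This contradicts \Cref{brick-maximally-valid}, which says $\widebar{S^t}=S^t$ in the brick $(D,S^t)$. In the stable case $D_2=D$ as a digraph, so $X$ is literally a barrier of $D$ containing $N(s)$, and we reach the same contradiction. Strictly, the barrier only needs $\sigma\ge|X|$; tight dijoin-coveredness of $(D,S^t)$ gives $\le$, so equality holds.

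The main obstacle is making the uncontraction step rigorous when $u$ might be the contracted vertex $\bar u$ on the sink side, i.e.\ when $\bar u\in X$. Here $D_2$ is the contraction of $U$ into a source $u$, so the contracted node is a source and cannot lie in $X\subseteq T_2$; I would double-check this orientation convention against \Cref{def:dicut-contraction}. If instead the contracted node were a sink in $X$, I would mimic the proof of \Cref{thm:separating-contractible}. Each component of $V_2\setminus X$ defines a dicut, and the components other than $\{s\}$ together with $C$ satisfy the double-counting identity. Using the tight dijoin $J_1$ with $|J_1\cap C|=1$ and a tight dijoin $J_2$ with $|J_2\cap C|=2$, obtained from \Cref{lemma:continuity}, I would derive either that $\delta^+(s)$ is tight in $D$, again contradicting \Cref{brick-maximally-valid}, or that some component dicut strictly dominates $C$, contradicting the maximality of $C$.
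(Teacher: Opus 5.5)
Your argument is correct, and it takes a genuinely shorter route than the paper. The paper splits on whether $(D_2,S_2^t)$ has a nontrivial barrier. If it has none, the paper invokes \Cref{prop:new_tight}. If it has one, the paper reuses the structure from \Cref{claim:near-brick}: a source barrier containing $u$, with a unique non-tight component dicut $F_j$. It then contracts $F_j$, applies the induction hypothesis to $(D_{22},S_{22}^t)$, and composes tight dijoins via \Cref{de-composition} to produce a tight dijoin of $(D_2,S_2^t)$ with degree at least $2$ at $s$.

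You instead observe that the barrier supplied by \Cref{prop:new_tight} is always a \emph{sink} barrier, so it cannot contain the source $u$. That is exactly the case the paper already handles in \Cref{claim:near-brick} by lifting to $D$. After lifting, $\{s\}$ is still a singleton component, so $\delta^+(s)$ is a tight barrier dicut of $(D,S^t)$, contradicting \Cref{brick-maximally-valid}. Your route avoids the induction, the composition lemma and the case split. It also never uses the maximality of $C$, so it proves the claim for any contractible or stable $C$ in a brick. The paper's route, in exchange, runs in parallel with the near-brick induction and with the $D_1$ analogue. There the contracted vertex $\bar u$ is a sink and can lie in the sink barrier, so direct lifting is not available.

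Two small remarks. First, the orientation is as you expected: contracting the out-shore $U$ of $C=\delta^+(U)$ yields a source, as the inclusion $X\subseteq S^t\cup\{u\}$ in \Cref{claim:near-brick} confirms. Your final fallback paragraph, which is too vague to stand as a proof anyway, can therefore be dropped. Second, you wrote that uncontracting ``does not increase'' the number of components. The fact you need is the reverse: uncontracting $u$ into $U$ (with $u\notin X$) cannot \emph{decrease} the number of components of the complement, since contraction only merges components. This gives $\sigma_D(V\setminus X)\ge\sigma_{D_2}(V_2\setminus X)$ whether or not $D[U]$ is connected.
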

    \begin{cproof}
    If $(D_2,S_2^t)$ has no nontrivial barrier dicut then it follows from \Cref{prop:new_tight} that $\widebar{S_2^t}=S_2^t$ and we are done. Otherwise, we find a nontrivial barrier dicut $F_j$ as above that is a maximal contractible dicut. Contract $F_j$ it to obtain $(D_{21},S_{21}^t)$ and $(D_{22},S_{22}^t)$. Pick an arbitrary $s\in S_2\setminus S^t_2$. Observe that $S_2\setminus S_2^t\subseteq S_{22}\setminus S_{22}^t$. This is because $S_{21}\subseteq S_2^t$ and all nodes in $S_2\setminus S_2^t$ belong to $(D_{22},S_{22}^t)$. Thus, $s\in S_{22}\setminus S_{22}^t$. By induction hypothesis $s$ is not a tight node in $(D_{22},S_{22}^t)$, i.e, there is a tight dijoin $J'$ such that $|J'\cap \delta(s)|>1$. Suppose $J'\cap F_j=\{e\}$. Since $(D_{21}, S_{21}^t)$ is tight dijoin-covered, it has a tight dijoin $J''$ such that $J''\cap F_j=\{e\}$. By \Cref{de-composition}, we can combine $J:=J'\cup J''$ into a tight dijoin of $(D_2,S_2^t)$ such that $|J\cap \delta(s)|>1$, which implies $s\notin \widebar{S_2^t}$. This completes the proof that $\widebar{S_2^t}=S_2^t$.
    \end{cproof}
Combining \Cref{claim:near-brick} and \Cref{claim:tight-source-maximal}, we have proved part (a): if $C$ is a maximal stable dicut, then $C$ is good. Now, we consider the case where $C$ is a maximal contractible dicut and prove (b) in the following.
    \begin{CL}
        If $C$ is not equivalent to a stable dicut, then $(D_1,S_1^t)$ is a near-brick and $\widebar{S_1^t}=S_1^t$. 
    \end{CL}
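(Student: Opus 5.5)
We prove the claim by running the same argument for $(D_1,S_1^t)$ that Claims \ref{claim:imbalance}--\ref{claim:tight-source-maximal} gave for $(D_2,S_2^t)$, with the roles of sources and sinks swapped. The contracted vertex $\bar u$ of $D_1$ is a sink, since $V\setminus U$ is the in-shore of the dicut. So the only genuinely new case is a barrier in $S_1^t$ that contains no contracted vertex. This is the case where the hypothesis ``not equivalent to a stable dicut'' is needed.

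\textbf{Step 1 (imbalance).} Take a tight dijoin $J$ with $|J\cap C|\ge 2$. Double counting over the sinks of $D_1$, as in Claim \ref{claim:imbalance}, gives $|S_1|<|T_1|$.

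\textbf{Step 2 (no $2$-separation).} Suppose $D_1$ has a $2$-separation $\{v,w\}$ with $v\in S_1^t$ and $w\in T_1$. It cannot be a $2$-separation of $D$, since $(D,S^t)$ is a brick, so $w=\bar u$. The two dicuts $F_1,F_2$ it induces satisfy
\[
|J\cap F_1|+|J\cap F_2|=|J\cap C|+1 \quad\text{for every tight dijoin } J.
\]
This is the analogue of Claim \ref{claim:no-2-sep}, and the same argument applies. Using $J_1$ with $|J_1\cap C|=1$ shows that both $F_i$ are contractible. Using $J_2$ with $|J_2\cap C|=2$, obtained from Lemma \ref{lemma:continuity}, shows that one $F_j$ strictly dominates $C$, contradicting maximality.

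\textbf{Step 3 (near-brick).} By Theorem \ref{thm:barrier_tight_equiv} and Steps 1--2, it remains to handle nontrivial barriers $X$ of $D_1$, which we find with Corollary \ref{CO:tight-dicut-polytime}.

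If $\bar u\in X$, then $X\subseteq T_1$. Let $F_1,\dots,F_k$ be the dicuts of the components. We get
\[
\sum_i\bigl(|J\cap F_i|-1\bigr)=|J\cap C|-1 \quad\text{for every tight dijoin } J.
\]
Exactly as in Claim \ref{claim:near-brick}, exactly one $F_j$ is non-tight, and that $F_j$ is a maximal contractible dicut equivalent to $C$. Every other component is a singleton source. Here a singleton component of $V_1\setminus X$ with $X\subseteq T_1$ is a source $s$ with $N(s)\subseteq X$, and its dicut is trivial.

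Contracting $F_j$ produces one side whose number of sources equals its number of sinks (a brace-like piece) and one strictly smaller side. Induction on $|V|$ shows the smaller side is a near-brick. Hence $b(D_1,S_1^t)=1$.

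If $\bar u\notin X$, then $X$ is a barrier of $D$ itself, which is impossible in a brick. So every barrier contains $\bar u$.

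\textbf{Step 4 (no new tight sources).} Show $\widebar{S_1^t}=S_1^t$ via Proposition \ref{prop:new_tight}. Suppose $s\in\widebar{S_1^t}\setminus S_1^t$. Then there is a barrier $X\subseteq T_1$ with $N(s)\subseteq X$, and by Step 3 we have $\bar u\in X$. This is the \emph{main obstacle}, and it is where the extra hypothesis enters.

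By the decomposition in Step 3, either $s$ lies in the part handled by induction, where we recompose tight dijoins through $F_j$ exactly as in Claim \ref{claim:tight-source-maximal}; or $\{s\}$ is one of the singleton components. In the latter case $\delta^+(s)$ is one of the $F_i$ with $i\neq j$. I expect to show that in this configuration the trivial dicut $\delta^+(s)$, or the relevant $F$, is equivalent to $C$. Since $s$ is not tight in $D$ (the digraft is a brick, so Corollary \ref{brick-maximally-valid} applies), this dicut would be a stable dicut equivalent to $C$, contradicting the hypothesis. The argument is to compare $|J\cap\delta^+(s)|$ with $|J\cap C|$ through the displayed identity, using that all other $F_i$ are tight.

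I would also check the case where the unique non-tight $F_j$ is itself the trivial dicut of a singleton source $\{s\}$. Then $\delta^+(s)$ is equivalent to $C$. It is stable because $C$ is contractible, so there is a tight dijoin $J$ with $|J\cap\delta^+(s)|=|J\cap C|=1$. This is excluded by hypothesis, so $F_j$ is nontrivial and the induction in Step 3 is legitimate.
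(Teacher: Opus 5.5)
\textbf{The gap is in Step~1.} In Claim~\ref{claim:imbalance} the contracted vertex of $D_2$ is a source. The excess $|J\cap C|\ge 2$ therefore lands on the source side of the count, which forces $|T_2|>|S_2|$.

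In $D_1$ the contracted vertex $\bar u$ is a sink, and the same count reads $\sum_{s\in S_1}d_J(s)=|T_1|-1+|J\cap C|$. With $d_J(s)\ge 1$ this gives only $|S_1|\le |T_1|-1+|J\cap C|$. At best, using a $J$ with $|J\cap C|=1$, you get $|S_1|\le |T_1|$, never strict inequality.

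Strict inequality genuinely needs the hypothesis, and your Step~1 never uses it. Take $S=\{s_0,s_1,s_2\}$, $T=\{t_1,\dots,t_4\}$, $S^t=\{s_1\}$, with $N(s_0)=T$, $N(s_1)=\{t_1,t_2,t_3\}$ and $N(s_2)=\{t_2,t_3,t_4\}$.
\begin{itemize}
\item This is a brick: the only feasible degree sequences have $d_J(s_0)\in\{1,2\}$, and no nontrivial dicut has constant value~$1$.
\item $U=\{s_0,s_1,t_1\}$ gives a maximal contractible dicut $C$ with $|J\cap C|=d_J(s_0)$ for every tight dijoin.
\item Yet $|S_1|=|T_1|=2$, so $(D_1,S_1^t)$ contains no brick at all.
\end{itemize}
So the no-barrier branch of Step~3 is unsupported, since there you conclude via Theorem~\ref{thm:barrier_tight_equiv} that $D_1$ is a brick. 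The paper's remark that this part is ``identical'' to the $D_2$ case passes over the same point.

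\textbf{A correct argument.} Suppose $|S_1|=|T_1|$. Then $|S\cap U|=|T\cap U|+1$. Writing $N:=(S\cap U)\setminus S^t$, every tight dijoin $J$ satisfies $\sum_{s\in N}(d_J(s)-1)=|J\cap C|-1$.
\begin{itemize}
\item $N\neq\emptyset$, because $C$ is not tight.
\item Each $\delta^+(s)$ with $s\in N$ dominates $C$, and it is non-tight by Corollary~\ref{brick-maximally-valid}. By maximality it is therefore equivalent to $C$.
\item If $|N|\ge 2$, this forces $|J\cap C|=1$ for all $J$, a contradiction.
\item If $N=\{s_0\}$, then $C$ is equivalent to $\delta^+(s_0)$. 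This dicut is stable, since the tight dijoin with $|J\cap C|=1$ has $d_J(s_0)=1$. That contradicts the hypothesis.
\end{itemize}

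\textbf{Where the hypothesis enters.} It is used here, and in the case where the unique non-tight $F_j$ is a singleton source. Your last paragraph handles that second case correctly and matches the paper. It is not used for a barrier in $S_1^t$ avoiding $\bar u$, as your opening paragraph says; your own Step~3 correctly rules such barriers out.

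\textbf{Step~4.} The branch where $\{s\}$ is some $F_i$ with $i\neq j$ is aimed the wrong way. Those $F_i$ are tight, so $\delta^+(s)$ cannot be equivalent to the non-tight $C$. Instead, $s$ would be a tight node of $(D,S^t)$, contradicting $s\notin S^t=\widebar{S^t}$.

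Also, $N(s)\subseteq X$ makes $\{s\}$ itself a component of $V_1\setminus X$. Hence the two singleton cases (this one and the one in your last paragraph) are exhaustive. Step~4 then closes directly, without the recomposition through $F_j$ borrowed from Claim~\ref{claim:tight-source-maximal}.
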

    \begin{cproof}
    The proof is identical to the one for $(D_2,S_2^t)$ up to the case where $(D_1,S_1^t)$ has a nontrivial barrier, where we conclude that exactly one $F_j$ is non-tight. The complication here is that $F_j$ could be a contractible dicut or a stable dicut. However, since $C$ is maximal and $F_j$ dominates $C$, $F_j$ is equivalent to $C$, which implies by assumption that $F_j$ is not stable. Thus, we can contract $F_j$ and use the same inductive proof to show that $(D_1,S_1^t)$ is a near-brick and $\widebar{S_1^t}=S_1^t$. We omit the details here.
    \end{cproof}
This completes the proof of (b). Finally, we prove (c) in the following.
\begin{CL}
    Starting from an arbitrary contractible dicut, one can reach a good dicut by a sequence of at most $|A|+|V|$ contractible or stable dicuts, each dominating the previous one.
\end{CL}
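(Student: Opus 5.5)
The algorithm has two phases: the first strictly improves the dicut until it is maximal, the second moves among equivalent dicuts until parts (a) and (b) apply.

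\textbf{Phase 1 (strict improvements).} Start from the given contractible dicut $C_0$ and repeat the following for the current dicut $C$ (contractible or stable). Test goodness directly. For stable $C$, check whether $(D,S^t\cup\{u\})$ is a near-brick with no new tight sources, using \Cref{CO:tight-dicut-polytime} and \Cref{prop:new_tight}. For contractible $C$, check both $C$-contractions the same way. If the test fails, rerun the relevant argument from Claims \ref{claim:imbalance}--\ref{claim:tight-source-maximal} constructively. A $2$-separation (Claim \ref{claim:no-2-sep}) yields two contractible dicuts $F_1,F_2$. A nontrivial barrier through the contracted vertex (Claim \ref{claim:near-brick}) yields $F_1,\dots,F_k$. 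A new tight source yields, via \Cref{prop:new_tight}, a barrier containing $u$. In every case the dicuts $F_i$ satisfy $\sum_i(|J\cap F_i|-1)=|J\cap C|-1$ for all tight dijoins $J$. Using \Cref{lemma:continuity}, compute $J_2$ with $|J_2\cap C|=2$. Then either some non-tight $F_i$ has $|J_2\cap F_i|=1$, and it strictly dominates $C$, or exactly one $F_j$ is non-tight and $F_j$ is equivalent to $C$. By \Cref{lemma:contractible-def} with $J_1$, each such $F_i$ is contractible or stable. In the strict case, replace $C$ by $F_i$.

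To bound the number of strict steps by $|A|$, we use the characteristic vectors $\chi_C$ restricted to $\lin(\zJ(D,S^t))$. Each strict step yields a dicut whose linear functional $x\mapsto x(C)$ on $\lin(\zJ)$ is not in the span of the earlier ones. I would argue this using the identity above: each new dicut is strictly below all previous ones on some tight dijoin while dominating them, so it cannot be an affine combination that is implied by the earlier tightness relations. This linear-independence bound, stated in the overview as ``the dicuts we find are linearly independent'', is the delicate step. If it fails, I would fall back on the potential $\sum_{J\in\zB}|J\cap C|$ over a fixed spanning set of tight dijoins. That potential decreases strictly under a strict domination step and is bounded by $|\zB|\cdot|T|$, which is still polynomial.

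\textbf{Phase 2 (equivalent steps).} Once no strict improvement occurs, $C$ is maximal. If $C$ is stable, it is good by (a). If $C$ is contractible and not equivalent to a stable dicut, it is good by (b). Otherwise the barrier argument returns an equivalent $F_j$ whose inner shore $U_j$ is strictly smaller than the shore of $C$ on that side. Replace $C$ by $F_j$. Since shore sizes strictly decrease, this happens at most $|V|$ times, and the process ends either at a stable maximal dicut or at a contractible one that passes the test. Altogether the sequence has at most $|A|+|V|$ dicuts, each dominating its predecessor, and every step runs in polynomial time by \Cref{CO:tight-dijoin-polytime}, \Cref{CO:tight-dicut-polytime} and \Cref{lemma:continuity}.
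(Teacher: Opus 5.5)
Your two-phase structure is the same as the paper's: strict improvements until maximal, then at most $|V|$ equivalent moves with shrinking shore until (a) or (b) applies. The gap is the bound of $|A|$ on the number of strict steps, which you do not actually establish.

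Your justification is that each new dicut dominates the earlier ones and is strictly smaller on some tight dijoin, so its functional cannot lie in their span. This is not valid, because strict domination alone does not prevent linear dependence. Value vectors $(3,1)$, $(2,1)$, $(1,1)$ on two dijoins form a strictly dominating chain, yet the third is twice the second minus the first.

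What makes the paper's argument work is the specific values $1$ and $2$ at particular witnesses.
\begin{itemize}
\item Since $C_1$ is contractible, there is a tight dijoin $J_0$ with $|J_0\cap C_1|=1$. Domination, together with $|J\cap C|\ge 1$ for every dijoin $J$ and dicut $C$, forces $|J_0\cap C_j|=1$ for all $j$.
\item The tight dijoin $J_i$ certifying the $i$-th strict step has $|J_i\cap C_i|=2$ and $|J_i\cap C_{i+1}|=1$. By the same reasoning, $|J_i\cap C_j|=1$ for all $j>i$.
\end{itemize}
Now suppose $\sum_j\lambda_j\mathbf{1}_{C_j}=\mathbf{0}$. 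Evaluating at $J_0$ gives $\sum_j\lambda_j=0$. Then, once $\lambda_1=\dots=\lambda_{i-1}=0$, evaluating at $J_i$ gives $\lambda_i+\sum_{j\ge i}\lambda_j=0$, hence $\lambda_i=0$. This triangular system gives $t\le|A|$.

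You already compute all of these dijoins: $J_1$ with $|J_1\cap C|=1$, and $J_2$ from \Cref{lemma:continuity}. The fix is to assemble them this way, proving independence of the whole chain in increasing order, rather than arguing via domination that each new functional avoids the earlier span.

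Your fallback potential $\sum_{J\in\zB}|J\cap C|$ does decrease strictly under each strict domination step. Since $\zB$ spans $\lin(\zJ(D,S^t))$, some member of $\zB$ must witness the strictness. So the fallback gives polynomial termination. However, it only bounds the number of strict steps by about $|\zB|(|T|-1)$, not by $|A|$. It therefore does not yield the $|A|+|V|$ bound the claim asserts.
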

\begin{cproof}
    The proof gives an algorithm to find a good dicut in the following. First, starting from an arbitrary contractible dicut $C$, if $C$ is not good, we can find another contractible or stable dicut $F_i\succ C$ such that there is a tight dijoin $J_2$ with $|J_2\cap C|=2$ and $|J_2\cap F_i|=1$. We claim that the process terminates in $|A|$ steps. Let $C=C_1\prec\ldots \prec C_t$ be the dicuts found by the algorithm, where $C_t$ is a maximal contractible or stable dicut. We show that the characteristic vectors $\mathbf{1}_{C_i}\ i\in [t]$ are linearly independent, which implies $t\leq |A|$. Suppose for $\lambda\in \R^t$ such that $\sum_{j=1}^t\lambda_j \mathbf{1}_{C_j}=\mathbf{0}$. Let $J_0$ be a tight dijoin such that $|J_0\cap C_1|=1$, which exists because $C_1$ is contractible. The dominance relationships imply that $|J_0\cap C_1|=|J_0\cap C_2|=\ldots =|J_0\cap C_t|=1$. Therefore, \[0=\sum_{j=1}^t\lambda_j \mathbf{1}_{C_j}(J_0)=\sum_{j=1}^t\lambda_j.\]
    We inductively prove that $\lambda_j=0\ \forall j\in [t]$. Suppose we already have $\lambda_1=\ldots=\lambda_{i-1}=0$. Let $J_i$ be the tight dijoin from the proof such that $|J_i\cap C_i|=2$ and $|J_i\cap C_{i+1}|=1$. The dominance relationships imply that $|J_i\cap C_{i+1}|=|J_i\cap C_{i+2}|=\ldots =|J_i\cap C_t|=1$. Therefore, \[0=\sum_{j=i}^t\lambda_j \mathbf{1}_{C_j}(J_i)=2\lambda_i+\sum_{j=i+1}^t\lambda_j=\lambda_i+\sum_{j=i}^t\lambda_j.\]
    They together imply that $\lambda_i=0$. This completes the proof that $C_1,\ldots,C_t$ are linearly independent. Second, if the maximal dicut $C_t$ is good, then we are done. Otherwise, by (a) and (b), it is contractible and equivalent to some stable dicut $C_t'$. It takes at most $|V|$ more steps to find $C_t'$ because every time we contract the in-shore of a maximal contractible dicut and $|V_1|$ strictly decreases.
    \end{cproof}
\end{proof}

The next theorem puts everything established so far together and finishes the basis construction for bricks. The idea is that if $(D,S^t)$ is not robust, we can find a separating dicut. Then we use \Cref{thm:separating-contractible} and \Cref{thm:separating-good} to find a good dicut $C$. It follows from a simple dimension counting that $C$ being a good dicut guarantees that the dimension of $\lin(\{J\in \zJ(D,S^t): |J\cap C|=1\})$ is at least the dimension of $\lin(\zJ(D,S^t))$ minus one. Together with \Cref{lemma:integral_basis} and \Cref{lemma:continuity}, this gives a basis for $\lin(\zJ(D,S^t))$.
\begin{theorem}\label{thm:basis_brick}
    Let $(D,S^t)$ be a brick. Then, we can construct an integral basis $\zB$ for $\lin(\zJ(D,S^t))$ that consists of tight dijoins in polynomial time. Moreover, $|\zB|=|A|-|V^t|+1$.
\end{theorem}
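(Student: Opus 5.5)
We argue by induction on $|A|+|S\setminus S^t|$, and first test robustness with \Cref{thm:testing-robust}. If $(D,S^t)$ is robust, \Cref{thm:robust_basis} already gives an integral basis of tight dijoins of size $|A|-|V^t|+1$. By \Cref{brick-maximally-valid} we have $\widebar{S^t}=S^t$, so this size equals $|A|-|\widebar{V^t}|-b+2$ with $b=1$.

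Otherwise \Cref{thm:testing-robust} returns a separating dicut. We then apply \Cref{thm:separating-contractible} to obtain a contractible dicut, and \Cref{thm:separating-good}(c) to obtain a good dicut $C$; all of this takes polynomial time. Since $(D,S^t)$ is a brick, $C$ is not tight. Since $C$ is stable or contractible, some tight dijoin meets $C$ exactly once (by \Cref{lemma:contractible-def} in the nontrivial case). By \Cref{lemma:continuity} we can therefore find in polynomial time a tight dijoin $J_0$ with $|J_0\cap C|=2$.

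Next we build a basis $\zB'$ for $\lin(\{J\in\zJ(D,S^t):|J\cap C|=1\})$, splitting into two cases.

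\emph{Trivial case.} Here $C=\delta^+(u)$ with $u\in S\setminus S^t$. The set above is $\zJ(D,S^t\cup\{u\})$, and $(D,S^t\cup\{u\})$ is a near-brick whose tight sources are exactly $S^t\cup\{u\}$. We apply \Cref{lemma:non-basic}, whose basic-case hypothesis is supplied by induction for the brick, which has fewer non-tight sources or fewer arcs, together with \Cref{thm:ear_decomposition} for braces. This gives $|\zB'|=|A|-(|V^t|+1)-1+2=|A|-|V^t|$.

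\emph{Nontrivial case.} Both $C$-contractions $(D_i,S_i^t)$ are near-bricks with $\widebar{S_i^t}=S_i^t$. We build bases for them in the same way and glue them with \Cref{lemma:short_basis-going-up}. Counting, with $|A_1|+|A_2|=|A|+|C|$ and $|V_1^t|+|V_2^t|=|V^t|+2$, gives
\[
|\zB'|=\sum_{i}(|A_i|-|V_i^t|+1)-|C|=|A|-|V^t|.
\]

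Having $\zB'$, \Cref{lemma:integral_basis} shows that $\zB:=\zB'\cup\{J_0\}$ is an integral basis for $\lin(\{J:|J\cap C|=1\}\cup\{J_0\})$, and $|\zB|=|A|-|V^t|+1$. To conclude that this space is all of $\lin(\zJ(D,S^t))$, it suffices to show $\dim\lin(\zJ(D,S^t))\le |A|-|V^t|+1$. The linear equations $x(\delta(v))=x(A)/|T|$ for $v\in V^t$ hold on every tight dijoin. Since the digraph is connected and bipartite, these $|V^t|$ equations are independent except for one dependency among them. Hence the dimension is at most $|A|-|V^t|+1$, matching $|\zB|$, so $\zB$ spans and is an integral basis for $\lin(\zJ(D,S^t))$.

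The main obstacle is the bookkeeping: confirming that the induction measure decreases, that \Cref{lemma:non-basic} applies to the near-brick pieces with the exact size formula, and that the degree equations have rank exactly $|V^t|-1$. For the rank, I would use the $2$-edge-connectivity of $D$ together with $|S^t|\le|S|-1$: the only relation among $\sum_{v\in S^t\cup T}\1_{\delta(v)}$-type vectors comes from the bipartition, where summing the constraints over $T$ gives $x(A)$.
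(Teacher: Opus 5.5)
Your proposal is correct and follows the paper's proof essentially step for step. The steps are:
\begin{itemize}
\item the robust base case via \Cref{thm:robust_basis};
\item the chain separating $\to$ contractible $\to$ good dicut;
\item a basis of size $|A|-|V^t|$ for the tight dijoins meeting $C$ once, via \Cref{lemma:non-basic}, with \Cref{lemma:short_basis-going-up} in the nontrivial case;
\item the extra $J_0$ with $|J_0\cap C|=2$ from \Cref{lemma:continuity}, combined using \Cref{lemma:integral_basis};
\item the dimension bound $|A|-|V^t|+1$.
\end{itemize}

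The only differences are cosmetic. You induct on $|A|+|S\setminus S^t|$ instead of the paper's lexicographic pair $(|V|,|S\setminus S^t|)$; this works because contracting a dicut that some tight dijoin meets once leaves both pieces with fewer arcs. You also justify explicitly that the homogeneous degree equations have rank $|V^t|-1$, whereas the paper simply asserts that the node equalities are linearly independent.
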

\begin{proof}[Proof of \Cref{thm:basis_brick}]
    We prove the theorem by induction on the lexicographic pair $(|V|,\ |S\setminus S^t|)$. In each recursive call, either $|V|$ decreases, or $|V|$ stays the same and $|S\setminus S^t|$ decreases. The base case is when $(D,S^t)$ is a robust digraft, since when $|S^t|=|S|-1$, the digraft is elementary and thus robust. We use \Cref{thm:testing-robust} to test whether $(D,S^t)$ is robust. If it is robust we use \Cref{thm:robust_basis} to construct a basis. Otherwise, we find a separating dicut. We then use \Cref{thm:separating-contractible} to find a contractible dicut. Then we use \Cref{thm:separating-good} to find a good dicut $C$. 
    
    If $C=\delta^+(u)$ is a trivial dicut induced by some $u\in S\setminus S^t$, then we let $(D,(S^t)':=S^t\cup\{u\})$ be a new digraft, which is a near-brick satisfying $\widebar{(S^t)}'=(S^t)'$. If $(D,(S^t)')$ is a brick, then $|(S^t)'|>|S^t|$ and we use the induction hypothesis to construct a basis $\zB'$ with $|\zB'|=|A|-|\widebar{(V^t)'}|+1$ for it. Otherwise, we further reduce $(D,(S^t)')$ through tight dicut decomposition to a brick smaller than $(D,S^t)$ and a bunch of braces. Using the induction hypothesis on the smaller bricks, \Cref{CO:basis_brace} on the braces, and \Cref{lemma:non-basic}, we can construct a basis $\zB'$ for $(D,(S^t)')$ with $|\zB'|=|A|-|\widebar{(V^t)'}|+1$. Since $(D,S^t)$ is a brick, $\widebar{S^t}=S^t$ and thus $u$ is not tight. We use \Cref{lemma:continuity} to find a tight dijoin $J_0$ of $(D,S^t)$ such that $|J_0\cap C|=2$. Let $\zB:=\zB'\cup \{J_0\}$. It follows from \Cref{lemma:integral_basis} that $\zB$ is an integral basis for $\lin\big(\zJ(D,(S^t)')\cup \{J_0\}\big)$. Moreover, since $(D,S^t)$ is a brick, the dimension of $\lin(\zJ(D,S^t))$ is at most $|A|-|V^t|+1$. This is because in the tight dijoin polytope, the equalities defined by tight nodes $v\in V^t$ are linearly independent. Therefore,
    \[
    \begin{aligned}
        |\zB|=&~\dim\big(\lin\big(\zJ(D,(S^t)')\cup \{J_0\}\big)\big)\leq \dim\big(\lin\big(\zJ(D,S^t)\big)\big)\leq |A|-|V^t|+1\\
        =&~|A|-(|(V^t)'|-1)+1=|A|-(|\widebar{(V^t)}'|-1)+1=|\zB'|+1=|\zB|.
    \end{aligned}
    \]
    We conclude that $|\zB|$ is an integral basis for $\lin(\zJ(D,S^t))$. 

    If $C=\delta^+(U)$ is a nontrivial dicut for some $\emptyset\neq U\subsetneqq V$, then we let $(D_1,S_1^t)$, $(D_2,S_2^t)$ be the two $C$-contractions. Both of them are near-bricks satisfying $\widebar{S_i^t}=S_i^t$. 
Since each \((D_i,S_i^t)\) is a near-brick, tight dicut decomposition
reduces it to exactly one brick and some braces. The unique brick has fewer vertices than \((D,S^t)\). Hence, by the induction hypothesis on the
smaller brick, \Cref{CO:basis_brace} on the braces, and \Cref{lemma:non-basic}, we can
construct an integral basis \(B_i\) for
\(\operatorname{lin}(\mathcal J(D_i,S_i^t))\), \(i=1,2\), with
    $|B_i|=|A_i|-|V_i^t|+1$.
    It follows from \Cref{lemma:short_basis-going-up} that we can combine $\zB_1$ and $\zB_2$ into an integral basis $\zB'$ for $\mathcal{J}_C:=\{J\in \zJ(D,S^t): |J\cap C|=1\}$. Moreover,
    \[
    \begin{aligned}
|\zB'|=&|\zB_1|+|\zB_2|-|C|\\
=&\dim(\lin(\zJ(D_1,S_1^t)))+\dim(\lin(\zJ(D_2,S_2^t)))-|C|\\
=&(|A_1|-|\widebar{V_1^t}|+1)+(|A_2|-|\widebar{V_2^t}|+1)-|C|\\
=&(|A_1|-|V_1^t|+1)+(|A_2|-|V_2^t|+1)-|C|\\
=&(|A_1|+|A_2|-|C|)-(|V_1^t|-1+|V_2^t|-1)\\
=&|A|-|V^t|.
\end{aligned}
    \]
    Since $(D,S^t)$ is a brick, $C$ is not tight. We use \Cref{lemma:continuity} to find a tight dijoin $J_0$ of $(D,S^t)$ such that $|J_0\cap C|=2$. It follows from \Cref{lemma:integral_basis} that $\zB:=\zB'\cup \{J_0\}$ is an integral basis for $\lin(\zJ_C\cup \{J_0\})$. Moreover, since
    \[
    |\zB|=\dim(\lin(\zJ_C\cup \{J_0\}))\leq \dim(\lin(\zJ(D,S^t)))\leq |A|-|V^t|+1=|\zB'|+1=|\zB|,
    \]
    we conclude that $|\zB|$ is an integral basis for $\lin(\zJ(D,S^t))$. 
\end{proof}

\section{Ear decomposition of elementary digrafts}\label{sec:elemantary}

Recall that $(D,S^t)$ is elementary if it is tight dijoin-covered and  $|S^t|=|S|-1$. In this case, the degree sequence of a tight dijoin $J$ is fixed: $d_J(v)=1\ \forall v\in S^t$ and $d_J(s_0)=|T|-|S^t|=|T|-|S|+1$ for $\{s_0\}:=S\setminus S^t$. On the other hand, it follows from the existence of one tight dijoin and \Cref{lemma:b-matching} that a perfect $b$-matching is a tight dijoin if and only if
\begin{equation} \label{eq:elementary_b-matching}
\begin{aligned}
    b(v)&=1,\quad \forall v\in S^t\\
    b(s_0)&=|T|-|S|+1.
\end{aligned}
\end{equation}
Thus, it follows from characterization (2) of \Cref{thm:robust_digraft} that an elementary digraft is robust. In particular, when $|S|=|T|$, $J\subseteq A$ is a tight dijoin if and only if it is a perfect matching. The lattice of general \emph{$f$-factors} in bipartite graphs is characterized by \cite{rieder1992note}, where the basis does not necessarily consist of $f$-factors. We extend the ear decomposition of matching-covered bipartite graphs to elementary digrafts to construct a basis for $\lat(\zJ(D,S^t))$ that consists of tight dijoins. 

Let $F\subseteq \delta(s_0)$ be an arbitrary simple subset of arcs (no parallel arcs) such that $|F|=|T|-|S|+1$. Let $S_0:=\{s_0\}$ and let $T_0:=N(s_0)\cap F$ be the neighbors of $s_0$ in $F$. Let $(D_0=(S_0\dot\cup T_0,F),S^t_0=\emptyset)$ be the first elementary digraft; note that the underlying undirected graph of $D_0$ may not be $2$-edge-connected, as required in the definition of a digraft. We do not need the assumption of $2$-edge-connectivity throughout this section, so here by slight abuse of notation we say $(D,S^t)$ is a digraft without requiring $D$ to be $2$-edge-connected. We proceed inductively to build elementary digrafts in the following way. Let $(D_{i-1}, S_{i-1}^t)$ be an elementary digraft. 
We find an odd path \(P_i\) whose two endpoints lie in \(V(D_{i-1})\)
and whose internal vertices are disjoint from \(V(D_{i-1})\). We call
\(P_i\) the \(i\)-th \emph{ear}, and set    $D_i:=D_{i-1}\cup P_i$, $S_i^t:=S^t\cap V(D_i).$ We say such $F,P_1,P_2,\ldots, P_r$ is an \emph{ear decomposition} of $(D,S^t)$ if each $(D_i:=F\dot\cup P_1\dot\cup\ldots \dot\cup P_i,S_i^t)$ is an elementary digraft and $(D_r,S_r^t)=(D,S^t)$. Figure \ref{fig:ear-decomp} shows an example of such a process. In the following theorem we show that an ear decomposition can be found constructively.

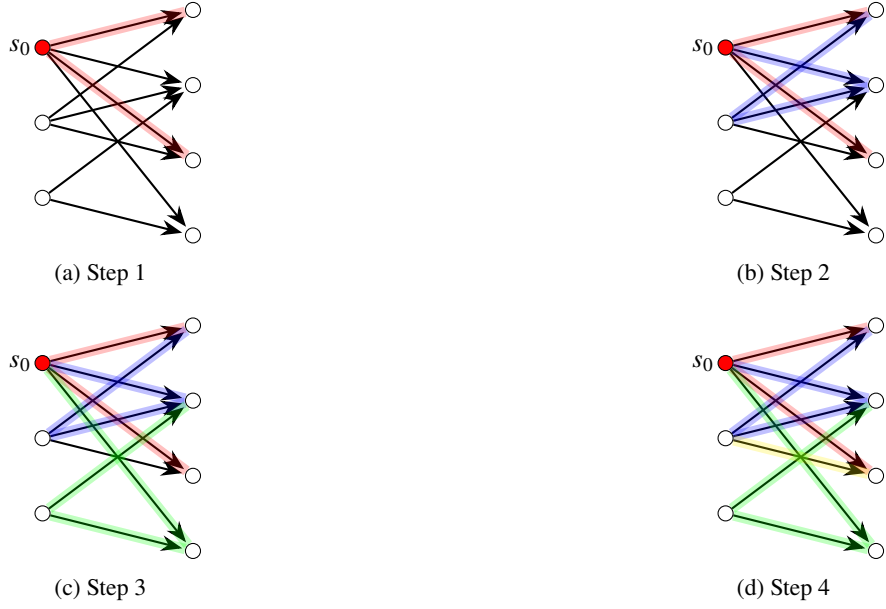
\begin{figure}[ht]
    \centering

    \begin{subfigure}{0.45\textwidth}
        \centering
        \begin{tikzpicture}[
  scale = 1,
  vx/.style = {draw, fill=white, circle, minimum size=2mm, inner sep=0pt},
  vxr/.style = {vx, fill=red}, 
  vxp/.style = {
    vx,
    fill=pink, opacity=0.25, transform shape, scale=1.8, 
  },
  dedge/.style = {thick, -{Stealth[length=8pt, sep]}},
  cut/.style = {blue, thick, dashed},
  ear/.style={line width=4pt, opacity=0.25, red}
]

\node[vxr] (a1) at (-1,2.5) {};  
\node (s) at (-1.3,2.5) {$s_0$};
\node[vx] (a2) at (-1,1.5) {};
\node[vx] (a3) at (-1,0.5) {};

\node[vx] (b1) at (1,3) {}; 
\node[vx] (b2) at (1,2) {};
\node[vx] (b3) at (1,1) {};
\node[vx] (b4) at (1,0) {};

        \draw[dedge] (a1) to (b1);
        \draw[dedge] (a1) to (b2);
        \draw[dedge] (a1) to (b3);
        \draw[dedge] (a1) to (b4);
        \draw[dedge] (a2) to (b1);
        \draw[dedge] (a2) to (b2);
        \draw[dedge] (a2) to (b3);
        \draw[dedge] (a3) to (b4);
        \draw[dedge] (a3) to (b2);

        \draw[ear] (a1) to (b1);
        \draw[ear] (a1) to (b3);

	\end{tikzpicture} \caption{Step $1$}
    \end{subfigure}
    \hfill
    \begin{subfigure}{0.45\textwidth}
        \centering
        \begin{tikzpicture}[
  scale = 1,
  vx/.style = {draw, fill=white, circle, minimum size=2mm, inner sep=0pt},
  vxr/.style = {vx, fill=red}, 
  vxp/.style = {
    vx,
    fill=pink, opacity=0.25, transform shape, scale=1.8, 
  },
  dedge/.style = {thick, -{Stealth[length=8pt, sep]}},
  cut/.style = {blue, thick, dashed},
  ear/.style={line width=4pt, opacity=0.25, red}
]

\node[vxr] (a1) at (-1,2.5) {};  
\node (s) at (-1.3,2.5) {$s_0$};
\node[vx] (a2) at (-1,1.5) {};
\node[vx] (a3) at (-1,0.5) {};

\node[vx] (b1) at (1,3) {}; 
\node[vx] (b2) at (1,2) {};
\node[vx] (b3) at (1,1) {};
\node[vx] (b4) at (1,0) {};

        \draw[dedge] (a1) to (b1);
        \draw[dedge] (a1) to (b2);
        \draw[dedge] (a1) to (b3);
        \draw[dedge] (a1) to (b4);
        \draw[dedge] (a2) to (b1);
        \draw[dedge] (a2) to (b2);
        \draw[dedge] (a2) to (b3);
        \draw[dedge] (a3) to (b4);
        \draw[dedge] (a3) to (b2);

        \draw[ear] (a1) to (b1);
        \draw[ear] (a1) to (b3);
        \draw[ear, blue] (b1) to (a2);
        \draw[ear, blue] (b2) to (a2);
        \draw[ear, blue] (b2) to (a1);

	\end{tikzpicture} \caption{Step $2$}
    \end{subfigure}

    \vspace{1em} 

    \begin{subfigure}{0.45\textwidth}
        \centering
        \begin{tikzpicture}[
  scale = 1,
  vx/.style = {draw, fill=white, circle, minimum size=2mm, inner sep=0pt},
  vxr/.style = {vx, fill=red}, 
  vxp/.style = {
    vx,
    fill=pink, opacity=0.25, transform shape, scale=1.8, 
  },
  dedge/.style = {thick, -{Stealth[length=8pt, sep]}},
  cut/.style = {blue, thick, dashed},
  ear/.style={line width=4pt, opacity=0.25, red}
]

\node[vxr] (a1) at (-1,2.5) {};  
\node (s) at (-1.3,2.5) {$s_0$};
\node[vx] (a2) at (-1,1.5) {};
\node[vx] (a3) at (-1,0.5) {};

\node[vx] (b1) at (1,3) {}; 
\node[vx] (b2) at (1,2) {};
\node[vx] (b3) at (1,1) {};
\node[vx] (b4) at (1,0) {};

        \draw[dedge] (a1) to (b1);
        \draw[dedge] (a1) to (b2);
        \draw[dedge] (a1) to (b3);
        \draw[dedge] (a1) to (b4);
        \draw[dedge] (a2) to (b1);
        \draw[dedge] (a2) to (b2);
        \draw[dedge] (a2) to (b3);
        \draw[dedge] (a3) to (b4);
        \draw[dedge] (a3) to (b2);

        \draw[ear] (a1) to (b1);
        \draw[ear] (a1) to (b3);
        \draw[ear, blue] (b1) to (a2);
        \draw[ear, blue] (b2) to (a2);
        \draw[ear, blue] (b2) to (a1);
        \draw[ear, green] (b2) to (a3);
        \draw[ear, green] (b4) to (a3);
        \draw[ear, green] (b4) to (a1);

	\end{tikzpicture} \caption{Step $3$}
    \end{subfigure}
    \hfill
    \begin{subfigure}{0.45\textwidth}
        \centering
        \begin{tikzpicture}[
  scale = 1,
  vx/.style = {draw, fill=white, circle, minimum size=2mm, inner sep=0pt},
  vxr/.style = {vx, fill=red}, 
  vxp/.style = {
    vx,
    fill=pink, opacity=0.25, transform shape, scale=1.8, 
  },
  dedge/.style = {thick, -{Stealth[length=8pt, sep]}},
  cut/.style = {blue, thick, dashed},
  ear/.style={line width=4pt, opacity=0.25, red}
]

\node[vxr] (a1) at (-1,2.5) {};  
\node (s) at (-1.3,2.5) {$s_0$};
\node[vx] (a2) at (-1,1.5) {};
\node[vx] (a3) at (-1,0.5) {};

\node[vx] (b1) at (1,3) {}; 
\node[vx] (b2) at (1,2) {};
\node[vx] (b3) at (1,1) {};
\node[vx] (b4) at (1,0) {};

        \draw[dedge] (a1) to (b1);
        \draw[dedge] (a1) to (b2);
        \draw[dedge] (a1) to (b3);
        \draw[dedge] (a1) to (b4);
        \draw[dedge] (a2) to (b1);
        \draw[dedge] (a2) to (b2);
        \draw[dedge] (a2) to (b3);
        \draw[dedge] (a3) to (b4);
        \draw[dedge] (a3) to (b2);

        \draw[ear] (a1) to (b1);
        \draw[ear] (a1) to (b3);
        \draw[ear, blue] (b1) to (a2);
        \draw[ear, blue] (b2) to (a2);
        \draw[ear, blue] (b2) to (a1);
        \draw[ear, green] (b2) to (a3);
        \draw[ear, green] (b4) to (a3);
        \draw[ear, green] (b4) to (a1);
        \draw[ear, yellow] (b3) to (a2);

	\end{tikzpicture}\caption{Step $4$}
    \end{subfigure}

    \caption{An example of an ear decomposition. Starting elementary digraft together with the choice of $s_0$ and $F$ are depicted on the top left. The next steps of the ear decomposition are depicted in the remaining pictures.}
    \label{fig:ear-decomp}
\end{figure}

\begin{theorem}\label{thm:ear_decomposition}
    Every elementary digraft $(D,S^t)$ has an ear decomposition. We can construct an integral basis $\zB$ for $\lin(\zJ(D,S^t))$ consisting of tight dijoins in polynomial time. Moreover, $|\zB|=|A|-|V^t|+1$.
\end{theorem}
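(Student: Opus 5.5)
The plan mirrors the ear decomposition of matching-covered bipartite graphs, using the fact (from \eqref{eq:elementary_b-matching} and \Cref{lemma:b-matching}) that in an elementary digraft the tight dijoins are exactly the perfect $b$-matchings with $b(v)=1$ for $v\in S^t\cup T$ and $b(s_0)=|T|-|S|+1$. Thus the problem is purely about $b$-matchings with a fixed degree sequence; no dicut condition remains. I would first build the ear decomposition backwards in spirit but constructively forward. Start from a tight dijoin $J^*$ of $(D,S^t)$, found in polynomial time via a $b$-matching algorithm, and take $F:=J^*\cap\delta(s_0)$. Then $D_0=(\{s_0\}\cup T_0,F)$ is elementary with unique tight dijoin $F$.

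Given an elementary subdigraft $(D_{i-1},S^t_{i-1})$ with $D_{i-1}\neq D$, I would find the next ear as follows. Pick an arc $e=(s,t)\in A\setminus A(D_{i-1})$ with at least one endpoint in $V(D_{i-1})$; such an arc exists by connectivity, or else $e$ is an extra arc between vertices of $D_{i-1}$. Since $D$ is tight dijoin-covered, there is a tight dijoin $J_e\ni e$. Consider $J_e\,\triangle\,J^*$, where $J^*$ restricted to $D_{i-1}$ is a tight dijoin of $D_{i-1}$ (maintain the invariant that $J^*\cap A(D_{i-1})$ is a tight dijoin of $D_{i-1}$ and that $J^*$ perfectly matches $V\setminus V(D_{i-1})$, all of which lie in $S^t\cup T$). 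Each vertex other than $s_0$ has degree $0$ or $2$ in the symmetric difference, and $s_0$ has balanced in/out alternation. Hence the component through $e$ alternates and, walking from $e$ outward until it first re-enters $V(D_{i-1})$, yields a path $P_i$ whose internal vertices lie outside $D_{i-1}$. Its end arcs are in $J_e\setminus J^*$, so it is odd in the sense that it has one more $J_e$-arc than $J^*$-arc. The path ends at vertices of $D_{i-1}$ of opposite sides (one in $S$, one in $T$), by parity of the alternating path in a bipartite graph. Then $D_i:=D_{i-1}\cup P_i$ is elementary: $J^*\cap A(D_i)$ is a tight dijoin, and for covering we take a tight dijoin of $D_{i-1}$ containing an arc at the $T$-endpoint, remove it, and add $P_i\cap J_e$ arcs. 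The endpoint-degree bookkeeping (source endpoint gains one, sink endpoint swaps) is the delicate part. I expect the main obstacle to be exactly this: guaranteeing that every arc of $D_i$ lies in some tight dijoin of $D_i$, especially when an endpoint is $s_0$, whose degree is fixed at $|T|-|S|+1$ rather than $1$. I would handle it by always choosing the exchanging dijoin of $D_{i-1}$ via the $T$-endpoint $t$, using the arc of that dijoin at $t$, and then shifting along an alternating path inside $D_{i-1}$ to restore the degree of the $S$-endpoint.

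For the basis, I would induct on $i$. A tight dijoin of $D_i$ either avoids $P_i$'s new arcs pattern $P_i\cap J_e$, in which case it uses the complementary alternate arcs of $P_i$ (internal vertices have degree $1$, forcing the pattern) and restricts to a tight dijoin of $D_{i-1}$, or it uses them. Choose $K_i$ as a tight dijoin of $D_i$ using the arcs $P_i\cap J_e$, extended by the fixed internal matching on all later ears. Let $\zB_i:=\zB_{i-1}\cup\{K_i\}$, where elements of $\zB_{i-1}$ are extended by the forced internal arcs of $P_i$. Integrality follows as in \Cref{lemma:integral_basis}. The coordinate of the first arc $f$ of $P_i$ is $0$ on all extended members of $\zB_{i-1}$ and $1$ on $K_i$, so any integer vector $x$ in the hull satisfies $x-x_f\1_{K_i}\in\lin(\zB_{i-1}\text{-extension})$, and induction applies. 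Finally, count: each odd ear with $k$ new internal vertices adds $k+1$ arcs and $k$ vertices of $V^t$, so $|A|-|V^t|$ increases by exactly one per ear. Since $|F|-|T_0|+1=1$ initially, we get $|\zB|=r+1=|A|-|V^t|+1$. The upper bound on the dimension follows from the independent degree equalities at $V^t$, so $\zB$ spans.
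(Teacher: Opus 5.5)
Your plan is the paper's: start from $F=J^*\cap\delta(s_0)$, and grow each ear from the component of $J_e\triangle J^*$ through $e$, keeping $J^*\cap A(D_i)$ a tight dijoin of $D_i$. Then take $K_i$ to use the odd arcs of $P_i$ and the even arcs of all later ears, and finish with the triangular structure and the bound $|A|-|V^t|+1$. The gap is the step you flag yourself. You never show that $D_i$ is tight dijoin-covered. Without that, neither ``$D_i$ is elementary'' nor the existence of $K_i$ (a tight dijoin of $D_i$ through the first arc of $P_i$) is established.

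Your proposed fix is to take a tight dijoin of $D_{i-1}$, remove its arc $(x,w)$ at the sink endpoint $w$, add the odd arcs of $P_i$, and ``shift along an alternating path'' to move the surplus at the source endpoint $u$ to $x$. This presupposes that an alternating path from $u$ to $x$ avoiding $w$ exists, which is not automatic. Its existence is equivalent to the existence of a $(b-\1_u)$-matching of $D_{i-1}-w$ covering every remaining sink, i.e.\ to a Hall condition, and you do not verify it.

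No exchange argument is needed, because Hall's condition holds directly. Set $b'=b-\1_u$ on $D_{i-1}$ with $w$ deleted, and with $u$ deleted if $b'(u)=0$; call the resulting vertex sets $S'$, $T'$. Then $b'(X)\le b(X)\le |N(X)|-1\le |N'(X)|$ for every $\emptyset\neq X\subsetneqq S_{i-1}$. The middle inequality is \Cref{lemma:b-matching} for the elementary digraft $D_{i-1}$; the last holds because deleting $w$ removes at most one neighbor. Also $b'(S')=|T'|$, so a $b'$-matching $J'$ exists; this is exactly \Cref{lemma:remove_two_nodes}.

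Adding the odd arcs of $P_i$ to $J'$ gives a perfect $b$-matching of $D_i$. It is a tight dijoin: $D_i$ already has one, namely $J^*\cap A(D_i)$, so its fixed degree sequence satisfies \eqref{eq:b-matching}, and the ``moreover'' part of \Cref{lemma:b-matching} applies. This treats $u=s_0$ like any other source, so the worry about the degree $|T|-|S|+1$ disappears. Alternatively, cite \Cref{lemma:tight-dijoin-covered}, whose Hall-type proof needs only connectivity, to get coveredness of $D_i$ from the single tight dijoin $J^*\cap A(D_i)$; this is how the paper argues.

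One minor point: your inductive integrality step reads off the coefficient of $K_i$ from the coordinate at $f$. That requires $\zB_i$ to already span $\lin(\zJ(D_i))$. So apply the dimension bound at every level $i$, not only at the end, or argue once with the full unit-triangular matrix as the paper does.
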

To prove this, we need some lemmas. The following fact is a formalization of the discussion above.
\begin{lemma}\label{lemma:elementary}
    A digraft $(D,S^t)$ is elementary if and only if $|S^t|=|S|-1$ and letting $\{s_0\}:=S\setminus S^t$,
\begin{subequations}\label{eq:elementary}
        \begin{align}
        |N(X)|-|X|&\geq |T|-|S|+1,\quad \forall s_0\in  X\subsetneqq S
        \label{eq:elementary_1}
        \\
        |N(X)|-|X|&\geq 1, \quad \forall \emptyset \neq X\subsetneqq S, X \subseteq S^t.
        \label{eq:elementary_2}
    \end{align}
    \end{subequations}
\end{lemma}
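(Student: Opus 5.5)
The plan is to reduce both directions to \Cref{lemma:b-matching} applied to the single degree vector forced by elementarity. Let $\{s_0\}:=S\setminus S^t$ and define $b^*:S\rightarrow\Z_{\ge 0}$ by $b^*(v)=1$ for $v\in S^t$ and $b^*(s_0)=|T|-|S|+1$. The first observation is that when $|S^t|=|S|-1$, every tight dijoin $J$ has degree sequence exactly $b^*$ on $S$. Indeed, $d_J(v)=1$ for $v\in S^t$, and $|J|=|T|$ because every sink has degree $1$, so $d_J(s_0)=|T|-|S^t|=|T|-|S|+1$. Hence the tight dijoins of $(D,S^t)$ are precisely the $b^*$-dijoins. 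By \Cref{lemma:tight-dijoin-covered}, $(D,S^t)$ is elementary if and only if $|S^t|=|S|-1$ and a $b^*$-dijoin exists.

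By \Cref{lemma:b-matching}, a $b^*$-dijoin exists if and only if $b^*(S)=|T|$, which holds by construction, and $b^*(X)\le |N(X)|-1$ for every $\emptyset\neq X\subsetneqq S$. We split into two cases according to whether $s_0\in X$.

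\emph{Case $X\subseteq S^t$.} Here $b^*(X)=|X|$, so the condition reads $|N(X)|-|X|\ge 1$. This is exactly \eqref{eq:elementary_2}.

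\emph{Case $s_0\in X$.} Here $b^*(X)=|X|-1+|T|-|S|+1=|X|+|T|-|S|$, so the condition reads $|N(X)|-|X|\ge |T|-|S|+1$. This is exactly \eqref{eq:elementary_1}.

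Since every nonempty proper $X$ falls into exactly one of these two cases, condition \eqref{eq:b-matching} for $b^*$ is equivalent to \eqref{eq:elementary}. Both directions of the lemma then follow.

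An alternative route is to note that \eqref{eq:elementary} is exactly \eqref{eq:robust} specialized to $S\setminus S^t=\{s_0\}$, and to invoke \Cref{thm:robust_digraft}. The direct argument above avoids needing robustness, though. The only point requiring care is the degree computation for $s_0$, which relies on $|J|=|T|$. There is no real obstacle; the lemma is essentially a bookkeeping specialization of \Cref{lemma:b-matching}.
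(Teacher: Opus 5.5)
Your proof is correct and follows essentially the same route as the paper. Both fix the forced degree vector $b$ with $b(s_0)=|T|-|S|+1$, identify tight dijoins with $b$-dijoins, and combine \Cref{lemma:tight-dijoin-covered} with \Cref{lemma:b-matching}. Your explicit case split on whether $s_0\in X$ spells out the translation of \eqref{eq:b-matching} into \eqref{eq:elementary}, which the paper leaves implicit.
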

\begin{proof}
    Assume a digraft $(D,S^t)$ has $|S^t|=|S|-1$. Define $b:S\rightarrow \Z_{\geq 0}$ as $b(v)=1\ \forall v\in S^t$ and $b(s_0)=|T|-|S|+1$.  The digraft $(D,S^t)$ is elmentary if and only if it is tight dijoin-covered, which by \Cref{lemma:tight-dijoin-covered} is equivalent that there exists a tight dijoin. Every tight dijoin is a $b$-dijoin. Thus, a $(D,S^t)$ is elementary if and only if there exists a $b$-dijoin, which by \Cref{lemma:b-matching} is equivalent to \eqref{eq:elementary}.
\end{proof}

Now, we state a helpful fact for a digraft obtained by removing at most two nodes of an elementary digraft.
\begin{lemma}\label{lemma:remove_two_nodes}
    Let $(D,S^t)$ be an elementary digraft and let $u\in S$, $w\in T$ be two arbitrary nodes. For $b \in \Z_{\geq 0}^S$ satisfying \eqref{eq:elementary_b-matching}, define $b':=b-\mathbf{1}_{u}$. Then, there exists a $b'$-matching in the digraft obtained from $(D,S^t)$ by deleting $w$ and deleting $u$ if $b'(u)=0$. 
\end{lemma}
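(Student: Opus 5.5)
The plan is to reduce the statement to a Hall-type condition for bipartite $b$-matchings and verify that condition using \Cref{lemma:b-matching}.

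\textbf{Setup.} Let $D'=(S'\dot\cup T',A')$ be the digraph obtained by deleting $w$, and also deleting $u$ if $b'(u)=0$. Here a $b'$-matching means a set of arcs with degree $b'(v)$ at every $v\in S'$ and degree $1$ at every $t\in T'$. We use the standard criterion for bipartite degree-constrained subgraphs with demands on $S'$ and unit capacities on $T'$; it follows from max-flow min-cut, or from Hall's theorem after splitting each $v\in S'$ into $b'(v)$ copies. A perfect $b'$-matching exists if and only if
\[
b'(S')=|T'| \quad\text{and}\quad b'(X)\le |N_{D'}(X)| \ \text{ for all } X\subseteq S'.
\]
The total condition holds because $b(S)=|T|$ by \eqref{eq:elementary_b-matching}. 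Hence $b'(S')=b'(S)=|T|-1=|T'|$, and deleting $u$ when $b'(u)=0$ changes no sum.

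\textbf{Hall inequality, proper subsets.} Since $(D,S^t)$ is elementary, it is tight dijoin-covered, so a $b$-dijoin exists. By \Cref{lemma:b-matching}, $b(X)\le |N(X)|-1$ for every $\emptyset\ne X\subsetneqq S$. Deleting only the sink $w$ gives $N_{D'}(X)\supseteq N(X)\setminus\{w\}$, so $|N_{D'}(X)|\ge |N(X)|-1$. Therefore, for every nonempty $X\subseteq S'$ with $X\ne S$,
\[
b'(X)\le b(X)\le |N(X)|-1\le |N_{D'}(X)|.
\]
This includes the case $X=S\setminus\{u\}$ when $u$ is kept. It also includes $X=S'=S\setminus\{u\}$ when $u$ is deleted, since that set is still a proper subset of $S$.

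\textbf{Hall inequality, $X=S$.} This case arises only when $u$ is not deleted. Since $D$ is connected and bipartite, every sink has a neighbor in $S$, so $N(S)=T$ and $N_{D'}(S)=T\setminus\{w\}$. Then $b'(S)=|T|-1=|N_{D'}(S)|$, and the condition holds with equality.

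\textbf{Conclusion and main obstacle.} The criterion then yields the desired $b'$-matching, and it is found in polynomial time by a flow computation. The main point to get right is the precise form of the $b$-matching Hall criterion: the sinks must be saturated exactly once. That is handled by the equality of totals $b'(S')=|T'|$ together with the unit capacities on $T'$. Beyond this, the slack of $1$ supplied by \Cref{lemma:b-matching} exactly absorbs the loss of the deleted sink $w$.
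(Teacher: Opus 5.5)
Your proof is correct and is essentially the paper's argument: you verify Hall's condition for the $b'$-matching by letting the unit slack $b(X)\le |N(X)|-1$ from \Cref{lemma:b-matching} absorb the deleted sink $w$, and you check the count $b'(S')=|T|-1=|T'|$. Your separate treatment of $X=S$ via $N(S)=T$ makes explicit a case that the paper's proof, which only checks proper subsets of $S'$, leaves implicit.
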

\begin{proof}
    Let $(D'=(S'\dot\cup T',A'),(S^t)')$ be the digraft obtained from $(D,S^t)$ by deleting $w$ and deleting $u$ if $b'(u)=0$. 
    For an arbitrary $\emptyset\neq X\subsetneqq S'$, let $N'(X)$ denote the neighbors of $X$ in $D'$. Then, \[b'(X)\leq b(X)\leq |N(X)|-1\leq |N'(X)|,\]
    where the second inequality follows from \Cref{lemma:b-matching}. The third inequality follows from the fact that we delete at most one node from $N(X)$. Moreover, $b'(S')=b(S)-1=|T|-1=|T'|$. It follows from Hall's theorem that there exists a $b'$-matching in $D'$.
\end{proof}

We are now ready to prove the main theorem of this section.

\begin{proof}[Proof of \Cref{thm:ear_decomposition}]
    Let $(D=(S\dot\cup T,A),S^t)$ be an elementary digraft.
    Let $J_0$ be an arbitrary tight dijoin and let $F:=J_0\cap \delta(s_0)$. Let $S_0:=\{s_0\}$ and $T_0:=N_F(s_0)$. Let $(D_0=(S_0\dot\cup T_0,F),S^t_0=\emptyset)$ be the first elementary digraft. Suppose $(D_{i-1}=(S_{i-1}\dot\cup T_{i-1},A_{i-1}),S_{i-1}^t)$ is an elementary digraft that we have already built. We inductively maintain that $J_0\cap A_{i-1}$ is a tight dijoin of $(D_{i-1},S_{i-1}^t)$. If $D_{i-1}=D$, we are done. Otherwise, there exists $e\in A\setminus A_{i-1}$ that has at least one endpoint in $V(D_{i-1})$, say $u$. Pick an arbitrary tight dijoin $J$ such that $e\in J$. Such a tight dijoin exists because $(D,S^t)$ is tight dijoin-covered. Since each vertex has the same degree in each tight dijoin, the symmetric difference $J_0\Delta J$ is a collection of arc-disjoint cycles, each of which alternates between arcs in $J_0$ and $J$. 
    
    We first note that \(e\notin J_0\). Indeed, if the endpoint of \(e\) in
\(V(D_{i-1})\) is different from \(s_0\), then it is already covered by
the unique \(J_0\)-arc in \(A_{i-1}\). If the endpoint is \(s_0\), then
all arcs of \(J_0\) incident with \(s_0\) lie in
\(F=J_0\cap\delta(s_0)\subseteq A_{i-1}\). Hence no arc of
\(J_0\setminus A_{i-1}\) is incident with \(V(D_{i-1})\). Thus
\(e\in J\setminus J_0\subseteq J_0\Delta J\). Let $P_i$ be the alternating path in $J_0\Delta J$ obtained from starting at $u$, traversing $e$, and following the cycle until we first encounter a vertex in $V(D_{i-1})$. Suppose $f$ is the last arc of $P_i$ with the endpoint $w\in V(D_{i-1})$. 
    
    We claim that $f\in J$. Indeed, there are two cases possible. If $w\neq s_0$, then the only arc of $J_0$ incident to $w$ is in $A_{i-1}$ because $J_0\cap A_{i-1}$ is a tight dijoin of $(D_{i-1},S_{i-1}^t)$. Thus, $f$ had to come from $J$. 
    Alternatively, assume $w=s_0$. Since $f\notin A_{i-1}$, and all arcs of \(J_0\) incident with \(s_0\) belong to \(F=\delta_{J_0}(s_0)\subseteq A_{i-1}\),  we have $f\notin J_0$. The fact that $e,f\in J$ implies that $P_i$ is an odd path. Let $D_i:=D_{i-1}\cup P_i$ and $S_i^t:=S^t\cap V(D_i)$. Then $(D_i,S_i^t)$ is tight dijoin-covered because $J_0\cap A_i$ is a tight dijoin of $(D_i,S_i^t)$. Thus, $(D_i,S_i^t)$ is an elementary digraft. This concludes the proof that $F,P_1,\ldots, P_r$ is an ear decomposition of $(D,S^t)$. 

    We next show how to construct an integral basis $\zB$ for $\lin(\zJ(D,S^t))$. Starting from the last ear $P_r$, we inductively remove one ear and add one tight dijoin to $\zB$. Let $(D_i,S_i^t)$ be the current digraft. We describe how to construct a tight dijoin $J_i$ and remove $P_i$. Let $e,f$ be the first and last arc of $P_i$ and let $u,w$ be the endpoints of $e,f$ such that $u,w\in V(D_{i-1})$. Since $P_i$ is an odd ear, one of $u,w$ is a source and the other is a sink. We may assume w.l.o.g. that $u\in S$ and $w\in T$. Let $b:S_{i-1}\rightarrow \Z_{\geq 0}$ be defined as $b(v)=1\ \forall v\in S_{i-1}^t$ and $b(s_0)=|T|-|S|+1$. Let $b'=b-\mathbf{1}_{u}$.
    Applying \Cref{lemma:remove_two_nodes} to the elementary digraft $(D_{i-1}, S_{i-1}^t)$, we conclude that there exists a $b'$-matching $J'$ in the digraft obtained from deleting $w$ and $u$ if $b'(u)=0$. Let $J$ be obtained from adding odd arcs of $P_i$ to $J'$. Then, $J$ is a tight dijoin of $(D_i,S_i^t)$. Further let $J_i$ be obtained from adding even arcs of $P_{i+1},\ldots, P_r$ to $J$. Clearly, $J_i$ is a perfect $b$-matching for $b$ defined as \eqref{eq:elementary_b-matching}, and thus $J_i$ is a tight dijoin of $(D,S^t)$. This way, we obtain $r$ tight dijoins $J_r,\ldots J_1$. Add them to basis $\zB$. Finally, we add the tight dijoin $J_0$ to $\zB$. Now, we count the number of ears $r$. We claim that $r=|A|-|V|+1$. This is because $|A(F)|-|V(F)|=-1$ and for every ear $P_i$, we add $|P_i|$ new arcs and $|P_i|-1$ new nodes to $D_i$. Thus, $|A|-|V|=|A(F)|-|V(F)|+\sum_{i=1}^r (|P_i|-(|P_i|-1))=r-1$. Thus, $|\zB|=r+1=|A|-|V|+2$. Since $b$-matchings can be found in polynomial time and the total number of ears is at most $r=|A|-|V|+1$, this approach yields a polynomial-time algorithm to construct $\zB$.

    It remains to show that $\zB$ is indeed an integral basis for $\lin(\zJ(D,S^t))$. First, we prove that $\zB$ is linearly independent. Observe that for $i\in [r]$, $J_i$ is the only one in $\{J_0, J_1 \ldots, J_i\}$ that uses the odd arcs of the ear $P_i$. Let $e_i$ be any odd arc of $P_i$ and let $e_0$ be any arc of the starting set $F$. Considering the $(r+1)\times (r+1)$ incidence matrix $M$ whose columns are indexed by $\zB=\{J_0, J_1,\ldots, J_r\}$ and whose rows are indexed by $O:=\{e_0, e_1, \ldots, e_r\}$. As argued above, $M$ is an upper-triangular matrix whose diagonal entries are all equal to $1$. This implies $J_0,J_1,\ldots,J_r$ are linearly independent. Moreover, since $\dim(\lin(\zJ(D,S^t)))=|A|-|V^t|+1=|A|-|V|+2=|\zB|$, $\zB$ forms a linear basis for $\lin(\zJ(D,S^t))$. Finally, $M$ is invertible with its inverse having integer entries. For any $x\in \lin(\zJ(D,S^t))\cap \Z^A$, the equation $M\lambda=x|_O$ has an integer solution $\lambda \in \Z^{r+1}$. 
    This completes the proof that $\zB$ is an integral basis for $\lin(\zJ(D,S^t))$.
\end{proof}

As a special case of elementary digrafts, we show how to construct bases for braces.
\begin{CO}\label{CO:basis_brace}
    Let $(D,S^t)$ be a brace. We can construct an integral basis $\zB$ for $\lin(\zJ(D,S^t))$ consisting of tight dijoins in polynomial time. Moreover, $|\zB|=|A|-|\widebar{V^t}|+2$.
\end{CO}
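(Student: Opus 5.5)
The plan is to reduce \Cref{CO:basis_brace} directly to \Cref{thm:ear_decomposition}. A brace $(D,S^t)$ has $|S|=|T|$. As observed when the problem was set up, when $|S|=|T|$ every tight dijoin has degree exactly $1$ at every source. Indeed, $|J|=|T|$, every source needs degree at least $1$ because $\delta^+(v)$ is a dicut, and $|S|=|T|$. So $\widebar{S^t}=S$ and $\zJ(D,S^t)=\zJ(D,S')$ for any $S'\subseteq S$.

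First step: choose an arbitrary $s_0\in S$ and set $S':=S\setminus\{s_0\}$, so $|S'|=|S|-1$. Since $(D,S^t)$ is tight dijoin-covered and $\zJ(D,S')=\zJ(D,S^t)$, the digraft $(D,S')$ is also tight dijoin-covered, hence elementary. Here $b(s_0)=|T|-|S|+1=1$, so tight dijoins are exactly perfect matchings.

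Second step: apply \Cref{thm:ear_decomposition} to $(D,S')$. This gives, in polynomial time, an integral basis $\zB$ of $\lin(\zJ(D,S'))=\lin(\zJ(D,S^t))$ consisting of tight dijoins, with $|\zB|=|A|-|V'^t|+1$, where $V'^t=S'\cup T$ and $|V'^t|=|V|-1$.

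Third step: reconcile the count. We have $\widebar{V^t}=\widebar{S^t}\cup T=S\cup T=V$, so
\[
|\zB|=|A|-(|V|-1)+1=|A|-|V|+2=|A|-|\widebar{V^t}|+2,
\]
as claimed.

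The only point needing care is the first step. One must check that the hypotheses of an elementary digraft are met. The definition of a digraft requires $|S'|\le|S|-1$, which holds. One also needs that the identification $\zJ(D,S^t)=\zJ(D,S')$ is exact, and this follows from the degree-counting argument above. Everything else is bookkeeping.
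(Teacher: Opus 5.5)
Your proposal is correct and follows essentially the same route as the paper. Like the paper, you note that when $|S|=|T|$ every tight dijoin is a perfect matching, so the choice of tight sources does not matter; you make $|S^t|=|S|-1$ so that the digraft is elementary, apply \Cref{thm:ear_decomposition}, and use the fact that every node is tight to turn $|A|-(|V|-1)+1$ into the stated count.
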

\begin{proof}
    Let $(D,S^t)$ be a brace. Let  $(S^t)'\subseteq S$ be an arbitrary subset of sources. Since $|S|=|T|$, every tight dijoin of $(D,(S^t)')$ has degree exactly $1$ on each $v\in V$, which means $\widebar{(S^t)'}=S$. This means the set of tight dijoins of $(D,S^t)$ is independent of the choice of $S^t$. Therefore, we may assume without loss of generality that $|S^t|=|S|-1$. In particular, $(D,S^t)$ is elementary. Therefore, we can use \Cref{thm:ear_decomposition} to construct an integral basis $\zB$ for $\lin(\zJ(D,S^t))$. Moreover, $|\zB|=|A|-|V^t|+1=|A|-(|V|-1)+1=|A|-|\widebar{V^t}|+2$. 
\end{proof}

\section{Parity-constrained strongly connected orientations}\label{sec:parity}
Recall the parity strongly connected orientation (parity SCO) problem. Given a $2$-edge-connected undirected graph $G=(V,E)$ whose bidirected graph is $\vec{G}=(V,E^+\cup E^-)$, a family $\zF\subseteq 2^V\setminus \{\emptyset,V\}$, and a subset of red arcs $R\subseteq E^+\cup E^-$, we say a tight SCO of $(G,\zF)$ is \emph{even (odd)} if it has an odd (even) number of red arcs. The goal is to find a tight even (odd) SCO of $(G,\zF)$ if it exists. 
\begin{proof}[Proof of \Cref{thm:parity-sco}]
   First, we reduce parity SCO to \emph{odd SCO}, i.e., find a SCO with an odd number of red arcs. Suppose we are looking for a tight even SCO of $(G,\zF)$. We may assume without loss of generality that at least one $u\in V$ has $\{u\},V\setminus \{u\}\notin \zF$, since the out (in) degree of $u$ in any strongly connected orientation is implied by the out (in) degrees of vertices in $V\setminus \{u\}$. Let $G'$ be obtained from adding a new node $w$ and two parallel edges $e,f=\{u,w\}$, so that $e^+,f^+=(u,w)$ and $e^-,f^-=(w,u)$. Color $e^+,f^+$ red. Let 
   \[\zF':=\Big\{U:U\in \zF, u\notin U\Big\}\bigcup \Big\{U\cup \{w\}:U\in \zF, u\in U\Big\}.\]
   Every tight SCO $O'$ in $(G',\zF')$ is the union of a tight SCO $O$ in $(G,\zF)$ and either $\{e^+,f^-\}$ or $\{f^+,e^-\}$. Moreover, $O$ is a tight even SCO of $(G,\zF)$ if and only if $O'$ is a tight odd SCO of $(G',\zF')$.
    
   Now, we describe how to solve the odd SCO problem. 
   Let $G=(V,E)$ be a $2$-edge-connected graph, whose bidirected graph is $\vec{G}=(V,E^+\cup E^-)$. We use \Cref{main-sco-theorem} to construct a basis $\zB$ for $\lat(\sco(G,\zF))$ that consists of tight SCO's. Since every tight SCO $O^*$ can be written as $\mathbf{1}_{O^*}=\sum_{O\in \zB} \lambda_O\mathbf{1}_O$ for some $\lambda_O\in \Z\ \forall O\in \zB$, if $(G,\zF)$ has a tight odd SCO $O^*$,
   \[
   \sum_{O\in \zB} \lambda_O\mathbf{1}_O(R)=\mathbf{1}_{O^*}(R)\not\equiv 0 \pmod{2}.
   \]
   Therefore, there is some $O\in \zB$ such that $|O\cap R|$ is odd, which is a desired tight odd SCO.  
\end{proof}

\section*{Acknowledgment}
This work originated from a collaboration that included Ahmad Abdi and G\'erard Cornu\'ejols and the authors are grateful for their invaluable input and feedback on the initial version of this work. We thank Karthik Chandrasekaran for suggesting the applications. We are also grateful to the anonymous reviewers whose comments helped improve the presentation of the paper.

\bibliography{references}{}
\bibliographystyle{splncs04}

\appendix

\section{Reduction to digrafts}\label{sec:reduction-to-digrafts}
In this section, we prove the main \Cref{main-sco-theorem} regarding SCO's using the main \Cref{main-digraft} on digrafts. We then prove \Cref{main-scr-theorem} using \Cref{main-sco-theorem}. We start with a reduction from SCO's in an undirected graph to dijoins in a digraft. A similar reduction appears in \cite{Cornuejols24,abdi2025strongly}.

\begin{lemma}\label{sco->dij}
For an arbitrary $2$-edge-connected graph $G=(V,E)$ and a family $\zF\subseteq 2^V\setminus\{\emptyset, V\}$, we can construct a digraft $(D'=(V',A'),\zF')$ and an identity map $\phi:\Z^{E^+\cup E^-}\rightarrow \Z^A$ such that $O\in \sco(G,\zF)$ if and only if $\phi(O)\in \zJ(D',\zF')$.
\end{lemma}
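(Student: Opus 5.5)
The construction is the standard subdivision of every edge. Given $G=(V,E)$ with bidirected $\vec G=(V,E^+\cup E^-)$, I would build a bipartite digraph $D'=(V'=S\dot\cup T,A')$ as follows. Every original vertex $v\in V$ becomes a sink, so $V\subseteq T$. For each edge $e=\{x,y\}\in E$ with $e^+=(x,y)$ and $e^-=(y,x)$, introduce a new source $s_e\in S$ together with the two arcs $a_e^+:=(s_e,y)$ and $a_e^-:=(s_e,x)$. Thus $S=\{s_e:e\in E\}$, $T=V$, and $A'=\{a_e^\pm\}$. The map $\phi$ is the coordinate identification $e^\pm\mapsto a_e^\pm$. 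Under it, the arc $a_e^+$ represents "$e$ oriented into $y$": choosing $e^+$ means $y$ receives an in-arc, and the dijoin arc $a_e^+$ enters $y$.

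Next I have to reconcile the counts $|S|\le|T|$ and the exact-one-per-source condition. A tight dijoin should pick exactly one of $a_e^+,a_e^-$ for each $e$, which requires every $s_e$ to be tight. Each sink, however, has degree $1$ in a tight dijoin, while a vertex in an SCO can have large in-degree. So the naive version is wrong, and I would swap roles: let the dijoin pick the arc \emph{not} used by the orientation. Then the complement $A'\setminus\phi(O)$, i.e.\ the reversal, should be what we compare, and this mismatch has to be handled.

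The cleaner fix, following \cite{Cornuejols24,abdi2025strongly}, is to take $D$ = (an arbitrary reference orientation $E^+$ of $G$, as a digraph) and use the strengthening correspondence: $O\in\sco(G,\zF)$ iff $J:=O\cap E^-$ is a tight strengthening of $(D,\zF)$. This is an affine (coordinatewise identity on the $E^-$ part, complement on $E^+$) correspondence. I would then apply the standard digraft reduction for tight strengthenings. Each arc $(x,y)$ is subdivided by a new source $s$ with arcs $(s,x),(s,y)$ so that the graph becomes bipartite with all old vertices as sinks. Minimal strengthenings, which every tight one is in the tight setting via $|J|$ counting, then correspond to dijoins. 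Tightness of $U\in\zF$ transfers to $U$ together with its internal subdivision vertices, so I would set $\zF'$ accordingly and add $V'\setminus v$ for every sink so that condition (b) of a digraft holds. To get a genuine identity map on $\Z^{E^+\cup E^-}$ rather than an affine one, the new arcs should be indexed by $E^+\cup E^-$ directly: each $e$ contributes exactly two arcs, $a_{e^+}$ and $a_{e^-}$, and the two dicut constraints at $s_e$ and at the sinks force "exactly one of the pair".

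The steps, in order, are these. First, specify $D',\zF'$ concretely and check the digraft axioms: bipartiteness, $2$-edge-connectivity inherited from $G$, that every set in $\zF'$ induces a dicut, $|S|\le|T|$ (possibly after adding dummy sinks), and that the sets $V'\setminus v$ lie in $\zF'$. Second, show that for $O\subseteq E^+\cup E^-$ picking one arc per edge, $\phi(O)$ intersects every dicut of $D'$ iff $O$ is strongly connected. I would do this by mapping each $U\subseteq V$ to the dicut shore $U\cup\{s_e: e\subseteq U\}\cup\ldots$, and conversely using \Cref{lemma:minimal_dicuts} to see that only such shores matter. Third, match $|\delta^-_O(U)|=1$ with $|\phi(O)\cap\delta^+(U')|=1$. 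The main obstacle is the degree bookkeeping at the old vertices, which must become tight sinks of degree $1$ even though SCO in-degrees are arbitrary. I expect this forces replacing each vertex $v$ by a gadget, such as a source with $d(v)-1$ auxiliary sinks, exactly as the functions $f,g$ in \eqref{eq:degree_constraints} suggest, and verifying that gadget is where most of the care goes.
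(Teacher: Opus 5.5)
There is a genuine gap. In every version of your construction, the original vertices of $G$ are the \emph{sinks} and the subdivision vertices $s_e$ are the \emph{sources}. With that choice, $(D',\zF')$ cannot be a digraft.

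\begin{itemize}
\item First, $|S|=|E|\ge|V|=|T|$, which contradicts $|S|\le|T|$ unless $G$ is a cycle.
\item Second, condition (b) forces $V'\setminus v\in\zF'$ for every original vertex $v$. So every tight dijoin $J$ would have $|J|=|V|$. It must also meet each of the $|E|$ pairwise disjoint dicuts $\delta^+(s_e)$, so $|J|\ge|E|$, again forcing $|E|=|V|$.
\end{itemize}

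So the first step of your plan, verifying the digraft axioms, already fails. The fixes you float are all left unspecified: dummy sinks, a gadget with $d(v)-1$ auxiliary sinks per vertex, and the detour through tight strengthenings. Moreover, any gadget changes the arc set, so $\phi$ would no longer be a coordinate identification of $E^+\cup E^-$ with $A'$.

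The missing observation is that the obstacle disappears if you simply reverse every arc of your first construction. Reversal does not change the dicuts as arc sets, hence not the dijoins. Concretely, subdivide each $e=\{r,s\}$ by a new \emph{sink} $t_e$ with arcs $(r,t_e),(s,t_e)$, so the original vertices become sources. Then:

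\begin{itemize}
\item $|S|=|V|\le|E|=|T|$, since a $2$-edge-connected $G$ has minimum degree at least $2$.
\item Each sink $t_e$ has degree $2$, so the mandatory constraint $V'\setminus t_e\in\zF'$ says exactly ``choose one of $e^+,e^-$''.
\item The sources carry no forced constraint, which is exactly what arbitrary SCO in-degrees require.
\end{itemize}

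Map $(s,r)\mapsto(r,t_e)$, which is precisely your ``into $r$'' convention, and set $\varphi(U):=U\cup\{t_e:e\subseteq U\}$. One checks that $\delta^-_{D'}(\varphi(U))=\emptyset$ and $|\phi(O)\cap\delta^+_{D'}(\varphi(U))|=|\delta^-_O(U)|$. By \Cref{lemma:minimal_dicuts}, every minimal dicut other than the $\delta^-(t_e)$ is of the form $\delta^+(\varphi(U))$ with $U=V\setminus X$. Hence $\zF':=\{\varphi(U):U\in\zF\}\cup\{V'\setminus t_e:e\in E\}$ gives the required correspondence with no gadgets. This is the paper's proof.
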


\begin{proof}[Proof of \Cref{sco->dij}]
Let $\overrightarrow{G}=(V,E^+\cup E^-)$ be a bidirected graph of $G$ and let $D'=(V', A')$ be the digraph obtained from $G$ by replacing every edge $e:=\{r,s\}\in E$, by the two arcs $(r,t_e),(s,t_e)$, where $t_e$ is a new vertex. We have $V' = V\cup \{t_e:e\in E\}$ and $A' = \{(r,t_e),(s,t_e):e=\{r,s\}\in E\}$. Note that the nodes of $G$ are sources in $V'$, while the new nodes in $\{t_e:e\in E\}$ are sinks. Furthermore, the underlying undirected graph of $D'$ is $2$-edge-connected. 

There is a one-to-one correspondence between arcs in $E^+\cup E^-$ and arcs in $A'$: $(s,r)$ corresponds to $(r,t_e)$; $(r,s)$ corresponds to $(s,t_e)$. We define the bijection $\phi:\R^{E^+\cup E^-}\rightarrow \R^A$ to map the value on $E^+\cup E^-$ to its corresponding arcs in $A'$.
 
Given a proper subset $\emptyset\neq U\subsetneqq V$, define $$\varphi(U):=U\cup \{t_e:e=\{r,s\};\, r,s\in U\}.$$ Note that $\delta^+_{D'}(\varphi(U)) = \{(r,t_e):e=\{r,s\}\in \delta_G(U)\}$ and $\delta^-_{D'}(\varphi(U)) = \emptyset$. Thus, $\varphi$ maps every proper subset of $V$ to a dicut of $D'$. Conversely, it can be checked for $U'\subset V'$ that, if $\delta^+_{D'}(U')$ is a minimal dicut of $D'$ such that $|U'|<|V'|-1$, then $U:=U'\cap V$ is a proper subset of $V$, and $\delta^+_{D'}(U') = \delta^+_{D'}(\varphi(U))$. 
 
Using the mapping $\varphi$ defined earlier, it can be easily checked that $O$ is an SCO in $G$ if, and only if, $J:=\phi(O)$ is a 
dijoin of $D'$ such that $|J\cap \delta(t_e)|=1,\, \forall e\in E$.
Let $$\zF':=\{\varphi(U):U\in \zF\}\cup \{V'\setminus t_e:e\in E\}.$$
It follows that $\phi$ is a bijection between tight SCO in $(G,\zF)$ and tight dijoins in $(D',\zF')$.
\end{proof}


We are ready to prove \Cref{main-sco-theorem}.
\begin{proof}[Proof of \Cref{main-sco-theorem}]
    Let $(D',\zF')$ be the digraft and  $\phi$ the identity map from \Cref{sco->dij}. We may assume there is at least one tight $SCO$ in $(G,\zF)$, which implies $(D',\zF')$ has at least one tight dijoin, and thus is tight dijoin-covered. \Cref{main-sco-theorem} follows immediately from \Cref{main-digraft} and the fact that $\phi$ maps tight SCO's of $(G,\zF)$ to tight dijoins of $(D',\zF')$.
\end{proof}

We now prove \Cref{main-scr-theorem} using \Cref{main-sco-theorem}.
\begin{proof}[Proof of \Cref{main-scr-theorem}]
    Let $G$ be the underlying undirected graph of $D$ and let $\vec{G}=(V,A\cup A^{-1})$ be its bidirected graph. By \Cref{main-sco-theorem}, there exist $O_1,\ldots,O_k\in \sco(G,\zF)$ that form an integral basis for $\lin(\sco(G,\zF))$. Let $J_i:=A\setminus O_i$ for each $i\in [k]$, which are tight strengthenings in $(D,\zF)$ by definition. We show that $J_1,\ldots, J_k$ form an integral basis for $\lin(\scr(D,\zF))$.
    
    It follows from the condition $\gcd\{1-|\delta^-(U)|:U\in \zF\}=1$ that $\zF\neq\emptyset$. For each $U\in \zF$, we require that for every $J\in \scr(D,\zF)$, after flipping $J$ there are exactly one arc enters $U$, i.e., $|\delta_J^+(U)|+\big(|\delta^-(U)|-|\delta_J^-(U)|\big)=1$. This implies that
    \[\aff(\scr(D,\zF)\subseteq \big\{x:x(\delta^+(U))-x(\delta^-(U))=1-|\delta^-(U)|\quad \forall U\in \zF\big\}.\]
     Pick an arbitrary $x\in \lin(\scr(D,\zF))\cap \Z^A$. Then it satisfies $x(\delta^+(U))-x(\delta^-(U))=k(1-|\delta^-(U)|)$ for every $U\in \zF$ for some $k\in \R$. Since $\gcd\{1-|\delta^-(U)|:U\in \zF\}=1$, there exists $y\in \Z^{\zF}$ such that $\sum_{U\in \zF}y_U (1-|\delta^-(U)|)=1$, which implies
     \[k=k\cdot \sum_{U\in \zF}y_U (1-|\delta^-(U)|)=\sum_{U\in \zF}y_U \cdot k(1-|\delta^-(U)|)=\sum_{U\in \zF}y_U \cdot \big(x(\delta^+(U))-x(\delta^-(U))\big)\in \Z.\] 
     Define $x'\in \Z^{A\cup A^{-1}}$ as $x'(e):=x(e)$ if $e\in A^{-1}$; $x'(e):=k-x(e^{-1})$ if $e\in A$. Notice that $x'\in \lin(\sco(G,\zF))\cap \Z^{A\cup A^{-1}}$. Then there exists $\lambda\in \Z^k$ such that $x'=\sum_{i=1}^k\lambda_i O_i$, which implies $x=\sum_{i=1}^k\lambda_i J_i$. This completes the proof that $J_1,\ldots, J_k$ form an integral basis for $\lin(\scr(D,\zF))$.
\end{proof}

\section{Reduction to basic digrafts}\label{sec:reduction-to-basic}
The goal of this section is to prove the reduction to basic digrafts, namely \Cref{lemma:non-basic}. 


According to \Cref{lemma:tight_dicut_uncross}, in a digraft $(D,\zF)$, if there exist $U,W\in \zF$ that cross, we can replace them with $U\cap W$ and $U\cup W$ while preserving the set of tight dijoins. It follows from the standard uncrossing argument that we can reduce $\zF$ to a cross-free family in time $\poly(|\zF|,n)$. Therefore, we may assume without loss of generality that $\zF$ is a cross-free family, which consists of at most $2n$ dicut shores. This allows us to define $C$-contractions for general digrafts $(D,\zF)$, which generalize \Cref{def:dicut-contraction}.

\begin{DE}[$C$-contractions]\label{def:dicut-contraction-general}
    Let $C=\delta^+(U)$ be a nontrivial dicut of $(D,\zF)$ such that $\zF\cup \{U\}$ is cross-free. 
    Let $U_1:=V\setminus U$ and $U_2:=U$. Let $D_i=(V_i,A_i)$ be the bipartite digraph obtained from $D$ after contracting $U_i$ into a singleton $u_i$; so $V_i=\{u_i\}\cup U_{3-i}$. Let $$
    \zF_i := \big\{W:W\cap U_i=\emptyset, W\in \zF\cup \{U\}\big\}\cup \big\{(W\setminus U_i)\cup \{u_i\}:U_i\subseteq W, W\in \zF\cup \{U\}\big\}.
    $$ We refer to $(D_i,\zF_i),i=1,2$ as the \emph{$C$-contractions} of $(D,\zF)$. 
\end{DE} 
We need one more lemma which gives us a way to decompose and compose tight dijoins along nontrivial dicuts.
\begin{lemma}[\cite{abdi2024strconn}, Lemma 4.12]\label{de-composition}
    Let $(D=(V,A),\zF)$ be a digraft and $C=\delta^+(U)$ be a nontrivial dicut. Let $(D_i=(V_i,A_i),\zF_i),i=1,2$ be the $C$-contractions of $(D,\zF)$. Then the following statements hold: \begin{enumerate}
            \item {\bf Decomposition:} If $J\in \zJ(D,\zF\cup\{U\})$, then $J_i:=J\cap A_i$ satisfies $J_i\in  \zJ(D_i,\zF_i)$.
        \item {\bf Composition:} If $J_i\in \zJ(D_i,\zF_i),i=1,2$, and $J_1\cap C=J_2\cap C$, then $J:=J_1\cup J_2$ satisfies $J\in \zJ(D,\zF\cup \{U\})$. 
    \end{enumerate}
\end{lemma}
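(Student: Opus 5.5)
The plan is to prove both parts through the natural correspondence between vertex sets of $D_i$ and vertex sets of $D$. Write $U_1=V\setminus U$ and $U_2=U$, so $V_1=U\cup\{u_1\}$ and $V_2=(V\setminus U)\cup\{u_2\}$. Since $C=\delta^+(U)$ is a dicut, every arc of $C$ goes from $U$ to $V\setminus U$. Hence $u_1$ is a sink of $D_1$, $u_2$ is a source of $D_2$, both $D_i$ are bipartite, and $A_1\cap A_2=C$. For $W\subseteq V_i$, let $\widehat W$ be obtained by replacing $u_i$ (if present) by $U_i$. Then $\delta^{\pm}_{D_i}(W)=\delta^{\pm}_D(\widehat W)$ as arc sets. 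In particular, $W$ is a dicut shore of $D_i$ if and only if $\widehat W$ is a dicut shore of $D$, and the family $\zF_i$ consists exactly of the sets $W\subseteq V_i$ with $\widehat W\in\zF\cup\{U\}$. The set $V_1\setminus u_1=U$ lifts to $U$ and $\{u_2\}$ lifts to $U$, so the contracted vertex is tight in each $D_i$.

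\textbf{Decomposition.} Let $J\in\zJ(D,\zF\cup\{U\})$. For any dicut $\delta^+_{D_i}(W)$ we have $J_i\cap\delta^+_{D_i}(W)=J\cap\delta^+_D(\widehat W)$. This is because all arcs of $\delta^+_D(\widehat W)$ lie in $A_i$: such an arc has at least one endpoint outside $U_i$, and arcs with exactly one endpoint in $U_i$ lie in $C\subseteq A_i$. Therefore $J_i$ meets every dicut of $D_i$, and it meets $\delta^+_{D_i}(W)$ exactly once for each $W\in\zF_i$, since $\widehat W\in\zF\cup\{U\}$. I would also check that $(D_i,\zF_i)$ is a digraft. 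Bipartiteness and $2$-edge-connectivity survive contraction. Condition (b) holds for sinks other than $u_1$ by lifting, and for $u_1$ because $V_1\setminus u_1$ lifts to $U$.

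\textbf{Composition.} Let $J_i\in\zJ(D_i,\zF_i)$ with $J_1\cap C=J_2\cap C$. Tightness of $u_1$ in $D_1$ forces $|J_1\cap C|=1$, and so $|J\cap C|=1$. If $\widehat W$ is nested with $U$ or disjoint from it (or co-disjoint), then $\widehat W$ comes from a set $W$ in exactly one of $V_1,V_2$. In that case $|J\cap\delta^+_D(\widehat W)|=|J_i\cap\delta^+_{D_i}(W)|$, which gives the covering inequality, and gives equality for members of $\zF$. Every $W\in\zF$ is of this form because $\zF\cup\{U\}$ is cross-free.

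For a general dicut shore $W$ crossing $U$, I would uncross. Both $W\cap U$ and $W\cup U$ have no entering arcs, so they are dicut shores. Counting arcs gives
\[|J\cap\delta^+(W)|+|J\cap\delta^+(U)|\ \ge\ |J\cap\delta^+(W\cap U)|+|J\cap\delta^+(W\cup U)|.\]
The set $W\cap U\subseteq U$ lives in $D_1$, and $W\cup U\supseteq U$ lives in $D_2$, so each term on the right is at least $1$. Since $|J\cap C|=1$, this yields $|J\cap\delta^+(W)|\ge 1$.

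The main obstacle is this uncrossing step together with the degenerate cases. These are $W\cap U=\emptyset$, $W\cup U=V$, $W\subseteq U$ and $W\supseteq U$, and in each of them $W$ must be shown to lift directly from one side. The arc-counting inequality above, which is where the dicut property of $U$ is essential, is the key point to verify carefully.
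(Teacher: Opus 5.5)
Your proof is correct. The paper gives nothing to compare it with: it imports this lemma from \cite{abdi2024strconn} without proof.

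Your route is the standard one. For sets not crossing $U$ you use cut preservation, $\delta^{\pm}_{D_i}(W)=\delta^{\pm}_D(\widehat W)$. Tightness of the contracted vertex gives $|J\cap C|=1$. For dicut out-shores crossing $U$ you uncross, which is the same modular identity the paper uses in \Cref{lemma:tight_dicut_uncross}. Your inequality in fact holds with equality, since no arc joins $W\setminus U$ and $U\setminus W$ when both $W$ and $U$ have no entering arcs.

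Two small points are worth stating explicitly.
\begin{itemize}
\item The hypothesis $J_1\cap C=J_2\cap C$ yields $J\cap A_i=J_i$ for $i=1,2$. This is what the exact count $|J\cap\delta^+_D(W)|=1$ for $W\in\zF$ requires; $|J\cap C|=1$ alone does not give it.
\item In the decomposition part you did not check the digraft condition $|S_i|\le|T_i|$. It follows from the existence of $J_i$: every sink of $D_i$ has $J_i$-degree one, and every source has $J_i$-degree at least one. The membership claim $J_i\in\zJ(D_i,\zF_i)$ does not depend on this anyway.
\end{itemize}
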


\begin{proof}[Proof of \Cref{lemma:non-basic}]
    We prove this by induction on $|V|$. In the first case, if there is some $U\in \zF$ with $1<|U|<|V|-1$, we contract $C=\delta^+(U)$. This reduces to digrafts of the form $(D,S^t)$. In the second case, where the digraft is already $(D,S^t)$, we use \Cref{CO:tight-dicut-polytime} to check whether $(D,S^t)$ has a nontrivial tight dicut. 
    If not, then $(D,S^t)$ is a basic digraft. We use the assumption in the lemma to construct an integral basis $\zB$ for $\lin(\zJ(D,S^t))$ such that $|\zB|=|A|-|\widebar{V^t}|-b(D,S^t)+2$.
    Otherwise, we find a nontrivial tight dicut $C$ and contract $C$. Now, we describe how to construct an integral basis for $\lin(\zJ(D,\zF))$ from the two $C$-contractions, $(D_i=(V_i,A_i),\zF_i),\ i=1,2$. Suppose $(D_1,\zF_1)$ is obtained from contracting $V\setminus U$ into $u_1$; $(D_2,\zF_2)$ is obtained from contracting $U$ into $u_2$. It follows from the induction hypothesis that we can construct a basis $\zB_i$ for $(D_i,\zF_i),\ i=1,2$. Using \Cref{lemma:short_basis-going-up}, we can combine $\zB_1$ and $\zB_2$ into an integral basis $\zB\subseteq \zJ(D,\zF\cup \{U\})$ for $\lin(\zJ(D,\zF\cup \{U\}))$. Since $\delta^+(U)$ is a tight dicut of $(D,\zF)$, $\zJ(D,\zF\cup \{U\})=\zJ(D,\zF)$. Thus, $\zB$ is an integral basis for $\lin(\zJ(D,\zF))$. Moreover,
    \[
    \begin{aligned}
|\zB|=&|\zB_1|+|\zB_2|-|C|\\
=&\big(|A_1|-|\widebar{V_1^t}|-b(D_1,\zF_1)+2\big)+\big(|A_2|-|\widebar{V_2^t}|-b(D_2,\zF_2)+2\big)-|C|\\
=&\big(|A_1|+|A_2|-|C|\big)-\big(|\widebar{V_1^t}|-1+|\widebar{V_2^t}|-1\big)-\big(b(D_1,\zF_1)+b(D_2,\zF_2)\big)+2\\
=&|A|-|\widebar{V^t}|-b(D,\zF)+2,
\end{aligned}
\]
where the first equality follows from \Cref{lemma:short_basis-going-up}; the second equality follows from the induction hypothesis; the fourth equality follows as a consequence of \Cref{de-composition}. Indeed, since $(D,\zF)$ is tight dijoin-covered and $\delta^+(U)$ is a tight dicut, it follows from \Cref{de-composition} ``decomposition" that both $(D_i,\zF_i)$ are tight dijoin-covered. Moreover, it follows from ``decomposition" that every tight dijoin $J\in\zJ(D,\zF)$ can be decomposed into $J_1\in\zJ(D_1,\zF_1)$ and $J_2\in\zJ(D_2,\zF_2)$; it follows from ``composition" that every tight dijoin of $J_i\in\zJ(D_i,\zF_i)$ can be composed with some tight dijoin $J_{3-i}\in\zJ(D_{3-i},\zF_{3-i})$ into a tight dijoin $J\in\zJ(D,\zF)$. This implies $v\in V_i\setminus \{u_i\}$ is tight in $(D_i,\zF_i)$ if and only if it is tight in $(D,\zF)$. In other words, $\widebar{V^t}=\big(\widebar{V_1^t}\setminus u_1\big)\dot\bigcup \big(\widebar{V_2^t}\setminus u_2\big)$. This completes the proof of the formula for the cardinality of $\zB$.
\end{proof}

\section{Proofs of results in \Cref{Sec:preliminary}}\label{sec:proof-prelim}
\begin{proof}[Proof of \Cref{lemma:minimal_dicuts}]
    For a minimal dicut $\delta^-(U)$, suppose that it is not in the form of $\delta^-(v)$, where $v\in T$. If $U\cap S=\emptyset$, then $\delta^-(U)=\cup_{v\in U}\delta^-(v)$. It implies that either $U$ is a cut induced by a single vertex in $T$ or $U$ is not minimal, a contradiction. Thus, $U\cap S\neq \emptyset$. Let $X=U\cap S$, then $N(X)\subseteq U$ since $\delta^+(U)=\emptyset$. Therefore, $\delta^-(X\cup N(X))\subseteq \delta^-(U)$. Suppose $U\cap T\neq N(X)$. Since $D$ is connected but $E\big(X, (U\cap T)\setminus N(X)\big)=\emptyset$, it must satisfy $E\big(S\setminus X, (U\cap T)\setminus N(X)\big)\neq \emptyset$. However, $E\big(S\setminus X, (U\cap T)\setminus N(X)\big)\subseteq \delta^-(U)$, contradicting the minimality of $\delta^-(U)$. Therefore, $U=X\cup N(X)$.
\end{proof}

\begin{proof}[Proof of \Cref{lemma:b-matching}]
    We first prove the ``only if" part. For any $X\subsetneqq S, X\neq \emptyset$, $\delta^-(X\cup N(X))$ is a nonempty dicut since $D$ is connected. Thus, any dijoin $J$ should satisfy $J\cap \delta^-(X\cup N(X))\neq \emptyset$. If $J$ is a perfect $b$-matching, then this is equivalent to $b(X)\leq |N(X)|-1$ because the number of edges of $J$ entering $X\cup N(X)$ is equal to $|N(X)|-b(X)$, which should be at least $1$. Moreover, it follows from $J$ being a perfect $b$-matching that $b(S)=|T|$.

    Next, we prove the ``if" part. Since $b(X)\leq |N(X)|-1\leq |N(X)|, \ \forall X\subsetneqq S, X\neq \emptyset$ and $b(S)=|T|$, by Hall's theorem, there exists a perfect $b$-matching. We now prove that every perfect $b$-matching $J$ where $b$ satisfies \eqref{eq:b-matching} intersects every dicut. Obviously $J$ intersects all the dicuts of the form $\delta^-(v)$, $v\in T$, since $J$ has one edge incident with each $v\in T$. By Lemma \ref{lemma:minimal_dicuts}, any other minimal dicut is of the form $\delta^-(X\cup N(X))$ for some $X\subsetneqq S, X\neq\emptyset$. Since $J$ is a perfect $b$-matching satisfying the first inequality of \eqref{eq:b-matching}, it has $|N(X)|-b(X)\geq 1$ edges entering $X\cup N(X)$. Thus, $J\cap \delta^-(X\cup N(X))\neq\emptyset$, $\forall X\subsetneqq S, X\neq \emptyset$. Thus, $J$ intersects all minimal dicuts, which means $J$ is a $b$-dijoin. 
\end{proof}

\begin{proof}[Proof of \Cref{lemma:tight-dijoin-covered}]
    ``Only if" is clear. We prove the ``if" part. Let $J$ be a tight dijoin. Define $b:S\rightarrow \Z_+$ such that $b(v)=d_J(v)$, i.e. the number of edges in $J$ that are incident to $v$, for each $v\in S$. Due to the tightness, we have $b(v)=1$, $\forall v\in S^t$. By Lemma \ref{lemma:b-matching}, it also satisfies that $b(X)\leq |N(X)|-1, \forall X\subsetneqq S, X\neq \emptyset$ and $b(S)=|T|$. Fix an arc $e=(u,w)$, let $D'=(S,T')$ be obtained from $D$ by deleting $w$. Let $b':S\rightarrow \Z_+$ be such that $b'(u)=b(u)-1$ and $b'(v)=b(v), \forall v\in S\setminus \{u\}$. Note that $b'$ satisfies $b'(X)\leq |N'(X)|, \forall X\subsetneqq S, X\neq \emptyset$ since $|N'(X)|\geq |N(X)|-1$ and $b(X)\leq |N(X)|-1$ for each $X\subsetneqq S, X\neq \emptyset$. Moreover, $b'(S)=|T'|$. By Hall's condition, there exists a $b'$-matching $J'$ in $D'$. Thus, $J:=J'\cup\{e\}$ is a perfect $b$-matching in $D$. Therefore, again by Lemma \ref{lemma:b-matching}, $J$ is a dijoin in $D$. Moreover, since $b(v)=1, \forall v\in S^t$, $J$ is a tight dijoin. Also, $e$ is contained in $J$. This holds for any $e\in A$, and thus $D$ is tight dijoin-covered.
\end{proof}

\begin{proof}[Proof of \Cref{lemma:continuity}]
    Let $b':S\rightarrow \R$ be the degree sequence of $J'$ in $S$ defined by $b'(v):=d_{J'}(v)$, i.e. the number of edges in $J'$ that are incident to $v$, for each $v\in S$. Similarly, define $b''$ to be the degree sequence of $J''$ in $S$. By Lemma \ref{lemma:b-matching}, a perfect $b$-matching is a tight dijoin if and only if $b$ is an integer point in the following polytope:
        \begin{equation*}
    \begin{aligned}
        P:=\Big\{b\in\R_{\geq 1}^{S}: b(X)&\leq |N(X)|-1, ~\forall \emptyset\neq X\subsetneqq S,\\
        b(S)&=|T|,\\
        b(v)&=1,~\forall v\in S^t.
        \Big\}
    \end{aligned}
    \end{equation*}
    Thus, $b',b''\in P$. Moreover, since the $|J'\cap C|\neq |J''\cap C|$, $C$ is not induced by a single node in $T$. By Lemma \ref{lemma:minimal_dicuts}, $C$ is induced by $Z\cup N(Z)$ for some $\emptyset\neq Z\subsetneqq S$. Also, one has $\lambda'=|J'\cap C|=|N(Z)|-b'(Z)$ and $\lambda''=|J''\cap C|=|N(Z)|-b''(Z)$. Therefore, $b'(Z)=|N(Z)|-\lambda'>|N(Z)|-\lambda''=b''(Z)$. Since $|N(Z)|-1$ is non-decreasing submodular, $P$ is a face of a polymatroid, and thus a polymatroid. By averaging there exists $v\in Z$ such that $b'(v)>b''(v)$. It follows from polymatroid exchange property (see e.g. \cite{welsh1976matroid} Ch 18.2), that there is some $u\in V$ with $b'(u)<b''(u)$ such that $b'-\mathbf{1}_v+\mathbf{1}_u\in P$. In particular, $b'(Z)\geq (b'-\mathbf{1}_v+\mathbf{1}_u)(Z)\geq b'(Z)$. Inductively, there is a sequence of integral $\{b_i\}_{i=1}^{k}\in P$ with $b'=b_1$ and $b''=b_k$ such that $b_i(Z) \geq b_{i+1}(Z)\geq b_i(Z)-1$ for any $i=1,...,k-1$. Therefore, for any $\lambda'\leq\lambda\leq\lambda''$, there is some $i\in\{1,...,k\}$ with $b_i(Z)=|N(Z)|-\lambda$. It follows from \Cref{lemma:b-matching} that there is a $b_i$-dijoin $J$, which satisfies $|J\cap C|=|N(Z)|-b_i(Z)=\lambda$.

    This proof naturally gives an algorithm to find such a tight dijoin, given $J'$ and $J''$.
    Alternatively, if $J'$ and $J''$ are not known beforehand, we can solve 
    $\min_{b\in P} (|N(Z)|-b(Z))$ and $\max_{b\in P} (|N(Z)|-b(Z))$ to obtain optimizers $b'$ and $b''$, resp. Then, any value between the minimum and maximum can be attained, and the corresponding tight dijoin can be found.
\end{proof}

\begin{proof}[Proof of \Cref{lemma:tight_dicut_uncross}]
Since $\delta^+(U),\delta^+(W)$ are dicuts, $\delta^+(U\cap W),\delta^+(U\cup W)$ are also dicuts. Thus, for every tight dijoin $J$ of $(D,\zF)$, $|\delta^+_J(U)|,|\delta^+_J(W)|,|\delta^+_J(U\cap W)|,|\delta^+_J(U\cup W)|\geq 1$. Moreover,
    \[
    |\delta^+_J(U\cap W)|+|\delta^+_J(U\cup W)|=|\delta^+_J(U)|+|\delta^+_J(W)|.
    \]
    Therefore, $|\delta^+_J(U)|=|\delta^+_J(W)|=1$ if and only if $|\delta^+_J(U\cap W)|=|\delta^+_J(U\cup W)|=1$. Thus, the tight dijoins of digrafts $\big(D,\zF\cup \{U\}\cup \{W\}\big)$ and $\big(D,\zF\cup \{U\cap W\}\cup \{U\cup W\}\big)$ are the same.
\end{proof}

\section{Proof of \Cref{thm:barrier_tight_equiv}}\label{sec:proof-tight-dicut}
In this section, we will prove that a tight dijoin-covered digraft has a nontrivial tight dicut if and only if it has a nontrivial barrier or a $2$-separation dicut. Our approach is inspired by an analogy for perfect matchings in \cite{Edmonds82}.

Let $(D,S^t)$ be a tight dijoin-covered digraft. We split the proof into two cases depending on whether $|S|=|T|$. We first deal with the easier case where $|S|=|T|$. This implies that $\widebar{S^t}=S$. It follows from \Cref{lemma:b-matching} that $J\subseteq A$ is a tight dijoin if and only if it is a perfect matching. In particular, the condition given in \eqref{eq:b-matching} with $b:=\mathbf{1}_S$ coincides with the condition for which a bipartite graph is \emph{matching-covered}, i.e., every edge is contained in some perfect matching: a bipartite graph is matching-covered if and only if
\begin{equation}\label{eq:matching-covered}
    |X|\leq |N(X)|-1,\quad \forall \emptyset\neq X\subseteq S: |X|<|S|-1.
\end{equation}
The characterization when a matching-covered bipartite graph is a brace is well-known; see, e.g., Lemma 1.4 in 
\cite{lovasz87}.
\begin{lemma}[\cite{lovasz87}]
    A matching-covered bipartite graph is a brace if and only if 
    \begin{equation}\label{eq:brace}
        |X|\leq |N(X)|-2,\quad \forall \emptyset\neq X\subseteq S: |X|<|S|-1.
    \end{equation}
\end{lemma}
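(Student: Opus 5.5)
We need to prove Lovász's lemma: a matching-covered bipartite graph (with $|S|=|T|$) is a brace, i.e. has no nontrivial tight cut, if and only if $|X|\le |N(X)|-2$ for every $\emptyset\neq X\subseteq S$ with $|X|<|S|-1$. In our setting, tight cuts are tight dicuts and perfect matchings are tight dijoins. The plan is to relate tight cuts directly to sets $X\subseteq S$ with $|N(X)|=|X|+1$.

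\emph{Necessity.} Suppose some $\emptyset\neq X\subseteq S$ with $|X|<|S|-1$ has $|N(X)|\le|X|+1$. Matching-coveredness \eqref{eq:matching-covered} forces $|N(X)|=|X|+1$. Set $U:=X\cup N(X)$, so that $C:=\delta^-(U)$ is a dicut, since no arc leaves $U$ (sources in $X$ send arcs only into $N(X)$). For any perfect matching $J$, all $|X|$ sources of $X$ are matched into $N(X)$. Hence exactly $|N(X)|-|X|=1$ sink of $N(X)$ is matched from outside, so $|J\cap C|=1$ and $C$ is tight. We then check nontriviality of the shores. We have $|U|=2|X|+1\ge 3>1$. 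For the complement, $|V\setminus U|=2|S|-2|X|-1\ge 3$ because $|X|\le|S|-2$. So $C$ is a nontrivial tight cut and the graph is not a brace.

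\emph{Sufficiency.} Suppose there is a nontrivial tight cut $\delta(W)$. Every perfect matching $J$ meets it exactly once, so by parity $|W\cap S|$ and $|W\cap T|$ differ by exactly one. Replacing $W$ by its complement if needed, we may assume $|W\cap T|=|W\cap S|+1$. Let $X:=W\cap S$. We first show that no arc goes from $X$ to $T\setminus W$. Such an arc $e$ would lie in some perfect matching $J$ by matching-coveredness. Then $J$ has one arc leaving $W$ through $e$, and since $W\cap T$ has one more vertex than $X$, at least two sinks of $W$ must be matched from $S\setminus W$. This gives $|J\cap\delta(W)|\ge 3$, contradicting tightness. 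Hence $N(X)\subseteq W\cap T$, and so $|N(X)|\le|X|+1$.

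The step I expect to need the most care is showing that this $X$ satisfies the range constraints, namely $X\neq\emptyset$ and $|X|<|S|-1$. If $X=\emptyset$, then $W\cap T$ consists of a single sink and $W$ is trivial. If instead $|X|\ge|S|-1$, then counting shows $V\setminus W$ is a single source, again trivial. (The case $|X|=|S|$ is impossible, since then $|W\cap T|=|S|+1>|T|$.) Both cases contradict nontriviality, so $X$ violates \eqref{eq:brace}.

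Combining the two directions gives the lemma. Alternatively, one may simply cite Lemma 1.4 of \cite{lovasz87}, since in this case the objects are exactly perfect matchings.
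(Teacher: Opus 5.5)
Your proof is correct and complete. The paper gives no proof of this lemma. It cites Lemma 1.4 of \cite{lovasz87} and uses it only inside \Cref{lemma:brace_characterization}, so your argument is a self-contained replacement for that citation rather than a reproduction of anything in the paper.

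The two halves line up with how the paper uses the result:
\begin{itemize}
\item The direction the paper actually relies on is your sufficiency part. There, "not a brace" gives a set $X$ with $0<|X|<|S|-1$ and $|N(X)|=|X|+1$.
\item Your necessity part is essentially the construction the paper carries out in \Cref{lemma:brace_characterization}. Your dicut $\delta^-(X\cup N(X))$ equals $\delta^+(Y)$ with $Y=V\setminus(X\cup N(X))$, which the paper exhibits as a barrier dicut with barrier $N(X)$. The only difference is that you verify tightness by counting which sinks of $N(X)$ are matched from outside $X$, while the paper counts the components of $V\setminus N(X)$.
\end{itemize}

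Your sufficiency argument has a further benefit. It starts from an arbitrary nontrivial tight cut $\delta(W)$ and shows that every arc of it enters the shore with the extra sink. So every tight cut of a matching-covered bipartite graph is automatically a dicut. This justifies an identification the paper leaves implicit: Lov\'asz's braces (no nontrivial tight cuts) are the same as the paper's braces (no nontrivial tight dicuts).
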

This allows us to prove the following:
\begin{lemma}\label{lemma:brace_characterization}
    Let $(D,S^t)$ be a tight dijoin-covered digraft with $|S|=|T|$. Then, $(D,S^t)$ is a brace if and only if it does not contain any nontrivial barrier dicuts.
\end{lemma}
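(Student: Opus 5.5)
The plan is to prove the equivalence by combining the brace characterization \eqref{eq:brace} of \cite{lovasz87} with the definition of barrier dicuts. Throughout, tight dijoins of $(D,S^t)$ are exactly the perfect matchings of the underlying bipartite graph, and $(D,S^t)$ is matching-covered. So ``brace'' in our sense, meaning no nontrivial tight dicut, coincides with the classical notion. The reason is that a dicut $\delta^+(U)$ is met exactly once by every perfect matching if and only if the cut $\delta(U)$ is a tight cut of the matching-covered graph: the arcs of $\delta^-(U)$ are absent, so the two counts agree.

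For the ``only if'' direction, a nontrivial barrier dicut is by construction a nontrivial tight dicut, as shown in the barrier paragraph. Hence a brace has none. This direction needs no further work.

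For the ``if'' direction, I argue by contrapositive. Suppose $(D,S^t)$ is not a brace. By \eqref{eq:brace}, there is $\emptyset\neq X\subseteq S$ with $|X|<|S|-1$ and $|N(X)|\le |X|+1$. Matching-coveredness \eqref{eq:matching-covered} gives $|N(X)|\ge |X|+1$, so $|N(X)|=|X|+1$.

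Now pass to the complementary side. Set $Y:=T\setminus N(X)$. Then $N(Y)\subseteq S\setminus X$, and $|S\setminus X|=|S|-|X|=|T|-|N(X)|+1=|Y|+1$. I take the candidate barrier to be $Z:=N(X)\subseteq T$ (or symmetrically $S\setminus X$ on the source side, but $T$-barriers suffice since every sink is tight). I want to show $\sigma(V\setminus Z)\ge |Z|$, and then that some resulting component is nontrivial.

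The main obstacle is that deleting $N(X)$ only guarantees that $X$ is separated from $(S\setminus X)\cup Y$. It does not immediately give $|N(X)|$ components. Hence I would instead aim directly at a tight nontrivial cut and then locate a barrier inside it. The cut $\delta^+(X\cup N(X))$ is tight: every perfect matching uses exactly $|N(X)|-|X|=1$ arc entering $X\cup N(X)$. It is nontrivial because $|X|\ge1$ and $|S\setminus X|\ge 2$.

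To convert this into a barrier, I would use the standard bipartite fact that a tight cut in a matching-covered bipartite graph yields a Hall-tight set. I would then choose $X$ minimal, or maximal, among sets with $|N(X)|=|X|+1$ and $|X|<|S|-1$, and show that for such an extremal $X$ the set $N(X)$ is a barrier. The intended argument is via the Tutte/Frank--Gyárfás condition of \Cref{thm:tight_dijoin}, applied to $(D,S^t)$ with the tightness of $\delta^+(X\cup N(X))$ forcing equality $\sigma(V\setminus Z)=|Z|$ for a suitable $Z\subseteq T$: one can mimic the argument of \Cref{prop:new_tight}.

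More concretely, I would consider the digraft obtained by imposing that $\delta^+(X\cup N(X))$ receives at least two dijoin arcs. This is infeasible, so \Cref{thm:degree_SCO} produces a violating set. From that set I would extract a barrier $Z$ whose deletion leaves a component containing $X\cup$ something of size at least $2$, which gives a nontrivial barrier dicut. I expect this extraction, checking that the violating set lies in $T$ and that the relevant component is nontrivial using $|X|\ge1$ and $|S\setminus X|\ge2$, to be the delicate step.
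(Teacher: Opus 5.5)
Your ``only if'' direction is fine. So is the reduction to some $\emptyset\neq X\subseteq S$ with $|X|<|S|-1$ and $|N(X)|=|X|+1$, and $Z:=N(X)$ is exactly the right candidate barrier. The gap is what comes next. You assert that deleting $N(X)$ ``does not immediately give $|N(X)|$ components'' and switch to an unfinished plan, but the assertion is false. $D$ is bipartite, and every $x\in X$ is a source whose neighbours all lie in $N(X)$. So in $D-N(X)$ each vertex of $X$ is isolated, i.e.\ its own component, giving $|X|$ singleton components. The set $W:=V\setminus(X\cup N(X))\supseteq S\setminus X$ is nonempty, so it contributes at least one more component. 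Hence $\sigma(V\setminus N(X))\ge |X|+1=|N(X)|$.

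Since $(D,S^t)$ has a tight dijoin, \Cref{thm:tight_dijoin} gives $\sigma(V\setminus N(X))\le |N(X)|$, so equality holds. Thus $N(X)\subseteq T$ is a barrier and $W$ is a single component. Moreover $|W|=2(|S|-|X|)-1\ge 3$ and $|V\setminus W|=2|X|+1\ge 3$. So the barrier dicut $\delta^+(W)=\delta^-(X\cup N(X))$ is nontrivial, which finishes the proof. The tight nontrivial cut you identified is precisely this barrier dicut. Note that you wrote $\delta^+(X\cup N(X))$, which is empty; you mean $\delta^-(X\cup N(X))$.

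Your fallback route does not close the gap as written. Choosing an extremal $X$ is unnecessary. ``Imposing that the cut receives at least two dijoin arcs'' is not a vertex-degree constraint, so \Cref{thm:degree_SCO} does not apply to it directly. And the extraction step you flag as delicate is never carried out. The missing idea is just the observation above: the vertices of $X$ become singleton components once $N(X)$ is deleted.
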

\begin{proof}
    A nontrivial barrier dicut is a nontrivial tight dicut, thus the ``only if" is trivial. We prove the ``if" direction in the following. Suppose $(D,S^t)$ is not a brace. We prove that there is a nontrivial barrier dicut. Since $(D,S^t)$ satisfies \eqref{eq:matching-covered} but violates \eqref{eq:brace} for some $\emptyset\neq X\subseteq S$ with $|X|<|S|-1$, we have $|X|=|N(X)|-1$. Observe that $N(X)$ forms a nontrivial barrier since after deleting $N(X)$, the nodes in $X$ become singleton components. Together with the components in $\emptyset\neq Y:=V\setminus N(X)\setminus X$, we have $|N(X)|\ge \sigma(V\setminus N(X))\ge |X|+1=|N(X)|$. This implies that $N(X)$ is a barrier and $Y$ is connected. Moreover, $|Y|\ge |S|-|X|>1$, which means that $Y$ induces a nontrivial barrier dicut.
\end{proof}

From now on, we assume $|S|<|T|$. Denote by $\mathscr{K}$ the set of dicuts of $D$. The convex hull of tight dijoins of a digraft $(D,S^t)$ is given as follows \cite{Lucchesi78}:
\begin{equation*}
    \dij(D,S^t):=\left\{x\in \R_{\geq 0}^A: \begin{array}{cc}
   x(C)\geq 1 & \forall C\in \mathscr{K};\\
    x(\delta(v))=1 & \forall v\in V^t
\end{array}\right\}.
\end{equation*}
A dicut $C_0$ is tight if and only if the following linear program (LP) has optimal value $1$: 
\[\max \{\mathbf{1}_{C_0}^\top x: x \in \dij(D,S^t)\}.\]
Its dual can be written as 
\begin{equation}\label{eq:dual_tight}
    \min \left\{\sum_{v\in V^t} y_v - \sum_{C\in \mathscr{K}} z_C: \begin{array}{ccc}
        & y_u+y_v-\sum_{C\in \mathscr{K}:\ e\in C} z_C \geq 1 & \forall e=(u,v) \in C_0, \\
        & y_u+y_v-\sum_{C\in \mathscr{K}:\ e\in C} z_C \geq 0 & \forall e=(u,v) \in A\setminus C_0, \\
        & z_C \geq 0 & \forall C\in \mathscr{K}
    \end{array} \right\}.
\end{equation}
\begin{lemma}\label{lemma:nonneg}
    The optimum value of $\eqref{eq:dual_tight}$ does not change if we additionally impose $y_v\geq 0$ for all $v\in V^t$.
\end{lemma}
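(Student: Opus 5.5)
Take an optimal dual solution $(y,z)$ of \eqref{eq:dual_tight} in which some $y_v<0$ for a node $v\in V^t$. The goal is to build another feasible solution with objective value no larger and with fewer negative coordinates, or with smaller total negativity. The natural tool is the trivial dicut $\delta(v)$. Every $v\in V^t$ is either a sink or a source of $D$, so $\delta(v)$ is a dicut: it equals $\delta^-(v)$ for a sink and $\delta^+(v)$ for a source. In particular $\delta(v)\in\mathscr{K}$, and we may shift weight between $y_v$ and the dicut variable $z_{\delta(v)}$.

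The move is as follows. Set $\epsilon:=-y_v>0$, and define $y'_v:=0$, $z'_{\delta(v)}:=z_{\delta(v)}+\epsilon$, leaving all other coordinates unchanged. Consider any arc $e=(u,w)$.
\begin{itemize}
\item If $e\in\delta(v)$, the left-hand side of its constraint changes by $+\epsilon$ from $y_v$ and by $-\epsilon$ from the extra $z$-term. It is therefore unchanged.
\item If $e\notin\delta(v)$, neither $y_v$ nor $z_{\delta(v)}$ appears in its constraint, so the constraint is unchanged.
\end{itemize}
Hence feasibility is preserved, including $z'\ge 0$. The objective changes by $+\epsilon$ from $y_v$ and by $-\epsilon$ from $-z_{\delta(v)}$, so it is exactly unchanged. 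Repeating this for every $v\in V^t$ with $y_v<0$ produces an optimal solution with $y\ge 0$. The constrained minimum is at least the unconstrained one trivially, and this construction gives the reverse inequality, so the two optimum values coincide.

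The one point that needs care is that $\delta(v)$ genuinely belongs to $\mathscr{K}$, that is, that it is a dicut in the paper's sense. This needs $\{v\}$ to be a nonempty proper subset with all arcs oriented one way. That holds because $D$ is bipartite with sources $S$ and sinks $T$, and $|V|\ge 2$. Since the LP is over a polytope and the dual is feasible and bounded, optimal solutions exist, and the argument applies to them directly.
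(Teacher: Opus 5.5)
Your proof is correct and is the same as the paper's: move the amount $-y_v$ from $y_v$ onto the trivial dicut variable $z_{\delta(v)}$, which leaves every arc constraint and the objective unchanged. The only difference is that the paper applies this move to an arbitrary feasible solution rather than an optimal one, so it does not need your remark about existence of optima; that remark is harmless here.
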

\begin{proof}
    For any feasible solution $(y,z)$ of $\eqref{eq:dual_tight}$, if $y_v<0$ for some $v\in V^t$, let $(y',z')$ be obtained from $(y,z)$ by $y'=y-y_v\mathbf{1}_v$ and $z'=z-y_v\mathbf{1}_{\delta(v)}$. Such $(y',z')$ is still feasible to $\eqref{eq:dual_tight}$ and has the same objective value as $(y,z)$. Therefore, we may also impose the constraint $y_v\geq 0$ on $\eqref{eq:dual_tight}$ without changing the optimum.
\end{proof}

We extend the variables $y\in \R_{\ge 0}^{V^t}$ to $y\in \R_{\ge 0}^{V}$ by defining $y_v:=0\ \forall v\in V\setminus V^t$. Let $w_e:=y_u+y_v$ for all arcs $e=(u,v) \in A$.
We call such a $w\in \R_{\ge 0}^A$ \emph{node-induced} by $y$. Then, $(y,z)$ is feasible for $\eqref{eq:dual_tight}$ if and only if $t:=z+\mathbf{1}_{C_0} \in \R_{\ge 0}^\mathscr{K}$ is feasible for the following LP:
\begin{equation}\label{eq:dual_packing}
    \max\left\{\sum_{C\in \mathscr{K}} t_C: \begin{array}{cc}
        \sum_{C\in \mathscr{K}:\ e\in C} t_C\leq w_e & \forall e\in A \\
        t_C \geq 0 & \forall C\in \mathscr{K}
    \end{array} \right\}.\tag{\textsc{D($w$)}}
\end{equation}
Note that \eqref{eq:dual_packing} is an LP that solves the maximum \emph{fractional packing} of dicuts under weight $w$. It is the dual program of the following dicut covering LP, which solves the minimum weight of a dijoin:
\begin{equation}\label{eq:primal_covering}
    \min\left\{w^\top x: 
    \begin{array}{cc}
   x(C)\geq 1 & \forall C\in \mathscr{K} \\
   x_e \geq 0  & \forall e \in A
\end{array}\right\}.\tag{\textsc{P($w$)}}
\end{equation}

\begin{lemma}\label{lemma:strong-dual}
    Let $w$ be node-induced by $y\in \R_{\ge 0}^{V}$ where $y_v=0\ \forall v\in V\setminus V^t$. Then, for any optimal solution $t^*$ of \eqref{eq:dual_packing}, the following holds: $(a)$ $\sum_{C\in \mathscr{K}} t^*_C=\sum_{v\in V^t} y_v$; $(b)$ if $t^*_C>0$ then $C$ is a tight dicut; $(c)$ $\sum_{C\in \mathscr{K}:\ e\in C}t^*_C=w_e$ for all $e\in A$.
\end{lemma}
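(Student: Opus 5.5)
The plan is to obtain all three parts from LP duality between \eqref{eq:primal_covering} and \eqref{eq:dual_packing}, evaluated at a tight dijoin. First I will pin down the optimal value. By the Lucchesi--Younger theorem, \eqref{eq:primal_covering} is integral for nonnegative $w$, so its optimum $\tau_w$ is the minimum $w$-weight of a dijoin. That minimum equals $\sum_{v\in V^t} y_v$, for two reasons.

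\emph{Lower bound.} Every dijoin $J$ meets each dicut $\delta(v)$, $v\in V$. Hence
\[
w(J)=\sum_{v} y_v\, d_J(v)\ \ge\ \sum_{v\in V^t} y_v,
\]
using $y\ge 0$ and $y_v=0$ outside $V^t$.

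\emph{Upper bound.} The digraft is tight dijoin-covered, so it has a tight dijoin $J_0$, i.e.\ $d_{J_0}(v)=1$ for all $v\in V^t$. Then $w(J_0)=\sum_{v\in V^t} y_v$.

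So $\tau_w=\sum_{v\in V^t}y_v$ and $J_0$ is optimal for \eqref{eq:primal_covering}. Strong duality then gives (a): every optimal $t^*$ of \eqref{eq:dual_packing} has value $\sum_{v\in V^t} y_v$.

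For (b) and (c) I will apply complementary slackness to the pair $(\1_J, t^*)$, where $J$ is an arbitrary tight dijoin. The computation above shows that every tight dijoin has $w(J)=\sum_{v\in V^t}y_v$, so each one is primal optimal.
\begin{itemize}
\item Dual slackness gives: if $t^*_C>0$ then $|J\cap C|=1$. Since this holds for every tight dijoin $J$, the dicut $C$ is tight, which is (b).
\item Primal slackness gives: if $e\in J$ then $\sum_{C\ni e} t^*_C = w_e$. Because the digraft is tight dijoin-covered, every arc $e\in A$ lies in some tight dijoin $J$, and this gives (c) for all $e$.
\end{itemize}

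The main obstacle is the optimal-value step, which needs integrality of the dicut covering LP (Lucchesi--Younger / Edmonds--Giles). The covering polyhedron $\{x\ge 0: x(C)\ge 1\ \forall C\in\mathscr K\}$ is integral, so \eqref{eq:primal_covering} has an integral optimum, and an integral optimum for $w\ge 0$ can be taken to be a $0/1$ dijoin. I will cite this rather than reprove it. Alternatively, the argument can avoid integrality: any feasible fractional $x$ satisfies $x(\delta(v))\ge 1$ for each $v\in V$, since $\delta(v)$ is a dicut for every node of a bipartite digraph. Hence $w^\top x=\sum_v y_v x(\delta(v))\ge \sum_{v\in V^t} y_v$ directly, and integrality is not needed at all. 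I will present this simpler route.
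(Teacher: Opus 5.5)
Your proof is correct and follows the paper's argument. The lower bound together with a tight dijoin gives the optimal value $\sum_{v\in V^t}y_v$; complementary slackness against an arbitrary tight dijoin (each of which is primal optimal) gives (b); and tight dijoin-coverage gives (c). Your fractional bound $w^\top x=\sum_v y_v\,x(\delta(v))\ge\sum_{v\in V^t}y_v$ is just weak duality with the packing $t_{\delta(v)}=y_v$ that the paper exhibits in its proof of (b), and it correctly removes the need to invoke Lucchesi--Younger integrality.
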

\begin{proof}
    Notice that \eqref{eq:primal_covering} exactly models the minimum weight of a dijoin under weight $w\in \R^A$. Since each dijoin uses at least one arc incident with each $v \in V^t$, the total weight of a dijoin is at least $\sum_{v\in V^t}y_v$. On the other hand, every tight dijoin $J$ has exactly one arc incident with each vertex in $V^t$, so for $x=\mathbf{1}_J$, the objective value of \eqref{eq:primal_covering} is exactly $\sum_{v\in V^t}y_v$.
    Therefore, $(a)$ follows by strong duality.
    
    To prove $(b)$, observe that $t_{\delta(v)}:=y_v$ for each $v \in V^t$ is a fractional packing of dicuts of value $\sum_{v\in V^t}y_v$, which is optimal for \eqref{eq:dual_packing}. By complementary slackness, for a dijoin $J$, $x=\mathbf{1}_J$ is optimal to \eqref{eq:primal_covering} if and only if it is tight. Therefore, $(b)$ holds by applying complementary slackness again.
    
    To see $(c)$, by assumption $(D,S^t)$ is tight dijoin-covered. Thus, $(c)$ follows from complementary slackness.
\end{proof}

\begin{lemma}\label{lemma:opt_node_induce}
    For every tight dicut $C_0$, there exists a node-induced weight $w\in \R_{\ge 0}^A$ with an optimal solution $t^*$ of \eqref{eq:dual_packing} such that $C_0\in \supp(t^*)$.
\end{lemma}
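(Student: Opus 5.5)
Since $C_0$ is tight, the LP $\max\{\mathbf{1}_{C_0}^\top x: x\in \dij(D,S^t)\}$ has optimal value $1$. I will take an optimal dual solution $(y,z)$ of \eqref{eq:dual_tight} and turn it into the desired weight and packing.

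First, by \Cref{lemma:nonneg} we may assume $y_v\ge 0$ for all $v\in V^t$. Extend $y$ by zero to $V\setminus V^t$, and let $w$ be node-induced by $y$. Set $t:=z+\mathbf{1}_{C_0}$. As observed after \Cref{lemma:nonneg}, $t$ is feasible for \eqref{eq:dual_packing}:
\begin{itemize}
\item for $e\in C_0$, the constraint $\sum_{C\ni e} z_C\le w_e-1$ gives $\sum_{C\ni e}t_C\le w_e$;
\item for $e\notin C_0$, the constraint $\sum_{C\ni e} z_C\le w_e$ gives $\sum_{C\ni e}t_C\le w_e$ directly.
\end{itemize}
Moreover, $t_{C_0}=z_{C_0}+1\ge 1>0$, so $C_0\in\supp(t)$.

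It remains to show that $t$ is optimal for \eqref{eq:dual_packing}. Its objective value is
\[
\sum_C t_C=\sum_C z_C+1.
\]
By strong duality, the optimal value of \eqref{eq:dual_tight} equals $1$, so $\sum_{v\in V^t}y_v-\sum_C z_C=1$. Hence
\[
\sum_C t_C=\sum_{v\in V^t}y_v.
\]
By \Cref{lemma:strong-dual}(a), $\sum_{v\in V^t}y_v$ is exactly the optimal value of \eqref{eq:dual_packing}; the easy direction also follows from weak duality against $x=\mathbf{1}_J$ for any tight dijoin $J$. So $t$ is optimal and $t^*:=t$ works.

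The only delicate point is applying strong duality to the primal LP, which needs the LP to be feasible with a finite optimum. The primal is nonempty because $(D,S^t)$ has a tight dijoin, which lies in $\dij(D,S^t)$. It is bounded because $x(\delta(v))=1$ for every $v\in T$ and every arc is incident to some sink, so $0\le x\le 1$. Thus optimal primal and dual solutions exist with equal value $1$, and the argument goes through.
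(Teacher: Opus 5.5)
Your proposal is correct and follows essentially the same route as the paper: take an optimal solution $(y,z)$ of \eqref{eq:dual_tight} of value $1$, make $y\ge 0$ via \Cref{lemma:nonneg}, set $t^*:=z+\mathbf{1}_{C_0}$, and conclude optimality from \Cref{lemma:strong-dual}~$(a)$. Your extra checks (feasibility arc by arc, $t^*_{C_0}\ge 1$, the weak-duality bound against a tight dijoin, and boundedness of the primal via the sink constraints) fill in details the paper leaves implicit.
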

\begin{proof}
    Since $C_0$ is a tight dicut, there exists an optimal solution $(y^*,z^*)$ to $\eqref{eq:dual_tight}$ of value $1$. In other words, $\sum_{v\in V^t} y^*_v=\sum_{C\in\mathscr{K}}z^*_C+1$. By \Cref{lemma:nonneg}, we may assume $y^*\geq 0$. Let $w\in \R_{\ge 0}^A$ be node-induced by $y^*$ and let $t^*:=z^*+\mathbf{1}_{C_0}$. 
    
    We claim that $t^*$ is an optimal solution to \eqref{eq:dual_packing}. First, $t^*$ is feasible since $(y^*, z^*)$ is feasible for $\eqref{eq:dual_tight}$. Moreover, $\sum_{C\in \mathscr{K}}t^*_C = \sum_{C\in \mathscr{K}}z^*_C+1 = \sum_{v\in V^t}y^*_v$. It follows from \Cref{lemma:strong-dual} $(a)$ that $t^*$ is optimal to \eqref{eq:dual_packing}.
\end{proof}

We are ready to prove \Cref{thm:barrier_tight_equiv}.

\begin{proof}[Proof of \Cref{thm:barrier_tight_equiv}]
The case where $|S|=|T|$ is proved in \Cref{lemma:brace_characterization}, so we may assume $|S|<|T|$. One direction of the theorem is immediate: $2$-separation and barrier dicuts are always tight. For the rest of this section, assume for the sake of contradiction that there exists a digraft $(D,S^t)$ with a nontrivial tight dicut, but no nontrivial barrier or $2$-separation dicuts.

Let $C_0$ be a nontrivial tight dicut. Let $w\in \R_{\ge 0}^A$ be node-induced by $y^*\in\R_{\ge 0}^V$ where $y^*_v=0\ \forall v\in V\setminus V^t$, and let $t^*$ be an optimal solution of \eqref{eq:dual_packing} such that $C_0\in \supp(t^*)$, as given in \Cref{lemma:opt_node_induce}. By scaling, we may assume that $w\in \Z_{\ge 0}^A$ and the optimal packing of dicuts $t^*$ under weight $w$ is integral. Hence we think of $t^*$ as a collection of dicuts (with multiplicity) that includes $C_0$.
Among all $t^*$ optimal to \eqref{eq:dual_packing} and $\supp(t^*)$ contains at least one nontrivial dicut, select one that minimizes $\Phi(t^*):=\sum_{U: \delta^+(U)\in \mathscr{K}} t^*_{\delta^+(U)} |U||V\setminus U|$.

\begin{CL}\label{claim:1}
The family $\{U: \delta^+(U)\in \supp (t^*) \}$ is cross-free.    
\end{CL}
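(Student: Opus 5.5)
This is a standard uncrossing argument driven by the potential $\Phi$. Suppose for contradiction that $\supp(t^*)$ contains two dicuts $\delta^+(U)$ and $\delta^+(W)$ with $U,W$ crossing. Since both are dicuts ($\delta^-(U)=\delta^-(W)=\emptyset$), the sets $U\cap W$ and $U\cup W$ are nonempty proper subsets, neither has an incoming arc, and so $\delta^+(U\cap W)$ and $\delta^+(U\cup W)$ are dicuts. Define $t'$ from $t^*$ by decreasing $t^*_{\delta^+(U)}$ and $t^*_{\delta^+(W)}$ by $\varepsilon:=\min\{t^*_{\delta^+(U)},t^*_{\delta^+(W)}\}>0$, and increasing $t^*_{\delta^+(U\cap W)}$ and $t^*_{\delta^+(U\cup W)}$ by $\varepsilon$. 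With integral scaling, $\varepsilon$ is a positive integer, so $t'$ stays integral.

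First I check feasibility and optimality of $t'$ for \eqref{eq:dual_packing}. For dicuts the modular identity holds arc by arc: $\mathbf{1}_{\delta^+(U\cap W)}+\mathbf{1}_{\delta^+(U\cup W)}\le \mathbf{1}_{\delta^+(U)}+\mathbf{1}_{\delta^+(W)}$. This is because an arc counted on the left but not on the right would have to go between $U\setminus W$ and $W\setminus U$, and such an arc would enter $U$ or $W$, which is impossible. Hence every arc load $\sum_{C\ni e}t'_C$ is at most the corresponding load for $t^*$, which is at most $w_e$. The total value $\sum_C t'_C=\sum_C t^*_C$ is unchanged, so $t'$ is optimal.

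Second, I verify the side condition that $\supp(t')$ still contains a nontrivial dicut. One of $\delta^+(U),\delta^+(W)$ keeps positive weight unless $\varepsilon$ equals both weights. If both are removed and both were nontrivial, I argue that $U\cap W$ or $U\cup W$ is nontrivial. Suppose instead both are trivial, so $|U\cap W|=1$ and $|U\cup W|=|V|-1$. I would handle this case by checking that crossing sets with $|U|,|W|\ge 2$ cannot produce two trivial dicuts without contradicting that $t'$ attains the same value; alternatively I would use \Cref{lemma:strong-dual}(c), under which the arcs are fully saturated. I expect this degenerate case to be the main obstacle. A fallback is to note that if another nontrivial dicut in $\supp(t^*)$ is untouched, we are done. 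Otherwise I would choose the crossing pair so that at least one remains, or rerun the minimization with $\Phi$ over the nonempty set of admissible optima. Since $\Phi$ is minimized over a compact (bounded LP optimal face) set, the minimum exists.

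Finally, I compare potentials. With $f(X):=|X|\,|V\setminus X|$ and $n=|V|$, we have $f(X)=|X|(n-|X|)$, which is strictly concave in $|X|$. The values satisfy $|U\cap W|+|U\cup W|=|U|+|W|$, and since $U,W$ cross, $|U\cap W|<\min(|U|,|W|)$ and $|U\cup W|>\max(|U|,|W|)$. Strict concavity then gives $f(U\cap W)+f(U\cup W)<f(U)+f(W)$. Equivalently, the difference equals $2|U\setminus W|\,|W\setminus U|>0$. Thus $\Phi(t')=\Phi(t^*)-\varepsilon\cdot 2|U\setminus W||W\setminus U|<\Phi(t^*)$, contradicting the minimality of $\Phi(t^*)$. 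Therefore the family $\{U:\delta^+(U)\in\supp(t^*)\}$ is cross-free.
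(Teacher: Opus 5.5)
Your uncrossing move, the feasibility and optimality of $t'$, and the potential computation $\Phi(t^*)-\Phi(t')=2\varepsilon\,|U\setminus W|\,|W\setminus U|>0$ are correct and agree with the paper. A minor point: the arcs between $U\setminus W$ and $W\setminus U$ are the ones counted on the right but not on the left, and their absence gives equality.

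The genuine gap is the degenerate case you flag and leave open. There $U\cap W=\{u\}$, $U\cup W=V\setminus\{v\}$, and $\delta^+(U),\delta^+(W)$ are the only nontrivial dicuts in $\supp(t^*)$, with equal weight. Then $t'$ is supported on trivial dicuts only and is no longer admissible, and none of your remedies closes this.

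Nothing about the value of $t'$, or the saturation in \Cref{lemma:strong-dual}(c), is violated, and the configuration really occurs. Take a $2$-separation $\{u,v\}$ with sides $U_1,U_2$ and $y^*=\mathbf{1}_u+\mathbf{1}_v$. Then $\mathbf{1}_{\delta^+(U_1\cup\{u\})}+\mathbf{1}_{\delta^+(U_2\cup\{u\})}$ is an optimal integral packing. One easily builds small examples where every optimal integral packing containing a nontrivial dicut has this crossing pair in its support, for every scaling of $w$. So the $\Phi$-minimizer has crossing support.

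Your compactness fallback also fails, because the condition that $\supp(t)$ contains a nontrivial dicut is not closed. In such examples one can shift weight continuously from $\delta^+(U),\delta^+(W)$ onto $\delta^+(\{u\})$ and $\delta^+(V\setminus\{v\})$, strictly decreasing $\Phi$, so the infimum over admissible packings is not attained. Hence the claim is false without additional hypotheses, and your argument uses none.

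The missing idea is the standing assumption in the proof of \Cref{thm:barrier_tight_equiv}: $|S|<|T|$, and $(D,S^t)$ has no nontrivial barrier dicut and no $2$-separation dicut. The paper uses it to show that at least one of $\delta^+(U\cap W)$, $\delta^+(U\cup W)$ is nontrivial. The argument in the degenerate case runs as follows.
\begin{enumerate}
\item There are no arcs between $U\setminus W$ and $W\setminus U$, so $\{u,v\}$ is a $2$-cut. Moreover $u$ is a source and $v$ a sink, since $\delta^-(\{u\})=\delta^+(\{v\})=\emptyset$.
\item Both $\delta^+(\{u\})$ and $\delta^+(V\setminus\{v\})$ lie in the support of the optimal packing $t'$. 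By \Cref{lemma:strong-dual}(b) they are tight, so $u$ is a tight node.
\item If $u\notin S^t$, then \Cref{prop:new_tight} gives a nontrivial barrier, which for $|S|<|T|$ yields a nontrivial barrier dicut. Hence $u\in S^t$.
\item Therefore $\{u,v\}$ is a $2$-separation, and $\delta^+(U),\delta^+(W)$ are nontrivial $2$-separation dicuts, a contradiction.
\end{enumerate}
With this in place $t'$ is admissible, and your $\Phi$ computation finishes the proof.
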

\begin{cproof}
    For contradiction, assume $\supp(t^*)$ contains two crossing sets $U$ and $W$. Obtain $t'$ from $t^*$ by decreasing $t^*_{\delta^+(U)}, t^*_{\delta^+(W)}$ by one and increasing $t^*_{\delta^+(U\cup W)}, t^*_{\delta^+(U\cap W)}$ by one. 
    Since $\mathbf{1}_{\delta^+(U\cup W)}+\mathbf{1}_{\delta^+(U\cap W)}=\mathbf{1}_{\delta^+(U)}+\mathbf{1}_{\delta^+(W)}$, $t'$ is feasible to \eqref{eq:dual_packing}. Moreover, the objective value does not change, which makes $t'$ optimal. 
    
    We claim that $t'$ contains at least one nontrivial dicut. It suffices to show that at least one of $\delta^+(U\cup W)$ and $\delta^+(U\cap W)$ is nontrivial. Suppose for the sake of contradiction that there exists vertices $u,v \in V$, such that $\{u\} = U\cap W$ and $V\setminus \{v\} = U\cup W$. There are no arcs between $U\setminus W$ and $W\setminus U$ since $\delta^+(U)$ and $\delta^+(W)$ are both dicuts, which means $\{u,v\}$ forms a $2$-cut. Moreover, both $u,v$ are tight, i.e., $u,v\in \widebar{V^t}$. This is because dicuts $\delta^+(U\cup W)$ and $\delta^+(U\cap W)$ are in the support of an optimal packing $t'$, and by \Cref{lemma:strong-dual} $(b)$, both of them are tight. We further claim that $u,v\in V^t$. Suppose not, which implies $u\in \widebar{S^t}\setminus S^t$. It follows from \Cref{prop:new_tight} that there is a nontrivial barrier. By the assumption that $|S|<|T|$, it leads to a nontrivial barrier dicut, a contradiction. Therefore, $\{u,v\}$ forms a $2$-separation dicut, a contradiction.

    Finally, notice that $\Phi(t')<\Phi(t^*)$, contradicting the our choice of $t^*$.
\end{cproof}


Among all the choices for $w$ and $t^*$ such that $\supp(t^*)$ contains at least one nontrivial dicut and $\{U: \delta^+(U)\in \supp (t^*) \}$ is cross-free, pick one that minimizes $\sum_{C\in \mathscr{K}}t^*_C$. Since $\supp(t^*)$ contains at least one nontrivial dicut, $t^*\neq 0$, which implies that there exists $e = (u,v)\in A$ such that $w_e>0$. This implies that $i\in V^t$ and $y_i^*>0$ holds for at least one $i\in\{u,v\}$. Pick $R\subseteq V$ such that $\delta(R)\in \supp(t^*)$, $e\in \delta(R)$, and $i\in R$. The existence of such $R$ is due to \Cref{lemma:strong-dual}~(c). Moreover, we know that $\delta(R)$ is a tight dicut due to \Cref{lemma:strong-dual}~$(b)$.
    
    Among all such choices of $(e,R)$, choose a pair such that $|R|$ is minimized. Assume w.l.o.g. that $i=u$, that is, $y^*_u>0$ and $\delta^+(R)\in \supp(t^*)$ (the proof of the other case is identical, and we omit it).

\begin{CL}\label{claim:2}
    $|R|>1$.
\end{CL}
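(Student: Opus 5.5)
The plan is to argue by contradiction against the minimality of $\sum_{C\in\mathscr{K}}t^*_C$ in the choice of $(w,t^*)$. Suppose $|R|=1$, so $R=\{u\}$. Then the dicut $\delta^+(R)=\delta(u)$ is a trivial dicut in $\supp(t^*)$, and $u\in V^t$ with $y^*_u>0$. After the scaling already performed, I may take $y^*$ (and hence $w$) integral and $t^*$ integral. Therefore $y^*_u\ge 1$ and $t^*_{\delta(u)}\ge 1$.

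The key step is to peel off one copy of this trivial dicut together with one unit of the node weight at $u$:
\[
y':=y^*-\mathbf{1}_u,\qquad w':=w-\mathbf{1}_{\delta(u)},\qquad t':=t^*-\mathbf{1}_{\delta(u)} .
\]
Then I would check the following points in order.
\begin{enumerate}
\item The vector $y'$ is nonnegative, vanishes outside $V^t$, and $w'$ is node-induced by $y'$. This holds because every arc incident to $u$ loses exactly $1$, which is exactly the change in $y_u$.
\item The vector $t'$ is nonnegative and feasible for \textsc{D($w'$)}. For an arc $e\in\delta(u)$, the left-hand side of its packing constraint and $w_e$ both drop by exactly one. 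For every other arc nothing changes.
\item The value of $t'$ is $\sum_C t^*_C-1=\sum_{v\in V^t}y^*_v-1=\sum_{v\in V^t}y'_v$, using \Cref{lemma:strong-dual}~(a) for $t^*$. Since $\sum_{v\in V^t}y'_v$ is an upper bound on the value of \textsc{D($w'$)} by weak duality (every tight dijoin has $w'$-weight $\sum_{v\in V^t} y'_v$), $t'$ is optimal for \textsc{D($w'$)}.
\end{enumerate}

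It remains to verify that $(w',t')$ is still admissible in the minimization. Only a copy of the trivial dicut $\delta(u)$ was removed. Hence every nontrivial dicut in $\supp(t^*)$, which exists by the choice of $t^*$, remains in $\supp(t')$. Also, $\supp(t')\subseteq\supp(t^*)$, so the family of shores stays cross-free by \Cref{claim:1}. Yet $\sum_C t'_C<\sum_C t^*_C$, contradicting the choice of $(w,t^*)$. Thus $|R|>1$.

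The symmetric case, where the positive endpoint is the sink $v$ and $R$ is the in-shore, is handled identically with $\delta(v)$.

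The only delicate point I expect is the integrality needed to subtract a full unit. If one prefers not to rely on integral $y^*$, the same argument works with $\varepsilon:=\min\{y^*_u,t^*_{\delta(u)}\}>0$ in place of $1$, which avoids the issue entirely.
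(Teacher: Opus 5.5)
Your proposal is correct and follows essentially the paper's argument: remove one unit of $y^*_u$ and one copy of $\delta(u)$ from $t^*$, note that the new packing is still optimal by Lemma~\ref{lemma:strong-dual}(a), and contradict the minimality of $\sum_C t^*_C$. One caution concerns your closing $\varepsilon$ variant, which does not avoid anything: the minimization is only meaningful over integral $(w,t^*)$, since otherwise rescaling drives $\sum_C t^*_C$ to $0$, so a fractional $\varepsilon$ would leave the admissible class; in that class $y^*$ is automatically integral, because $y^*_s=0$ for some $s\in S\setminus S^t$ and the graph is connected, so your unit decrement is the right step.
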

\begin{cproof}
    Suppose towards a contradiction that $R=\{u\}$. Then decrease each of $y^*_u,t^*_{\delta^+(R)}$ by $1$, and update $w$ accordingly. According to \Cref{lemma:strong-dual}~$(a)$, $t^*$ remains an optimal solution for \eqref{eq:dual_packing}. It is clear that $\supp(t^*)$ remains cross-free and contains at least one nontrivial dicut. However, $\sum_{C\in \mathscr{K}}t^*_C$ has decreased by $1$, a contradiction to our choice of $t^*$.
\end{cproof}

Let $R_1, \ldots, R_k$ be the maximal subsets of $R$ such that each $\delta^-(R_i) \in \supp(t^*)$, for some integer $k\geq 0$. According to \Cref{lemma:strong-dual}~$(b)$, $\delta^-(R_i)$ is tight for all $i\in[k]$. Furthermore, let $R^+ := \{v \in R\setminus (R_1\cup\ldots\cup R_k): y^*_v>0\}$ and $R^0 = R \setminus (R^+\cup R_1\cup\ldots\cup R_k)$ (see an illustration in \Cref{fig:placeholder}). Notice that $R^+\subseteq V^t$ by the definition of $y^*$. In the following, we show that $R^+, R^0, R_1,\ldots, R_k$ is a partition of $R$, and then we prove several properties of this partition.
    
\begin{CL} 
    The $R_i$'s are pairwise disjoint.
\end{CL}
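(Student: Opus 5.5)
The plan is to use cross-freeness of the support family (\Cref{claim:1}) together with maximality of each $R_i$. Take $i\neq j$ and suppose $R_i\cap R_j\neq\emptyset$. Both $R_i$ and $R_j$ are shores of dicuts in $\supp(t^*)$. Here I take ``$\delta^-(R_i)\in\supp(t^*)$'' to mean that $R_i$ is the in-shore, i.e.\ $\delta^-(R_i)=\delta^+(V\setminus R_i)$, so the complements $V\setminus R_i$ and $V\setminus R_j$ belong to the cross-free family $\{U:\delta^+(U)\in\supp(t^*)\}$.

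Cross-freeness of the complements means $R_i$ and $R_j$ do not cross, since crossing is invariant under complementing both sets. So one of four things holds: $R_i\subseteq R_j$, $R_j\subseteq R_i$, $R_i\cap R_j=\emptyset$, or $R_i\cup R_j=V$.

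\begin{itemize}
\item The first two cases contradict maximality. For example, if $R_i\subseteq R_j$, then since $R_i$ is a maximal subset of $R$ with this property and $R_j\subseteq R$, we get $R_i=R_j$, contradicting $i\neq j$. This uses that the $R_i$ are distinct sets; if the list is allowed repetitions, we simply index distinct sets.
\item The third case is exactly what we want.
\item The fourth case is impossible because $R_i\cup R_j\subseteq R\subsetneqq V$, since $\delta(R)$ is a proper dicut.
\end{itemize}

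The only subtlety I expect is the orientation bookkeeping. We need to make sure the sets whose complements lie in the cross-free family are the ones called $R_i$. We also need to check that the definition of crossing used in \Cref{claim:1} (requiring $U\cup W\neq V$) is symmetric under complementation. It is: the four conditions $U\setminus W$, $W\setminus U$, $U\cap W$, $V\setminus(U\cup W)$ nonempty are permuted when both sets are complemented.

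If instead $R_i$ were meant as an out-shore, the same argument applies directly to $R_i$ without complementing. In either reading the pairwise disjointness follows from cross-freeness, maximality, and $R\neq V$.
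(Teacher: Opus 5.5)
Your proposal is correct and follows essentially the same argument as the paper. The paper likewise uses cross-freeness of the support family (Claim~\ref{claim:1}) together with $R_i\cup R_j\subseteq R\subsetneqq V$ to conclude that one of $R_i,R_j$ contains the other, contradicting maximality. Your check that crossing is invariant under complementing both sets is needed because the $R_i$ are in-shores while the cross-free family consists of out-shores; it is a correct detail that the paper leaves implicit.
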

    \begin{cproof}
    If not, then there exists $R_i\cap R_j\neq \emptyset$. We also have $R_i\cup R_j\subseteq R\subsetneq V$. Since $\{U: \delta^+(U)\in \supp (t^*) \}$ is cross-free, $R_i$ and $R_j$ do not cross. Thus, one of $R_i$, $R_j$ is contained in the other, contradicting the maximality of the $R_i$'s.
    \end{cproof}
    
\begin{CL}\label{claim:arcs1} 
        There is no arc between distinct $R_i$'s, and there is no arc between $R^0$ and any of $R_i$.
\end{CL}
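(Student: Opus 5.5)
The claim should follow directly from the shape of the dicuts $\delta^-(R_i)$ together with the minimal choice of the pair $(e,R)$. The argument has two steps.

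\textbf{Step 1 (orientation).} Since $\delta^-(R_i)\in\supp(t^*)$ is a dicut with in-shore $R_i$, no arc leaves $R_i$, i.e.\ $\delta^+(R_i)=\emptyset$. Take an arc joining distinct $R_i$ and $R_j$. It would leave one of these two sets, which is impossible. This proves the first statement. By the same reasoning, any arc between $R^0$ and some $R_i$ must have the form $e'=(a,b)$ with $a\in R^0$ and $b\in R_i$.

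\textbf{Step 2 (excluding such an arc).} Suppose $e'=(a,b)$ is such an arc. Then $e'\in\delta^-(R_i)$, and $t^*_{\delta^-(R_i)}>0$. By \Cref{lemma:strong-dual}~(c), $w_{e'}=\sum_{C\ni e'}t^*_C>0$. Since $w_{e'}=y^*_a+y^*_b$, and $y^*_a=0$ because $a\in R^0$ (so $a\notin R^+$), we get $y^*_b>0$. In particular $b\in V^t$.

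Now consider the pair $(e',R_i)$. We have $\delta(R_i)\in\supp(t^*)$, $e'\in\delta(R_i)$, and the endpoint $b$ of $e'$ satisfies $b\in R_i$ and $y^*_b>0$. So $(e',R_i)$ is an admissible choice in the selection of $(e,R)$; this uses the remark that the in-shore and out-shore orientations are handled symmetrically. Moreover $R_i\subseteq R$ and $a\in R\setminus R_i$, since $a\in R^0$ is disjoint from all $R_j$. Hence $|R_i|<|R|$, which contradicts the minimality of $|R|$.

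The only delicate point is Step 2. There we must check that the admissibility condition used when choosing $(e,R)$ genuinely allows $R_i$, i.e.\ an in-shore rather than an out-shore. The paper's statement ``the other case is identical'' indicates that the choice ranges over both orientations, so I expect no obstacle beyond stating this carefully.
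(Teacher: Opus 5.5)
Your proposal is correct and follows the paper's proof essentially step for step. The first part uses $\delta^+(R_i)=\emptyset$. The second shows $y^*_b>0$ for the head $b\in R_i$ of an arc leaving $R^0$, and then derives a contradiction with the minimality of $|R|$ via the pair $(f,R_i)$. Your extra care fills in details the paper leaves implicit: the orientation of that arc is forced, not merely without loss of generality; $a\in R\setminus R_i$ gives $|R_i|<|R|$; and the selection of $(e,R)$ also ranges over in-shores, namely the case where the positive-weight endpoint is the head.
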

    \begin{cproof}
        We have no arc between different $R_i$'s since otherwise at least one $\delta^-(R_i)$ would not be a dicut. To prove the second statement, assume for the sake of contradiction that there is an arc $f=(a,b)$ between $R^0$ and $R_i$. We may assume w.l.o.g. that $a \in R^0$ and $b\in R_i$. Then $\delta^-(R_i)\in \supp(t^*)$ implies that $w_{f}>0$ and hence $y^*_b>0$. Therefore, instead of the pair $(e,R)$, we should have picked $(f,R_i)$ given that $|R_i|<|R|$, a contradiction.
    \end{cproof} 
    
\begin{CL}\label{claim:arcs2} 
        There is no arc $(a,b)$ with $a,b\in R^+$ or with $a\in R^+$ and $b\in R^0$.
\end{CL}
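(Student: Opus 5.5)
Fix an arc $f=(a,b)$ with $a\in R^+$ and $b\in R^+\cup R^0$. The plan is to show that $f$ must lie in some dicut of $\supp(t^*)$, and then use the cross-free structure to get a contradiction either with the minimality of $|R|$ or with the definition of the $R_i$'s. Both forbidden cases ($b\in R^+$ and $b\in R^0$) are handled by the same argument, since only $a\in R^+$ and $b\notin R_1\cup\cdots\cup R_k$ are used.

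\emph{Step 1: $f$ lies in a support dicut.} Since $a\in R^+$, we have $y^*_a>0$, and hence $w_f=y^*_a+y^*_b>0$. By \Cref{lemma:strong-dual}~$(c)$, $\sum_{C\in\mathscr{K}:\, f\in C}t^*_C=w_f>0$. So some dicut $\delta^+(W)\in\supp(t^*)$ contains $f$. Because $\delta^+(W)$ is a dicut, $f$ leaves $W$, that is, $a\in W$ and $b\notin W$.

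\emph{Step 2: case analysis via \Cref{claim:1}.} The sets $W$ and $R$ both index dicuts in $\supp(t^*)$, so they do not cross. We have $a\in W\cap R$ and $b\in R\setminus W$, so $W\cap R\neq\emptyset$ and $R\not\subseteq W$. Hence either $W\subsetneq R$ or $W\cup R=V$.

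\emph{Case $W\subsetneq R$.} The pair $(f,W)$ is an admissible choice: $\delta^+(W)\in\supp(t^*)$, $f\in\delta(W)$, and the endpoint $a$ of $f$ satisfies $a\in W$ and $y^*_a>0$. Since $|W|<|R|$, this contradicts the choice of $(e,R)$ minimizing $|R|$.

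\emph{Case $W\cup R=V$ with $W\not\subseteq R$.} Set $W':=V\setminus W$. Then $W'\subseteq R$, and $W'$ is a proper subset of $R$ because $a\in R\setminus W'$. Moreover $\delta^-(W')=\delta^+(W)\in\supp(t^*)$. By the definition of $R_1,\ldots,R_k$ as the maximal subsets of $R$ with $\delta^-(R_i)\in\supp(t^*)$, we get $W'\subseteq R_j$ for some $j$. In particular $b\in W'\subseteq R_j$. This contradicts $b\in R^+\cup R^0$, since these sets are disjoint from $R_1\cup\cdots\cup R_k$ by definition.

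The only delicate point is that the support dicut through $f$ has its out-shore $W$ containing $a$, which is what lets either $W$ or its complement be compared with $R$. This follows from $f$ being oriented out of $W$, and the direction of $f$ is exactly what distinguishes this claim from the excluded orientation $(a,b)$ with $a\in R^0$, $b\in R^+$.
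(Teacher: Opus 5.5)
Your proposal is correct and follows essentially the same route as the paper's proof. You use Lemma~\ref{lemma:strong-dual}~(c) to place $f$ in a support dicut $\delta^+(W)$, then apply cross-freeness to reduce to $W\subsetneq R$ (which contradicts the minimality of $|R|$) or $W\cup R=V$ (which forces $b$ into some $R_j$). You only spell out slightly more explicitly why $V\setminus W$ is contained in a maximal $R_j$.
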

    \begin{cproof}
    Suppose otherwise. Let $f = (a,b)$ be an arc such that $a\in R^+$ and $b\in R^+\cup R^0$. Since $y^*_a>0$, we have $w_f>0$, so $f$ must be contained in some dicut $\delta^+(U)\in \supp(t^*)$ due to \Cref{lemma:strong-dual}~$(c)$. Notice that $a\in U\cap R$ and $b\in R \setminus U$, so since $\supp(t^*)$ is cross-free, we have either $U\subsetneq R$ or $U\cup R = V$.
        If $U\subsetneq R$, then the pair $(f,U)$ should have been picked instead of $(e,R)$ as $|U|<|R|$. Otherwise, $b \in V\setminus U \subseteq R$ and $\delta^-(V\setminus U) \in \supp(t^*)$, so $b$ must have belonged to one of $R_i$. Both cases lead to a contradiction.
    \end{cproof}

\begin{figure}
    \centering
    \begin{tikzpicture}[
    font=\small,
    dot/.style={circle,fill,inner sep=1.6pt},
    arc/.style={thick,-{Latex[length=2mm]}},
    cutarc/.style={thick,dashed,-{Latex[length=2mm]}},
    setbox/.style={draw,rounded corners,thick,inner sep=10pt},
    dsetbox/.style={draw,rounded corners,thick,dashed,inner sep=6pt}
]

\node[dot,label=left:$u$]       (u)  at (0, 1.55) {};
\node[dot] (s2) at (0, 0.95) {};
\node at (0,0.35) {$\vdots$};
\node[dot] (sk) at (0, -0.5) {};

\node[dsetbox,minimum width=1cm,minimum height=1cm] (Rzero)
    at (0,-2) {$\emptyset$};

\node[dot] (r11) at (3.05, 1.55) {};
\node[dot] (r12) at (3.45, 1.15) {};

\node[dot] (r21) at (3.05, 0.15) {};
\node[dot] (r22) at (3.45,-0.25) {};

\node at (3.25,-1.00) {$\vdots$};

\node[dot] (rk1) at (3.05,-2.05) {};
\node[dot] (rk2) at (3.45,-2.45) {};

\node[dot,label=right:$v$] (v) at (3.05, 3.95) {};
\node[dot] (vv) at (3.45,3.55) {};
\begin{scope}[on background layer]
    \node[setbox,fit=(u)(s2)(sk)] (Rplus) {};
    \node[setbox,fit=(r11)(r12)] (Rone) {};
    \node[setbox,fit=(r21)(r22)] (Rtwo) {};
    \node[setbox,fit=(rk1)(rk2)] (Rk) {};
    \node[setbox,fit=(v)(vv)] (VminusR) {};

    \node[setbox,inner xsep=30pt, inner ysep=15pt,
          fit=(Rplus)(Rzero)(Rone)(Rtwo)(Rk)] (Rbox) {};
\end{scope}

\node[anchor=south west] at ($(Rbox.north west)+(0.05,0.05)$) {$R$};
\node[anchor=south] at ($(Rplus.north)+(0,0)$) {$R^+$};

\node[anchor=east] at ($(Rzero.west)+(-0.04,0)$) {$R^0$};
\node[anchor=west] at ($(Rone.east)+(0.04,0)$) {$R_1$};
\node[anchor=west] at ($(Rtwo.east)+(0.04,0)$) {$R_2$};
\node[anchor=west] at ($(Rk.east)+(0.04,0)$) {$R_k$};

\node[anchor=west] at ($(VminusR.east)+(0.18,0)$) {$V\setminus R$};


\draw[arc] (u) to[bend left=7] node[above] {$e$} (v);

\draw[arc] (s2) to[bend left=7] (r11);
\draw[arc] (sk) to[bend right=4] (r21);
\draw[arc] (u)  to[bend left=15] (rk1);

\draw[decorate,decoration={brace,mirror,amplitude=5pt},thick]
    ($(Rplus.north west)+(-0.15,0.0)$) --
    ($(Rplus.south west)+(-0.15,-0.0)$);

\node[anchor=east] at ($(Rplus.west)+(-0.3,0)$) {$k+1$};

\end{tikzpicture}
    \caption{$R^+, R^0, R_1,\ldots, R_k$ partitions $R$. $R^+\subseteq S^t$ is a barrier that induces barrier dicuts $\delta^-(R_1)$, $\delta^-(R_2)$, ..., $\delta^-(R_k)$, and $\delta^-(V\setminus R)$.}
    \label{fig:placeholder}
\end{figure}
    
    \begin{CL} 
        $R^0=\emptyset$ and $R^+\subseteq S^t$.
    \end{CL}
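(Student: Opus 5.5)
The plan is to prove the second assertion from the first, and to prove $R^0=\emptyset$ by contradiction. I would first record the arc structure around $R$. Since $\delta^+(R)$ is a dicut, no arc enters $R$. Since each $\delta^-(R_i)$ is a dicut, arcs only enter $R_i$ and never leave it. By \Cref{claim:arcs1} and \Cref{claim:arcs2}, the only arcs with an endpoint in $R^+\cup R^0$ are of the following types: $R^+\to R_i$, $R^+\to V\setminus R$, $R^0\to R^+$, arcs inside $R^0$, and $R^0\to V\setminus R$.

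\textbf{Step 1: $R^0=\emptyset$.} Suppose $R^0\neq\emptyset$. From the list above, no arc enters $R^0$, so $\delta^+(R^0)$ is a dicut. Every arc $f=(a,b)$ leaving $R^0$ ends in $R^+$ or in $V\setminus R$.
\begin{itemize}
\item I first want an arc $f$ in $\delta^+(R^0)$ with $y^*_b>0$, so that $w_f>0$. I would get it from the bipartite structure and $2$-edge-connectivity: if every such head has $y^*=0$, then $R^0$ carries no weight in $w$. I would then try to shift part of the packing from $\delta^+(R)$ to $\delta^+(R\setminus R^0)$, or rule this case out by connectivity.
\item With such an $f$ in hand, \Cref{lemma:strong-dual}~(c) gives a dicut $\delta^+(U)\in\supp(t^*)$ with $f\in\delta^+(U)$, so $a\in U$ and $b\notin U$. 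By \Cref{claim:1}, $U$ and $R$ do not cross.
\item If $V\setminus U\subseteq R$, then $\delta^-(V\setminus U)\in\supp(t^*)$, so $a\in R^0$ lies in some maximal $R_i$. This is a contradiction.
\item If $U\subsetneq R$, then $b\in R^+$. I would then look at the in-shore $V\setminus U$ together with the arc $f$, whose endpoint $b$ has $y^*_b>0$. The aim is to contradict either the maximality of the $R_i$ or the minimality of $|R|$, as in the proof of \Cref{claim:arcs2}.
\item The remaining case $R\subseteq U$ is impossible, because it would put $b$ inside $U$.
\end{itemize}

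\textbf{Step 2: $R^+\subseteq S^t$.} Once $R^0=\emptyset$, let $b\in R^+$ be a sink. Any arc into $b$ comes from a source, and that source is in $R$ because nothing enters $R$. It cannot be in $R^+$ by \Cref{claim:arcs2}, and it cannot be in any $R_i$ because arcs never leave $R_i$. So $b$ would be isolated, contradicting connectivity of $D$. Hence $R^+\subseteq S\cap V^t=S^t$, and in particular $u\in R^+$ is a source in $S^t$.

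I expect the main obstacle to be Step 1, specifically the case $U\subsetneq R$ with $b\in R^+$. The minimality of $|R|$ only applies to pairs $(f,U)$ in which the endpoint with positive $y^*$ lies inside $U$, but here that endpoint $b$ is outside $U$. My first attempt would exploit that $\delta^+(U)$, being in the support, is tight by \Cref{lemma:strong-dual}~(b), and that $U\cap R^+$ together with the $R_i$'s inside $U$ also gives a cross-free configuration. If that fails, I would instead modify $t^*$ by exchanging $\delta^+(R)$ and $\delta^+(U)$. The goal would be to lower $\sum_C t^*_C$ or $\Phi$, contradicting the choice of $t^*$.
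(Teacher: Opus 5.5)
Your Step 2 is correct. Step 1, however, has a genuine gap: it does not prove $R^0=\emptyset$, and the dual-minimality route does not close. Suppose every arc leaving $R^0$ ends in $V\setminus R$. Then the supporting dicut of $f$ given by \Cref{lemma:strong-dual}~(c) can simply be $\delta^+(R)$ itself. Your assertion that $R\subseteq U$ is impossible silently assumes $b\in R$, so in this case nothing is contradicted. Suppose instead $b\in R^+$ and $U\subsetneq R$. As you note yourself, the minimality of $|R|$ is then unavailable: the endpoint with $y^*_b>0$ lies in the in-shore $V\setminus U$, whose size can exceed $|R|$.

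The missing idea is that the out-shore $R$ of the tight dicut $\delta^+(R)$ (tight by \Cref{lemma:strong-dual}~(b)) must induce a connected subgraph. Suppose $R=R'\dot\cup R''$ with both parts nonempty and no arc between them. Since no arc enters $R$, both $\delta^+(R')$ and $\delta^+(R'')$ are dicuts, and together they partition $\delta^+(R)$. Then every tight dijoin meets $\delta^+(R)$ at least twice, a contradiction.

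Your own arc list already shows that $Q:=R^0\cup(R^+\cap T)$ has no arc inside $R$ to $(R^+\cap S)\cup R_1\cup\dots\cup R_k$:
\begin{itemize}
\item Inside $R$, arcs at $R^0$ stay in $R^0$ or go to $R^+\cap T$.
\item A sink in $R^+$ can receive arcs only from $R^0$. It cannot receive them from $R^+$ by \Cref{claim:arcs2}, nor from any $R_i$, since no arc leaves $R_i$.
\end{itemize}
The complement of $Q$ in $R$ contains $u$. Indeed, $u$ is a source with $y^*_u>0$, and $u$ lies in no $R_i$ because $e$ leaves $R$. So $Q=\emptyset$. This gives $R^0=\emptyset$ and $R^+\cap T=\emptyset$ at once, hence $R^+\subseteq S\cap V^t=S^t$. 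This is exactly the paper's proof. Your Step 2 is the same separation argument, run with connectivity of $D$ in place of connectivity of $R$ once $R^0$ is known to be empty.
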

    \begin{cproof}
        Notice that any shore of a tight dicut must induce a connected undirected graph (if a shore of a dicut has $p>1$ connected components, then the dicut must intersect every dijoin at least $p>1$ times and thus not tight).
        Thus, $R$ induces a connected undirected graph. However, it follows from \Cref{claim:arcs1} and \Cref{claim:arcs2} that $R^0\cup (R^+\cap T)$ is disconnected from the rest of $R$. Thus one of $R^0 \cup (R^+\cap T)$ or $R\setminus (R^0 \cup (R^+\cap T))$ is empty. Since $y^*_u>0$ and $u$ is a source, we have $u \in R\setminus (R^0 \cup (R^+\cap T)) \neq \emptyset$. Therefore, $R^0=\emptyset$ and $R^+$ has no sinks. Since $R^+\subseteq V^t$, we get that $R^+\subseteq S^t$.
    \end{cproof}
    
    \begin{CL}\label{claim:arcs3}
        There is no arc between $V\setminus R$ and any of $R_i$.
    \end{CL}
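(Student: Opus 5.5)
The plan is to argue purely from the orientation of the arcs, using the two dicut conditions already in place: $\delta^+(R)\in\supp(t^*)$ is a dicut, and each $\delta^-(R_i)\in\supp(t^*)$ is a dicut. No optimality of $t^*$ or choice of $(e,R)$ should be needed here.

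First, I would record what each dicut condition means for arc directions. Since $\delta^+(R)$ is a dicut, its shore $R$ has no entering arcs, so $\delta^-(R)=\emptyset$. Hence every arc with one endpoint in $R$ and the other in $V\setminus R$ is directed from $R$ to $V\setminus R$.

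Second, since $\delta^-(R_i)$ is a dicut written with $R_i$ as the in-shore, no arc leaves $R_i$. That is, $\delta^+(R_i)=\emptyset$, and every arc with exactly one endpoint in $R_i$ points into $R_i$.

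Now suppose there is an arc $f$ between $V\setminus R$ and $R_i$ for some $i\in[k]$. Because $R_i\subseteq R$, this arc has one endpoint in $R$ and the other in $V\setminus R$. By the first observation, it is directed from its endpoint in $R_i$ to its endpoint in $V\setminus R$. Then $f\in\delta^+(R_i)$, which contradicts the second observation. So no such arc exists.

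The only point that needs care is the sign convention: in this part of the paper, a dicut written $\delta^-(R_i)$ is one whose out-direction is empty, i.e. the shore $R_i$ has no leaving arcs. Once that convention is read correctly, the claim is immediate and I expect no real obstacle.
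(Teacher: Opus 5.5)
Your proof is correct and matches the paper's argument: the paper notes only that the claim follows from $\delta^+(R_i)=\emptyset$ and $\delta^-(R)=\emptyset$. Those are exactly your two observations, and you have spelled out why an arc between $R_i$ and $V\setminus R$ would have to violate one of them.
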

    \begin{cproof}
        This follows from $\delta^+(R_i)=\emptyset$ and $\delta^-(R)=\emptyset$.
    \end{cproof}
    \begin{CL}\label{claim:sizeRplus}
        $|R^+|=k+1$.
    \end{CL}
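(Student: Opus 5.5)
The plan is to prove $|R^+|=k+1$ by double counting the arcs of a single tight dijoin that leave $R^+$. Since $(D,S^t)$ is tight dijoin-covered, we can fix a tight dijoin $J$ and classify where each $J$-arc incident with $R^+$ ends.

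First, we locate all arcs incident with $R^+$. By the previous claim, $R^0=\emptyset$ and $R^+\subseteq S^t$. So every vertex of $R^+$ is a source, and the only arcs touching $R^+$ leave it. By \Cref{claim:arcs2} there is no arc inside $R^+$, and $R^0$ is empty. Hence every arc leaving $R^+$ ends either in some $R_i$ or in $V\setminus R$. The sets $R_1,\ldots,R_k,V\setminus R$ are pairwise disjoint, so
\[
\sum_{v\in R^+} d_J(v)=\sum_{i=1}^k |J\cap E(R^+,R_i)| + |J\cap E(R^+,V\setminus R)|.
\]
The left-hand side equals $|R^+|$, because $R^+\subseteq S^t$ and $J$ is tight, so $d_J(v)=1$ for every $v\in R^+$.

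Second, we evaluate each term on the right. Each $\delta^-(R_i)$ lies in $\supp(t^*)$, so it is a tight dicut by \Cref{lemma:strong-dual}~$(b)$, and thus $|J\cap\delta^-(R_i)|=1$. By \Cref{claim:arcs1} and \Cref{claim:arcs3}, every arc entering $R_i$ comes from $R^+$, so $|J\cap E(R^+,R_i)|=|J\cap\delta^-(R_i)|=1$. Likewise $\delta^+(R)$ is tight by the choice of $R$, so $|J\cap\delta^+(R)|=1$. The arcs of $\delta^+(R)$ cannot start in any $R_i$ (\Cref{claim:arcs3}) and $R^0=\emptyset$, so they all start in $R^+$, giving $|J\cap E(R^+,V\setminus R)|=1$. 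Summing the terms yields $|R^+|=k+1$.

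The main point needing care is the bookkeeping that every arc leaving $R^+$ is counted exactly once. This amounts to checking that $R=R^+\dot\cup R_1\dot\cup\cdots\dot\cup R_k$, which follows from the earlier claims (disjointness of the $R_i$ and $R^0=\emptyset$). We also need the orientation conventions $\delta^+(R)$ and $\delta^-(R_i)$ to be used consistently, with no arcs from $V\setminus R$ into $R$ because $\delta^+(R)$ is a dicut. As a sanity check, $\sigma(V-R^+)\ge k+1$ since the $R_i$ and $V\setminus R$ are separated once $R^+$ is deleted. This agrees with $\sigma(V-R^+)\le|R^+|$ from \Cref{thm:tight_dijoin}, and shows that $R^+$ is in fact a barrier, which is presumably the next step.
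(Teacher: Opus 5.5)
Your proof is correct and uses the same double-counting argument as the paper: $|J\cap\delta^+(R^+)|$ equals $|R^+|$ by tightness of $R^+\subseteq S^t$, and equals $k+1$ via the tight dicuts $\delta^-(R_1),\ldots,\delta^-(R_k)$ and $\delta^+(R)$. The only cosmetic difference is that the paper fixes a tight dijoin containing $e$, so that the unique $J$-arc in $\delta^+(R)$ is $e$, which leaves $u\in R^+$. You instead observe via \Cref{claim:arcs3} that every arc of $\delta^+(R)$ leaves $R^+$, so any tight dijoin works.
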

    \begin{cproof}
        Let $J$ be any tight dijoin containing $e$. Then, since $\delta^-(R_i)$ is a tight dicut, there is exactly one arc in $J\cap \delta^-(R_i)$, which has to come from $R^+$ by \Cref{claim:arcs1} and \Cref{claim:arcs3}. Since $\delta^+(R)$ is tight and $J$ already used $e \in \delta^+(R)$, $J$ does not use other arcs between $R$ and $V\setminus R$. This allows us to conclude that $|J\cap \delta^+(R^+)|=k+1$. On the other hand, $|J\cap \delta^+(R^+)| = |R^+|$ since all vertices in $R^+\subseteq V^t$ and  there is no arc inside $R^+$ by \Cref{claim:arcs2}. Therefore, we obtain $|R^+|=k+1$.
    \end{cproof}    
    
     This lets us conclude that $R^+$ is a barrier in $D$. Notice that $|R^+|\geq 2$, since otherwise $k=0$ and $|R|=|R^+|=1$, which is a contradiction to \Cref{claim:2}. Since $D$ does not contain any nontrivial barrier dicuts, $R_1,\ldots ,R_k$ and $V\setminus R$ are all singletons. Therefore, $S=R^+$ and $T=\{R_1,\ldots,R_k,V\setminus R\}$. Then \Cref{claim:sizeRplus} contradicts the assumption that $|S|<|T|$.
\end{proof}

\section{Proofs of results in \Cref{sec:tight-dicut-decomp}}\label{sec:proof-tight-dicut-decomp}

\begin{lemma}\label{lemma:submod-comp}
    Let $f(X):=|X|-\sigma(V\setminus X)$ for every subset $X\subseteq T$ ($X\subseteq S^t$). Then $f$ is submodular.
\end{lemma}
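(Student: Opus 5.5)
We need to prove that $f(X)=|X|-\sigma(V\setminus X)$ is submodular on subsets of $T$ (and, analogously, of $S^t$). Since $|X|$ is modular, it suffices to show that $g(X):=\sigma(V\setminus X)$ is supermodular on subsets of $T$:
\[
g(X\cup Y)+g(X\cap Y)\ \ge\ g(X)+g(Y)\qquad\text{for all }X,Y\subseteq T.
\]
The key structural fact is that $T$ is an independent set in the bipartite digraph $D$: no arc joins two sinks. The same holds for $S^t\subseteq S$. So the argument only needs independence of the ground set, and the two cases are treated identically.

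\textbf{Reduction to a marginal-gain statement.} Supermodularity is equivalent to increasing marginal gains. For $X\subseteq Y\subseteq T$ and $v\in T\setminus Y$ we must show
\[
g(X+v)-g(X)\ \le\ g(Y+v)-g(Y).
\]

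\textbf{Computing the marginal gain.} Since $T$ is independent, every neighbor of $v$ lies in $S$, so deleting $v$ from $V\setminus X$ never isolates or splits anything except through $v$'s own neighbors in $S$. Write $H_X$ for the graph induced on $V\setminus X$. Then $v$ lies in a single component $K$ of $H_X$. Deleting $v$ splits $K$ into $c_X(v)$ components (possibly $c_X(v)=0$ if $K=\{v\}$), so $g(X+v)-g(X)=c_X(v)-1$. Here $c_X(v)$ counts the components of $K-v$. Each such component contains a neighbor of $v$, and all neighbors of $v$ are in $S$, hence remain present in both $H_X$ and $H_Y$.

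\textbf{Monotonicity of the gain.} Deleting the additional vertices $Y\setminus X$ refines components. Two neighbors $a,b$ of $v$ that are disconnected in $H_X-v$ stay disconnected in $H_Y-v$, because $H_Y-v$ is a subgraph of $H_X-v$ and $a,b$ are still present. So the partition of $N(v)$ induced by the components of $H_Y-v$ refines the one induced by $H_X-v$, giving $c_Y(v)\ge c_X(v)$. The degenerate case $c_X(v)=0$ cannot occur: $D$ is connected and $v$ has a neighbor in $S$, which is never deleted, so $c_X(v)\ge 1$. This yields the required inequality, and hence $f$ is submodular.

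\textbf{Where the care is needed.} The main point is the observation that the neighbors of $v$ survive deletion of $Y$, which relies on independence of $T$ (respectively $S^t$). Without it, the gain could decrease: for example, a neighbor of $v$ deleted in $Y$ could reduce the split count. Once this is noted, the remaining steps are routine.
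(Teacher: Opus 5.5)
Your proof is correct and follows essentially the same route as the paper: reduce submodularity to non-increasing marginal values, express the marginal change of $\sigma$ through the number of components into which $v$'s component splits, and use that all neighbors of $v$ lie on the other side of the bipartition, so they survive deletion of $Y$ and these components can only be refined. Your bookkeeping is slightly more precise than the paper's, which writes the marginal change as $\rho(V\setminus X,v)$ rather than $\rho(V\setminus X,v)-1$; the offset cancels in the comparison, so this does not affect either argument.
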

\begin{proof}
     We prove that $f$ has non-increasing margins, i.e. for any sets $X\subseteq Y \subseteq T $ (the proof for $S^t$ is analogous) and any vertex $v \in T\setminus Y$, we have \begin{equation*}
        f(X\cup \{v\})-f(X) \geq f(Y\cup \{v\})-f(Y).
    \end{equation*}
    Expanding, this is equivalent to \begin{equation*}
        1- \sigma(V\setminus (X\cup \{v\}))+\sigma(V\setminus X) \geq 1- \sigma(V\setminus (Y\cup \{v\}))+\sigma(V\setminus Y).
    \end{equation*} Moreover, $\sigma(V\setminus (X\cup \{v\}))-\sigma(V\setminus X)= \rho(V\setminus X,v)$ where by $\rho(V\setminus X,v)$ we denote the number of distinct components of $G-X-\{v\}$ that are connected to $v$ by at least one arc. Now, it suffices to prove that \begin{equation*}
        \rho(V\setminus X,v)\leq \rho(V\setminus Y,v),
    \end{equation*} which is true because every component of $G-X-\{v\}$ will stay a separate component in $G-Y-\{v\}$ since all neighbors of $v$ are in $S$ and $Y$ only contains vertices of $T$. Thus, this number can only increase.
\end{proof}

\begin{proof}[Proof of \Cref{CO:tight-dijoin-polytime}]
    Let $f(X):=|X|-\sigma(V\setminus X)$ for every $X\subseteq T$, and $g(X):=|X|-\sigma(V\setminus X)$ for every $X\subseteq S^t$. It suffices to check whether $\min\{f(X):\emptyset \neq X\subseteq T\}$ is at least $0$, and $\min\{g(X):\emptyset \neq X\subseteq S^t\}$ is at least $0$, which can be solved in polynomial time using submodular minimization \cite{IwataFleischerFujishige2001,Schrijver2000}.
\end{proof}

\begin{proof}[Proof of \Cref{CO:tight-dicut-polytime}]
    To find a nontrivial tight dicut it suffices to check existence of nontrivial barrier or $2$-separation dicuts.
    For $2$-separation dicuts, it suffices to check, for every pair of $u\in S^t, v\in T$, whether $D-\{u,v\}$ is disconnected.

    For barrier dicuts, let $f(X):=|X|-\sigma(V\setminus X)$ for every $X\subseteq T$, and $g(X):=|X|-\sigma(V\setminus X)$ for every $X\subseteq S^t$. To find a nontrivial sink barrier, it suffices to find a subset $X\subseteq T$ such that $2\le |X|\le |T|-1$ minimizing $f(X)$. The reason why we enforce $|X|\le |T|-1$ is because if $|S|<|T|$, $X=T$ is never a barrier; if $|S|=|T|$, we need to avoid finding $X=T$ as a barrier, in which case no nontrivial barrier dicut is induced. This can be done by solving $\min\{f(X): u,v\in X, w\notin X\}$ for all triples $u,v,w\in T$. By \Cref{lemma:submod-comp}, $f(X)$ is submodular and therefore its restriction to $\{X\subseteq T: u,v\in X,w\notin X\}$ should be submodular. To find a source barrier, it suffices to find a subset $X\subseteq S^t$ such that $|X|\ge 2$, which can be done similarly.
\end{proof}


Finally, we prove \Cref{prop:unique_decomposition}. Our proof is inspired by the proof of uniqueness of tight cut decomposition for matching-covered graphs in \cite{Edmonds82}.

Let $\mathcal{S}\subseteq 2^V\setminus \{\emptyset, V\}$ be a family of tight dicuts out-shores that we use in some tight dicut decomposition procedure. Then $\mathcal{S}$ is cross-free by construction. Furthermore, notice that any tight dicut in the contracted digraft corresponds to a tight dicut in the original digraft. This means that any tight dicut decomposition corresponds to a \emph{maximal} cross-free family of tight dicut out-shores. 

\begin{proof}[Proof of \Cref{prop:unique_decomposition}]

    Consider a digraft $(D=(V,A),\zF)$ where $\zF$ is cross-free. Let $\mathcal{S}$ and $\mathcal{S}'$ be two families of tight dicut out-shores used in two tight dicut decomposition procedures. We denote by $\mathcal{S}\sim \mathcal{S}'$ if decomposing along $\mathcal{S}$ and $\mathcal{S}'$ ends up with the same list of basic digrafts. We prove that $\mathcal{S}\sim \mathcal{S}'$ by induction on the size $|V|$.
    Consider the following cases:
    \begin{enumerate}
        \item $\mathcal{S}$ and $\mathcal{S'}$ have a common member $U$. 
        
        In this case, consider starting the decomposition process with $\delta^+(U)$ in both cases. After this step, we end up with two digrafts whose sizes are both less than $|V|$, so the inductive hypothesis applies.
        \item There exist $U\in \mathcal{S}$ and $U'\in\mathcal{S'}$ that do not cross. 
        
        Consider a maximal cross-free family $\mathcal{S}''$ of tight dicuts out-shores that contains both $U$ and $U'$. By the previous case, $\mathcal{S}''\sim \mathcal{S}$ as well as $\mathcal{S}''\sim \mathcal{S}'$, so $\mathcal{S}\sim \mathcal{S}'$.
        \item There exist $U\in \mathcal{S}$ and $U'\in \mathcal{S}'$ such that $|U\cap U'|\neq 1$ or $|U\cup U'|\neq |V(D)|-1$.  
        
        Let $U''$ be either $U\cap U'$ or $U\cup U'$ whichever induces a nontrivial dicut. By Lemma \ref{lemma:tight_dicut_uncross}, $\delta^+(U'')$ is a tight dicut. Let $\mathcal{S}''$ be any maximal cross-free family of nontrivial tight dicuts out-shores containing $U''$. Since $U''$ do not cross $U$ or $U'$, by the second case, $\mathcal{S}\sim\mathcal{S}''$ and $\mathcal{S}'\sim\mathcal{S}''$. Therefore, $\mathcal{S}\sim\mathcal{S}'$.
        \item In the remaining case, for any $U\in \mathcal{S}$ and $U'\in \mathcal{S}'$, one has $U\cap U'=\{u\} \subseteq S$ and $\overline{U\cup U'}=\{v\}\subseteq T$ for some $u,v \in V$. 
        
        By \Cref{lemma:tight_dicut_uncross}, both $u$ and $v$ are tight nodes. Further, since $\delta^+(U), \delta^+(U')$ are both dicuts, there is no arc in between $U\setminus U'$ and $U'\setminus U$, which means $\{u,v\}$ forms a $2$-cut. One can easily verify that breaking along $\delta^+(U)$ and $\delta^+(U')$ result in the same pair of digraphs up to arc multiplicities. Therefore, starting the decompositions corresponding to $\mathcal{S}$ and $\mathcal{S}'$ with $U$ and $U'$, respectively, we get two pairs of isomorphic (up to arc multiplicities) digrafts. Applying the inductive hypothesis to the two contracted digrafts implies $\mathcal{S}\sim \mathcal{S}'$.
    \end{enumerate}
\end{proof}

\section{Proof of results in \Cref{sec:robust-case}}\label{sec:proof-robust}
We first prove \Cref{lemma:tight_edge_cover}, which is giving a characterization of the existence of a tight edge cover. This is a direct consequence of the following lemma from \cite{schrijver2003combinatorial}.
\begin{lemma}[\cite{schrijver2003combinatorial}, Theorem 21.28]\label{lemma:ab-matching}
Let $D = (V,A)$ be a bipartite graph with bipartition $V=S\dot\cup T$ and let $a,b \in \mathbb{Z}^V$ and 
$d,c \in \mathbb{Z}^A$ with $a \le b$ and $d \le c$. Then there exists 
$x \in \mathbb{Z}^A$ satisfying 
\begin{equation}
\begin{aligned}\label{eq:abcd-matching}
    d_e\leq &~x_e\leq c_e\quad \forall e\in A\\
    a_v\leq &~x(\delta(v))\leq b_v\quad \forall v\in V.
\end{aligned}
\end{equation}
if and only if
\begin{subequations}\label{eq:ab-matching}
\begin{align}
  c(E(Z)) - d(E(V \setminus Z)) &\ge a(S \cap Z) - b(T \setminus Z)\quad \forall Z\subseteq V, \label{eq:ab-matching1}\\ 
  c(E(Z)) - d(E(V \setminus Z)) &\ge a(T \cap Z) - b(S \setminus Z)\quad \forall Z\subseteq V. \label{eq:ab-matching2}
\end{align}
\end{subequations}
\end{lemma}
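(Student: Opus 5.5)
We plan to reduce the statement to Hoffman's circulation theorem on an auxiliary network. The integer version of that theorem gives integrality for free, so the only real work is translating its cut condition into \eqref{eq:ab-matching1} and \eqref{eq:ab-matching2}.

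\textbf{Auxiliary network.} Orient every edge of the bipartite graph from $S$ to $T$, keeping lower bound $d_e$ and upper bound $c_e$. Add a new node $s$ with an arc $(s,v)$ for each $v\in S$, bounds $[a_v,b_v]$. Add a new node $t$ with an arc $(v,t)$ for each $v\in T$, bounds $[a_v,b_v]$. Finally add one arc $(t,s)$ with bounds $[-\infty,+\infty]$, so that its flow is unrestricted in sign.

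\textbf{Reduction.} A circulation $f$ in this network is exactly an $x\in\Z^A$ satisfying \eqref{eq:abcd-matching}. Indeed, conservation at $v\in S$ says that the flow on $(s,v)$ equals $x(\delta(v))$, and symmetrically at $v\in T$. Hoffman's theorem, in its integral form for integer (possibly infinite) bounds, states the following: since $a\le b$ and $d\le c$, an integral feasible circulation exists if and only if $\ell(\delta^{in}(W))\le u(\delta^{out}(W))$ for every $W\subseteq V\cup\{s,t\}$.

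\textbf{Cut enumeration.} We split into four cases according to $W\cap\{s,t\}$, and write $Z:=W\cap V$.
\begin{itemize}
\item If $W$ contains exactly one of $s,t$, the arc $(t,s)$ crosses the cut. Its bound of $-\infty$ (as a lower bound, if it enters $W$) or $+\infty$ (as an upper bound, if it leaves $W$) makes the inequality trivially true.
\item If $s,t\in W$, the entering arcs are the edges from $S\setminus Z$ to $T\cap Z$ and the arcs $(v,t)$ with $v\in T\setminus Z$. The leaving arcs are $(s,v)$ with $v\in S\setminus Z$ and the edges from $S\cap Z$ to $T\setminus Z$. The condition reads
\[
d(E(S\setminus Z,T\cap Z))+a(T\setminus Z)\le b(S\setminus Z)+c(E(S\cap Z,T\setminus Z)).
\]
Substituting $Y:=(S\cap Z)\cup(T\setminus Z)$ gives $E(Y)=E(S\cap Z,T\setminus Z)$ and $E(V\setminus Y)=E(S\setminus Z,T\cap Z)$. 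The inequality then becomes exactly \eqref{eq:ab-matching2} for $Y$.
\item If $s,t\notin W$, the symmetric computation produces \eqref{eq:ab-matching1}, with the roles of $S$ and $T$ exchanged.
\end{itemize}
As $Z$ ranges over all subsets of $V$, so does $Y$. Hence the cut conditions are equivalent to \eqref{eq:ab-matching}.

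\textbf{Main obstacle.} The delicate step is the bookkeeping in the cut enumeration: one must identify the correct substitution $Z\mapsto Y$ so that the sets $E(\cdot,\cdot)$ collapse to $E(Y)$ and $E(V\setminus Y)$. One must also make sure the unbounded arc $(t,s)$ legitimately removes the two mixed cases. If one prefers finite bounds on $(t,s)$, those mixed cases would have to be derived from \eqref{eq:ab-matching}, which is messier; we therefore keep the unbounded arc.
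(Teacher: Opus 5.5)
Your proof is correct. The network, the identification of integral circulations with feasible $x$, and the cut bookkeeping all check out. In particular, with $Y=(S\cap Z)\cup(T\setminus Z)$, the case $s,t\notin W$ gives $d(E(S\setminus Z,T\cap Z))+a(S\cap Z)\le c(E(S\cap Z,T\setminus Z))+b(T\cap Z)$. This is exactly \eqref{eq:ab-matching1} for $Y$, and $Z\mapsto Y$ is a bijection of $2^V$. It would be cleaner to write this case out rather than call it ``symmetric''.

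The paper itself gives no proof of this statement; it only cites Theorem 21.28 of Schrijver's book. Your reduction to Hoffman's circulation theorem therefore supplies a self-contained proof. Two remarks.

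First, the unbounded arc $(t,s)$ is harmless either way. Replace its bounds by $[-M,M]$ with $M$ larger than the sum of the absolute values of all finite bounds. Then the two mixed cut conditions hold automatically, so the finite-bound, integral form of Hoffman's theorem applies directly. Contrary to your closing remark, nothing then needs to be derived from \eqref{eq:ab-matching}.

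Second, the same truncation shows your argument goes through verbatim when some $b_v$ or $c_e$ equal $+\infty$ (or some $a_v,d_e$ equal $-\infty$). This matters because the paper, in the proof of \Cref{lemma:tight_edge_cover}, applies the lemma with $c_e=\infty$ for all $e$ and $b_v=\infty$ for $v\notin V^t$. That is outside the stated hypotheses $b\in\Z^V$, $c\in\Z^A$, and your circulation proof justifies that use directly.
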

\begin{proof}[Proof of \Cref{lemma:tight_edge_cover}]
Consider $a_v=1\ \forall v\in V$, $b_v=\begin{cases}
    1 & \forall v\in V^t \\ \infty\ & \forall v\in V\setminus V^t
\end{cases}$ and $c_e=\infty\ \forall e\in A$, $d_e=0\ \forall e\in A$.
Then, $x$ satisfies \eqref{eq:abcd-matching} if and only if $x$ is the characteristic vector of some tight edge cover (note that $x(\delta(v))\leq 1\ \forall v\in T$ implies $x_e\leq 1\ \forall e\in A$ and thus $x$ is the characteristic vector of some subset of arcs as opposed to a multi-set). By \Cref{lemma:ab-matching}, a tight edge cover exists if and only if \eqref{eq:ab-matching} holds. Let $X:=S\cap Z$ and $Y:=T\cap Z$. For $Z$ such that \eqref{eq:ab-matching} does not automatically hold, $E(Z)=\emptyset$, which means $N(X)\cap Y=\emptyset$. Then, \eqref{eq:ab-matching1} is equivalent to $|T\setminus Y|\geq |X|$ for every $X,Y$ such that $N(X)\cap Y=\emptyset$. For a fixed $X$, we only need to consider $Y=T\setminus N(X)$ which minimizes $|T\setminus Y|$. Therefore, \eqref{eq:ab-matching1} is equivalent to $|N(X)|\geq |X|\ \forall X\subseteq S$. For \eqref{eq:ab-matching2} to not hold automatically, we further need $S\setminus X\subseteq S^t$. Then, \eqref{eq:ab-matching2} is equivalent to $|S\setminus X|\geq |Y|$ for every $X,Y$ such that $N(Y)\cap X=\emptyset$ and $S\setminus X\subseteq S^t$. For a fixed $Y$, we only need to consider $X=S\setminus N(Y)$ which minimizes $|S\setminus X|$. Therefore, \eqref{eq:ab-matching2} is equivalent to $|N(Y)|\geq |Y|\ \forall Y\subseteq T, N(Y)\subseteq S^t$.
\end{proof}

Next, we prove \Cref{thm:testing-robust} that states an algorithm for testing robustness of a digraft.
\begin{proof}[Proof of \Cref{thm:testing-robust}]
    It follows from \Cref{thm:robust_digraft} that it suffices to check conditions \eqref{eq:robust_1} and \eqref{eq:robust_2}. Let $f(X):=|N(X)|-|X|$ for every $X\subseteq S$. One can easily check that $f$ is a submodular. To check \eqref{eq:robust_1}, it suffices to find a subset $X\subsetneqq S,\ X\not\subseteq S^t$ that minimizes $f(X)$. This can be done by solving $\min\{f(X):u\in X, v\notin X\}$ for every $u\in S\setminus S^t$ and $v\in S$. 
    If the minimum is smaller than $|T|-|S|+1$, then the digraft is not robust. In such a case we take the minimizer $X$ and output $C=\delta^-(X\cup N(X))$ as a separating dicut.

    To check \eqref{eq:robust_2}, it suffices to find a subset $\emptyset \neq X\subsetneqq S,\ X\subseteq S^t$ that minimizes $f(X)$. Note that $f$ is crossing submodular over the feasible $X$'s, whose minimization can be solved in polynomial time \cite{IwataFleischerFujishige2001,Schrijver2000}. If the minimum is smaller than $1$, then the digraft is not tight dijoin-covered. In such a case we can find an arbitrary tight edge cover $J$ and there will be a dicut $C$ with $J\cap C=\emptyset$. We output such a dicut as a separating dicut.
\end{proof}

Finally, we prove \Cref{lemma:integral_basis}.
\begin{proof}[Proof of \Cref{lemma:integral_basis}]
Recall, this lemma states that one can combine an integral basis for $\lin(\{J\in \zJ(D,S^t): |J\cap C|=1\})$ and a tight dijoin $J_0$ with $|J_0\cap C|=2$ into an integral basis for $\lin(\{J\in \zJ(D,S^t): |J\cap C|=1\}\cup \{J_0\})$.

    Set $\mathcal{J}_C:=\{J\in \zJ(D,S^t): |J\cap C|=1\}$. Suppose $\zB'=\{J_1,\ldots J_k\}\subseteq \mathcal{J}_C$.  First, we show that $\zB$ is linearly independent. Suppose for $\lambda\in \R^{k+1}$, $\mathbf{0}=\sum_{j=0}^k\lambda_j \mathbf{1}_{J_j}$. Then, \[0=\sum_{j=0}^k\lambda_j \mathbf{1}_{J_j}(C)=2\lambda_0+\sum_{j=1}^k \lambda_j=\lambda_0+\sum_{j=0}^k \lambda_j.\] However, pick any $v\in T$, we have \[0=\sum_{j=0}^k\lambda_j \mathbf{1}_{J_j}(\delta(v))=\sum_{j=0}^k \lambda_j.\] This implies $\lambda_0=0$. Then, we conclude that $\mathbf{0}=\sum_{j=1}^k\lambda_j \mathbf{1}_{J_j}$, which implies that $\lambda_{1}=\ldots=\lambda_k=0$ since $\zB'$ is linearly independent. This proves that $\zB$ is linearly independent. Since $\dim(\lin(\zJ_C\cup \{J_0\}))\leq \dim(\lin(\zJ_C))+1=|\zB'|+1=|\zB|$, $\zB$ forms a linear basis for $\lin(\zJ_C\cup \{J_0\})$.
    
    It remains to show that $\zB$ is also an integral basis. For an arbitrary $x\in \lin(\zJ_C\cup \{J_0\})\cap \Z^A$, there exists $\lambda\in \R^{k+1}$ such that $x=\sum_{j=0}^k \lambda_j\mathbf{1}_{J_j}$. Then, \[\lambda_0+\sum_{j=0}^k \lambda_j=\sum_{j=0}^k\lambda_j \mathbf{1}_{J_j}(C)=x(C)\in \Z.\]
    On the other hand, pick any $v\in T$, we have
    \[\sum_{j=0}^k \lambda_j=x(\delta(v))\in\Z.\]
    This implies $\lambda_0\in\Z$. Therefore, $\sum_{j=1}^k\lambda_j\mathbf{1}_{J_j}=x-\lambda_0\mathbf{1}_{J_0}\in\Z^A$. This implies that $\lambda_1, \ldots, \lambda_{k}\in\Z$ because $\zB'$ is an integral basis for $\lin(\zJ_C)$. This proves that $\zB$ is an integral basis for $\lin(\zJ_C\cup \{J_0\})$.
\end{proof}

\end{document}